\newcommand\bigcheck[1]{#1 \raise1ex\hbox{$\hspace{-1ex}{}^\vee$}}
\newcommand\sucheck[1]{#1 \raise0.5ex\hbox{$\hspace{-1ex}{}^\vee$}}
\newcommand{\ch}{{\rm ch}}
\renewcommand{\Im}{\mathop{\rm Im  \, }}
\renewcommand{\sl}{s\ell}
\newcommand{\sdim}{\mathop{\rm sdim \, }}
\newcommand{\tw}{\rm tw \, }
\newcommand{\bz}{\bar{0}}
\newcommand{\A}{\mathcal{A}}
\newcommand{\CC}{\mathbb{C}}
\newcommand{\QQ}{\mathbb{Q}}
\newcommand{\RR}{\mathbb{R}}
\newcommand{\ZZ}{\mathbb{Z}}
\newcommand{\fg}{\mathfrak{g}}
\newcommand{\fh}{\mathfrak{h}}
\renewcommand{\tilde}{\widetilde}
\renewcommand{\hat}{\widehat}
\renewcommand\section{\@startsection {section}{1}{\z@}%
                                   {-3.5ex \@plus -1ex \@minus -.2ex}%
                                   {2.3ex \@plus.2ex}%
                                   {\normalfont\large\bfseries}}
\renewcommand\subsection{\@startsection{subsection}{2}{\z@}%
                                     {-3.25ex\@plus -1ex \@minus -.2ex}%
                                     {0ex \@plus .0ex}%
                                     {\normalfont\normalsize\bfseries}}
\newtheorem{theorem}{Theorem}[section]
\newtheorem{lemma}[theorem]{Lemma}
\newtheorem{corollary}[theorem]{Corollary}
\newtheorem{proposition}[theorem]{Proposition}
\newtheorem*{lemma*}{Lemma}
\theoremstyle{remark}
\newtheorem{remark}[theorem]{Remark}
\def\@maketitle{\newpage
 \null
 \vskip 2em
 \begin{center}%
 \vskip 3em
  {\Large\bf \@title \par}%
  \vskip 1.5em
  {\normalsize
   \lineskip .5em
   \begin{tabular}[t]{c}\@author
   \end{tabular}\par}%
  \vskip 2em

 \end{center}%
 \par
 \vskip 2.5em}
\newcommand{\bs}[1]{{\boldsymbol #1}}
\renewcommand{\epsilon}{\varepsilon}
\definecolor{light}{gray}{.9}
\begin{document}

\title{Representations of affine superalgebras and mock theta functions II}

\author{ Victor G. Kac and Minru Wakimoto}

\author{Victor G. Kac\thanks{Department of Mathematics, M.I.T, Cambridge, MA 02139, USA.  kac@math.mit.edu } \:           and Minoru Wakimoto\thanks{~~wakimoto@r6.dion.ne.jp~~~~ Supported in part by Department of Mathematics, M.I.T.}}

\maketitle

\noindent
To the memory of Andrei Zelevinsky.

\section*{Abstract}
We show that the normalized supercharacters of principal admissible modules, associated to each integrable atypical module over the affine Lie superalgebra $\widehat{\sl}_{2|1}$ can be modified, using Zwegers' real analytic corrections, to form an $SL_2(\ZZ)$-invariant family of functions. Using a variation of Zwegers' correction, we obtain a similar result for $\widehat{osp}_{3|2}$. Applying the quantum Hamiltonian reduction, this leads to new families of positive energy modules over the $N=2$ (resp. $N=3$) superconformal algebras with central charge $c=3 (1-\frac{2m+2}{M})$, where $m \in \ZZ_{\geq 0}, M \in \ZZ_{\geq 2}$, gcd$(2m+2,M)=1$ if $m>0$ (resp. $c=-3\frac{2m+1}{M}$, where $m \in \ZZ_{\geq 0}, M \in \ZZ_{\geq 2}$ gcd$(4m +2, M) =1)$, whose modified supercharacters form an $SL_2(\ZZ)$-invariant family of functions. 

\section{Introduction}

This paper is the second in the series of our papers on modular invariance of 
modified normalized characters of irreducible highest weight representations 
$L(\Lambda)$ over an affine Lie superalgebra $\hat\fg$, associated to a simple finite-dimensional Lie superalgebra $\fg$. We shall keep the notation and conventions of 
the first paper \cite{KW}. 

We assume that $\fg$ is endowed with a non-degenerate invariant supersymmetric bilinear form $(.|.)$ and that its even part $\fg_{\bar{0}}$ is a 
reductive subalgebra. (These properties hold if the Killing form $\kappa$
on $\fg$ is non-degenerate.) 
The associated affine Lie superalgebra is
$\hat{\fg} = \fg [t,t^{-1}] \oplus \CC K \oplus \CC d$, where $K$ is a central element, $\fg [t, t^{-1}] \oplus \CC K$ is a central extension of the loop algebra $\fg [t, t^{-1}]$:
\[ [at^m , bt^n] = [a,b]t^{m+n} + m \delta_{m, -n} (a | b)K,\, a,b \in \fg,\,
 m, n \in \ZZ,  \]
and $ d = t \frac{d}{dt}$.

Choosing a Cartan subalgebra $\fh$ of $\fg_{\bar{0}}$, one defines the Cartan subalgebra of $\hat{\fg}$:

\[\hat{\fh} = \CC d + \fh + \CC K .\]
The restriction of the bilinear form $(.|.)$ to $\fh$ is symmetric 
non-degenerate, and one extends it from $\fh$ to $\hat{\fh}$, letting

\[(\fh | \CC K + \CC d) = 0,\quad  (K | K) = (d | d) = 0,\quad  (K | d) = 1.\] 
One identifies $\hat{\fh}^*$ with $\hat{\fh}$ via this bilinear form. 
Traditionally, the elements of $\hat{\fh}^*$ corresponding to $K$ and $d$ are denoted by $\delta$ and $\Lambda_0$, respectively. One uses the following coordinates on $\hat{\fh}$: 
\begin{equation}
%\hat{\fh} \in
\label{eq:0.1} 
h= 2\pi i(-\tau d + z + tK) = : (\tau, z, t),\,\,\hbox{where}\,\,\tau, 
t \in \CC,\, z \in \fh.
\end{equation}

Given a set of simple roots $\hat{\Pi} =
\{\alpha_0, \alpha_1, \ldots, \alpha_\ell \}$ of $\hat{\fg}$ and 
$\Lambda \in \hat{\fh}^*$, 
one defines the \textit{highest weight module} $L(\Lambda)$ over $\hat{\fg}$ as the irreducible module, which admits a non-zero vector $v_\Lambda$, such that 
\[h v_\Lambda = \Lambda(h)v_\Lambda \,\, \hbox{for}\,\, 
 h \in \hat{\fh}\,\, \hbox{and}\,\, \hat{\fg}_{\alpha_i}v_\Lambda = 0\, 
\,\hbox{for}\,\,\, 
i = 0, \ldots , \ell, \] 
where $\hat{\fg}_{\alpha_i}$ denotes the root subspace of $\hat{\fg}$, 
attached to the simple root $\alpha_i$. 
Since $K$ is a central element of $\hat{\fg}$, it is represented by a scalar 
$\Lambda(K)$, called the \textit{level} of $L(\Lambda)$ (or $\Lambda$). 

The \textit{character} $\mathrm{ch}^+$ and the \textit{supercharacter} $ch^-$ of $L(\Lambda)$ are defined as the following series, corresponding to the weight space decomposition of $L(\Lambda)$ with respect to $\hat{\fh}$, cf. 
(\ref{eq:0.1}):

\[  ch^\pm_{L(\Lambda)} (\tau, z, t) = tr^\pm_{L(\Lambda)}e^{2\pi i (-\tau d + z + tK)},        
\]
where $tr^+$ (resp. $tr^-$) denotes the trace (resp. supertrace). 
It is easy to see (as in \cite{K}, Chapter 10) that these series converge 
absolutely in the 
domain $\{h \in \hat{\fh} |\, \mathrm{Re}\, \alpha_i (h) > 0,\, i = 0, 1, \ldots, \ell \}$ to holomorphic functions. In all examples these functions extend to meromorphic functions in the domain
\begin{equation}
\label{eq:0.2}
X =\left\lbrace h \in \hat{\fh} |\, \mathrm{Re}(K | h) > 0\right\rbrace  = \left\lbrace (\tau, z, t) |\, \mathrm{Im}\tau > 0\right\rbrace. 
\end{equation} 

Note that, as a $\fg[t,t^{-1}] \oplus \CC K$-module, $L(\Lambda)$ remains 
irreducible and it is unchanged if we replace $\Lambda$ by $\Lambda + a\delta,\, 
a \in \CC$, and the character of the $\hat{\fg}$-module gets multiplied by $q^a$. Here and further $ q = e^{2\pi i \tau} = e^{-\delta}$.

In the case when $\fg$ is a simple Lie algebra, there exists an important collection of integrable and, more generally, admissible $\hat{\fg}$-modules $L(\Lambda)$, whose normalized characters have modular invariance property 
\cite{K}, \cite{KW1}. Recall that the \textit{normalized (super)character} 
$\mathrm{ch}_{\Lambda}^\pm$ is defined as 
\[ \mathrm{ch}^\pm_{\Lambda} (\tau, z, t) = q^{m_{\Lambda}} \mathrm{ch}^\pm_{L(\Lambda)} (\tau, z, t), \]
where $m_\Lambda \in \QQ$ is the "modular anomaly" (cf. formula 
(\ref{eq:2.a})). Recall that $\mathrm{ch}^\pm_{\Lambda + aK} = 
\mathrm{ch}^\pm_\Lambda,\, a \in \CC$. 

Recall the action of $SL_2(\RR)$ in the domain $X$ in coordinates (\ref{eq:0.1}):
\begin{equation}
\label{eq:0.3}
\left( 
\begin{array}{cc}
a & b\\ 
c & d\\
\end{array}
\right) \cdot (\tau, z, t) = \left( \frac{a \tau + b}{c \tau + d},\: \frac{z}{c \tau + d},\: t - \frac{c (z|z)}{2(c \tau + d)}\right) .
\end{equation}
By definition, modular invariance of the normalized (super)character of the 
$\hat{\fg}$-module $L(\Lambda)$ means that $L(\Lambda)$ is a member of a finite collection of irreducible highest weight $\hat{\fg}$-modules, such that the $\CC$-span of their (super)characters is $SL_2(\ZZ)$-invariant.

If the Killing form $\kappa$ on $\fg$ is non-degenerate (only such $\fg$ are considered in the present paper), a $\hat{\fg}$-module $L(\Lambda)$ (and the highest weight $\Lambda$) is called (partially) \textit{integrable} if for any root $\alpha$ of $\hat{\fg}$, such that $\kappa (\alpha, \alpha) > 0$, the elements from the root space $\hat{\fg}_\alpha$ act locally nilpotently on $L(\Lambda)$. (If $\fg$ is a Lie algebra, this property is equivalent to integrability, 
as defined in \cite{K}.)

The conjectural Kac--Wakimoto (super)character formula for a tame $\hat{\fg}$-module $L(\Lambda)$ 
(see \cite{KW}, Definition 3.5(c)) reads:
\begin{equation}
\label{eq:0.4}
%j_{\bar{\Lambda}} 
\hat{R}^\pm \mathrm{ch}^\pm_{L(\Lambda)} = 
\sum_{w \in \hat{W}^\#}^{} \epsilon_\pm (w) w \frac{e^{\Lambda + \hat{\rho}}}{\prod_{\beta \in T_{\bar{\Lambda}}}{(1 \pm e^{-\beta})}}.
\end{equation}
Here and further on $\bar{\Lambda}$ denotes the restriction of $\Lambda$ 
to $\fh$ 
%and $j_{\bar{\Lambda}}$ is a non-zero integer
; $\hat{R}^\pm$ is the affine (super)denominator:
\[ \hat{R}^\pm = e^{\hat{\rho}} \frac{\prod_{\alpha \in \hat{\Delta}_{\bar{0}, +}}^{} (1- e^{- \alpha})}{\prod_{\alpha \in \hat{\Delta}_{\bar{1},+}}^{} (1 \pm e^{- \alpha})}, 
\]
$\hat{\rho}$ is the affine Weyl vector defined by 
$2(\hat{\rho}| \alpha_i) = 
(\alpha_i | \alpha_i), i = 0, 1, \ldots , \ell$; 
$\hat{\Delta}_{\bar{0},+}$ and 
$\hat{\Delta}_{\bar{1},+}$ 
are the sets of 
positive even and odd roots of $\hat{\fg}$ (counting multiplicities);
$T_{\bar{\Lambda}} \subset 
\hat{\Pi}$ is a subset of the set of positive roots of $\fg$, 
consisting of pairwise orthogonal isotropic roots, orthogonal to 
$\bar{\Lambda}$, and maximal with this property in this set;
%$\hat{\Delta}_{+}$;  
%\fg$ is the restriction of $\hat{\fg}$ to $\fh$; 
$\hat{W} = W \ltimes t_L$ is the affine Weyl group , where $ W $ is the 
Weyl group of $\fg_{\bar{0}}$, and the subgroup $t_L$ consists of 
\textit{translations} 
$t_\gamma$, $\gamma \in L$, where $L$ is the coroot lattice of 
$\fg_{\bar{0}}$, which are defined by
\begin{equation}
\label{eq:0.5}
t_\gamma (\lambda) = \lambda + \lambda (K) \gamma - ((\lambda|\gamma)+ \frac{1}{2} \lambda(K) (\gamma | \gamma))\delta,\,\,\lambda \in \hat{\fh}^*;
\end{equation} 
$\hat{W}^\# = W^\# \ltimes t_{L^\#}$ (resp. $W^\#$)
is the subgroup of the affine (resp. finite) Weyl group, generated by 
reflections in 
$\alpha \in \hat{\Delta}_+$  (resp. $\alpha \in \Delta_+$) with $\kappa(\alpha,\alpha)>0$,  where $L^\#$ is the sublattice of $L$, 
spanned by the coroots $\alpha$ with $\kappa (\alpha, \alpha) > 0$ of 
$\fg_{\bar{0}}$;
% with $\kappa(\alpha,\alpha)>0)$
finally, $\epsilon_\pm (w) = (-1)^{s_\pm (w)} $, for a decomposition of $w$ 
in a product of $s_+$ reflections, with respect to non-isotropic even roots, and $s_-$ is the number of those of them, for which the half is not a root 
(note that $\epsilon_+(t_\alpha) = 1$). Formula (\ref{eq:0.4}) 
is a slightly more precise version of formula (3.14) from \cite{KW}.

In all cases studied in the present paper, formula (\ref{eq:0.4}) 
is proven in \cite{GK}. Note that it can be rewritten, after multiplying both 
sides by a suitable power of $q$, as
\begin{equation}
\label{eq:0.6}
%j_{\bar{\Lambda}} 
q^{\frac{\mathrm{sdim} \fg}{24}} \hat{R}^\pm \mathrm{ch}^\pm_{\Lambda} = 
\sum_{w \in W^\#} \epsilon^\pm (w)\, 
w (\Theta^\pm_{\Lambda + \hat{\rho} , T}),
\end{equation}
where $T = T_{\bar{\Lambda}}$ and $\Theta^\pm_{\Lambda + \hat{\rho}, T}$ is a 
mock theta function of degree $(\Lambda + \hat{\rho})(K)$.
 
Recall \cite{KW} that, for $\lambda \in \hat{\fh}^*$, such that $\lambda(K) > 0$, a \textit{mock theta function} $\Theta^\pm_{\lambda, T}$ of degree $n = \lambda (K)$ is defined by the following series:
 \begin{equation}
\label{eq:0.7}
 \Theta^\pm_{\lambda, T} = q^{\frac{(\lambda | \lambda)}{2n}} \sum_{\gamma \in M}^{} \epsilon_\pm (t_\gamma) t_\gamma \frac{e^\lambda}{\prod_{\beta \in T}^{} (1 \pm e^{- \beta})   }, 
 \end{equation}
 where $M \subset \fh$ is a positive definite integral lattice ($=L^\#$
in (\ref{eq:0.6})), $t_\gamma$ are the translations, defined by (\ref{eq:0.5}) 
, and $T \subset \fh$ is a finite subset, consisting of pairwise orthogonal isotropic vectors, orthogonal to $\lambda$. This series converges to a 
meromorphic function in the domain $X$, which in coordinates (\ref{eq:0.1}) 
takes the form
\begin{equation}
\label{eq:0.8}
\Theta_{\lambda, T} (\tau, z, t) = e^{2 \pi int} \sum_{\gamma \in \frac{\bar{\lambda}}{n}+M}^{} \epsilon^\pm (t_\gamma) \frac{q^{n \frac{(\gamma | \gamma)}{2}}  e^{2 \pi i n \gamma (z)}}{\prod_{\beta \in T}^{}(1 \pm q^{-(\gamma| \beta)} e^{-2\pi i \beta (z)})}.
\end{equation}

Of course, if $T = \emptyset$, we get the usual Jacobi form, which is modular invariant, up to a weight factor. The normalized (super)denominator 
$q^{\frac{\mathrm{sdim} \fg}{24}} \hat{R}^\pm$ is modular invariant, up to the same weight factor, since it can be expressed as a ratio of products of four 
standard Jacobi forms $\vartheta_{a b}\, (a,b = 0 \,\hbox{or}\, 1$) of degree 
2 (see \cite{KW}, Section 4). 

In particular, if $\fg$ is a simple Lie algebra and $L(\Lambda)$ is an integrable $\hat{\fg}$-module, then formula (\ref{eq:0.4}) turns into the usual 
Weyl-Kac character formula, where 
%$j_{\bar{\Lambda}} = 1$,
$\hat{W}^\#=\hat{W}$, and 
$T_{\bar{\Lambda}} = \emptyset$ (in this case, of course, 
$\mathrm{ch}^+ = \mathrm{ch}^-$ and $\epsilon_+ (w) = \epsilon_- (w) = \mathrm{det} (w)$). 
Therefore (\ref{eq:0.6}) holds with the usual Jacobi forms, hence 
$\mathrm{ch}_\Lambda$ is modular invariant.  

However, modular invariance fails for mock theta functions, but 
sometimes it can be achieved by adding non-holomorphic real analytic 
corrections, discovered by Zwegers \cite{Z}.

In our previous paper \cite{KW} we studied the first non-trivial case of 
$\fg = \sl_{2|1}$ and the non-typical 
(i.e. $T \neq \emptyset$) integrable 
$\hat{\fg}$-module $L(m \Lambda_0)$, where $m$ is a positive integer. 
Namely, we found a modification of the numerator of 
$ \mathrm{ch}^\pm_{m\Lambda_0} $ 
(i.e. the RHS of (\ref{eq:0.6})) in the spirit of Zwegers, so that the 
corresponding modified normalized supercharacter 
$\mathrm{\tilde{ch}}^-_{m\Lambda_0}$ is modular invariant (see \cite{KW}, 
Theorem 7.3 for 
$M = 1,\, \epsilon = \epsilon' = 0$).

Moreover, since the numerators of the normalized supercharacters of admissible modules and their modifications are expressed by a simple substitution via that for the integrable modules, and their denominators remain the same, we obtain modular invariance for the modified normalized supercharacters of admissible modules, associated to $L(m \Lambda_0)$ (\cite{KW}, Theorem 7.3 for  
$\epsilon = \epsilon' = 0$). 
Finally, it turns out that the modified normalized supercharacters and characters, along with their Ramond twisted analogues, again form a modular invariant family (\cite{KW}, Theorem 7.3).

In the present paper we show that similar results hold for arbitrary atypical integrable highest weight $\hat{\fg}$-modules and the associated principal admissible modules in the case when $\fg$ is a simple Lie superalgebra of rank 
$\ell = 2$, i.e. $\fg$ is either $\sl_{2|1}$ or $\mathrm{osp}_{3|2}$.

%In the present paper we show that similar results hold for arbitrary atypical integrable highest weight $\hat{\fg}$-modules and the associated principle admissible modules in the cases where $ \fg $ is a simple Lie algebra of rang $ \ell = 2$, i.e. $ \fg $ is either $ \sl_{2|1} $ or $ \mathrm{osp}_{3|2} $.

For $ \fg = \sl_{2|1} $ an arbitrary atypical integrable weight is of the 
form 
$ \Lambda_{m;s} = (m-s) \Lambda_0 + s \alpha, $ where 
$ m, s \in \ZZ_{\geq 0},\, s \leq m $, and $ \alpha $ is an isotropic root of 
$\fg$. We show that for each $ \Lambda_{m;s} $ the corresponding modified normalized supercharacter $ \mathrm{\tilde{ch}}^-_{\Lambda_{m;s}}  $ is modular invariant, and consequently, for each $ M \in \ZZ_{\geq 1}, $ such that 
$\mathrm{gcd} (M, 2m+2 )= 1$ if $ m > 0 $, the associated family of principal 
admissible modified 
normalized supercharacters is modular invariant (Theorem \ref{th:2.9}). 
It turns out more convenient to use slightly changed Zwegers' real analytic 
functions $ R_{j, m+1}(\tau, v) $; they are given by (\ref{eq:1.4}). 
The method of the present paper is simpler than that of [KW], but still the 
key result is that $ \Phi^{[m;s]} - \Phi^{\left[ m;s\right) }|_S $ is a 
holomorphic function, where  $ \Phi^{[m;s]} $ is the numerator of $  \mathrm{ch}^-_{\Lambda_{m;s}} $. It is in the proof of the latter fact
that the restriction $ \ell = 2 $ is essential.

For $ \fg = \mathrm{osp}_{3|2} 
(= B(1,1)) $ the only atypical integrable 
weight is $ m \Lambda_0, $ where $ m \in \ZZ_{\geq 0} $. The method is 
similar to that for $ \fg = \sl_{2|1} $. The corresponding Zwegers' type real 
analytic functions $ R^{[B]}_{j + \frac{1}{2}, m + \frac{1}{2}} (\tau, \upsilon)$ are introduced in Section \ref{sec:4}. As a result, we construct a modular 
invariant modified normalized supercharacter for the 
$\hat{\mathrm{osp}}_{3|2}$-module
$L(m\Lambda_0)$, and, for 
each $ M \in \ZZ_{\geq 1} $, such that $ \mathrm{gcd}(M, 4m +2) = 1 $, the associated modular invariant family of principal admissible modified normalized supercharacters (Theorem \ref{th:5.12}).

As in \cite{KW}, in all cases considered, modified normalized
supercharacters and characters, along with their twisted analogues, again 
form a modular invariant 
family (Theorems \ref{th:2.9} and \ref{th:5.12}).

Next, as in \cite{KW}, we apply the quantum Hamiltonian reduction to the 
principal admissible $ \hat{\fg}$-modules. As usual, the integrable and a few 
admissible $ \hat{\fg}$-modules get erased (i.e. give zero), but 
what remains of 
the modular invariant families of modified normalized characters of
$ \hat{\fg}$-modules, produce modular 
invariant families of modified characters and supercharacters of 
Neveu-Schwarz and 
Ramond $ N =2  $ (resp. $ N =3 $) superconformal algebra positive energy 
modules. Namely, in the case $ \fg = \sl_{2|1} $ we obtain new families of 
$  N = 2 $ superconformal algebra modules with central charge 
$  c = 3(1 - \frac{2m+2}{M}) $, where $  m \in \ZZ_{\geq 1}, M \in \ZZ_{\geq 2} $ and $ \mathrm{gcd}(M, 2m+2) = 0 $  (Theorem \ref{th:2.11}); 
and in the case of 
$\fg =  \mathrm{osp}_{3|2} $ we obtain new families of $ N = 3 $ 
superconformal algebra modules with central charge
$  c = -3 \frac{2m+1}{M} $, 
where $  m \in \ZZ_{\geq 0}, M \in \ZZ_{\geq 2} $ and  $ \mathrm{gcd}(M, 4m+2) = 1 $  (Theorem \ref{th:6.3}).

In our subsequent paper, we will consider the case $ \fg = D(2, 1;a) $ and the corresponding big $ N =4 $ superconformal algebras, obtained from
$\hat{\fg}$ by the quantum Hamiltonian reduction \cite{KW2}, \cite{KW3} .

\section{Transformation properties of the mock theta functions 
$\Phi^{[m;\,s]}$ and their modifications $\tilde{\Phi}^{[m;\,s]}$.}
\label{sec:1}

%%%%%%%%%%%%%%1A%%%%%%%%%%%%%%%%%%%%%%%%%%
In this section we study modular and elliptic transformation properties of supercharacters
$ch^{-}_{L(\Lambda)}$ of integrable highest weight modules $L(\Lambda)$ over the affine Lie superalgebra $\widehat{\sl}_{2|1}$. We choose the set of simple roots $\{\alpha_0, \alpha_1, \alpha_2\}$, where $\alpha_0$ is even and $\alpha_1, \alpha_2$ are odd, and the scalar products are :
$$
(\alpha_0|\alpha_0)=2,\, (\alpha_1|\alpha_1)=(\alpha_2|\alpha_2)=0,\, (\alpha_0|\alpha_1) = (\alpha_0|\alpha_2)=-1, \,(\alpha_1|\alpha_2)=1.
$$
Then the highest weight $\Lambda$ of an integrable module $L(\Lambda)$, 
such that $(\Lambda|\alpha_1)=0$, is of the form (up to adding a multiple of the
imaginary root $\delta$):
$$
\Lambda_{m;s}=(m-s)\Lambda_0+s \Lambda_2=m\Lambda_0+s \alpha_1, \;\text{where}\;  m,s \in \ZZ_{\geq 0},\,\, 0\leq s \leq m.
$$
%%%%%%%%%%%%%%1B%%%%%%%%%%%%%%%%%%%%%%%%%%
It is easy to see that the highest weight $\Lambda$ of any atypical integrable level $m$ $\widehat{\sl}_{2|1}$-module can be brought to this form by odd reflections (up to adding a multiple of an imaginary root).

In our paper [KW] we studied transformation properties of the supercharacter of only the vacuum $\widehat{\sl}_{2|1}$-module $L(m \Lambda_0)$, by a more complicated method, close to the original approach of [$Z$].

Since the transformation properties of the superdenominator $\widehat{R}^-$ are well understood [KW], it suffices to study those of the numerator $\widehat{R}^- ch^{-}_{L(\Lambda)}$.

The numerator of the supercharacter of $L(\Lambda_{m;s})$ is given by the 
formula (see Conjecture 3.8 in \cite{KW} and its proof in \cite{S} or in 
\cite{GK}):
$$
\hat{R}^{-} ch^{-}_{L(\Lambda_{m;s})}
%e^{\Lambda} \left( \sum\limits_{j \in \ZZ} \dfrac{e^{-j(m+1)(\alpha_1+\alpha_2)}q^{j^2(m+1)-j s}}{1- e^{-\alpha_1}q^{n}} - \sum\limits_{j \in \ZZ} \dfrac{e^{j(m+1)(\alpha_1+\alpha_2)} e^{-s(\alpha_1+\alpha_2)} q^{j^2(m+1)-js}}{1- e^{\alpha_2} q^n} \right)
%$$
%$$$
=e^{(m+1) \Lambda_0} 
\sum\limits_{j \in \ZZ}\left(  \dfrac{e^{-j(m+1)(\alpha_1+\alpha_2)} 
e^{s \alpha_1} q^{j^2(m+1)-j s}}{1 - e^{-\alpha_1}q^{j}} - 
%\sum\limits_{j \in \ZZ} 
\dfrac{e^{j(m+1)(\alpha_1+\alpha_2)} e^{-s\alpha_2} 
q^{j^2(m+1)-js}}{1 - e^{\alpha_2} q^j} \right),
$$
where, as usual, $q=e^{-\delta}$. Hence in coordinates 
$$
h=2\pi i\,(-\tau \Lambda_0 - z_1 \alpha_2- z_2\alpha_1+t\delta)\,\,\, 
(\hbox {i.e.}\,\, e^{-\alpha_1(h)} = e^{2\pi i z_1},\,\, e^{-\alpha_2(h)}=e^{2\pi i z_2},\,
\, e^{-\delta(h)}=e^{2\pi i \tau}=q)
$$
%
%the numerator of the supercharacter of $L(\Lambda_{m;\,s})$
it is given by the 
function
$$
\left(\hat{R}^-\ch^-_{L(\Lambda_{m;\,s})} \right) (h) 
= \Phi^{[m;\,s]}(\tau, z_1, z_2, t)
= e^{2\pi i(m+1)t} \left(\Phi_1^{[m;\, s]}(\tau, z_1, z_2) - 
\Phi_1^{[m;\, s]} (\tau,- z_2,- z_1)\right),
$$
where
\begin{equation}
	\label{eq:1.1}
\Phi_1^{[m;\, s]}(\tau, z_1, z_2) 
%= \Phi_2^{[m;\, s]}(\tau, -z_2, -z_1, t)
= \sum\limits_{j \in \ZZ} \dfrac{e^{2\pi i(m+1)j(z_1+z_2)}
 e^{-2\pi i s z_1} q^{j^2(m+1)-js}}{1-e^{2\pi i z_1} q^{j}}\;\;.
\end{equation}
It is a holomorphic function in the domain
$X=\{(\tau, z_1,z_2,t)\in \CC^4|\, \Im \tau >0\}.$

%%%%%%%%%%%%%%%Insert A%%%%%%%%%%%%%%%
Recall the formula for the classical theta functions 
$\Theta_{j, m} (\tau, z, t)$ 
of degree $m$, where $m$ is a positive integer and $j \in \ZZ$ mod $2m \ZZ$ :

\begin{equation} \label{eq:1.2} \Theta_{j, m} (\tau, z, t) 
= e^{2\pi i m t} 		\sum\limits_{n \in \ZZ}{e^{2\pi i m z (n+\frac{j}{2m})} q^{m(n+\frac{j}{2m})^2}}.
\end{equation}
These are holomorphic functions in the domain 
$X_0=\{(\tau, z, t) \in \CC^3 |\, \Im \tau>0\}$.
Let $\Theta_{j, m} (\tau,z) = \Theta_{j, m} (\tau, z, 0)$.
In the study of the functions $\Phi^{[m;s]}$ we shall use, in particular, 
the following obvious properties
\begin{equation}\label{eq:1.a} 
\Theta_{j, m} (\tau, -z)= \Theta_{-j, m} (\tau, z), 
\end{equation}
\begin{equation}
\label{eq:1.b} 
 \Theta_{j, m} (\tau, z+b)=(-1)^{bj} \Theta_{j, m} (\tau, z)\,\,\hbox{if}\,\,\,\, b\in \ZZ. 
\end{equation}

\vspace{-1ex}
\noindent
\begin{lemma}
\label{lem:1.1} The functions $\Phi^{[m;\,s]}$ satisfy the following 
properties :
$$$$
\vspace{-10ex}
 \begin{list}{}{}
	\item(0) $\Phi^{[m;\,s]}- \Phi^{[m;\,s]}|_S$ is a holomorphic function
in the domain $X$.
	 
	\item(1) $\Phi^{[m;\,s]}(\tau+1,z_1,z_2,t)=\Phi^{[m;\,s]}(\tau,z_1,z_2,t)$.
	
	\item(2) $\Phi^{[m;\,s]}(\tau,z_1+a,z_2+b,t)=\Phi^{[m;\,s]}(\tau,z_1,z_2,t)$ if $a,b \in \ZZ$.
	
	\item(3) $\Phi^{[m;\,s]}(\tau,z_1,z_2,t)- e^{2\pi i(m+1)z_1} \Phi^{[m;\,s]}(\tau, z_1, z_2+\tau, t)$ 
	
	$= e^{2\pi i(m+1)t} \sum\limits_{k=0}^{m} e^{\pi i (k- s)(z_1-z_2)} q^{-\frac{(k-s)^2}{4(m+1)}} \left(\Theta_{k-s,\, m+1} -\Theta_{-(k-s),\,m+1} \right) (\tau, z_1+z_2)$.
	
	\item(4) $\Phi^{[m;\,s]} (\tau,z_1,z_2,t) - e^{-2\pi i(m+1)z_2} \Phi^{[m;\,s]}(\tau,z_1-\tau,z_2,t)$
	
	$= e^{2\pi i(m+1)t}\sum\limits_{k=0}^{m} e^{\pi i(k-s)(z_1-z_2)} q^{-\frac{(k-s)^2}{4(m+1)}} \left( \Theta_{k-s,\,m+1} - \Theta_{-(k-s),\,m+1} \right) (\tau, z_1+z_2)$.
	
	\item(5) $\Phi^{[m;\,s]}(\tau, z_1+j\tau,z_2+j\tau,t)= q^{-j^2(m+1)} e^{-2\pi ij(m+1)(z_1+z_2)} \Phi^{[m;\,s]}(\tau, z_1,z_2,t)$ if $j\in \ZZ$.

	\item(6) $\Phi^{[m;\,s]}(\tau,-z_2,-z_1,t)=-\Phi^{[m;\,s]}(\tau,z_1,z_2,t)$.
\end{list}
\end{lemma}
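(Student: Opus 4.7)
My plan is to dispatch the formal and symmetry properties first, derive the theta-identity (3) by a telescoping computation, obtain (4) from (3) via (6), establish (5) by combining (3) and (4), and finally address the holomorphicity statement (0) by residue analysis.

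Properties (1), (2), (6) are direct. For (1), every $q$-exponent in (\ref{eq:1.1}) is the integer $j^2(m+1)-js$, so $\tau\mapsto\tau+1$ leaves each summand unchanged. Property (6) is immediate from the antisymmetric form of the definition $\Phi^{[m;s]}=e^{2\pi i(m+1)t}\bigl(\Phi_1^{[m;s]}(\tau,z_1,z_2)-\Phi_1^{[m;s]}(\tau,-z_2,-z_1)\bigr)$. For (2), an integer shift $z_1\mapsto z_1+a$ introduces only factors $e^{2\pi i(m+1)ja}=1$ and $e^{-2\pi isa}=1$ termwise (recalling $s\in\ZZ$); the other shifts are analogous. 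For (3), I would compute the termwise difference $\Phi_1^{[m;s]}(\tau,z_1,z_2)-e^{2\pi i(m+1)z_1}\Phi_1^{[m;s]}(\tau,z_1,z_2+\tau)$: the shared denominator $1-e^{2\pi iz_1}q^j$ lets the geometric-series collapse
\[
\frac{1-(e^{2\pi iz_1}q^j)^{m+1}}{1-e^{2\pi iz_1}q^j}=\sum_{k=0}^{m}(e^{2\pi iz_1}q^j)^{k}
\]
take effect. Swapping the order of summation and completing the square in $j$ (the exponent becomes $j^2(m+1)+j(k-s)$), the inner series is identified via (\ref{eq:1.2}) as $q^{-(k-s)^2/(4(m+1))}e^{-\pi i(k-s)(z_1+z_2)}\Theta_{k-s,m+1}(\tau,z_1+z_2)$. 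The reflected piece is treated analogously after reindexing $j\mapsto -j$ and invoking (\ref{eq:1.a}), producing the $\Theta_{-(k-s),m+1}$ contribution; subtracting yields (3). Property (4) follows from (3) by the substitution $(z_1,z_2)\mapsto(-z_2,-z_1)$ and (6), using (\ref{eq:1.a}) to verify that the right-hand side $F$ of (3) satisfies $F(\tau,-z_2,-z_1,t)=-F(\tau,z_1,z_2,t)$. For the $j=1$ case of (5), I would combine (3) (to express $\Phi^{[m;s]}(\tau,z_1+\tau,z_2+\tau,t)$ in terms of $\Phi^{[m;s]}(\tau,z_1+\tau,z_2,t)$) with (4) (to express $\Phi^{[m;s]}(\tau,z_1+\tau,z_2,t)$ in terms of $\Phi^{[m;s]}(\tau,z_1,z_2,t)$): the common correction $F(\tau,z_1+\tau,z_2,t)$ cancels, leaving precisely $q^{-(m+1)}e^{-2\pi i(m+1)(z_1+z_2)}\Phi^{[m;s]}(\tau,z_1,z_2,t)$. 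Iteration handles arbitrary $j\in\ZZ$.

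The substantive step is (0). A direct calculation from (\ref{eq:1.1}) shows that, as a function of $z_1$, $\Phi^{[m;s]}$ has simple poles only on the lattice $z_1\in\ZZ+\ZZ\tau$ (coming from $\Phi_1^{[m;s]}(\tau,z_1,z_2)$), and the residue at $z_1=-j\tau+\ell$ equals $-\frac{1}{2\pi i}e^{2\pi i(m+1)t}e^{2\pi i(m+1)jz_2}$: the Gaussian $q^{-j^2(m+1)}$ produced by evaluating the numerator at $z_1=-j\tau+\ell$ exactly cancels the $q^{j^2(m+1)}$ already present, and the $\ell$-dependence drops out by integrality of $s$ and $m+1$. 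The reflected summand yields simple $z_2$-poles on the same lattice, with residues determined via (6). Under $S\colon(\tau,z_1,z_2,t)\mapsto(-1/\tau,z_1/\tau,z_2/\tau,t-z_1z_2/\tau)$, where $(z|z)=2z_1z_2$ in our coordinates, both lattices are preserved. I would compute the residue of $\Phi^{[m;s]}|_S$ at $z_1=J+L\tau$ using the Jacobian $dz_1=\tau\,d(z_1/\tau)$ and the shifted $t$-argument: the resulting factor $e^{-2\pi i(m+1)(J+L\tau)z_2/\tau}$ rewrites the residue of the $S$-image to match the original residue at the same lattice point (corresponding to $j=-L$, $\ell=J$), provided the standard weight factor $\tau^{-1}$ is absorbed in the definition of $|_S$. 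Since the principal parts then coincide at every lattice point (the $z_2$-poles being handled by (6)), the difference has only removable simple-pole singularities on $X$ and is therefore holomorphic there.

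The main obstacle is indeed (0): one must verify that the $(z|z)/2$-shape of the $t$-shift in (\ref{eq:0.3}) produces exactly the $z_2$-dependence needed to cancel the factor $e^{2\pi i(m+1)jz_2}$ in the original residues. This is the point at which the restriction $\ell=2$ enters essentially, since in higher rank the residues would carry a richer $z$-dependence that the single $t$-shift could not absorb.
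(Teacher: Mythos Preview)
Your proof is correct and, for items (0)--(4) and (6), follows essentially the same route as the paper: the residue computation for (0), the geometric-series collapse for (3), and the derivation of (4) from (3) via the symmetry (6) all match the paper's argument.

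The one genuine difference is (5). The paper proves the $j=1$ case directly by the index shift $j\mapsto j-1$ (resp.\ $j\mapsto j+1$) in the defining series for $\Phi_1^{[m;s]}(\tau,z_1+\tau,z_2+\tau)$ (resp.\ its reflected counterpart), then inducts. Your argument instead extracts (5) as a formal consequence of (3) and (4): applying (3) at $(z_1+\tau,z_2)$ and (4) at $(z_1+\tau,z_2)$ produces the same correction term $F(\tau,z_1+\tau,z_2,t)$ on both sides, which cancels and leaves the desired relation. This is a legitimate alternative and has the minor conceptual payoff of showing that (5) is not independent of (3) and (4); the paper's direct index-shift is shorter but obscures this dependence.
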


%%%%Insert a-c%%%%%%%%%%%%%%%
\begin{proof}
Recall that, by definition of the action of $SL(2,\ZZ)$ (see \cite{KW},
Section 4):
\begin{equation}	
\label{eq:1.3}		
\Phi^{[m;s]}_1|_S (\tau, z_1, z_2) = \frac{1}{\tau} e^{-\frac{2 \pi i (m+1)z_1 z_2}{\tau}} \Phi^{[m;s]}_1 \left(-\frac{1}{\tau}, \frac{z_1}{\tau}, \frac{z_2}{\tau} \right).
\end{equation}
In order to prove $(0)$, it suffices to show that 
$$
\Phi^{[m;s]}_1 (\tau, z_1, z_2) - \Phi^{[m;s]}_1 |_S (\tau, z_1, z_2),
$$
viewed as a function in $z_1$, has zero residues at all poles. 
The poles of both $\Phi^{[m;s]}_1$ and $\Phi^{[m;s]}_1|_S$ are the points 
$z_1 \in \ZZ + \tau \ZZ$, and it is easy to see that :
$$
\hbox{Res}_{\,z_1\,=\, n+j \tau}\;\; \Phi^{[m;s]}_1 = \hbox{Res}_{\;z_1\,=\,n+j \tau} \;\;\Phi^{[m;s]}_1|_S = -\frac{1}{2\pi i} e^{-2 \pi i j (m+1)z_2},
$$
as required.

Claims (1), (2) and (6) are obvious. In order to prove (3), we rewrite :
$$
\begin{array}{ll}
			&	\Phi^{[m;s]}_1 (\tau, z_1, z_2) \,-\, e^{2\pi i (m+1) z_1}\; \Phi^{[m;s]}_1 (\tau, z_1, z_2+\tau) \\[1.5ex]
&=\sum\limits_{j\in \ZZ}{e^{2\pi ij (m+1) (z_1+z_2)}\; e^{-2\pi i s z_1}\; 
q^{j^2 (m+1)-j s}}\;\,\frac{1-\left(e^{2\pi i z_1 q^j}\right)^{m+1}}{1- e^{2\pi i z_1} q^j} \\[1.5ex]
&=e^{-2\pi i s z_1} \sum\limits_{k=0}^m{e^{\pi i k (z_1-z_2)}}  \;\,
\sum\limits_{j \in \ZZ} {\left(e^{2\pi i j (m+1)(z_1+z_2)} e^{\pi i k (z_1+z_2)}\right) \,\;q^{j^2(m+1)+j(k-s)}} \\[2ex]
&=\sum\limits_{k=0}^m e^{\pi i (k-s)(z_1-z_2)}\, q^{-\frac{(k-s)^2}{4(m+1)}}
\, \Theta_{k-s, m+1} (\tau, z_1 + z_2).
\end{array}
$$
%\vspace{1.5ex}
%\noindent 
Using (\ref{eq:1.a}), 
%$\Theta_{j, m+1} (\tau, - z)) = \Theta_{-j, m+1} (\tau, z)$, 
we obtain the claim.

Claim (4) follows easily from (6) by sending $(z_1,z_2)$ to
$(-z_2,-z_1)$ in (3) and using (\ref{eq:1.a}).
%\vspace{1ex}

In order to prove claim (5), note that 
$$
\Phi^{[m;s]}_1 (\tau, z_1+\tau, z_2+\tau)= q^{-(m+1)} e^{-2\pi i (m+1) (z_1+z_2)} \Phi^{[m;s]}_1 (\tau, z_1, z_2),
$$
%\noindent 
via replacing $j$ by $j-1$ in the sum, defining the LHS (see (\ref{eq:1.1})). The same formula holds for $\Phi^{[m;s]}_1 (\tau, -z_2-\tau, -z_1-\tau)$ via replacing $j$ by $j+1$ in the sum. Hence (5) holds for $j=1$, and by induction on $j$ it holds for arbitrary  $j \geq 1$.
\end{proof}

We change the coordinates, letting
$$
z_1=v-u, \quad z_2=-v-u,\quad\text{i.e.}\quad u=-\dfrac{z_1+z_2}{2},\quad v=\dfrac{z_1-z_2}{2},
$$
and denote
$$
\varphi^{[m;s]}(\tau, u, v, t)= \Phi^{[m;s]} (\tau,z_1,z_2,t).
$$
Formula (\ref{eq:1.3}) becomes :
$$
\varphi^{[m;s]}|_S \;\,(\tau, u, v, t)=\frac{1}{\tau} \varphi^{[m,s]} 
\left(-\frac{1}{\tau}, \frac{u}{\tau}, \frac{v}{\tau}, t-\frac{u^2-v^2}{\tau}\right).
$$
The following lemma is immediate by Lemma \ref{lem:1.1}.

\vspace{1ex}
\noindent
\begin{lemma}
  \label{lem:1.2} The function $\varphi^{[m;\,s]}$ has the following properties:

  \begin{list}{}{}
  \item (0) $\varphi^{[m;s]}- \varphi^{[m;s]}|_S$ is a holomorphic 
function in the domain $X$. 
	
	\item(1) $\varphi^{[m;s]}(\tau+1,u,v,t)=\varphi^{[m;s]}(\tau,u,v,t)$.
	
	\item(2) $\varphi^{[m;s]}(\tau,u+a,v+b,t)=\varphi^{[m;s]}(\tau,u,v,t)$ if $a,b \in \frac{1}{2} \ZZ$ are such that $a+b \in \ZZ$.
	
	\item(3) $\varphi^{[m;s]}(\tau,u,v,t)- e^{2\pi i(m+1)(v-u)} \varphi^{[m;s]}(\tau, u-\frac{\tau}{2}, v-\frac{\tau}{2},t)$ 
	
	$= -e^{2\pi i(m+1)t} \sum\limits_{j=0}^{m} e^{2\pi i (j- s)v} q^{-\frac{(j-s)^2}{4(m+1)}} \left(\Theta_{j-s,\, m+1} -\Theta_{-(j-s),\, m+1} \right) (\tau, 2u)$.
	
	\item(4) $\varphi^{[m;s]} (\tau,u,v,t) - e^{2\pi i(m+1)(u+v)}\; \varphi^{[m;s]}(\tau,u+\frac{\tau}{2},v-\frac{\tau}{2},t)$ 
	
	$= -e^{2\pi i(m+1)t}\sum\limits_{j=0}^{m} e^{2\pi i(j-s)v}\; q^{-\frac{(j-s)^2}{4(m+1)}} \left( \Theta_{j-s,\, m+1} - \Theta_{-(j-s),\, m+1} \right) (\tau, 2u)$.
	
	\item(5) $\varphi^{[m;s]}(\tau, u-\tau,v,t) =  e^{4\pi i(m+1)u} q^{-(m+1)} \varphi^{[m;s]}(\tau, u,v,t)$
	
	\item(6) $\varphi^{[m;s]}(\tau,-u,v,t)=-\varphi^{[m;s]}(\tau,u,v,t)$.
 \end{list}
 \end{lemma}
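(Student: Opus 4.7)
The plan is to derive each item of Lemma \ref{lem:1.2} directly from the corresponding item of Lemma \ref{lem:1.1} by applying the invertible affine change of variables $z_1 = v - u$, $z_2 = -v - u$. Since $\tau$ and $t$ are unchanged and the substitution is linear (hence holomorphic) in $(z_1, z_2)$, items (0) and (1) of Lemma \ref{lem:1.2} are immediate rewritings of items (0) and (1) of Lemma \ref{lem:1.1}.

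For item (2), I observe that a shift $(z_1, z_2) \mapsto (z_1 + A, z_2 + B)$ with $A, B \in \ZZ$ corresponds to $(u, v) \mapsto (u + a, v + b)$ with $a = -(A+B)/2$, $b = (A-B)/2$, so $a, b \in \tfrac{1}{2}\ZZ$ and $a + b = -B \in \ZZ$; conversely, given such $a, b$, one recovers $A = b - a \in \ZZ$ and $B = -(a+b) \in \ZZ$, so the correspondence is bijective, yielding Lemma \ref{lem:1.2}(2). For item (5), a shift $(z_1, z_2) \mapsto (z_1 + \tau, z_2 + \tau)$ translates to $(u, v) \mapsto (u - \tau, v)$, and using $z_1 + z_2 = -2u$ the prefactor $e^{-2\pi i (m+1)(z_1 + z_2)}$ becomes $e^{4\pi i (m+1) u}$. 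For item (6), the swap $(z_1, z_2) \mapsto (-z_2, -z_1)$ becomes exactly $(u, v) \mapsto (-u, v)$, so Lemma \ref{lem:1.1}(6) transcribes directly.

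The computation for items (3) and (4) is similar: in (3), the shift $z_2 \mapsto z_2 + \tau$ corresponds to $(u, v) \mapsto (u - \tau/2, v - \tau/2)$, and the prefactor $e^{2\pi i (m+1) z_1}$ becomes $e^{2\pi i (m+1)(v - u)}$, matching the LHS of Lemma \ref{lem:1.2}(3). On the RHS I would substitute $z_1 - z_2 = 2v$ and $z_1 + z_2 = -2u$, so that $e^{\pi i (k-s)(z_1 - z_2)}$ becomes $e^{2\pi i (k-s) v}$ and the theta functions are evaluated at $(\tau, -2u)$. The one point requiring care is the sign: by the parity (\ref{eq:1.a}), $\Theta_{k-s, m+1}(\tau, -2u) - \Theta_{-(k-s), m+1}(\tau, -2u) = -\bigl(\Theta_{k-s, m+1} - \Theta_{-(k-s), m+1}\bigr)(\tau, 2u)$, which accounts for the overall minus sign in front of the sums in Lemma \ref{lem:1.2}(3) and (4); renaming the summation index $k$ to $j$ then gives the claims. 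Item (4) can alternatively be obtained from (3) by combining with (6).

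No essential obstacle is anticipated: the lemma is a coordinate-change bookkeeping exercise, and the only subtle point is the above appeal to the theta parity (\ref{eq:1.a}) in items (3) and (4), which is what produces the sign reversal relative to Lemma \ref{lem:1.1}.
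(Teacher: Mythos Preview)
Your proposal is correct and follows exactly the same approach as the paper, which simply states that the lemma ``is immediate by Lemma~\ref{lem:1.1}'' after the change of variables $u=-\tfrac{z_1+z_2}{2}$, $v=\tfrac{z_1-z_2}{2}$. Your write-up supplies the bookkeeping details (in particular the use of \eqref{eq:1.a} to flip the theta-argument from $-2u$ to $2u$, producing the minus sign in (3) and (4)), but there is no substantive difference.
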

>From claims (3) and (4) of Lemma \ref{lem:1.2}, we obtain the following:

\begin{lemma}
 \label{lem:1.3}
$$ 
\begin{array}{ll}
\varphi^{[m;\,s]}(\tau,u,v,t) -e^{2\pi i(m+1)(2v-\tau)} \varphi^{[m;\,s]}(\tau,u,v-\tau,t) \\
%[1ex][2ex]
= -e^{2\pi i(m+1)t} \sum\limits_{j=0}^{2m+1} e^{2\pi i(j-s)v} q^{-\frac{(j-s)^2}{4(m+1)}} \left(\Theta_{j-s,\,m+1} - \Theta_{-(j-s),\,m+1} \right) (\tau, 2u).% \hspace{98pt}\Box
\end{array}
$$
\hfill $\Box$
\end{lemma}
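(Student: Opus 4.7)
The plan is to compose the identities in Lemma \ref{lem:1.2}(3) and Lemma \ref{lem:1.2}(4) to realize the shift $(u,v) \mapsto (u, v-\tau)$ as two successive half-shifts, and then to simplify the resulting combination of theta functions by completing the square.

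Denote by $A(\tau, u, v, t)$ the common right-hand side in (3) and (4) of Lemma \ref{lem:1.2}. Solving (3) for $\varphi^{[m;s]}(\tau, u-\tau/2, v-\tau/2, t)$ and substituting the result into (4) applied at the point $(u-\tau/2, v-\tau/2)$ to eliminate that value in favor of $\varphi^{[m;s]}(\tau, u, v-\tau, t)$, the exponential prefactors multiply to $e^{2\pi i(m+1)(v-u)} \cdot e^{2\pi i(m+1)(u+v-\tau)} = e^{2\pi i(m+1)(2v-\tau)}$, giving
\[
\varphi^{[m;s]}(\tau,u,v,t) - e^{2\pi i(m+1)(2v-\tau)}\varphi^{[m;s]}(\tau,u,v-\tau,t) = A(\tau,u,v,t) + e^{2\pi i(m+1)(v-u)}\, A(\tau, u-\tfrac{\tau}{2}, v-\tfrac{\tau}{2}, t).
\]
Thus the $\varphi$-part of the desired identity is already in place, and it remains to verify that the right-hand side above equals the claimed theta-function sum.

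To that end, I will use the elementary identity
\[
\Theta_{k, m+1}(\tau, 2u - \tau) = q^{-(m+1)/4}\, e^{2\pi i(m+1)u}\, \Theta_{k-(m+1), m+1}(\tau, 2u),
\]
which follows from the defining series (\ref{eq:1.2}) by completing the square in $n + k/(2(m+1))$. Applied to both $k = j-s$ and $k = -(j-s)$ inside $A(\tau, u-\tau/2, v-\tau/2, t)$, the overall prefactor accompanying the shifted theta combination in $e^{2\pi i(m+1)(v-u)}\, A(\tau, u-\tau/2, v-\tau/2, t)$ simplifies so that its $v$-exponential is $e^{2\pi i((j-s)+(m+1))v}$ and its $q$-exponent is
\[
-\frac{j-s}{2} - \frac{(j-s)^2}{4(m+1)} - \frac{m+1}{4} = -\frac{\bigl((j-s)+(m+1)\bigr)^2}{4(m+1)}.
\]

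After the substitution $j' = j + (m+1)$, the summation range becomes $j' \in \{m+1, \ldots, 2m+1\}$; the two shifted theta indices $(j-s)-(m+1) = j'-s-2(m+1)$ and $-(j-s)-(m+1) = -(j'-s)$ reduce, by the period $2(m+1)$ of $\Theta_{\cdot, m+1}$ in its first index (evident from (\ref{eq:1.2})), to $j'-s$ and $-(j'-s)$ respectively. Adding to $A(\tau,u,v,t)$ then merges the summation ranges $\{0,\ldots,m\}$ and $\{m+1,\ldots,2m+1\}$ into $\{0,\ldots,2m+1\}$, producing the claimed right-hand side. The main delicate point is precisely the collapse of the three $q$-exponents to the perfect square above; the rest is bookkeeping.
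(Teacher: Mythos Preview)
Your proof is correct and follows exactly the approach indicated by the paper, which simply states that Lemma~\ref{lem:1.3} follows from claims (3) and (4) of Lemma~\ref{lem:1.2}. You have supplied the details the paper omits: composing the two half-shifts, applying the theta identity $\Theta_{k,m+1}(\tau,2u-\tau)=q^{-(m+1)/4}e^{2\pi i(m+1)u}\Theta_{k-(m+1),m+1}(\tau,2u)$, and reindexing via $j'=j+m+1$ together with the $2(m+1)$-periodicity of $\Theta_{\cdot,m+1}$ to merge the two ranges into $\{0,\ldots,2m+1\}$.
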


We put
$$
G^{[m;\,s]}(\tau,u,v,t) = \varphi^{[m;\,s]}(\tau,u,v,t) - \varphi^{[m;\,s]}|_S(\tau,u,v,t).
$$

%\noindent The function $G^{[m;\,s]}$ satisfies the following properties\,:

\begin{lemma}
  \label{lem:1.4} The function $G^{[m;\,s]}$ satisfies the following properties:
 
   \begin{list}{}{}
  
  \item(1) $G^{[m;\,s]}(\tau, u, v, t)$ is holomorphic with respect to $v$.
	
	\item(2) $G^{[m;\,s]}(\tau,u,v,t)- e^{2\pi i(m+1)(2v-\tau)} G^{[m;\,s]}(\tau,u,v-\tau,t) $
	
	$= -e^{2\pi i(m+1)t} \sum\limits_{j=-s}^{-s+2m+1} e^{2\pi ijv} q^{-\frac{j^2}{4(m+1)}} \left( \Theta_{j,\, m+1} -\Theta_{-j,\, m+1} \right) (\tau, 2u)$.
	
	\item(3) $G^{[m;\,s]}(\tau,u,v+1,t)-G^{[m;\,s]}(\tau,u,v,t)=i e^{2\pi i(m+1)t} \dfrac{1}{\sqrt{2(m+1)}}(-i\tau)^{-\frac{1}{2}}$
	$$
	\times \qquad \sum\limits_{j, k=-s}^{-s+2m+1}
	{\hspace{-2.5ex} e^{-\frac{\pi ijk}{m+1}} 	\;\,
	e^{\frac{2\pi i (m+1)}{\tau} \left( v+ \frac{k}{2(m+1)}\right)^2}} \left(\Theta_{j,\, m+1}-\Theta_{-j,\, m+1} \right) (\tau,2u).
	$$
	
	\item(4) $G^{[m;\,s]}$ is determined uniquely by the above three properties.
\end{list}
\end{lemma}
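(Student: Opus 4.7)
The plan is to address properties (1)--(4) in order, relying on Lemmas~\ref{lem:1.2} and~\ref{lem:1.3} together with the classical $S$-transformation of the theta functions $\Theta_{j,m+1}$. Property (1) is immediate from Lemma~\ref{lem:1.2}(0), which asserts holomorphy of $G^{[m;s]}$ on all of $X$.

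The key structural observation for (2) and (3) is that
$$\varphi^{[m;s]}|_S(\tau,u,v,t) = \tau^{-1}\varphi^{[m;s]}\bigl(-1/\tau,\,u/\tau,\,v/\tau,\,t-(u^2-v^2)/\tau\bigr),$$
so outer shifts of $v$ translate into shifts of the inner $v$-argument $v/\tau$, combined with a $t$-shift that produces an explicit exponential factor via the $e^{2\pi i(m+1)t}$-dependence of $\varphi^{[m;s]}$. For (2), the outer shift $v\to v-\tau$ produces the integer shift $v/\tau\to v/\tau-1$ inside, which leaves $\varphi^{[m;s]}$ invariant by Lemma~\ref{lem:1.2}(2); the accompanying $t$-shift supplies exactly the factor $e^{-2\pi i(m+1)(2v-\tau)}$. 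Hence $\varphi^{[m;s]}|_S$ satisfies the homogeneous version of the identity in (2), so the inhomogeneity comes entirely from $\varphi^{[m;s]}$ via Lemma~\ref{lem:1.3}, after reindexing $j\to j+s$ so that the summation runs over $-s\le j\le -s+2m+1$.

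Property (3) is the main computation. By Lemma~\ref{lem:1.2}(2) with $(a,b)=(0,1)$, $\varphi^{[m;s]}$ is itself invariant under $v\to v+1$, so $G^{[m;s]}(\tau,u,v+1,t)-G^{[m;s]}(\tau,u,v,t)$ is accounted for entirely by the $\varphi^{[m;s]}|_S$ contribution. The outer shift $v\to v+1$ corresponds to the inner shift $v/\tau\to v/\tau+1/\tau = v/\tau-\tau'$ with $\tau'=-1/\tau$, which is precisely the shift to which Lemma~\ref{lem:1.3}, applied at the transformed arguments $(\tau',u',v',t')=(-1/\tau,\,u/\tau,\,v/\tau,\,t-(u^2-v^2)/\tau)$, applies. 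The resulting expression contains theta functions $\Theta_{k,m+1}(-1/\tau,2u/\tau)$; I would then rewrite them in terms of $\Theta_{\ell,m+1}(\tau,2u)$ via the standard $S$-transformation
$$\Theta_{k,m+1}\bigl(-1/\tau,\,2u/\tau\bigr) = \frac{(-i\tau)^{1/2}}{\sqrt{2(m+1)}}\,e^{4\pi i(m+1)u^2/\tau}\!\!\sum_{\ell\,\bmod\,2(m+1)}\!\! e^{-\pi i k\ell/(m+1)}\,\Theta_{\ell,m+1}(\tau,2u),$$
reindexing $\ell\to-\ell$ in the antisymmetric combination so that $\Theta_\ell-\Theta_{-\ell}$ differences re-emerge, using the identity $-\tau^{-1}(-i\tau)^{1/2}=i(-i\tau)^{-1/2}$, and completing the square
$$\frac{2\pi i(m+1)v^2}{\tau}+\frac{2\pi i k v}{\tau}+\frac{\pi i k^2}{2(m+1)\tau} = \frac{2\pi i(m+1)}{\tau}\Bigl(v+\frac{k}{2(m+1)}\Bigr)^2$$
to arrive at the stated formula. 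The main obstacle is careful bookkeeping of the many exponential factors (from the $t$-shift encoded in $|_S$, from the theta $S$-transformation, and from completing the square in $v$), so that they cleanly recombine into the stated double sum over $-s\le j,k\le -s+2m+1$.

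Finally, for (4), let $H$ denote the difference of two functions satisfying (1)--(3). Then $H$ is holomorphic in $v$ with $H(\tau,u,v+1,t)=H(\tau,u,v,t)$, and (inverting the relation in (2) and substituting $v\to v+\tau$) $H(\tau,u,v+\tau,t) = q^{m+1} e^{4\pi i(m+1)v}H(\tau,u,v,t)$. Regarded as a function of $v$ for each fixed $(\tau,u,t)$, $H$ is thus a holomorphic Jacobi form of weight $0$ and index $-(m+1)<0$, equivalently a holomorphic section of a line bundle of negative degree on the elliptic curve $\mathbb{C}/(\mathbb{Z}+\tau\mathbb{Z})$. Since there are no nonzero such sections, $H\equiv 0$, proving uniqueness.
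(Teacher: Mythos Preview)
Your proposal is correct and follows essentially the same approach as the paper: holomorphy from Lemma~\ref{lem:1.2}(0); for (2), the $\varphi^{[m;s]}|_S$ piece satisfies the homogeneous relation via Lemma~\ref{lem:1.2}(2) and the inhomogeneity comes from Lemma~\ref{lem:1.3}; for (3), $\varphi^{[m;s]}$ is invariant under $v\to v+1$ and the contribution of $\varphi^{[m;s]}|_S$ is computed by applying Lemma~\ref{lem:1.3} at $(-1/\tau,u/\tau,v/\tau)$ followed by the $S$-transformation of $\Theta_{j,m+1}$ and completing the square. For (4) the paper makes your line-bundle argument concrete by multiplying the difference $F$ by $\vartheta_{11}((m+1)\tau,(m+1)v)^2$ to obtain a holomorphic doubly periodic function of $v$ that vanishes at $v=0$; this is the same content as your negative-degree observation.
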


\begin{proof}
Without loss of generality we can let $t=0$. We have, using Lemma \ref{lem:1.2} (2) for $a=0,\,b =1$ :
$$
\begin{array}{ll}
G^{[m;\,s]} (\tau,u,v,0) - e^{2\pi i (m+1)(2v-\tau)}\; G^{[m;\,s]} (\tau,u,v-\tau,0)\\[1ex]
= \varphi^{[m;\,s]} (\tau,u,v,0) - \tau^{-1}\; e^{-\frac{2\pi i (m+1)}{\tau} (u^2-v^2)}\; \varphi^{[m;\,s]} \left(-\frac{1}{\tau}, \frac{u}{\tau}, \frac{v}{\tau}, 0\right) \\[1ex]
-e^{2\pi i (m+1) (2v-\tau)} \left( \varphi^{[m;\,s]} (\tau,u,v-\tau,0) -\tau^{-1}\; e^{-\frac{2\pi i (m+1)}{\tau} [u^2-(v-\tau)^2]}  \varphi^{[m;\,s]}\left(-\frac{1}{\tau}, \frac{u}{\tau}, \frac{v-\tau}{\tau}, 0 \right)\right) \\[1ex]
=\varphi^{[m;\,s]}(\tau,u,v,0) - e^{2\pi i (m+1) (2v-\tau)} \varphi^{[m;\,s]} (\tau,u,v-\tau,0) \\[1ex]
= -\sum\limits^{2m+1}_{j=0}{e^{2\pi i (j-s)v}\; q^{-\frac{(j-s)^2}{4(m+1)}}}  
\left(\Theta_{j-s,\, m+1}-\Theta_{-(j-s),\, m+1}\right) (\tau,2u).
\end{array} 
$$
The last equality holds by Lemma \ref{lem:1.3}. This proves (2).

In order to prove (3), note that after the substitution $(\tau,u,v)\rightarrow \left(-\frac{1}{\tau}, \frac{u}{\tau}, \frac{v}{\tau}\right)$ in Lemma \ref{lem:1.3} and using the modular transformation formula of $\Theta_{\pm(j-s),\,m+1}$
(see e.g. the Appendix in \cite{KW}), we obtain :
$$
\begin{array}{l}
\varphi^{[m;\,s]}\left(-\frac{1}{\tau}, \frac{u}{\tau}, \frac{v}{\tau}, 0 \right)-
e^{\frac{2\pi i (m+1)}{\tau} (2v+1)} \varphi^{[m;\,s]}\left(-\frac{1}{\tau}, \frac{u}{\tau}, \frac{v+1}{\tau}, 0 \right)\\[1ex]
=-\sum\limits^{2m+1}_{j=0}{e^{\frac{2\pi i (m+1)}{\tau}(j-s) v}}  
e^{\frac{2\pi i (j-s)^2}{4(m+1)\tau}}
\left(\Theta_{j-s,\,m+1}-\Theta_{-(j-s),\,m+1}\right) \left(-\frac{1}{\tau}, \frac{2u}{\tau}\right)\\[1ex]
=\frac{-(-i\tau)^{\frac{1}{2}}}{\sqrt{2(m+1)}} 
e^{\frac{2\pi i (m+1)}{\tau} (u^2-v^2)} 
\displaystyle\sum\limits_{0 \leq j \leq 2m+1 \atop k \in \ZZ/2(m+1)\ZZ} {e^{-\frac{\pi i (j-s)k}{m+1}} e^{\frac{2\pi i (m+1)}{\tau} \left(v+\frac{j-s}{2(m+1)}\right)^2}} \left(\Theta_{k,\,m+1}- \Theta_{-k,\,m+1}\right) (\tau,2u).
\end{array}
$$
%\noindent 
Using this and Lemma \ref{lem:1.2} (2), we obtain :
$$
\begin{array}{l}
G^{[m;\,s]} (\tau,u,v+1,0) - G^{[m;\,s]} (\tau,u,v,0) \\[1ex]
= \varphi^{[m;\,s]} (\tau,u,v+1,0) - \tau^{-1}\; e^{-\frac{2\pi i (m+1)}{\tau} (u^2-(v+1)^2)}\; \varphi^{[m;\,s]} \left(-\frac{1}{\tau}, \frac{u}{\tau}, \frac{v+1}{\tau}, 0\right)	\\[1ex]
-\left(\varphi^{[m;\,s]}(\tau,u,v,0)-\tau^{-1}\;
e^{-\frac{2\pi i(m+1)}{\tau}(u^2-v^2)}\;\varphi^{[m;\,s]}
\left(-\frac{1}{\tau}, \frac{u}{\tau}, \frac{v}{\tau}, 0 \right) \right) \\[1ex]
=\tau^{-1} e^{-\frac{2\pi i (m+1)}{\tau} (u^2-v^2)}  \left(\varphi^{[m;\,s]} \left(-\frac{1}{\tau}, \frac{u}{\tau}, \frac{v}{\tau}, 0 \right) - 
e^{\frac{2\pi i (m+1)}{\tau} (2v+1)} \varphi^{[m;\,s]}\left(-\frac{1}{\tau}, \frac{u}{\tau}, \frac{v+1}{\tau}, 0 \right) \right)\\[1ex]
=\frac{-i}{-i \tau}\cdot \frac{-(-i \tau)^{\frac{1}{2}}}{\sqrt{2(m+1)}}
\displaystyle\sum\limits_{0 \leq j \leq 2m+1 \atop k \in \ZZ/2(m+1)\ZZ} 
{e^{-\frac{\pi i (j-s)k}{m+1}} e^{\frac{2\pi i (m+1)}{\tau} \left(v+\frac{j-s}{2(m+1)}\right)^2}} \left(\Theta_{k,\,m+1}-\Theta_{-k,\,m+1}\right) (\tau,2u),
\end{array}
$$
which proves (3).

The proof of claim (4) is the same as that Proposition 5.4(c) in [KW]. 
Namely, if $F(\tau,u,v,t)$ is the difference of two functions, satisfying (1)-(3), then the function 
$$
P(\tau,u,v,t) = F(\tau,u,v,t)\vartheta_{11} ((m+1)\tau, (m+1)v)^2
$$ 
is holomorphic and doubly periodic in $v$, and vanishes at $v=0$, hence is zero.
\end{proof} 
\begin{lemma}
  \label{lem:1.5}

\quad Let $a_j(\tau,v)\,\,(j\in \ZZ,\,-s \leqq j \leq -s+2m+1)$ be functions 
satisfying the following conditions (i), (ii), (iii):
  \begin{list}{}{}
	\item(i) $a_j (\tau, v)$ is holomorphic with respect to $v$,
	
	\item(ii) $a_j (\tau, v)-e^{2\pi i(m+1)(2v-\tau)} a_j (\tau, v-\tau)= 
	-e^{2\pi i jv}\;\,  e^{-\frac{\pi i \tau}{2(m+1)}j^2}$,
	
	\item(iii) $a_j(\tau,v+1)-a_j(\tau,v)$
	
$=\dfrac{i}{\sqrt{2(m+1)}}(-i\tau)^{-\frac{1}{2}} 
\displaystyle\sum\limits^{-s+2m+1}_{k =-s} e^{-\frac{\pi ijk}{m+1}}\;\, 
e^{\frac{2\pi i(m+1)}{\tau} \left(v+\frac{k}{2(m+1)}\right)^2}$.
\end{list}
(By the argument in the proof of Lemma \ref{lem:1.4}(4), $a_j(\tau,v)$
is uniquely determined by the properties (i),(ii),(iii).)
\noindent Then the function
$$
G^{[m;\,s]} (\tau, u, v, t) := e^{2\pi i(m+1) t} 
\sum\limits^{-s+2m+1}_{j=-s}{\hspace{-2.5ex}a_j (\tau, v) \left(\Theta_{j,m+1} - \Theta_{-j,m+1}\right)} (\tau, 2u) 
$$
\noindent satisfies the properties (1), (2), (3) of Lemma \ref{lem:1.4}.
\end{lemma}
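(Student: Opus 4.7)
The plan is to verify each of properties (1), (2), (3) of Lemma \ref{lem:1.4} for $G^{[m;s]}$ by a direct term-by-term computation: pull the relevant $v$-transformation inside the finite sum defining $G^{[m;s]}$ and apply the corresponding hypothesis (i), (ii), (iii) on the coefficients $a_j(\tau,v)$. This works because the theta factors $\Theta_{\pm j,m+1}(\tau,2u)$ and the prefactor $e^{2\pi i(m+1)t}$ carry no $v$-dependence, so any operator acting in $v$ alone passes through them as a constant.

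Property (1) will be immediate: $G^{[m;s]}$ is a finite $\CC$-linear combination of the $a_j(\tau,v)$ with $v$-independent coefficients, and each $a_j(\tau,v)$ is holomorphic in $v$ by (i), so the sum is holomorphic in $v$ throughout the domain $X$.

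For property (2), I would compute $G^{[m;s]}(\tau,u,v,t) - e^{2\pi i(m+1)(2v-\tau)} G^{[m;s]}(\tau,u,v-\tau,t)$ by pulling the scalar factor inside; using that only $a_j$ depends on $v$, this equals
$$e^{2\pi i(m+1)t} \sum_{j=-s}^{-s+2m+1} \bigl[a_j(\tau,v) - e^{2\pi i(m+1)(2v-\tau)} a_j(\tau,v-\tau)\bigr] \bigl(\Theta_{j,m+1} - \Theta_{-j,m+1}\bigr)(\tau,2u).$$
Applying hypothesis (ii) to each bracket, and rewriting $e^{-\pi i\tau j^2/(2(m+1))}$ as $q^{-j^2/(4(m+1))}$, yields exactly the right-hand side of Lemma \ref{lem:1.4}(2).

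For property (3), the analogous computation gives $G^{[m;s]}(\tau,u,v+1,t) - G^{[m;s]}(\tau,u,v,t) = e^{2\pi i(m+1)t} \sum_{j} [a_j(\tau,v+1)-a_j(\tau,v)](\Theta_{j,m+1}-\Theta_{-j,m+1})(\tau,2u)$, and substituting hypothesis (iii) into each bracket produces precisely the double sum over $j,k$ in the stated form, matching Lemma \ref{lem:1.4}(3). I do not expect any genuine obstacle here, since the lemma is essentially a bookkeeping check that packages (i)--(iii) into (1)--(3) via a $v$-independent linear combination. The only small items requiring care are the identification $e^{-\pi i\tau j^2/(2(m+1))} = q^{-j^2/(4(m+1))}$ and the fact that the summation index range $-s \le j \le -s+2m+1$ in the definition of $G^{[m;s]}$ is the same range appearing in Lemma \ref{lem:1.4}(2)--(3), so no relabeling or periodicity argument is needed.
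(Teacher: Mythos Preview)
Your proposal is correct and is exactly the straightforward verification the paper has in mind (the paper's proof consists of the single sentence ``It is straightforward''). Your term-by-term check, together with the identification $e^{-\pi i\tau j^{2}/(2(m+1))}=q^{-j^{2}/(4(m+1))}$, is precisely what is required.
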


\begin{proof}
It is straightforward.
\end{proof}

In order to construct functions $a_j(\tau,v)$ satisfying the conditions (i), (ii), (iii) of Lemma \ref{lem:1.5}, we define the (modified) Zwegers functions 
$R_{j;\,m+1}(\tau,v)$ 
($j\; \in\; \ZZ$) as follows:
%\begin{eqnarray}
%	\label{eq:1.4}
%R_{j;\,m+1}(\tau,\, v) &:=\displaystyle\sum\limits_{n \in \ZZ \atop n \equiv j \;\text{mod}\; 2(m+1)}\left(\!{\rm sgn} (n+\frac{1}{2} + j - 2(m+1))\!\!\right. \nonumber \\[1ex]
%&\left.- E \left(\left(n+2(m+1) \frac{{\rm Im}\,v}{{\rm Im}\,\tau} \right) \sqrt{\frac{{\rm Im} \,\tau}{m+1}}\right) \right) 
%e^{-\frac{\pi i n^2}{2(m+1)}\tau - 2\pi i n v},
%\end{eqnarray}
\begin{equation}
	\label{eq:1.4}
R_{j;\,m+1}(\tau,\, v) :=
\end{equation}
\begin{equation}
\displaystyle\sum\limits_{n \in \ZZ \atop n \equiv
- j \;
\hbox{mod}\; 2(m+1)}\left(\!{\rm sgn} (n+\frac{1}{2} + j - 2(m+1))\!\!\right. \left.- E \left(\left(n+2(m+1) \frac{{\rm Im}\,v}{{\rm Im}\,\tau} \right) \sqrt{\frac{{\rm Im} \,\tau}{m+1}}\right) \right) 
e^{-\frac{\pi i n^2}{2(m+1)}\tau - 2\pi i n v}  \nonumber ,
\end{equation}
where $E(x)=2\int\limits_0^x e^{-\pi u^2} du$.  In the same way as in \cite{Z}
one shows that this series converges to a real analytic function for all
$v, \tau \in \CC,\, \Im \tau >0$. 
%By the same proof as in \cite{Z}, we see
%that the function $R_{j;\,m+1}(\tau,v)$ are real analytic for $v, \tau\in \CC$,$\Im \tau >0$.
\begin{lemma}
  \label{lem:1.6} The functions $R_{j;m+1}$ have the following properties: 
%$$$$
%\vspace{-7ex}
  \begin{list}{}{}
  \item (1) $R_{j;\,m+1} (\tau, v+\frac{1}{2}) = (-1)^j R_{j;\,m+1} (\tau,v)$,
hence $R_{j;\,m+1} (\tau,v+1) = R_{j;\,m+1} (\tau,v)$.
 
\item (2) $R_{j;\,m+1} (\tau, v) = - R_{2m+2-j;\,m+1} (\tau,-v)$.

 \item (3) $R_{j;\,m+1} (\tau,v) - e^{2\pi i (m+1) (2v-\tau)} R_{j;\,m+1} (\tau,v-\tau) = -2\; e^{\,\frac{\pi i \tau}{2(m+1)} j^2} e^{2\pi i j v}$.
  \end{list}
\end{lemma}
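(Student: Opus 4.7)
The plan is to establish each of the three properties by direct manipulation of the defining series for $R_{j;m+1}(\tau,v)$, tracking three ingredients separately: the $\mathrm{sgn}$ term, the Mordell-type correction $E(\cdot)$, and the exponential $e^{-\pi i n^2\tau/(2(m+1))-2\pi i n v}$. Throughout, the summation index runs over $n = -j + 2(m+1)k$, $k\in\ZZ$, and this arithmetic form will be used repeatedly in the sign/residue bookkeeping.

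For (1), I would substitute $v\mapsto v+\tfrac12$. Since $\Im v$ is unchanged, both the $\mathrm{sgn}$ term and the $E$ term are unaffected; only the exponential picks up a factor $e^{-\pi i n}=(-1)^n$. The congruence $n\equiv -j \bmod 2(m+1)$ forces $n$ and $j$ to have the same parity, so $(-1)^n=(-1)^j$, giving the stated identity; iterating yields $R_{j;m+1}(\tau,v+1)=R_{j;m+1}(\tau,v)$.

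For (2), I would reindex the series for $R_{2m+2-j;m+1}(\tau,-v)$ by $n'=-n$. The new index range is $n'\equiv -j\bmod 2(m+1)$, and the exponential takes the form $e^{-\pi i n'^2\tau/(2(m+1))-2\pi i n' v}$ used in $R_{j;m+1}(\tau,v)$. By oddness of $E$, the correction becomes $+E\!\left((n'+2(m+1)\Im v/\Im\tau)\sqrt{\Im\tau/(m+1)}\right)$, the opposite sign of what appears in $R_{j;m+1}(\tau,v)$. The remaining check is the elementary identity $\mathrm{sgn}(-n'+\tfrac12-j)=-\mathrm{sgn}(n'+\tfrac12+j-2(m+1))$ on the progression: writing $n'=-j+2(m+1)k$, both sides equal $+1$ for $k\le 0$ and $-1$ for $k\ge 1$. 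Thus each ingredient is $-1$ times the corresponding ingredient of $R_{j;m+1}(\tau,v)$, proving (2).

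For (3), the key move is to perform, inside the series for $e^{2\pi i(m+1)(2v-\tau)}R_{j;m+1}(\tau,v-\tau)$, the reindexing $n\mapsto n+2(m+1)$. The prefactor $e^{2\pi i(m+1)(2v-\tau)}$ is exactly what is needed to complete the square in $n$ and restore the exponential to $e^{-\pi i n^2\tau/(2(m+1))-2\pi i n v}$; simultaneously, because $\Im(v-\tau)/\Im\tau=\Im v/\Im\tau-1$, the $E$-argument is also realigned to the form $(n+2(m+1)\Im v/\Im\tau)\sqrt{\Im\tau/(m+1)}$ appearing in $R_{j;m+1}(\tau,v)$. What changes is only the $\mathrm{sgn}$ argument: it becomes $\mathrm{sgn}(n+\tfrac12+j)$ rather than $\mathrm{sgn}(n+\tfrac12+j-2(m+1))$. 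Hence in the difference, the $E$ terms cancel entirely, and the $\mathrm{sgn}$ difference vanishes except at $k=0$ (i.e.\ $n=-j$), where it equals $-2$; the single surviving term reproduces the right-hand side. The main obstacle is precisely this bookkeeping in (3)---one must check that the single shift $n\mapsto n+2(m+1)$ simultaneously restores the exponential, the $E$-argument, and the index set $n\equiv -j\bmod 2(m+1)$, isolating the $\mathrm{sgn}$ term as the lone non-trivial residue. Convergence justifying the rearrangements is routine, by the rapid decay of the combination $\mathrm{sgn}-E$, as in~\cite{Z}.
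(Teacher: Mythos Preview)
Your proposal is correct and follows essentially the same route as the paper. The paper dismisses (1) and (2) as immediate from the definition and, for (3), performs the same shift $n\mapsto n'+2(m+1)$ in the series for $R_{j;m+1}(\tau,v-\tau)$, using the identity $\mathrm{sgn}(n+\tfrac12+j)=\mathrm{sgn}(n+\tfrac12+j-2(m+1))+2\delta_{n,-j}$ to isolate the single surviving term---exactly the mechanism you describe; the only cosmetic difference is that the paper first records the intermediate identity (1.5) and then multiplies through by $-e^{2\pi i(m+1)(2v-\tau)}$, whereas you go straight to the difference.
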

%%%%%%Insert 3a,b%%%%%%%%%%%
\begin{proof}
Claims (1) and (2) are
immediate from the definition of the function $R_{j;\, m+1}$. In order to prove (3), we first prove

\begin{equation}
	\label{eq:1.5}
R_{j;\,m+1}	(\tau, v-\tau) - e^{2\pi i (m+1) (\tau-2v)} R_{j;\, m+1} (\tau, v)\\[1ex] 
= 2 e^{-\frac{\pi i \tau}{2(m+1)} (2m+2-j)^2 + 2\pi i (2m+2-j)\tau} 
e^{-2\pi i (2m+2-j)v}.
\end{equation}
\noindent We have :
$$
\begin{array}{ll}
R_{j;\, m+1} (\tau, v-\tau) &=\displaystyle\sum\limits_
{n \equiv -j \;\hbox{mod}\; 2(m\!+\!1)} \!\!\!\left(\!\hbox{sign}\;(n\!+\!\frac{1}{2}\! +\! j-2(m\!+\!1))\right.\!\\[1ex]
&- E \!\left.\left(\left(\!n-2 (m\!+\!1) \!+\! 2(m\!+\!1) \frac{\hbox{Im}\; v}{\hbox{Im}\; \tau}\right) \!\!\sqrt{\frac{\hbox{Im} \;\tau}{m+1}}\right)\right)
e^{-\frac{\pi i n^2\tau}{2(m+1)} - 2\pi i n (v - \tau)}. 
\end{array}
$$

\noindent Letting $n=n^\prime + 2(m+1)$, the RHS can be rewritten as follows :
$$
\begin{array}{ll}
\displaystyle\sum\limits_{n^\prime \equiv -j\;\hbox{mod}\;2(m+1)} &\displaystyle \left(\hbox{sign} (n^\prime + \frac{1}{2} + j) - 
E \left(\left(n^\prime +2(m+1) \frac{\hbox{Im}\;v}{\hbox{Im}\,\tau}\right)
\sqrt{\frac{\hbox{Im}\;\tau}{m+1}}\right)\right)\\
%[1ex][2ex]
&\times e^{-\frac{\pi i \tau}{2(m+1)} (n^\prime + 2(m+1))^2 -2\pi i (n^\prime + 2 (m+1)) (v - \tau)}.
\end{array}
$$
This can be rewritten, using that
$$
\hbox{sign}\; \left(n + \frac{1}{2} + j \right) = \hbox{sign}\; \left(n + \frac{1}{2} + j -2(m+1) \right) + 2 \delta_{n,\, -j}, 
$$
as follows :
$$
2 e^{-\frac{\pi i \tau}{2(m+1)} (2m+2-j)^2 + 2\pi i (2m+2-j)\tau}\;\; e^{-2\pi i (2m + 2 -j) v} + (I),
$$
where
$$
\begin{array}{ll}
(I)&= \sum\limits_{n \equiv -j \;\hbox{mod}\; 2(m+1)} 
\displaystyle\left(\hbox{sign}\;(n + \frac{1}{2} + j -2(m+1))\right.\\[2ex]  
& \left.
- E \left(\left(n+2(m+1) \frac{\hbox{Im}\; v}{\hbox{Im}\; \tau}\right) \sqrt{\frac{\hbox{Im}\, \tau}{m+1}}\right)\right) 
%&\hspace{5ex} \times
e^{-\frac{\pi i \tau n^2}{2(m+1)} -2\pi i n v} e^{2\pi i (m+1)(\tau - 2v)}   \\[1ex]
&= e^{2\pi i (m+1)(\tau - 2v)} R_{j;\,m+1} (\tau, v).
\end{array}
$$
This completes the proof of formula (\ref{eq:1.5}). Multiplying both sides of 
(\ref{eq:1.5}) by 
$-e^{2\pi i (m+1)( 2v - \tau)}$, we obtain claim (3).
\end{proof}

\vspace{1ex}
We put 
\begin{equation}
\label{eq:1.8a}
y = {\rm Im} \,\tau\, (>0),\,\,\, v = a\tau - b \,\,\hbox{(so that}\,\, 
\overline{v} = a \overline{\tau}-b), 
\,\,\hbox{where}\,\, a,\,b\; \in\; \RR.
\end{equation}
Then in the real coordinates $(a,b,y,\rm Re\,\tau)$ we have :
\begin{equation}
\label{eq:1.6}
\frac{\partial}{\partial a} + \overline{\tau} \frac{\partial}{\partial b} = 2iy \frac{\partial}{\partial v} ,\;\; \frac{\partial}{\partial a} + \tau \frac{\partial}{\partial b} = -2iy \frac{\partial}{\partial \overline{v}}
\end{equation}
and 
\begin{equation}
	\label{eq:1.7}
	2iy = \tau -\overline{\tau}, \quad 2iy a = v - \overline{v}, \quad 2iy b = \overline{\tau} v - \tau \overline{v}.
\end{equation}

\begin{lemma}
  \label{lem:1.7} We have:
  $$$$
  \vspace{-7ex}
    \begin{list}{}{}
\item (1) $\left(\frac{\partial}{\partial a} + \tau \frac{\partial}{\partial b}\right) R_{j;\,m+1} (\tau, v) = -4 \sqrt{(m+1)y} \;\, e^{-4\pi (m+1) a^2 y}\;\, \Theta_{j,\, m+1} (-\overline{\tau}, 2\overline{v})$.

\item (2) $\left(\frac{\partial}{\partial a} + \tau \frac{\partial}{\partial b} \right) R_{j;\, m+1} \left(-\frac{1}{\tau}, \frac{v}{\tau}\right) = -4 \frac{\tau}{|\tau|} \sqrt{(m+1)y}\,\; e^{-4\pi (m+1) \frac{b^2}{|\tau|^2}y} \Theta_{j,\, m+1} \left(\frac{1}{\overline{\tau}}, \frac{2\overline{v}}{\overline{\tau}}\right)$.
 \end{list}
\end{lemma}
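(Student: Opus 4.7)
The plan is to reduce everything to a single direct computation of the $\bar{v}$--derivative of $R_{j;\,m+1}$, and then obtain (2) from (1) by the elliptic/modular change of variables $\tau \mapsto -1/\tau$, $v \mapsto v/\tau$.

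For (1), the main observation is that by the second identity of \eqref{eq:1.6},
\[
\frac{\partial}{\partial a} + \tau \frac{\partial}{\partial b} = -2iy\, \frac{\partial}{\partial \bar{v}},
\]
so we only have to differentiate with respect to $\bar{v}$. Inspecting the definition \eqref{eq:1.4} of $R_{j;\,m+1}(\tau,v)$, the factor $e^{-\pi i n^{2}\tau/(2(m+1))-2\pi i n v}$ and the sign term are holomorphic in $v$, so only the argument of $E$ contributes. Using \eqref{eq:1.8a} one has $a=(v-\bar{v})/(2iy)$, hence
\[
\frac{\partial}{\partial \bar{v}}\left( \bigl(n+2(m+1)a\bigr)\sqrt{y/(m+1)} \right) = i\sqrt{(m+1)/y},
\]
and $E'(x)=2e^{-\pi x^{2}}$ gives
\[
\frac{\partial}{\partial \bar{v}} R_{j;\,m+1}(\tau,v) = -2i\sqrt{(m+1)/y} \sum_{n\equiv -j \,(2(m+1))} e^{-\pi(n+2(m+1)a)^{2}y/(m+1)}\, e^{-\pi i n^{2}\tau/(2(m+1))-2\pi i n v}.
\]
Multiplying by $-2iy$ yields the factor $-4\sqrt{(m+1)y}$. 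To recognize the resulting sum as the claimed theta, expand the square $(n+2(m+1)a)^{2}$ and pull out $e^{-4\pi(m+1)a^{2}y}$. The remaining $n$--dependent exponent becomes
\[
-\tfrac{\pi n^{2}}{2(m+1)}(2y+i\tau) -4\pi n a y - 2\pi i n v,
\]
and the key algebraic identities $2y+i\tau = i\bar{\tau}$ and $-4\pi n a y -2\pi i n v = -2\pi i n \bar{v}$ (both immediate from $\tau=\bar{\tau}+2iy$ and $v=a\tau-b$, $\bar{v}=a\bar{\tau}-b$) collapse it to $-\pi i n^{2}\bar{\tau}/(2(m+1)) - 2\pi i n \bar{v}$. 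Reindexing the sum over $n\equiv -j\pmod{2(m+1)}$ by $n = -2(m+1)(N+j/(2(m+1)))$ matches \eqref{eq:1.2} applied to $\Theta_{j,m+1}(-\bar{\tau},2\bar{v})$, proving (1).

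For (2), apply (1) with the new variables $\tau'=-1/\tau$, $v'=v/\tau$. Writing $v' = a'\tau'-b'$ forces $(a',b')=(b,-a)$ and $y'=y/|\tau|^{2}$; by the chain rule,
\[
\frac{\partial}{\partial a} + \tau \frac{\partial}{\partial b}
= -\frac{\partial}{\partial b'} + \tau \frac{\partial}{\partial a'}
= \tau\left(\frac{\partial}{\partial a'} - \tfrac{1}{\tau}\frac{\partial}{\partial b'}\right)
= \tau\left(\frac{\partial}{\partial a'} + \tau'\frac{\partial}{\partial b'}\right).
\]
Applying (1) to $R_{j;\,m+1}(\tau',v')$ and using $\bar{\tau'}=-1/\bar{\tau}$, $\bar{v'}=\bar{v}/\bar{\tau}$, $a'^{2}y' = b^{2}y/|\tau|^{2}$, $\sqrt{(m+1)y'}=\sqrt{(m+1)y}/|\tau|$ gives (2) after multiplying by the prefactor $\tau$.

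The only real obstacle is the bookkeeping in the first part, in particular verifying the collapse $2y+i\tau=i\bar{\tau}$, $-4\pi nay-2\pi inv=-2\pi in\bar{v}$, and matching the reindexing with \eqref{eq:1.2}; everything else is routine once the operator is rewritten as $-2iy\,\partial/\partial\bar{v}$ and part (2) is recognized as a formal consequence of part (1) plus the $SL_{2}$ change of variables.
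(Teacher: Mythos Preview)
Your proof is correct and follows essentially the same route as the paper's. For (1), both you and the paper use \eqref{eq:1.6} to kill the holomorphic exponential and reduce to differentiating the $E$--term, then simplify the resulting exponent via the identities in \eqref{eq:1.7} (your $2y+i\tau=i\bar\tau$ and $2ay+iv=i\bar v$ are exactly what the paper uses implicitly) and reindex to recognize $\Theta_{j,m+1}(-\bar\tau,2\bar v)$. For (2), your change of variables $(\tau',v')=(-1/\tau,v/\tau)$ with $(a',b')=(b,-a)$ and the chain rule identity $\partial_a+\tau\partial_b=\tau(\partial_{a'}+\tau'\partial_{b'})$ is precisely the argument in the paper.
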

\begin{proof}
Due to the second formula in (\ref{eq:1.6}), 
we have: 
$$
\left(\frac{\partial}{\partial a} + \tau \dfrac{\partial}{\partial b} \right) e^{\frac{-\pi i n^2}{2(m+1)}\tau - 2\pi i n v} = 0,
$$
and since 
$$
\left(\frac{\partial}{\partial a} + \tau \frac{\partial}{\partial b}\right) 
E \left(\left(n+2  (m+1) a\right) \sqrt{\frac{y}{m+1}}\right) = 4 e^{-\pi (n+2 (m+1)a)^2 \frac{y}{m+1}}\;\, (m+1) \sqrt{\frac{y}{m+1}}\;\;\;, 
$$
we obtain, using formula (\ref{eq:1.7}) : 
$$
\begin{array}{ll}
\displaystyle\left(\frac{\partial}{\partial a} +\tau \frac{\partial}{\partial b}\right) R_{j;\, m+1} (\tau, v) = -4 \sqrt{(m+1) y} \sum\limits_{n \equiv -j \;\hbox{mod}\; 2(m+1)}{\hspace{-6ex} e^{-\frac{\pi y}{m+1} (n+2(m+1)a)^2 -\frac{\pi i n^2}{2(m+1)} \tau - 2\pi i n(a\tau-b)}} \\
%[1ex][6ex]
= -4 \sqrt{(m+1)y} \sum\limits_{n \equiv -j \;\hbox{mod}\; 2(m+1)}{e^{-\pi i n^2 \frac{\overline{\tau}}{4(m+1)} - 2\pi i n \overline{v}}}\;\;\;.
\end{array}
$$
Replacing $n$ by $-n$ in the last sum, we obtain claim (1) (cf (\ref{eq:1.2})).
%%%%%%%%%%%%%A%%%%%%%%%%%%%%%%%%%%%%%%%%%%%5

In order to prove claim (2), let
$$
\tau^\prime=-\dfrac{1}{\tau},\;\;v^\prime=\dfrac{v}{\tau},\;\; y^\prime=\Im \tau^\prime=\dfrac{y}{|\tau|^2}\;\;,
$$
and introduce the coordinates $a^\prime,b^\prime \in \mathbb{R}$ by
$$
v^\prime =a^\prime \tau^\prime-b^\prime,\;\;\overline{v}^\prime= a^\prime \overline{\tau}^\prime-b^\prime.
$$
Then we have :
%\begin{equation}
%	\label{eq:1.8}
$$	a=-b^\prime,\;\;b=a^\prime ;\,\, \dfrac{\partial}{\partial a^\prime}+\tau^\prime \dfrac{\partial}{\partial b^\prime}=\dfrac{1}{\tau} \left(\dfrac{\partial}{\partial a} + \tau \dfrac{\partial}{\partial b} \right).
%\end{equation}
$$
Using this and claim (1), we obtain :
$$
\begin{array}{l}
\dfrac{1}{\tau} \left(\dfrac{\partial}{\partial a} + \tau \dfrac{\partial}{\partial b} \right) R_{j;\,m+1}\left( -\dfrac{1}{\tau},\,\dfrac{v}{\tau} \right) = \left(\dfrac{\partial}{\partial a^\prime} + \tau^\prime \dfrac{\partial}{\partial b^\prime} \right) R_{j;\,m+1} \left(\tau^\prime,\, v^\prime \right)\\
%\\[1ex][4ex]
=-4\sqrt{(m+1)y^\prime} \,e^{-4\pi(m+1)a^{\prime^ 2} y^\prime} \Theta_{j,m+1}
\left( -\overline{\tau^\prime},\, 2\overline{v^\prime} \right)\\
%\\[1ex][2ex]
=-4\sqrt{ \dfrac{(m+1)y}{|\tau|^2}}\, e^{-4\pi(m+1)b^{2} \frac{y}{|\tau|^2}} \Theta_{j,\,m+1}
\left(\dfrac{1}{\overline{\tau}} ,\, \dfrac{2\overline{v}}{\overline{\tau}} \right),
\end{array}
$$
proving (2)
\end{proof}

The formula in Lemma \ref{lem:1.7} (2) together with the modular 
transformation formula for $\Theta_{j,\,m+1}$ gives the following :

\begin{lemma}
  \label{lem:1.8}
$$
\begin{array}{l}
\left(\frac{\partial}{\partial a} + \tau \frac{\partial}{\partial b}\right) 
R_{j;\, m+1} \left(-\frac{1}{\tau}, \frac{v}{\tau}\right) \\[1ex] 
= -4i (-i\tau)^{\frac{1}{2}} \sqrt{\frac{y}{2}} \;e^{\frac{\pi  (m+1)}{y \tau} 
(\overline{\tau} v^2 -2 \tau v \overline{v} + \tau \overline{v}^2)}\!\!\!
\sum\limits_{k \in \ZZ/2(m+1)\ZZ}
{\hspace{-4.5ex} e^{\,-\frac{\pi i j k}{m+1}} 
\Theta_{k,\,m+1}(-\overline{\tau}, -2\overline{v})}. 
\end{array}
$$
\hfill $\Box$
\end{lemma}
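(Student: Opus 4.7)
The plan is to derive Lemma~\ref{lem:1.8} directly by combining Lemma~\ref{lem:1.7}(2) with the standard modular $S$-transformation of the classical theta functions $\Theta_{j,m+1}$. First I would start from
$$\left(\frac{\partial}{\partial a} + \tau\frac{\partial}{\partial b}\right) R_{j;m+1}\!\left(-\frac{1}{\tau},\frac{v}{\tau}\right) = -4\frac{\tau}{|\tau|}\sqrt{(m+1)y}\,e^{-4\pi(m+1)b^2 y/|\tau|^2}\,\Theta_{j,m+1}\!\left(\frac{1}{\overline{\tau}},\frac{2\overline{v}}{\overline{\tau}}\right)$$
provided by Lemma~\ref{lem:1.7}(2), and rewrite $\Theta_{j,m+1}(1/\overline{\tau},2\overline{v}/\overline{\tau})$ as a sum over $\Theta_{k,m+1}(-\overline{\tau},-2\overline{v})$. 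For this I would apply the $S$-transformation already used in the proof of Lemma~\ref{lem:1.4}(3),
$$\Theta_{j,m+1}\!\left(-\frac{1}{\tau'},\frac{z'}{\tau'}\right) = \frac{(-i\tau')^{1/2}}{\sqrt{2(m+1)}}\,e^{\pi i(m+1)(z')^2/(2\tau')}\!\!\!\sum_{k\in\ZZ/2(m+1)\ZZ}\!\! e^{-\pi ijk/(m+1)}\,\Theta_{k,m+1}(\tau',z'),$$
with $\tau' = -\overline{\tau}$ and $z' = -2\overline{v}$, so that $-1/\tau' = 1/\overline{\tau}$ and $z'/\tau' = 2\overline{v}/\overline{\tau}$. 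This gives
$$\Theta_{j,m+1}\!\left(\frac{1}{\overline{\tau}},\frac{2\overline{v}}{\overline{\tau}}\right) = \frac{(i\overline{\tau})^{1/2}}{\sqrt{2(m+1)}}\,e^{-2\pi i(m+1)\overline{v}^2/\overline{\tau}}\!\!\!\sum_{k\in\ZZ/2(m+1)\ZZ}\!\! e^{-\pi ijk/(m+1)}\,\Theta_{k,m+1}(-\overline{\tau},-2\overline{v}),$$
which already exhibits the desired sum structure.

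Next I would collect the scalar prefactors. Using $|\tau|^2 = \tau\overline{\tau}$ together with the principal-branch identity $(i\overline{\tau})^{1/2} = \overline{(-i\tau)^{1/2}}$, one checks that $\frac{\tau}{|\tau|}(i\overline{\tau})^{1/2} = i(-i\tau)^{1/2}$; combined with $\sqrt{(m+1)y}/\sqrt{2(m+1)} = \sqrt{y/2}$, the numerical factor collapses to $-4i(-i\tau)^{1/2}\sqrt{y/2}$, matching the statement of Lemma~\ref{lem:1.8}.

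Finally I would verify that the product of exponential factors equals $e^{\frac{\pi(m+1)}{y\tau}(\overline{\tau}v^2 - 2\tau v\overline{v} + \tau\overline{v}^2)}$. Using \eqref{eq:1.7}, $2iyb = \overline{\tau}v-\tau\overline{v}$, so $b^2 = -(\overline{\tau}v-\tau\overline{v})^2/(4y^2)$ and hence
$$-\frac{4\pi(m+1)b^2 y}{|\tau|^2} = \frac{\pi(m+1)(\overline{\tau}v-\tau\overline{v})^2}{y\tau\overline{\tau}} = \frac{\pi(m+1)}{y}\!\left(\frac{\overline{\tau}v^2}{\tau}-2v\overline{v}+\frac{\tau\overline{v}^2}{\overline{\tau}}\right).$$
Adding the contribution $-\frac{2\pi i(m+1)\overline{v}^2}{\overline{\tau}}$ coming from the $S$-transformation and using the identity $\tau-2iy = \overline{\tau}$ to combine the two $\overline{v}^2$-terms via $\frac{\tau}{y\overline{\tau}} - \frac{2i}{\overline{\tau}} = \frac{\tau-2iy}{y\overline{\tau}} = \frac{1}{y}$, one obtains the claimed exponent. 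The only real obstacle is keeping the branches of the square roots consistent — most crucially establishing $\frac{\tau}{|\tau|}(i\overline{\tau})^{1/2} = i(-i\tau)^{1/2}$; once that is settled, the rest is a direct algebraic verification.
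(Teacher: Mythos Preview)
Your proposal is correct and follows exactly the approach indicated in the paper: the lemma is stated as an immediate consequence of Lemma~\ref{lem:1.7}(2) together with the modular $S$-transformation formula for $\Theta_{j,m+1}$, and you have carried out precisely that computation with all the algebraic details (the branch check for $\frac{\tau}{|\tau|}(i\overline{\tau})^{1/2}=i(-i\tau)^{1/2}$ and the simplification of the exponent via $\tau-2iy=\overline{\tau}$) filled in.
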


For $j\,\in\, \ZZ$ such that $-s \leqq j \leq -s+ 2m+1$, we define the following functions :

$$
\overset{\sim}{a_j} (\tau, v) := R_{j;\, m+1} (\tau, v) + \frac{i}{\sqrt{2(m+1)}} 
{(-i \tau)^{-\frac{1}{2}}}\;\; e^{\frac{2\pi i (m+1) v^2}{\tau}} 
\sum\limits^{-s + 2m+1}_{k=-s} 
{\hspace{-2.5ex} e^{\,-\frac{\pi i j\,k}{m+1}} R_{k;\,m+1} \left(-\frac{1}{\tau}, \frac{v}{\tau}\right)}.
$$
 
\begin{lemma} 
  \label{lem:1.9}
The functions $\overset{\sim}{a_j}(\tau,v),\, -s \leqq j \leq -s+2m+1$, satisfy the following properties\,:

  \begin{list}{}{}
\item (1) $\overset{\sim}{a_j}(\tau,v)$ are holomorphic in $v$,

\item (2) $\overset{\sim}{a_j}(\tau,v) -e^{2\pi i (m+1)(2v -\tau)} \overset{\sim}{a_j}(\tau,v-\tau)=-2e^{-\frac{\pi i\tau}{2(m+1)}j^2} e^{2\pi i jv}$,

\item (3) $\overset{\sim}{a_j}(\tau,v+1) - \overset{\sim}{a_j}(\tau,v) = \frac{2i}{\sqrt{2(m+1)}} (-i\tau)^{-\frac{1}{2}} 
\displaystyle\sum\limits^{-s+2m+1}_{k=-s} {\hspace{-2.5ex}e^{-\frac{\pi i j k}{m+1}} e^{\frac{2\pi i (m+1)}{\tau}\left(v + \frac{k}{2(m+1)}\right)^2}}.$
  \end{list}
\end{lemma}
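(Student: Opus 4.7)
The plan is to verify each of the three properties in turn, with the main work going into (1), which is the statement that the non-holomorphic Zwegers pieces cancel.

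For part (1), I would compute $\partial/\partial \bar v$ of each summand in the definition of $\tilde{a}_j(\tau,v)$. The prefactor $(-i\tau)^{-1/2} e^{2\pi i(m+1) v^2/\tau}$ is holomorphic in $v$, so only the two $R$'s contribute. Using (\ref{eq:1.6}) to convert $\partial/\partial a+\tau\partial/\partial b$ into $-2iy\,\partial/\partial\bar v$, Lemma \ref{lem:1.7}(1) gives $\partial_{\bar v}R_{j;m+1}(\tau,v)$ as a multiple of $e^{-4\pi(m+1)a^2 y}\Theta_{j,m+1}(-\bar\tau,2\bar v)$, while Lemma \ref{lem:1.8} gives $\partial_{\bar v}R_{k;m+1}(-1/\tau,v/\tau)$ as a multiple of $\sum_\ell e^{-\pi ik\ell/(m+1)}\Theta_{\ell,m+1}(-\bar\tau,-2\bar v)$. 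Substituting into the $\partial_{\bar v}$ of the second summand of $\tilde{a}_j$ produces a double sum in $k,\ell$; since $k$ runs over a complete set of residues $\pmod{2m+2}$, the orthogonality $\sum_k e^{-\pi i k(j+\ell)/(m+1)}=2(m+1)\,\delta_{j+\ell\equiv 0\,(2m+2)}$ collapses it to $\ell\equiv -j$, and (\ref{eq:1.a}) turns $\Theta_{-j,m+1}(-\bar\tau,-2\bar v)$ into $\Theta_{j,m+1}(-\bar\tau,2\bar v)$. The combined exponential prefactor is $(2\pi i(m+1)v^2/\tau)+(\pi(m+1)/(y\tau))(\bar\tau v^2-2\tau v\bar v+\tau\bar v^2)$, which by $2iy=\tau-\bar\tau$ collapses to $\pi(m+1)(v-\bar v)^2/y$, and then $v-\bar v=2iya$ (from (\ref{eq:1.7})) converts it to $-4\pi(m+1)a^2 y$, matching exactly the prefactor of the first summand. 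The numerical constants also match (up to sign), and the two contributions cancel.

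For part (2), I would apply Lemma \ref{lem:1.6}(3) directly to the first summand $R_{j;m+1}(\tau,v)$, which immediately gives the claimed right-hand side $-2e^{-\pi i\tau j^2/(2(m+1))}e^{2\pi i jv}$. For the second summand, Lemma \ref{lem:1.6}(1) shows $R_{k;m+1}(-1/\tau,v/\tau-1)=R_{k;m+1}(-1/\tau,v/\tau)$, while the prefactor transforms as $e^{2\pi i(m+1)(v-\tau)^2/\tau}=e^{2\pi i(m+1)v^2/\tau}e^{-4\pi i(m+1)v}e^{2\pi i(m+1)\tau}$, which is precisely $e^{-2\pi i(m+1)(2v-\tau)}$ times the original. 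Thus the second summand satisfies the invariance relation with the same $e^{2\pi i(m+1)(2v-\tau)}$ factor, contributing nothing to (2).

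For part (3), the first summand is invariant under $v\mapsto v+1$ by Lemma \ref{lem:1.6}(1). For the second summand, the shift $v\mapsto v+1$ amounts to a $v'\mapsto v'+1/\tau = v'-(-1/\tau)$ shift inside $R_{k;m+1}(-1/\tau,v')$, which is exactly the $\tau'$-shift appearing in Lemma \ref{lem:1.6}(3) with $\tau'=-1/\tau$. Solving for $R_{k;m+1}(-1/\tau,(v+1)/\tau)$ in terms of $R_{k;m+1}(-1/\tau,v/\tau)$ and combining with the change of the holomorphic prefactor $e^{2\pi i(m+1)(v+1)^2/\tau}$ yields two contributions: the $R_{k;m+1}(-1/\tau,v/\tau)$ piece reconstructs $\text{term}_2(\tau,v)$ exactly and cancels, while the extra $e^{\pi ik^2/(2(m+1)\tau)}e^{2\pi ikv/\tau}$ piece combines, via completing the square, into $e^{(2\pi i(m+1)/\tau)(v+k/(2(m+1)))^2}$, producing the claimed right-hand side.

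The main obstacle is the exponent identity in part (1), i.e.\ recognizing that the apparently different Gaussian factors arising from Lemmas \ref{lem:1.7} and \ref{lem:1.8} agree after applying (\ref{eq:1.7}); the remaining work in (2) and (3) is bookkeeping with Lemma \ref{lem:1.6} and tracking the holomorphic prefactor.
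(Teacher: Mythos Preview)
Your proposal is correct and follows essentially the same route as the paper: for (1) you compare the $\partial_{\bar v}$-contributions of the two summands via Lemmas \ref{lem:1.7}(1) and \ref{lem:1.8}, collapse the double sum by orthogonality of the characters mod $2(m+1)$, and match the Gaussian prefactors using (\ref{eq:1.7}) exactly as the paper does (the paper packages this step as equations (\ref{eq:1.9})--(\ref{eq:1.10})); for (2) and (3) you reduce to Lemma \ref{lem:1.6}(1),(3) applied at $(\tau,v)$ and at $(-1/\tau,v/\tau)$ respectively, which is precisely the paper's argument.
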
 
%%%%%%%%%%%%%%%%%%%%B-1 to \B-5%%%%%%%%%%%%%%%%%%%%%%%
%\vspace{1ex}
%\noindent
\begin{proof} 
In view of the second formula in (\ref{eq:1.6}), we can rewrite Lemma \ref{lem:1.8} as follows :
\begin{equation}
	\label{eq:1.9}
\begin{array}{ll}
	\left(\dfrac{\partial}{\partial a} + \tau \dfrac{\partial}{\partial b} \right) \left((-i\tau)^{-\frac{1}{2}} e^{\frac{2\pi i(m+1)}{\tau}v^2} R_{j;\,m+1} \left(-\dfrac{1}{\tau}, \dfrac{v}{\tau} \right) \right) \\
%[1ex][3ex]
= -4i \sqrt{\dfrac{y}{2}} e^{-4 \pi(m+1)a^2y} \displaystyle\sum\limits_{k\in \mathbb{Z}/2(m+1)\mathbb{Z}} e^{-\frac{\pi i jk}{m+1}} \Theta_{k,\,m+1} (-\overline{\tau},-2\overline{v}).
\end{array}\,.
\end{equation}

In order to prove (1), it suffices to show the following :
\begin{equation}
	\label{eq:1.10}
\begin{array}{ll}
	\left(\dfrac{\partial}{\partial a} + \tau \dfrac{\partial}{\partial b} \right) \left(\dfrac{-i}{\sqrt{2(m+1)}}(-i\tau)^{-\frac{1}{2}} e^{\frac{2\pi i(m+1)}{\tau}v^2} \sum\limits^{-s+2m-1}_{k=-s} e^{-\frac{\pi ijk}{m+1}} R_{k;\,m+1} \left(-\dfrac{1}{\tau}, \dfrac{v}{\tau} \right) \right)  \\
%[1ex][3ex]
	= \left(\dfrac{\partial}{\partial a} + \tau \dfrac{\partial}{\partial b} \right) R_{j;\,m+1}(\tau,v).
\end{array}
\end{equation}
The LHS of (\ref{eq:1.10}) is equal to
$$
\dfrac{-i}{\sqrt{2(m+1)}} \sum\limits^{-s+2m+1}_{k=-s}e^{-\frac{\pi ijk}{m+1}}
 \left(\dfrac{\partial}{\partial a} + \tau \dfrac{\partial}{\partial b} \right) \left((-i\tau)^{-\frac{1}{2}} e^{\frac{2\pi(m+1)v^2}{\tau}} R_{k;\,m+1} \left(-\dfrac{1}{\tau}, \dfrac{v}{\tau} \right)\right),
$$
which by (\ref{eq:1.9}) is equal to
$$
\dfrac{-i}{\sqrt{2(m+1)}} \sum\limits^{-s+2m+1}_{k=-s}e^{-\frac{\pi ijk}{m+1}}
\left(- 
\sqrt{\dfrac{y}{2}} e^{-4\pi(m+1)ya^2} \sum\limits_{n\in \mathbb{Z}/2(m+1)\mathbb{Z}} e^{-\frac{\pi i k n}{m+1}} \Theta_{n,\,m+1} (-\overline{\tau},-2\overline{v})\right)
$$
$$
=-2\sqrt{\dfrac{y}{m+1}} e^{-4\pi(m+1)ya^2}  \sum\limits_{n\in \mathbb{Z}/2(m+1)\mathbb{Z}} \left( \sum\limits^{-s+2m+1}_{k=-s} e^{\frac{-\pi jk}{m+1}} e^{-\frac{\pi ikn}{m+1}} \right) \Theta_{n,\,m+1}(-\overline{\tau},-2\overline{v}).
$$
Since $\sum\limits^{-s+2m+1}_{k=-s} e^{-\frac{\pi ijk}{m+1}} e^{-\frac{\pi ikn}{m+1}} = 2(m+1)$ if $j+n\equiv 0$ mod\,$2(m+1)$, and $=0$ otherwise, we obtain that
the LHS of (\ref{eq:1.10}) is equal to $-4\sqrt{(m+1)y}\, e^{-4\pi(m+1)ya^2} \Theta_{-j,\,m+1}(-\overline{\tau},-2\overline{v})$.
Hence, using (\ref{eq:1.a}), 
%since $\Theta_{j,\,m+1}(-\overline{\tau},-2\overline{v})=\Theta_{j,\,m+1}(-\overline{\tau},-2\overline{v})$, 
we deduce from Lemma \ref{lem:1.7} (1) that (\ref{eq:1.10}) holds.

Next, we prove (2). We have :
$$
\begin{array}{l}
\overset{\sim}{a}_j(\tau,v-\tau)= R_{j;\,m+1}(\tau, v-\tau)\\[1ex]
+\dfrac{i}{\sqrt{2(m+1)}} (-i\tau)^{-\frac{1}{2}} 
e^{\frac{2\pi i(m+1)}{\tau}(v-\tau)^2} \sum\limits^{-s+2m+1}_{k=-s} 
e^{-\frac{\pi ijk}{m+1}} R_{k;\,m+1}\left(-\dfrac{1}{\tau}, \dfrac{v-\tau}{\tau}\right) =R_{j;\,m+1} (\tau,v-\tau) \\[1ex]
+e^{2\pi i(m+1)(\tau-2v)}\dfrac{i}{\sqrt{2(m+1)}}(-i\tau)^{-\frac{1}{2}} e^{\frac{2\pi i(m+1)}{\tau}v^2} \sum\limits^{-s+2m+1}_{k=-s} 
e^{-\frac{\pi ijk}{m+1}} R_{k;\,m+1}\left(-\dfrac{1}{\tau}, \dfrac{v}{\tau}\right)\\[1ex]
=R_{j;\,m+1} (\tau,v-\tau)+e^{2\pi i(m+1)(\tau-2v)} \left(\overset{\sim}{a}_{j} (\tau, v)- R_{j;\,m+1}(\tau,v)\right).
\end{array}
$$
%Multiplying both sides by $e^{2\pi i(m+1)(2v-\tau)}$ and using Lemma \ref{lem:1.6} (2), use obtain (2).
%$$
%=-2e^{-\frac{\pi i\tau}{2(m+1)}} j^2 e^{2\pi ijw}\,,
%$$
%\hfill $\Box$
Multiplying both sides by $e^{2\pi i(m+1)(2v-\tau)}$, we obtain
$$
\overset{\sim}{a}_j(\tau,v)-e^{2\pi i(m+1)(2v-\tau)} \overset{\sim}{a}_j (\tau, v-\tau)= R_{j;\,m+1}(\tau, v)- e^{2 \pi i(m+1)(2v-\tau)} R_{j;\,m+1} (\tau,  v-\tau).
$$
Applying Lemma \ref{lem:1.6} (3) to the RHS, we obtain claim (2).

In order to prove (3), note that, replacing $(\tau,v)$ by $\left(-\frac{1}{\tau}, \frac{v}{\tau}\right)$, Lemma \ref{lem:1.6}(3) gives 

\begin{equation}
	\label{eq:1.11}
	\begin{array}{ll}
R_{j;\,m+1}\left(-\dfrac{1}{\tau}, \dfrac{v}{\tau}\right)- e^{\frac{2\pi i (m+1)}{\tau} (2v+1)} \; R_{j;\,m+1} \left(-\dfrac{1}{\tau}, \dfrac{v+1}{\tau}\right) 
= -2 e^{\frac{\pi i j^2}{2(m+1)\tau}} e^{\frac{2\pi i j v}{\tau}}.	
\end{array}
\end{equation}
By definition of $R_{j;\,m+1}$ and using Lemma \ref{lem:1.6} (1), we have
$$
\begin{array}{ll}
\overset{\sim}{a}_j (\tau, v+1) - \overset{\sim}{a}_j (\tau, v) = \frac{i}{\sqrt{2(m+1)}} (-i\tau)^{-\frac{1}{2}} \sum\limits^{-s+2m+1}_{k=-s} e^{-\frac{\pi i j k}{m+1}} \left(e^{\frac{2\pi i (m+1)}{\tau} (v+1)^2} 
R_{k;\,m+1} \left(-\dfrac{1}{\tau}, \dfrac{v+1}{\tau}\right)\right. \\[3ex]
\left.-e^{\frac{2\pi i (m+1)}{\tau} v^2} R_{k;\,m+1} \left(-\dfrac{1}{\tau}, \dfrac{v}{\tau}\right)\right) = \frac{i}{\sqrt{2(m+1)}} (-i\tau)^{-\frac{1}{2}} \sum\limits^{-s+2m+1}_{k=-s} e^{-\frac{\pi i j k}{m+1}} e^{\frac{2\pi i (m+1)}{\tau} v^2} \\[3ex]
\times\left(e^{\frac{2\pi i (m+1)}{\tau}(2v+1)} R_{k;\,m+1} \left(-\dfrac{1}{\tau}, \dfrac{v+1}{\tau}\right)-R_{k;\,m+1}\left(-\dfrac{1}{\tau}, \dfrac{v}{\tau}\right) \right).
\end{array}
$$
Using (\ref{eq:1.11}), we deduce:
$$
\begin{array}{ll}
\overset{\sim}{a}_j (\tau, v+1) - \overset{\sim}{a}_j (\tau, v) = \frac{2i}{\sqrt{2(m+1)}} (-i \tau)^{-\frac{1}{2}} \; \sum\limits^{-s+2m+1}_{k=-s} e^{-\frac{\pi ijk}{m+1}}
e^{\frac{2\pi i (m+1)}{\tau} v^2 + \frac{\pi i k^2}{2(m+1)\tau} + \frac{2\pi i k v}{\tau}}\\
%[1ex][3ex]
=\frac{2i}{\sqrt{2(m+1)}} (-i \tau)^{-\frac{1}{2}} \sum\limits^{-s+2m+1}_{k=-s} e^{-\frac{\pi ijk}{m+1}} 
e^{\frac{2\pi i (m+1)}{\tau} \left(v+\frac{k}{2(m+1)}\right)^2} ,
\end{array}	
$$
%$
%=\frac{2i}{\sqrt{2(m+1)}} (-i \tau)^{-\frac{1}{2}} \; \sum\limits^{-s+2m+1}{k-s} 
%e^{-\frac{\pi i j k}{m+1}}   e^{\frac{2 \pi i (m+1)}{\tau} \left(v + \frac{k}{2(m+1)}\right)^2},
%$
proving claim (3).
%\hfill $\Box$
\end{proof}

Lemma \ref{lem:1.9} shows that the functions $a_j (\tau,v) := \frac{1}{2} \overset{\sim}{a_j}(\tau,v)$ satisfy the conditions (i), (ii), (iii) of 
Lemma \ref{lem:1.5}, 
hence, by Lemma \ref{lem:1.5} and Lemma \ref{lem:1.4}(4), we have
\begin{proposition}
\label{prop:1.10}

$G^{[m;\,s]} (\tau, u, v, t) = \frac{1}{2} e^{2\pi i(m+1) t} 
\displaystyle\sum\limits_{j=-s}^{-s+2m+1} {\hspace{-2.5ex} \overset{\sim}{a_j} (\tau, v) \left(\Theta_{j,\, m+1}- \Theta_{-j,\, m+1}\right)} (\tau, 2u) $.
\end{proposition}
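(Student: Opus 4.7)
The plan is to obtain the proposition as an immediate consequence of the three preceding lemmas, relying on the uniqueness statement Lemma \ref{lem:1.4}(4). Concretely, I would set $a_j(\tau,v) := \tfrac12 \tilde a_j(\tau,v)$ for $-s \le j \le -s+2m+1$ and check that these satisfy the three conditions (i), (ii), (iii) of Lemma \ref{lem:1.5}. Condition (i) is clear from Lemma \ref{lem:1.9}(1) (holomorphy in $v$), and conditions (ii) and (iii) are obtained by halving the identities in Lemma \ref{lem:1.9}(2)-(3): the right-hand sides of those two formulas come with a factor of $2$, so dividing by $2$ produces exactly the normalization demanded by Lemma \ref{lem:1.5}(ii)-(iii).

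Having these $a_j$ in hand, I would apply Lemma \ref{lem:1.5} directly to conclude that the function
\[
H(\tau,u,v,t) := \tfrac12 e^{2\pi i(m+1)t} \sum_{j=-s}^{-s+2m+1} \tilde a_j(\tau,v) \bigl(\Theta_{j,\,m+1}-\Theta_{-j,\,m+1}\bigr)(\tau,2u)
\]
satisfies the properties (1), (2), (3) of Lemma \ref{lem:1.4}. By the uniqueness assertion Lemma \ref{lem:1.4}(4), any function with these three properties equals $G^{[m;s]}$; since $G^{[m;s]}$ itself satisfies (1), (2), (3), we conclude $G^{[m;s]} = H$, which is the claim of the proposition.

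There is no real obstacle here, as all of the substantive analytic work (the construction of the Zwegers-type functions $R_{j;\,m+1}$, the computation of their non-holomorphic derivatives in Lemmas \ref{lem:1.7}-\ref{lem:1.8}, and most importantly the verification in Lemma \ref{lem:1.9} that the particular linear combination defining $\tilde a_j$ restores holomorphy in $v$) has already been carried out. The only thing to be careful about is the bookkeeping of the factor $\tfrac12$: Lemma \ref{lem:1.9} is stated with the Zwegers normalization that carries an extra $2$, while Lemma \ref{lem:1.5} is normalized to match the right-hand sides appearing in Lemmas \ref{lem:1.3}-\ref{lem:1.4}(2)-(3) which come directly from the elliptic transformation of $\varphi^{[m;s]}$. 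The $\tfrac12$ in the proposition is exactly what reconciles these two normalizations, and once this is tracked correctly the proof reduces to quoting Lemmas \ref{lem:1.5} and \ref{lem:1.4}(4) in succession.
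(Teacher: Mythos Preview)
Your proposal is correct and matches the paper's own argument essentially verbatim: the paper also sets $a_j := \tfrac12 \tilde a_j$, observes via Lemma~\ref{lem:1.9} that these satisfy conditions (i)--(iii) of Lemma~\ref{lem:1.5}, and then invokes Lemma~\ref{lem:1.5} together with the uniqueness statement Lemma~\ref{lem:1.4}(4). Your discussion of the factor $\tfrac12$ reconciling the normalizations is exactly the point.
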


We put
\begin{equation}
	\label{eq:1.12}
	\varphi^{[m;\,s]}_{\rm add}(\tau, u, v, t):=-\frac{1}{2}e^{2\pi i(m+1)t} \sum\limits_{j=-s}^{-s+2m+1}{\hspace{-2.5ex}R_{j;\, m+1}(\tau, v)
	\left(\Theta_{j,\, m+1}- \Theta_{-j,\, m+1}\right)} \;(\tau, 2u) 
\end{equation}
and
\vspace{1.5ex}
\begin{equation}
	\label{eq:1.13}
\overset{\sim}{\varphi}^{[m;\,s]} := \varphi^{[m;\,s]}+\varphi^{[m;\,s]}_{\rm add}.
\end{equation}
%\vspace{1.5ex}
Then we obtain the following theorem.

\pagebreak

\begin{theorem}
  \label{th:1.11}
$$$$
\vspace{-7ex}
\begin{list}{}{}
\item(1) $ \overset{\sim}{\varphi}^{[m;\,s]}|_S=\overset{\sim}{\varphi}^{[m;\,s]}$; \,\, $\overset{\sim}{\varphi}^{[m;\,s]} (\tau+1, u, v, t) = \overset{\sim}{\varphi}^{[m;\,s]} (\tau, u, v, t)$.

\item (2) $ \overset{\sim}{\varphi}^{[m;\,s]} (\tau, u+a, v+b, t) =  \overset{\sim}{\varphi}^{[m;\,s]} (\tau, u,v,t)$ for all $a$, $b\; \in\; \frac{1}{2}\ZZ$ such that $a+b\; \in\; \ZZ$.

\item (3) $ \overset{\sim}{\varphi}^{[m;\,s]}  (\tau, u+a\tau, v+b\tau,t) =  
q^{(m+1)(b^2-a^2)} e^{4\pi i (m+1)(bv-au)}  
\overset{\sim}{\varphi}^{[m;\,s]} (\tau,u,v,t)$

\text{for all} $a,\,b\, \in\,\frac{1}{2}\, \ZZ$ \text{such that} $a+b\, \in\, \ZZ$.
\end{list}
\end{theorem}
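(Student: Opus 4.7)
My plan is to establish (1) and (2) by direct calculation using the preparatory lemmas, and then to deduce (3) as a formal consequence of (1) and (2) together with the explicit $e^{2\pi i(m+1)t}$ dependence of $\tilde\varphi^{[m;s]}$, bypassing any separate analysis of the elliptic behaviour of $\varphi^{[m;s]}_{\mathrm{add}}$ in the $\tau$-direction.

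For the S-invariance in (1), write $\tilde\varphi^{[m;s]}|_S - \tilde\varphi^{[m;s]} = (\varphi^{[m;s]}_{\mathrm{add}}|_S - \varphi^{[m;s]}_{\mathrm{add}}) - G^{[m;s]}$, so the task reduces to showing $\varphi^{[m;s]}_{\mathrm{add}}|_S - \varphi^{[m;s]}_{\mathrm{add}} = G^{[m;s]}$. By Proposition \ref{prop:1.10} and the explicit form of $\tilde{a}_j$, the right-hand side splits as $-\varphi^{[m;s]}_{\mathrm{add}}$ plus a ``modular image'' piece built from the sums $\sum_{k} e^{-\pi i jk/(m+1)} R_{k;m+1}(-\tfrac{1}{\tau}, \tfrac{v}{\tau})$. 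I would compute $\varphi^{[m;s]}_{\mathrm{add}}|_S$ directly from (\ref{eq:1.12}) via the S-transform
\[\Theta_{k,m+1}(-\tfrac{1}{\tau}, \tfrac{2u}{\tau}) = (-i\tau)^{1/2}\, e^{2\pi i(m+1)u^2/\tau} \tfrac{1}{\sqrt{2(m+1)}} \sum_j e^{-\pi i jk/(m+1)} \Theta_{j,m+1}(\tau, 2u),\]
and verify that it matches this piece: the crucial cancellation is that the $e^{2\pi i(m+1)u^2/\tau}$ from theta combines with the $e^{-2\pi i(m+1)(u^2-v^2)/\tau}$ arising from $t \mapsto t - (u^2-v^2)/\tau$ to leave exactly $e^{2\pi i(m+1)v^2/\tau}$, while $(-i\tau)^{1/2}/\tau = -i(-i\tau)^{-1/2}$ supplies the correct scalar prefactor when combined with the $-\tfrac{1}{2}$ in (\ref{eq:1.12}). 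For the T-invariance, I would check from (\ref{eq:1.4}) that $R_{j;m+1}(\tau+1, v) = e^{-\pi i j^2/(2(m+1))} R_{j;m+1}(\tau, v)$ and from (\ref{eq:1.2}) that $\Theta_{j,m+1}(\tau+1, 2u) = e^{\pi i j^2/(2(m+1))} \Theta_{j,m+1}(\tau, 2u)$, so the $j$-dependent phases cancel in every product in $\varphi^{[m;s]}_{\mathrm{add}}$; Lemma \ref{lem:1.2}(1) disposes of $\varphi^{[m;s]}$.

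For (2), Lemma \ref{lem:1.2}(2) handles $\varphi^{[m;s]}$. For $\varphi^{[m;s]}_{\mathrm{add}}$, Lemma \ref{lem:1.6}(1) yields $R_{j;m+1}(\tau, v+b) = (-1)^{2bj} R_{j;m+1}(\tau, v)$ for $b \in \tfrac{1}{2}\ZZ$, and (\ref{eq:1.b}) gives $\Theta_{\pm j, m+1}(\tau, 2u+2a) = (-1)^{\pm 2aj} \Theta_{\pm j, m+1}(\tau, 2u)$ for $a \in \tfrac{1}{2}\ZZ$. In each product $R_{j;m+1}(\tau, v+b)\,\Theta_{\pm j, m+1}(\tau, 2u+2a)$ the combined sign is $(-1)^{2(b \pm a)j}$, which equals $1$ because both $a+b$ and $a-b$ lie in $\ZZ$ under the hypothesis.

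For (3), apply $S$-invariance to obtain
\[\tilde\varphi^{[m;s]}(\tau, u+a\tau, v+b\tau, t) = \tfrac{1}{\tau}\, \tilde\varphi^{[m;s]}\bigl(-\tfrac{1}{\tau},\, \tfrac{u}{\tau}+a,\, \tfrac{v}{\tau}+b,\, t - \tfrac{u^2-v^2}{\tau} - 2(au-bv) - (a^2-b^2)\tau\bigr);\]
part (2) (applied at $-\tfrac{1}{\tau}$) removes the integer shifts by $a$ and $b$, pulling the residual $t$-shift through the $e^{2\pi i(m+1)t}$ prefactor produces the scalar $e^{-4\pi i(m+1)(au-bv)} q^{-(m+1)(a^2-b^2)}$, and a second application of $S$-invariance identifies the remainder with $\tilde\varphi^{[m;s]}(\tau, u, v, t)$, delivering exactly the factor $q^{(m+1)(b^2-a^2)} e^{4\pi i(m+1)(bv-au)}$ claimed. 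The main obstacle is the S-invariance calculation in (1): one must carefully track every exponential arising from the completion-of-square and Poisson-summation steps in the theta S-transform (in particular the $u^2/\tau$ cancellation) and verify that the result matches the decomposition of $G^{[m;s]}$ supplied by Proposition \ref{prop:1.10}; once that is in hand, the T-invariance, part (2), and part (3) all follow from essentially mechanical computations.
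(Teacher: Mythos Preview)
Your proposal is correct and follows essentially the same route as the paper: for (1) you compute $\varphi^{[m;s]}_{\mathrm{add}}|_S$ via the $S$-transform of $\Theta_{j,m+1}$ and match it against the decomposition of $G^{[m;s]}$ coming from Proposition~\ref{prop:1.10} and the definition of $\tilde a_j$ (the paper does the identical computation, just phrased as showing $\varphi^{[m;s]}_{\mathrm{add}}|_S=G^{[m;s]}+\varphi^{[m;s]}_{\mathrm{add}}$), for (2) both you and the paper invoke Lemma~\ref{lem:1.6}(1) and (\ref{eq:1.b}), and for (3) your use of $S$-invariance at $(u+a\tau,v+b\tau)$, removal of integer shifts via (2), extraction of the $t$-shift through the $e^{2\pi i(m+1)t}$ prefactor, and a second application of $S$ is exactly the paper's argument (the paper first absorbs the $t$-shift into the exponential to rewrite $S$-invariance as equation~(\ref{eq:1.14}), then substitutes and unwinds, which is the same computation organized slightly differently).
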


%%%%%%%%Insert after theorem 1.11%%%%%%%%%%%
\begin{proof}
We have by definition of $\varphi^{[m;\,s]}_{\hbox{add}}$ :
$$
\begin{array}{ll}
\varphi^{[m;\,s]}_{\hbox{add}}|_S (\tau, u, v, t) := \dfrac{1}{\tau} \varphi^{[m;\,s]}_{\hbox{add}} \left(-\dfrac{1}{\tau}, \dfrac{u}{\tau}, \dfrac{v}{\tau}, t -\dfrac{u^2-v^2}{\tau}\right)\\[3ex]
=-\dfrac{1}{2\tau} e^{2\pi i (m+1) \left(t-\frac{u^2-v^2}{\tau}\right)} \;\sum\limits^{-s+2m+1}_{j=-s}{R_{j;\,m+1} \left(-\dfrac{1}{\tau}, \dfrac{v}{\tau}\right) \left( \Theta_{j,\,m+1}-\Theta_{-j,\,m+1} \right)\left(-\dfrac{1}{\tau}, \dfrac{2u}{\tau}\right)}.
\end{array}
$$
Using the modular transformation formula of $\Theta_{j,\,m+1}$, 
%(see e.g the Appendix of \cite{KW}), 
the RHS is rewritten as follows :
$$
\begin{array}{l}
\frac{1}{2} e^{2\pi i (m+1)t} \sum\limits_{k=-s}^{-s+2m+1} 
\frac{i}{\sqrt{2(m+1)}}(-i\tau)^{-\frac{1}{2}} \;e^{\frac{2\pi i (m+1)}{\tau}v^2}\\[1ex] 
%\hspace{-4ex}
\times\sum\limits_{k=-s}^{-s+2m+1} 
e^{-\frac{\pi i j k}{m+1}} R_{j;\,m+1} \left(-\frac{1}{\tau}, \frac{v}{\tau}\right) (\Theta_{k,\,m+1} - \Theta_{-k,\,m+1}) (\tau,2u).
\end{array}
$$
By the definition of $\overset{\sim}{a}_k (\tau,v)$, this is equal to
$$
\frac{1}{2} e^{2\pi i (m+1) t} \sum\limits^{-s+2m+1}_{k= -s}{(\overset{\sim}{a}_k (\tau,v) - R_{k;\, m+1} (\tau,v))} (\Theta_{k,\,m+1} - \Theta_{-k,\,m+1}) (\tau, 2u).
$$
Using Proposition 1.10 and the definition of $\varphi^{[m;\,s]}_{\hbox{add}}$, this can be rewritten as
$G^{[m;\,s]} (\tau, u, v, t) + \varphi^{[m;\,s]}_{\hbox{add}} (\tau, u, v, t).$ Thus, by definition of $G^{[m;\,s]}$ we have 
$$
\left(\varphi^{[m;\,s]} + \varphi^{[m;\,s]}_{\hbox{add}}\right)|_S = \varphi^{[m;\,s]} + \varphi^{[m;\,s]}_{\hbox{add}},
$$
proving the first formula in (1). The second formula in (1) is straightforward.

Next, let $a$, $b\in \frac{1}{2}\ZZ$ be such that $a+b \in \ZZ$. Using 
Lemma \ref{lem:1.6} (1) and equation (\ref{eq:1.b}),
%that $\Theta_{j,\,m+1} (\tau, 2(u+a)) = \Theta_{j,\,m+1} (\tau, 2u) (-1)^{2aj}$, 
we obtain that $\varphi^{[m;\,s]}_{\hbox{add}} (\tau, u+a, v+b, t) = \varphi^{[m;\,s]}_{\hbox{add}} (\tau, u, v, t)$. 
This together with Lemma \ref{lem:1.2} (2) proves claim (2). 

Claim(3) is easily deduced from claims (1) and (2) as follows. By (1) we have: 
\begin{equation}
	\label{eq:1.14}
	\tau \overset{\sim}{\varphi}^{[m;\,s]} (\tau, u, v, t)= 
	e^{-\frac{2\pi i (m+1)}{\tau}(u^2-v^2)} \overset{\sim}{\varphi}^{[m;\,s]} \left(-\frac{1}{\tau}, \frac{u}{\tau}, \frac{v}{\tau}, t \right).
\end{equation}
Replacing $(u,v)$ by $(u+a\tau,v+a\tau)$ in this formula, and using claim (2), we obtain :
$$
\tau \overset{\sim}{\varphi}^{[m;\,s]} (\tau, u+a\tau, v+b\tau)=e^{-\frac{2\pi i(m+1)}{\tau}((u+a\tau)^2-(v+b\tau)^2)}
\overset{\sim}{\varphi}^{[m;\,s]} \left(-\frac{1}{\tau}, \frac{u}{\tau}, \frac{v}{\tau}, t \right).
$$
By (\ref{eq:1.14}), the RHS of this formula is equal to 
$\tau q^{(m+1)(b^2-a^2)} e^{4\pi i (m+1)(bv-au)}   \overset{\sim}{\varphi}^{[m;\,s]} (\tau, u, v, t)$, proving (3).
\end{proof}
%\hfill $\Box$\\[1ex]

Translating the formulas for $\varphi^{[m;\,s]}$ and its modification to $\Phi^{[m;\,s]}$ and its modification, and using (\ref{eq:1.a}), 
%that $\Theta_{j,\,m} (\tau,- z) = -\Theta_{-j,\,m} (\tau,-z)$, 
we obtain

\begin{equation}
	\label{eq:15}
\Phi^{[m;s]}_{\hbox{add}} 
(\tau, z_1, z_2, t) = \frac{1}{2} e^{2\pi i (m+1)t} 
\sum\limits^{-s+2m+1}_{j=-s}{R_{j;\,m+1} \left(\tau, \frac{z_1-z_2}{2}\right) (\Theta_{j,\,m+1}-\Theta_{-j,\,m+1}) (\tau, z_1+z_2)}
\end{equation}
and 
$$
\overset{\sim}{\Phi}^{[m;\,s]} 
= \Phi^{[m;\,s]} + \Phi^{[m;s]}_{\hbox{add}},
$$
where 
$\Phi^{[m;s]}_{\hbox{add}}$ is obtained from $\varphi^{[m;s]}_{\hbox{add}}$
by the change of variables $u
=-\frac{z_1+z_2}{2}$, $v =\frac{z_1-z_2}{2}$. 
Then Theorem \ref{th:1.11} implies

\begin{theorem}
\label{th:1.12} 
The function $\overset{\sim}{\Phi}^{[m;\,s]}$ has the following modular and elliptic transformation properties: 
$$$$
\vspace{-6ex}
\begin{list}{}{}

\item (1) $\overset{\sim}{\Phi}^{[m;\,s]}|_S = \overset{\sim}{\Phi}^{[m;\,s]}$, i.e.
$
\overset{\sim}{\Phi}^{[m;\,s]} \left(-\frac{1}{\tau}, \frac{z_1}{\tau}, \frac{z_2}{\tau}, t \right) = \tau e^{\frac{2\pi i (m+1) z_1 z_2}{\tau}} \overset{\sim}{\Phi}^{[m;\,s]} (\tau, z_1, z_2, t).
$

\item (2) $\overset{\sim}{\Phi}^{[m;\,s]} (\tau+1, z_1, z_2,t) = \overset{\sim}{\Phi}^{[m;\,s]} (\tau, z_1, z_2, t).$

\item (3) $\overset{\sim}{\Phi}^{[m;\,s]} (\tau, z_1+a, z_2+b,t) = \overset{\sim}{\Phi}^{[m;\,s]} (\tau, z_1, z_2, t)$ if $a$, $b \in \ZZ$.

\item(4) $\overset{\sim}{\Phi}^{[m;\,s]} (\tau, z_1 + a\tau, z_2+b\tau, t) = q^{-(m+1)ab}\; e^{-2\pi i (m+1) (b z_1+a z_2)} \; \overset{\sim}{\Phi}^{[m;\,s]} (\tau, z_1, z_2, t)$ if $a$, $b \in \ZZ$.
\end{list}
\end{theorem}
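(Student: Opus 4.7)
The plan is to deduce Theorem \ref{th:1.12} directly from Theorem \ref{th:1.11} by the linear change of variables $z_1=v-u$, $z_2=-v-u$ (equivalently $u=-(z_1+z_2)/2$, $v=(z_1-z_2)/2$), under which $\tilde\Phi^{[m;s]}(\tau,z_1,z_2,t)=\tilde\varphi^{[m;s]}(\tau,u,v,t)$ by the very definition of $\Phi^{[m;s]}_{\rm add}$ in (\ref{eq:15}) and of $\varphi^{[m;s]}_{\rm add}$ in (\ref{eq:1.12}). The single algebraic identity that makes everything translate cleanly is
\[
u^2-v^2 \;=\; z_1 z_2,
\]
matching the $t$-shift $-(u^2-v^2)/\tau$ in the $S$-action on $(u,v,t)$-coordinates (Theorem \ref{th:1.11}(1)) with the $t$-shift $-z_1z_2/\tau$ dictated by the general $SL_2(\ZZ)$-action (\ref{eq:0.3}), since the bilinear form here pairs $\alpha_1$ with $\alpha_2$.

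For claim (1), I rewrite $\tilde\varphi^{[m;s]}|_S=\tilde\varphi^{[m;s]}$ in $(z_1,z_2)$-variables via $u^2-v^2=z_1z_2$, obtaining
\[
\tilde\Phi^{[m;s]}(\tau,z_1,z_2,t)\;=\;\tau^{-1}\,\tilde\Phi^{[m;s]}\!\left(-1/\tau,\,z_1/\tau,\,z_2/\tau,\,t-z_1z_2/\tau\right).
\]
Since the $t$-dependence of $\tilde\Phi^{[m;s]}$ is only through the overall factor $e^{2\pi i(m+1)t}$ (explicit both in $\Phi^{[m;s]}$ and in (\ref{eq:15})), shifting $t-z_1z_2/\tau\mapsto t$ pulls out a factor $e^{2\pi i(m+1)z_1z_2/\tau}$, giving the stated identity. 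The $T$-invariance in (2) is immediate from Theorem \ref{th:1.11}(1).

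For (3) and (4), the lattice shift $(z_1,z_2)\mapsto(z_1+a,z_2+b)$ with $a,b\in\ZZ$ corresponds to $(u,v)\mapsto(u+a',v+b')$ with $a'=-(a+b)/2$, $b'=(a-b)/2$; since $a'+b'=-b\in\ZZ$, Theorem \ref{th:1.11}(2) applies and gives (3) at once. For the elliptic shift in (4), the same $a',b'$ are used in Theorem \ref{th:1.11}(3); the short identities
\[
b'^{\,2}-a'^{\,2}\;=\;-ab,\qquad b'v-a'u\;=\;-\tfrac{1}{2}(bz_1+az_2)
\]
turn the prefactor $q^{(m+1)(b'^{2}-a'^{2})}\,e^{4\pi i(m+1)(b'v-a'u)}$ into precisely $q^{-(m+1)ab}\,e^{-2\pi i(m+1)(bz_1+az_2)}$, which is the factor asserted in (4).

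Since Theorem \ref{th:1.11} is already in hand, there is no genuine obstacle here; the translation is purely a change of variables. The one bookkeeping point that requires attention is the $t$-shift in the $S$-transformation, which comes out correctly exactly because of the identity $u^2-v^2=z_1z_2$. Once this is observed, (1)--(4) are formal consequences of Theorem \ref{th:1.11}(1)--(3).
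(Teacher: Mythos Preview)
Your proof is correct and follows essentially the same approach as the paper, which simply states that Theorem \ref{th:1.11} implies Theorem \ref{th:1.12} via the change of variables $u=-(z_1+z_2)/2$, $v=(z_1-z_2)/2$. You have spelled out the verifications (the identity $u^2-v^2=z_1z_2$ and the half-integer computations for $a',b'$) that the paper leaves to the reader.
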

\hfill $\Box$

%%%%%%%%%%%%%%%%%%%%%%%%%%%%%%%%%%%%%%%%%%%%%%%%%%%%%%%%%%%%

\begin{remark}   
\label{rem:1.13}
  After replacing $(\tau,z_1,z_2,t)$ by $(\frac{\tau}{M},\frac{z_1}{M},\frac{z_2}{M},t)$, where $M\neq 0$, Theorem \ref{th:1.12} (1) gives:
$$
\overset{\sim}{\Phi}^{[m;\,s]} ( \frac{\tau}{M},\frac{z_1}{M},\frac{z_2}{M},t ) = \frac{M}{\tau} \overset{\sim}{\Phi}^{[m;\,s]} ( -\frac{M}{\tau}, \frac{z_1}{\tau},\dfrac{z_2}{\tau},t-\frac{z_1z_2}{\tau M} ).
$$
\end{remark}

%%%%%%%%%%%%%%%%%%%%%%%%%%%%%%%%%%%%%%%%%%%%%%%%%%%%%%%%%%%%

\section{Modular transformation formulae for modified normalized characters of admissible $\widehat{\sl}_{2|1}$-modules.}
\label{sec:2}

Recall an explicit description of principal admissible weights $\Lambda$ of an affine Lie superalgebra $\hat{\fg}$ (see [KW], Section 3).

Let $M$ be a positive integer and let 
$S_{M}=\{(M-1)\delta\, +\, \alpha_0, \alpha_1, \ldots, \alpha_\ell\}$, where 
$\{\alpha_0, \alpha_1, \ldots, \alpha_\ell \}$ is the set of simple roots of 
$\hat{\fg}$ and $\delta$ is the primitive imaginary root. Let $\Lambda^0$ be 
a partially integrable weight of level $m$. 
Then all principal admissible weights, associated to the pair 
$(M,\Lambda^0)$ are obtained as follows. 
Let $\fh$ be the Cartan subalgebra of $\fg$ and $W$ the (finite) Weyl group. 
Let $\beta \in \fh^\ast$ and $y \in W$ be such that the set 
$S :=t_\beta y (S_M)$ lies in the subset of positive roots of $\hat{\fg}$. 
Such subsets are called simple.
All principal admissible weights with respect to a simple subset $S$, 
associated to the pair 
$(M,\Lambda^0)$ are of the form (up to adding a multiple of $\delta$) :
\begin{equation}
	\label{eq:2.1}
\Lambda = (t_\beta y) (\Lambda^0 - (M-1) (K+h^\vee) \Lambda_0 + \hat{\rho}) - \hat{\rho},
\end{equation}
and all of them have level
\begin{equation}
\label{eq:2.2}
	k=\frac{m+h^\vee}{M}- h^\vee, 
\end{equation}
where $h^\vee$ is the dual Coxeter number.
%%%%%%%%%%%%%%%%%%%%%%%%%%%%%%%%%%%%%%%%%%%%%%%%%%%%%%%

For the Lie superalgebra $\widehat{\fg}=\widehat{\sl}_{2|1}$ the dual 
Coxeter number $h^\vee=1$ and $\widehat{\rho}=\Lambda_0$. There are two kinds 
of simple subsets:
$$
S^{+}_{k_1,k_2}=\{ k_0\delta+\alpha_0, k_1\delta+\alpha_1, k_2\delta+\alpha_2 \},\,M=
%\sum\limits^{2}_{i=0} k_i+1, 
k_0+k_1+ k_2+1,\, 
k_i \in \mathbb{Z}_{\geq 0},\,y=1,\,\beta=-k_1\alpha_2-k_2 \alpha_1;
$$
$$
S^{-}_{k_1,k_2}=\{ k_0\delta-\alpha_0, k_1\delta-\alpha_1, k_2\delta-\alpha_2 \},\, M=k_0+ k_1+k_2-1,\, k_i \in \mathbb{Z}_{> 0},\, 
y=r_{\alpha_1+\alpha_2},\, \beta=k_1\alpha_2+k_2 \alpha_1.
$$
By (\ref{eq:2.1}) 
all principal admissible weights with respect to $S^{+}_{k_1,k_2}$ and 
$S^{-}_{k_1,k_2}$, associated to the pair $(M,\Lambda^0=\Lambda_{m;s})$ are respectively:
$$
\Lambda^{[s]+}_{k_1,k_2}=k\Lambda_0 -(k+1)( k_2-\frac{s}{k+1} ) \alpha_1 - (k+1)k_1\alpha_2-(k+1)k_1(k_2-\frac{s}{k+1})\delta ;
$$
$$
\Lambda^{[s]-}_{k_1,k_2}=k\Lambda_0 +(k+1)k_2\alpha_1+(k+1) ( k_1-\frac{s}{k+1} ) \alpha_2 - (k+1)k_2(k_1-\frac{s}{k+1})\delta,
$$
and, by (\ref{eq:2.2}), the level of all of them is $k=\frac{m+1}{M}-1$. 

Recall that the normalized character and supercharacter $ch^{\pm}_{\Lambda}$ of a $\widehat{\fg}$-module $L(\Lambda)$ of level $k$ is given by
\begin{equation}
\label{eq:2.a}
ch^{\pm}_{\Lambda}=e^{-(\frac{|\Lambda+\hat{\rho}|^2}{2(k+h^\vee)} -
\frac{\sdim \fg}{24})\delta} ch^{\pm}_{L(\Lambda)}.
\end{equation}

Formula (3.28) from [KW] (see [GK] for its proof) gives in the case of $\widehat{\fg}=\widehat{\sl}_{2|1}$ the following expressions for the normalized characters and supercharacters of principal admissible modules, in terms of the functions $\Phi^{[m;\,s]}$:

\begin{lemma}
\label{lem:2.1}
Let $\Lambda$ be a principal admissible weight of level $k=\frac{m+1}{M}-1$ for $\widehat{\sl}_{2|1}$, 
associated to the pair $(M,\Lambda_{m;\,s})$. 
Then the normalized supercharaters $ch^{-}_{\Lambda}$ are 
given by the following formulae (where $\widehat{R}^{-}$ is the affine 
 $\widehat{\sl}_{2|1}$ superdenominator): 
$$
%\begin{array}{l@{\!\;}l}
\Lambda=\Lambda^{[s]+}_{k_1,k_2}: \,\, 
%q^{\frac{|\Lambda+\Lambda_0|^2}{2(k+1)}} 
(\widehat{R}^{-}ch^{-}_{\Lambda}) (\tau,z_1,z_2,t) =
q^{\frac{m+1}{M}k_1k_2} e^{\frac{2\pi i(m+1)}{M}(k_2z_1+k_1z_2)}
\Phi^{[m;\,s]}(M\tau, z_1+k_1\tau,z_2+k_2\tau,\frac{t}{M});
% q^{\frac{|\Lambda+\Lambda_0|^2}{2(k+1)}} 
%( \widehat{R}^{+}ch^{+}_{\Lambda} ) (\tau, z_1,z_2,t) &=q^{\frac{m+1}{M}k_1k_2} e^{\frac{2\pi i(m+1)}{M}(k_2z_1+k_1z_2)} \\[1ex] 
% &\quad
%\times\Phi^{[m;\,s]}(M\tau, z_1+k_1\tau+\frac{1}{2},z_2+k_2\tau+\frac{1}{2},\frac{t}{M}).	
%\end{array}
$$
$$
\Lambda=\Lambda^{[s]-}_{k_1,k_2}:\,\, 
%q^{\frac{|\Lambda+\Lambda_0|^2}{2(k+1)}} 
(\widehat{R}^{-}ch^{-}_{\Lambda}) (\tau,z_1,z_2,t)
%\\[1ex]
=-q^{\frac{m+1}{M}(M-k_1)(M-k_2)} 
e^{\frac{2\pi i(m+1)}{M}((M-k_2)z_1+(M-k_1)z_2)}
$$
$$
 \times\Phi^{[m;\,s]} (M\tau, z_1+(M-k_1)\tau,z_2+(M-k_2)\tau,\frac{t}{M}).
$$ 
%q^{\frac{|\Lambda+\Lambda_0|^2}{2(k+1)}} 
%\left( \widehat{R}^{+}ch^{+}_{\Lambda} \right) (\tau, z_1,z_2,t)
%\\[2ex]
%=-q^{\frac{m+1}{M}(M-k_1)(M-k_2)} e^{\frac{2\pi i(m+1)}{M}((M-k_2)z_1+(M-k_1)z_2)} 
%\\[1ex]
%\times\Phi^{[m;\,s]}\left(M\tau, z_1+(M-k_1)\tau+\frac{1}{2},z_2+(M-k_2)\tau+\frac{1}{2},\frac{t}{M}\right). 
%\end{array}
%$$
\hfill $\Box$
\end{lemma}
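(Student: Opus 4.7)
The plan is to deduce both formulas directly from the general substitution formula (3.28) of \cite{KW}, proved in \cite{GK}, which expresses the numerator $\hat{R}^- \mathrm{ch}^-_\Lambda$ of a principal admissible module, associated to a pair $(M, \Lambda^0)$ via a simple subset $S = t_\beta y(S_M)$, as the numerator $\hat{R}^- \mathrm{ch}^-_{\Lambda^0}$ of the integrable module evaluated at arguments transformed by the dual action of $t_\beta y$, multiplied by a cocycle prefactor coming from (\ref{eq:0.5}). For $\widehat{\sl}_{2|1}$ we use $h^\vee = 1$, $\hat{\rho} = \Lambda_0$, and the fact, recalled just after Conjecture 3.8, that $\hat{R}^- \mathrm{ch}^-_{L(\Lambda_{m;s})}$ equals $\Phi^{[m;s]}(\tau, z_1, z_2, t)$ in the coordinates (\ref{eq:0.1}).

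For the ``$+$'' family, $y = 1$ and $\beta = -k_1\alpha_2 - k_2\alpha_1$. Using the inner products $(\alpha_1|\alpha_2) = 1$, $(\alpha_i|\alpha_i) = 0$ of Section \ref{sec:1} together with the dictionary $e^{-\alpha_i(h)} = e^{2\pi i z_i}$, the dual action of $t_\beta$ on the coordinates sends $(\tau, z_1, z_2, t)$ to $(M\tau, z_1 + k_1\tau, z_2 + k_2\tau, t/M)$. The cocycle exponential from (\ref{eq:0.5}), applied at level $k + h^\vee = (m+1)/M$, evaluates via $(\beta|\beta) = -2k_1k_2$ and $(\beta | -z_1\alpha_2 - z_2\alpha_1) = -(k_2 z_1 + k_1 z_2)$ to the stated factor $q^{\frac{m+1}{M}k_1k_2} e^{\frac{2\pi i(m+1)}{M}(k_2 z_1 + k_1 z_2)}$.

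For the ``$-$'' family, $y = r_{\alpha_1+\alpha_2}$ and $\beta = k_1\alpha_2 + k_2\alpha_1$. The reflection $y$ acts on $\fh^*$ by $\alpha_1 \leftrightarrow -\alpha_2$, hence on coordinates by $(z_1, z_2) \mapsto (-z_2, -z_1)$; property (6) of Lemma \ref{lem:1.1} then supplies the overall sign $\epsilon^-(y) = -1$. The shift by $t_\beta$, combined with the action of $y$ and reduced modulo the $\ZZ$-periodicity of $\Phi^{[m;s]}$ in $z_1, z_2$ from Lemma \ref{lem:1.1}(2), yields the shifts $z_i \mapsto z_i + (M - k_i)\tau$, where the bookkeeping uses $M = k_0 + k_1 + k_2 - 1$ from the definition of $S^-_{k_1,k_2}$; the cocycle factor is recomputed analogously with $k_i$ replaced by $M - k_i$.

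The main technical point is the careful matching of the $t_\beta y$ action on weights with its dual action on the coordinates $(\tau, z_1, z_2, t)$, together with the tracking of the cocycle prefactor and the sign $\epsilon^-(r_{\alpha_1+\alpha_2}) = -1$; the key substantive input is the antisymmetry of $\Phi^{[m;s]}$ under $(z_1, z_2) \mapsto (-z_2, -z_1)$ from Lemma \ref{lem:1.1}(6), which is what converts the ``$+$'' substitution into the ``$-$'' one. Once these pieces are assembled, both formulas follow directly.
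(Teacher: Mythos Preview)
Your approach is the same as the paper's: both derive the formulas from formula (3.28) of \cite{KW} (proved in \cite{GK}), and in fact the paper gives no further argument beyond that citation, while you supply the explicit bookkeeping of the $t_\beta y$ action on coordinates and the cocycle prefactor. One small correction: in the ``$-$'' case the passage from the raw shifts to the shifts by $(M-k_i)\tau$ uses the $\tau$-translation property Lemma~\ref{lem:1.1}(5) (applied with first argument $M\tau$), not the integer $z$-periodicity of Lemma~\ref{lem:1.1}(2); otherwise your outline is accurate.
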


In order to derive the modular transformation formulae for the modified admissible $\widehat{\sl}_{2|1}$-characters we need the following theorem.

\begin{theorem}
\label{th:2.2}
Let $M$ be a positive integer and let $m$ be a non-negative integer, such that gcd\,$(M,2m+2)=1$ if $m>0$. Then

\vspace{1.5ex}
$
\overset{\sim}{\Phi}^{[m;\,s]} (-\frac{M}{\tau},\frac{z_1}{\tau},\frac{z_2}{\tau},t-\frac{z_1z_2}{\tau M})
$

$
 = \frac{\tau}{M}\sum\limits_{j,k\in\mathbb{Z}/M\mathbb{Z}} q^{\frac{m+1}{M}jk} e^{\frac{2\pi i(m+1)}{M}(kz_1+jz_2)}
\overset{\sim}{\Phi}^{[m;\,s]}(M\tau,z_1+j\tau,z_2+k\tau,t).
$

(By Theorem \ref{th:1.12}(4), each term in the RHS depends only on $j,k \mod M$.)
\hfill $\Box$
\end{theorem}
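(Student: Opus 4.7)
The plan is to first use Remark \ref{rem:1.13} to rewrite the left-hand side of Theorem \ref{th:2.2} as $\tfrac{\tau}{M}\,\tilde\Phi^{[m;s]}(\tau/M, z_1/M, z_2/M, t)$. The theorem then becomes equivalent to the identity
\[\tilde\Phi^{[m;s]}(\tau/M, z_1/M, z_2/M, t) \;=\; \sum_{j, k \in \ZZ/M\ZZ} q^{(m+1)jk/M}\, e^{2\pi i(m+1)(kz_1+jz_2)/M}\, \tilde\Phi^{[m;s]}(M\tau, z_1+j\tau, z_2+k\tau, t),\]
which I will denote by $(\star)$. Both sides carry a common factor $e^{2\pi i(m+1)t}$, so this factor cancels and I set $t = 0$.

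Second, as a consistency check and to organize the argument, I would verify that the two sides of $(\star)$ satisfy the same elliptic transformation laws in $(z_1, z_2)$ using Theorem \ref{th:1.12}(3)-(4). The period $M$ in each of $z_1, z_2$ is immediate on both sides. Under $z_1 \mapsto z_1+\tau$, the LHS picks up the multiplier $e^{-2\pi i(m+1)z_2/M}$ from Theorem \ref{th:1.12}(4) applied at modular argument $\tau/M$ with $(a,b)=(1,0)$; on the RHS, reindexing $j \mapsto j+1 \pmod M$ and handling the wrap-around $j = M-1 \mapsto 0$ via Theorem \ref{th:1.12}(4) at modular argument $M\tau$ produces exactly the same multiplier. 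Symmetric computations hold for $z_2 \mapsto z_2 + \tau$.

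Third, to actually prove equality, I would use the $\Theta$-decomposition of $\tilde\Phi^{[m;s]}$. Combining Proposition \ref{prop:1.10}, Lemma \ref{lem:1.9}, and equation (\ref{eq:15}), one writes $\tilde\Phi^{[m;s]}$ as a combination of the holomorphic series (\ref{eq:1.1}) for $\Phi^{[m;s]}$ and non-holomorphic summands built from $R_{j;m+1}(\tau, (z_1-z_2)/2)\,(\Theta_{j,m+1} - \Theta_{-j,m+1})(\tau, z_1+z_2)$. Substituting this expansion into both sides of $(\star)$ reduces the identity to three sub-claims: (i) the classical distribution identity $\Theta_{j,m+1}(\tau/M, z/M) = \sum_{\ell \in \ZZ/M\ZZ}(\text{explicit factor})\,\Theta_{?,m+1}(M\tau, z + \ell\tau)$, derivable from (\ref{eq:1.2}) by writing the summation variable $n$ as $n = Mn' + \ell$ and reorganizing; (ii) the analogous distribution identity for Zwegers' function $R_{j;m+1}$, provable from (\ref{eq:1.4}) by the same reindexing, using crucially that $\mathrm{Im}(v/M)/\mathrm{Im}(\tau/M) = \mathrm{Im}(v)/\mathrm{Im}(\tau)$ so that the real-analytic correction $E(\cdot)$ transforms compatibly under the rescaling; and (iii) the corresponding reindexing applied directly to the mock series (\ref{eq:1.1}) defining $\Phi^{[m;s]}$.

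The hard part will be the careful bookkeeping in steps (ii) and (iii): matching the non-holomorphic Zwegers contributions through two different rescalings, and tracking the interaction between the outer $\ZZ/M\ZZ$-summation and the theta indexing in $\ZZ/2(m+1)\ZZ$. It is precisely here that the coprimality hypothesis $\gcd(M, 2m+2) = 1$ (for $m > 0$) enters: via the Chinese Remainder Theorem, it ensures that the outer index $\ell \in \ZZ/M\ZZ$ produced by the distribution relation and the inner theta index $j \in \ZZ/2(m+1)\ZZ$ combine bijectively onto $\ZZ/2(m+1)M\ZZ$, so that the double summations on the two sides of $(\star)$ match up term by term without collision or degeneracy.
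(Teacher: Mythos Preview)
Your proposal is correct and mirrors the paper's argument: after the reduction via Remark~\ref{rem:1.13}, the paper splits $\tilde\Phi^{[m;s]}$ into $\Phi^{[m;s]}$ and $\Phi^{[m;s]}_{\rm add}$ and proves the distribution identity separately for each piece---your sub-claim (iii) is Lemma~\ref{lem:2.3}, and your sub-claims (i)--(ii) are Lemma~\ref{lem:2.5} (assembled into Lemma~\ref{lem:2.6}), with the coprimality entering exactly through the reindexing (\ref{eq:2.3})--(\ref{eq:2.4a}). The one point you leave implicit is the case $m=0$, where $\Phi^{[0;s]}_{\rm add}=0$ and coprimality is unnecessary (handled in the paper via Remark~\ref{rem:2.4}).
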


Given coprime positive integers $p$ and $q$, for each integer $n\in [s+1, s+q]$ there exist unique integers $n^\prime \in [s+1,s+p]$ and $b_n$, such that
\begin{equation}
	\label{eq:2.3}
n= n^\prime q + b_n\, p.	
\end{equation}
Furthermore, the set
\begin{equation}
\label{eq:2.4a}
I^{[s]}_{q,p} := \{b_n|\, n\in [s+1, s+q] \}
\end{equation}
consists of $q$ distinct integers. Any $n\in \ZZ$ can be uniquely represented in the form (\ref{eq:2.3}), where $n^\prime \in \ZZ$ and $b_n \in I^{[s]}_{q,p}$, and this decomposition has the following properties :

\begin{list}{}{}
\item (i) $n \geq s+1$ iff $n^\prime \geq s+1$;

\item (ii) if $j, j_0 \in \ZZ$ are such that $j\equiv q j_0\;\text{mod} \;p $, 
then $n \equiv \pm j \;\text{mod}\; 
p$ iff $n^\prime \equiv \pm j_0 \;\text{mod}\; p$.
\end{list}

We shall apply this setup to $p=2m+2$, $q=M$, and let $I^{[s]}
=I^{[s]}_{M,\, 2m+2}$. The same proof as that of Lemma 6.2 (a) in [KW] (except that we expand $\Phi^{[m;\,s]}_{1} (\tau, -z_2, -z_1)$ in the domain $\Im\,z_2>0$) gives the following result.

\begin{lemma} 
\label{lem:2.3}
Let $M$ be a positive integer and $m$ be a non-negative integer, such that \linebreak $\gcd (M,\,2m+2)=1$. Then  

\vspace{2ex}
$\Phi^{[m;\,s]} \left(\frac{\tau}{M}, \frac{z_1}{M}, \frac{z_2}{M}, t \right)$

\vspace{1.5ex}
$=\sum\limits_{0 \leq a < M \atop b \in I^{[s]}} e^{\frac{2\pi i(m+1)}{M} (az_1+(a+2b)z_2) } \;q^{\frac{m+1}{M} a(a+2b)}\; 
\Phi^{[m;s]} (M\tau, z_1+ (a+2b) \tau, z_2+a\tau, t)$.
\end{lemma}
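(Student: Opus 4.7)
The plan is to expand both summands of
\[
\Phi^{[m;s]}(\tau/M, z_1/M, z_2/M, t) = e^{2\pi i (m+1)t}\bigl(\Phi_1^{[m;s]}(\tau/M, z_1/M, z_2/M) - \Phi_1^{[m;s]}(\tau/M, -z_2/M, -z_1/M)\bigr)
\]
as explicit iterated series via geometric expansion of the denominators, then reindex using the decomposition (\ref{eq:2.3}) with $(p,q) = (2m+2, M)$ to recognize the result as the RHS. This is the template of Lemma 6.2(a) in \cite{KW}; the only modification, flagged in the paper, is that the two summands are expanded in different domains ($0 < \Im z_1 < \Im\tau$ for the first, $0 < \Im z_2 < \Im\tau$ for the second), since each one has a privileged $z$-variable in the denominator.

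For the first summand I would fix $0 < \Im z_1 < \Im\tau$, so that $|e^{2\pi i z_1/M} q^{j/M}| < 1$ iff $j \geq 0$, and expand
\[
\frac{1}{1 - e^{2\pi i z_1/M} q^{j/M}} = \sum_{n \geq 0} e^{2\pi i n z_1/M} q^{nj/M} \ \text{if } j \geq 0, \quad = -\sum_{n \leq -1} e^{2\pi i n z_1/M} q^{nj/M} \ \text{if } j < 0.
\]
Substituting into (\ref{eq:1.1}) and setting $N = (m+1)j + n - s$ presents $\Phi_1^{[m;s]}(\tau/M, z_1/M, z_2/M)$ as a signed sum over pairs $(j, N) \in \ZZ^2$ in two wedge-shaped regions. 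Next I would apply the substitution $j = j'M + a$ with $a \in \{0, 1, \ldots, M-1\}$, and $N = n''M + b(2m+2)$ with $b \in I^{[s]}$, which is unique by the coprimality hypothesis. Properties (i)--(ii) of the excerpt guarantee that the two wedge conditions on $(j, N)$ translate exactly into the condition $\mathrm{sign}(n'') = \mathrm{sign}(j')$ for each fixed $(a, b)$. The inner sum over $(j', n'')$ then resums, running the geometric series backwards, into
\[
q^{\tfrac{(m+1)}{M}a(a+2b)}\, e^{\tfrac{2\pi i(m+1)}{M}(az_1 + (a+2b)z_2)}\, \Phi_1^{[m;s]}\bigl(M\tau,\, z_1 + (a+2b)\tau,\, z_2 + a\tau\bigr),
\]
which is exactly the $(a,b)$-contribution to the first piece of the RHS.

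The second summand $\Phi_1^{[m;s]}(\tau/M, -z_2/M, -z_1/M)$ is treated identically, but in the domain $0 < \Im z_2 < \Im\tau$, where the geometric expansion of $1/(1 - e^{-2\pi i z_2/M} q^{j/M})$ has the correct sign split on $j$; the output is $-\Phi_1^{[m;s]}(M\tau, -z_2 - a\tau, -z_1 - (a+2b)\tau)$ with the same prefactor. Summing the two contributions, multiplying by $e^{2\pi i(m+1)t}$, and extending by meromorphic continuation in $(z_1, z_2)$ yields the stated identity.

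The main obstacle is the combinatorial step in the second paragraph: one must check that the change of variables $N = n''M + b(2m+2)$, combined with $j = j'M + a$, carries the wedge region $\{j \geq 0,\ n \geq 0\}$ (resp.\ $\{j < 0,\ n < 0\}$) precisely onto $\{j' \geq 0,\ n'' \geq 0\}$ (resp.\ $\{j' < 0,\ n'' < 0\}$) for each fixed $(a, b)$. This is the content of properties (i)--(ii) of the decomposition (\ref{eq:2.3}) and is exactly where the coprimality hypothesis $\gcd(M, 2m+2) = 1$ enters. Once this bijection of index sets is verified, the identity reduces to a straightforward term-by-term check of exponents.
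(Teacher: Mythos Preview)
Your overall strategy---geometric expansion, reindexing via the decomposition (\ref{eq:2.3}), and resummation---is exactly the method the paper invokes (it simply cites Lemma~6.2(a) of \cite{KW}, noting the domain $\Im z_2>0$ for the second piece, which you also handle correctly). So the architecture of your proof is right.

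There is, however, a slip in the specific variable you propose to decompose. You set $N=(m+1)j+n-s$ and write $N=n''M+(2m+2)b$. But $N$ depends on \emph{both} $j$ and $n$, so the wedge $\{j\geq 0,\,n\geq 0\}$ becomes, in the $(j,N)$-plane, the region $\{j\geq 0,\, N\geq (m+1)j-s\}$---a $j$-dependent half-plane, not a product region. Property~(i) of (\ref{eq:2.3}), which asserts $N\geq s+1\iff n''\geq s+1$, therefore does not yield the clean sign-matching $\mathrm{sign}(n'')=\mathrm{sign}(j')$ you claim. The bijection of index sets breaks down with this choice.

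The fix is to decompose $j$ and a variable depending on $n$ \emph{alone}, exactly as in the fully written-out analogue Lemma~\ref{lem:5.3} (where the auxiliary variable is $k=2n+1$, independent of $j$). Concretely, matching $z_2$-exponents shows $j=MJ+(a+2b)$, while matching the remaining exponents gives $s-n=(s-n')M+(2m+2)b$, so the decomposition (\ref{eq:2.3}) with $b\in I^{[s]}$ should be applied to $s-n$. Property~(i) then reads $s-n\geq s+1\iff s-n'\geq s+1$, i.e.\ $n\leq -1\iff n'\leq -1$, which is precisely the wedge condition needed. With this correction the rest of your argument goes through.
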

\hfill $\Box$
\begin{remark}
\label{rem:2.4}
Since Remark 6.6 from[KW] holds for arbitrary $s$, we see that Lemma \ref{lem:2.3} holds if $gcd(M,m+1)=1$, the set $I^{[s]}$ is replaced by the set $I^{[s]}_{M,\,m+1}$, and $2b$ is replaced in each summand by $b$.
\end{remark}
The proof of the following lemma is the same as that of Lemma 6.3 from [KW].

\begin{lemma}
\label{lem:2.5}
Let $M$ and $m$ be as in Theorem \ref{th:2.2}. For each integer $j \in [-s,-s+2m+1]$ take the unique integer $j_0$ in the same interval, such that $j \equiv Mj_0 \;\text{mod}\; 2m+2$. Then

\begin{list}{}{}
	\item(1) $R_{j;\, m+1} \left(\frac{\tau}{M}, \frac{v}{M} \right) =
	\sum\limits_{b \in I^{[s]}} q^{-\frac{m+1}{M}b^2} 
e^{-\frac{4\pi i (m+1)}{M}bv} \;\; R_{j_0;\,m+1} \left(M\tau, v+b\tau \right).$
	
	\item(2) $\Theta_{\pm j, m+1} \left(\frac{\tau}{M}, \frac{2u}{M}\right) = \sum\limits_{a\in I^{[s]}}{{q^{\frac{m+1}{M}a^2}}\; e^{\frac{4\pi i (m+1)}{M} au} \;\Theta_{\pm j_0,\,m+1}} (M\tau, 2(u+a\tau)).$
\end{list}
\hfill $\Box$
\end{lemma}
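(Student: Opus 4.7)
Both identities are proved by direct expansion, exploiting the bijection induced by the decomposition (\ref{eq:2.3}) with $p = 2m+2$ and $q = M$: every integer $n$ is uniquely written as $n = N'M + b(2m+2)$ with $N' \in \ZZ$ and $b \in I^{[s]}$, and by property (ii) the condition $n \equiv \pm j \bmod 2(m+1)$ translates into $N' \equiv \pm j_0 \bmod 2(m+1)$. This is the bookkeeping underlying both claims.

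For claim (2), I would expand $\Theta_{j,m+1}(\tau/M, 2u/M) = \sum_{n \equiv j \bmod 2(m+1)} q^{n^2/(4(m+1)M)} e^{2\pi i n u/M}$ using (\ref{eq:1.2}). Substituting $n = N'M + b(2m+2)$ and expanding $n^2 = M^2 N'^2 + 2MN'b(2m+2) + b^2(2m+2)^2$, the $N'$-independent terms collect into the announced prefactor $q^{(m+1)b^2/M} e^{4\pi i(m+1)bu/M}$, while the remaining sum over $N' \equiv j_0 \bmod 2(m+1)$ reassembles into $\Theta_{j_0, m+1}(M\tau, 2(u+b\tau))$. The case $-j$ is identical, since $-j \equiv M(-j_0) \bmod (2m+2)$ and $I^{[s]}$ does not depend on the sign.

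For claim (1), I would apply the same substitution inside the defining series (\ref{eq:1.4}). Three pieces must be checked: (a) the exponential $e^{-\pi i n^2\tau/(2(m+1)M) - 2\pi i n v/M}$ splits, via the same expansion of $n^2$, as the $b$-only prefactor $q^{-(m+1)b^2/M} e^{-4\pi i(m+1)bv/M}$ times the exponential $e^{-\pi i N'^2 M\tau/(2(m+1)) - 2\pi i N'(v+b\tau)}$ appearing in $R_{j_0;m+1}(M\tau, v+b\tau)$; (b) since $\Im(M\tau) = M\Im\tau$, the argument of $E$ becomes $(N' + 2(m+1)\Im(v+b\tau)/\Im(M\tau))\sqrt{\Im(M\tau)/(m+1)}$, matching that of $R_{j_0;m+1}(M\tau, v+b\tau)$; and (c) the sign $\mathrm{sgn}(n + \tfrac12 + j - 2(m+1))$ equals $\mathrm{sgn}(N' + \tfrac12 + j_0 - 2(m+1))$ under the bijection.

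Pieces (a) and (b) are elementary algebraic rearrangements. The crux is (c), and this is precisely why $I^{[s]}$ was constructed via the decomposition (\ref{eq:2.3}): property (i) of that decomposition — that ``$n$ is above the cut-off'' matches ``$N'$ is above the cut-off'' — is what ensures that the partition of $\{n \equiv -j \bmod 2(m+1)\}$ into its positive and negative halves transfers consistently to the product partition of $\{N' \equiv -j_0 \bmod 2(m+1)\}$ times $I^{[s]}$ (after the shift that accounts for the $j$- versus $j_0$-threshold). Granting (a)–(c), regrouping the doubly-indexed sum by $b \in I^{[s]}$ yields claim (1); the main work is in getting this sign identification (c) honest, following the template of Lemma 6.3 of \cite{KW}.
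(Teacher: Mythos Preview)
Your proposal is correct and follows the same route as the paper, which simply refers the reader to Lemma 6.3 of \cite{KW}; your outline in fact supplies more detail than the paper does. One small simplification for part (c): because $j$ and $j_0$ both lie in $[-s,-s+2m+1]$, the residue $-j$ (and likewise $-j_0$) lies in $[s-2m-1,s]$, so within the congruence class $n\equiv -j\bmod 2(m+1)$ the sign $\mathrm{sgn}(n+\tfrac12+j-2(m+1))$ is $+1$ exactly when $n\geq s+1$; thus the sign threshold is literally the $s+1$ threshold of property (i), and no additional ``shift'' is needed to match the $j$- and $j_0$-thresholds.
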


>From the definition (1.13) of $\varphi^{[m;\,s]}_{\text{add}}$ and Lemmas \ref{lem:1.6} (2) and \ref{lem:2.5} we obtain 

\begin{lemma}
\label{lem:2.6}
Let $M$ and $m$ be as in Lemma \ref{lem:2.3}. Then 

$$
\varphi^{[m;\,s]}_{\text{add}} \left(\frac{\tau}{M}, \frac{u}{M}, \frac{v}{M}, t \right)
=\sum\limits_{a,b \in I^{[s]}}q^{\frac{m+1}{M} (a^2-b^2)} 
e^{\frac{4 \pi i (m+1)}{M}(au-bv)} \varphi^{[m;\,s]}_{\text{add}}
(M\tau, u+a\tau, v+b\tau,t).
$$
%\end{array}
%\end{equation*}
%$$
\hfill $\Box$
\end{lemma}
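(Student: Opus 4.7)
The strategy is a direct substitution: expand $\varphi^{[m;\,s]}_{\text{add}}(\tau/M, u/M, v/M, t)$ using definition (\ref{eq:1.12}), apply Lemma \ref{lem:2.5}(1) to the $R$--factor and Lemma \ref{lem:2.5}(2) to each of the two $\Theta$--factors, and then interchange the order of summation.

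First I would write
\[
\varphi^{[m;\,s]}_{\text{add}}\!\left(\tfrac{\tau}{M},\tfrac{u}{M},\tfrac{v}{M},t\right)=-\tfrac{1}{2}e^{2\pi i (m+1)t}\sum_{j=-s}^{-s+2m+1}R_{j;m+1}\!\left(\tfrac{\tau}{M},\tfrac{v}{M}\right)(\Theta_{j,m+1}-\Theta_{-j,m+1})\!\left(\tfrac{\tau}{M},\tfrac{2u}{M}\right).
\]
Since $\gcd(M,2m+2)=1$, the map $j\mapsto j_0$ in Lemma \ref{lem:2.5} is a bijection of $[-s,-s+2m+1]$ to itself, and $(-j)_0\equiv -j_0\bmod 2(m+1)$. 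Because $\Theta_{k,m+1}$ and $R_{k;m+1}$ depend on $k$ only modulo $2(m+1)$ (here one uses Lemma \ref{lem:1.6}(1), (2) to justify replacing $(-j)_0$ by $-j_0$ in the $R$-- and $\Theta$--expansions), applying Lemma \ref{lem:2.5}(2) to both $\Theta_{\pm j,m+1}(\tau/M,2u/M)$ yields expansions with the same exponential prefactor $q^{\frac{m+1}{M}a^2}e^{\frac{4\pi i(m+1)}{M}au}$ and with $\pm j$ replaced by $\pm j_0$.

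Next I would collect the three resulting geometric factors into
\[
q^{\frac{m+1}{M}(a^2-b^2)}\,e^{\frac{4\pi i(m+1)}{M}(au-bv)},
\]
pull the double sum over $a,b\in I^{[s]}$ outside the $j$--sum, and reindex the inner sum by $j_0$ using the bijection. The inner sum is then precisely the definition (\ref{eq:1.12}) of $\varphi^{[m;\,s]}_{\text{add}}(M\tau,u+a\tau,v+b\tau,t)$, completing the proof.

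The computation is essentially mechanical; the only delicate point is the bookkeeping of the symmetry $j\leftrightarrow -j$ under the bijection $j\mapsto j_0$, which is why Lemma \ref{lem:1.6}(2) (combined with the periodicity (1) and the fact that $-j_0$ and $(-j)_0$ differ by a multiple of $2(m+1)$) is invoked. No new analytic input is needed: the result is a formal consequence of the definitions and of Lemma \ref{lem:2.5}.
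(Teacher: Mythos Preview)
Your proposal is correct and matches the paper's own argument, which simply says the lemma follows ``from the definition (\ref{eq:1.12}) of $\varphi^{[m;\,s]}_{\text{add}}$ and Lemmas \ref{lem:1.6}(2) and \ref{lem:2.5}.'' Your discussion of $(-j)_0$ versus $-j_0$ is a bit superfluous since Lemma \ref{lem:2.5}(2) already states the identity for both signs $\pm j \mapsto \pm j_0$, but this does no harm.
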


\begin{proof} \emph{of Theorem} \ref{th:2.2}.
It follows from Lemma \ref{lem:1.1} (5) and Theorem \ref{th:1.11} (3) that each term in the RHS of the equation in Lemma \ref{lem:2.3} remains unchanged if 
we add to $a$ or to $b$ an integer multiple of $M$. Similarly, it follows from Theorem \ref{th:1.12} (4) that each term in the RHS of the equation  in Lemma \ref{lem:2.6} remains unchanged if we add to $a$ or to $b$ an integer multiple 
of $M$. Hence, by Remark \ref{rem:1.13}, Theorem \ref{th:2.2} follows from Lemmas \ref{lem:2.3} and \ref{lem:2.6} and, for $m=0$, from Remark \ref{rem:2.4} and the observation that $\varphi^{[0;\,s]}_{\text{add}}=0$.
\end{proof}

%%%%%%%%%%%%%%%%%%%%%%%%%%%%%%%%%%%%%%%%%%%%%%%%%%%%%%%
As in [KW], let $\xi = -\frac{1}{2}(\alpha_1+ \alpha_2)$ and consider the 
twisted normalized admissible characters and supercharacters $t_{\xi} ch^{\pm}_{\Lambda}$ and their denominators and superdenominators $t_{\xi} \widehat{R}^{\pm}$. As in [KW], we shall use the following notation :
\begin{equation} \label{eq:2.6a}
ch_{\Lambda;\,0}^{(0)}=ch^{-}_{\Lambda}, \;\; ch^{(\frac{1}{2})}_{\Lambda;\,0}= ch^{+}_{\Lambda},\;\; ch^{(0)}_{\Lambda;\,\frac{1}{2}}=t_{\xi} ch^{-}_{\Lambda}, \;\; ch^{(\frac{1}{2})}_{\Lambda;\,\frac{1}{2}} = t_{\xi} ch^{+}_{\Lambda},
\end{equation}

\noindent and similarly for their (super)denominators :
\begin{equation} \label{eq:2.6b}
\widehat{R}^{(0)}_{\;\,0} = \widehat{R}^{-},\,\, \widehat{R}^{(\frac{1}{2})}_{\;\;0}= \widehat{R}^{+},\,\, \widehat{R}^{(0)}_{\,\,\frac{1}{2}}= t_{\xi} \widehat{R}^{-},\,\, \widehat{R}^{(\frac{1}{2})}_{\;\,\frac{1}{2}}=t_{\xi} \widehat{R}^{+}.
\end{equation}

Recall that the untwisted and twisted (super)denominators for 
$\widehat{\sl}_{2|1}$ can be conveniently 
written in terms of Jacobi's four theta functions of degree $2$
(see (7.4) in [KW]) :
\begin{equation} \label{eq:2.6c}
\widehat{R}^{(\epsilon)}_{\,\,\epsilon^\prime} (\tau, z_1, z_2, t) = 
(-1)^{2\epsilon (1-2\epsilon^\prime)} \, i e^{2\pi i t} \frac{\eta(\tau)^3 
\vartheta_{11} (\tau, z_1+z_2)} 
{\vartheta_{1-2\epsilon^\prime,\, 1-2\epsilon} (\tau, z_1) 
\vartheta_{1-2\epsilon^\prime,\, 1-2\epsilon} (\tau, z_2)},
\end{equation} 
where $\epsilon,\epsilon^\prime = 0$ or $\frac{1}{2}$.

As in the case $s=0$ in [KW], Section 7, the non-twisted and twisted 
normalized admissible (super)characters can be written in terms of the 
following functions :
\begin{equation}
	\label{eq:2.4}
\Psi^{[M,\, m,\, s;\,\epsilon]}_{a,\, b;\,\epsilon^{\prime}} (\tau,z_1,z_2,t) = q^{\frac{(m+1)ab}{M}} e^{\frac{2\pi i (m+1)}{M} (bz_1+az_2)} \Phi^{[m;\,s]} 
(M\tau, z_1+a\tau+\epsilon, z_2+b\tau+\epsilon, \frac{t}{M}),
\end{equation}
where $\epsilon, \epsilon^\prime=0$ or $\frac{1}{2}$, and $a,b\in \epsilon^\prime + \ZZ$.
Namely, Lemma \ref{lem:2.1}, along with Lemma \ref{lem:1.1} (5) and (6), 
implies the following character formulae.

\begin{proposition}
\label{prop:2.7}
\begin{list}{}{}
\item(1) If $\Lambda=\Lambda^{[s]+}_{j,\, k}$, where $j, k \in \ZZ$, $0 \leq j,k,j+k \leq M-1$, then 
$$
\left(\widehat{R}^{(\epsilon)}_{\,\,\epsilon^\prime} ch^{(\epsilon)}_{\Lambda;\,\epsilon^\prime} \right) (\tau, z_1, z_2, t) = \Psi^{[M,\,m,\,s;\,\epsilon]}_{j+\epsilon^\prime,\, k+\epsilon^\prime,\, \epsilon^\prime} (\tau, z_1,z_2,t).
$$

\item(2) If $\Lambda=\Lambda^{[s]-}_{j,\,k}$, where $j,k \in \ZZ$, $1 \leq j, k, j+k \leq M$, then 
$$\left(\widehat{R}^{(\epsilon)}_{\,\,\epsilon^\prime} 
ch^{(\epsilon)}_{\Lambda;\, \epsilon^{\prime}} \right) (\tau, z_1, z_2, t) = -\Psi^{[M,\,m,\,s ;\,\epsilon]}_{M+\epsilon^\prime-j,\, M+\epsilon^\prime-k;\,\epsilon^\prime} (\tau, z_1, z_2, t).
$$
\end{list}
\hfill $\Box$
\end{proposition}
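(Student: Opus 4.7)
The plan is to reduce to the base case $\epsilon=\epsilon'=0$, which follows directly from Lemma~\ref{lem:2.1}, and then propagate to the remaining three parameter choices via two standard variable substitutions: a shift $z_i\mapsto z_i+1/2$ (switching between supercharacter and character, controlled by $\epsilon$) and a shift $z_i\mapsto z_i+\tau/2$ together with the standard $t_\xi$-prefactor (implementing the twist $t_\xi$ with $\xi=-\tfrac12(\alpha_1+\alpha_2)$, controlled by $\epsilon'$).

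For $\epsilon=\epsilon'=0$ the formulas of Lemma~\ref{lem:2.1} and definition~(\ref{eq:2.4}) coincide on the nose: in part~(1) with $(a,b)=(k_1,k_2)$ giving $\Psi^{[M,m,s;0]}_{k_1,k_2;0}$, and in part~(2) with $(a,b)=(M-k_1,M-k_2)$ and the overall minus sign, giving $-\Psi^{[M,m,s;0]}_{M-k_1,M-k_2;0}$. To pass from $\epsilon=0$ to $\epsilon=1/2$, I would substitute $z_i\mapsto z_i+1/2$; in the coordinates $e^{-\alpha_i(h)}=e^{2\pi iz_i}$ this flips the sign of each odd-root factor $e^{-\alpha_1}$, $e^{-\alpha_2}$ while leaving the even-root factor $e^{-(\alpha_1+\alpha_2)}$ intact, so $\widehat R^-\mathrm{ch}^-_\Lambda$ is converted into $\widehat R^+\mathrm{ch}^+_\Lambda$; inside $\Phi^{[m;s]}(M\tau,z_1+k_1\tau,z_2+k_2\tau,t/M)$ this substitution contributes $\epsilon=1/2$ to each of the last two arguments, exactly matching the $\epsilon$-slot of~(\ref{eq:2.4}). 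To pass from $\epsilon'=0$ to $\epsilon'=1/2$, I would substitute $z_i\mapsto z_i+\tau/2$ together with the standard $t_\xi$-prefactor on a level-$k$ function, with $k=(m+1)/M-1$; the $\tau/2$-shift is absorbed into the $k_i\tau$-term inside $\Phi^{[m;s]}$, producing $(k_i+\tfrac12)\tau$ and hence the half-integer values of $a,b$ called for by $\epsilon'=1/2$. Lemma~\ref{lem:1.1}(5) is used to pull any integer $\tau$-shifts that appear in the course of the substitution outside $\Phi^{[m;s]}$, while Lemma~\ref{lem:1.1}(6) is used in the minus case of~(2) to handle the overall sign arising from the Weyl reflection $y=r_{\alpha_1+\alpha_2}$ implicit in the definition of $\Lambda^{[s]-}_{j,k}$.

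The main obstacle is the phase bookkeeping. Under the two substitutions, the outer prefactor $q^{(m+1)k_1k_2/M}e^{\frac{2\pi i(m+1)}{M}(k_2z_1+k_1z_2)}$ from Lemma~\ref{lem:2.1} picks up extra exponentials and powers of $q$, and these must combine exactly with the $t_\xi$-prefactor to reproduce the clean prefactor $q^{(m+1)ab/M}e^{\frac{2\pi i(m+1)}{M}(bz_1+az_2)}$ of~(\ref{eq:2.4}), with $(a,b)=(k_1+\epsilon',k_2+\epsilon')$ in case~(1) and $(M+\epsilon'-k_1,M+\epsilon'-k_2)$ in case~(2). Although routine, this matching of phases is the actual content of the proof, and it is here that both Lemma~\ref{lem:1.1}(5) and (6) do the work.
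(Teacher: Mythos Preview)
Your approach is essentially the same as the paper's: the paper states only that Lemma~\ref{lem:2.1} together with Lemma~\ref{lem:1.1}(5) and (6) implies the result, and you have correctly unpacked this into the base case $\epsilon=\epsilon'=0$ plus the two substitutions (the half-integer shift in $z_i$ for $\epsilon$ and the $t_\xi$-twist for $\epsilon'$). Your identification of the phase bookkeeping as the actual content, and of Lemma~\ref{lem:1.1}(5),(6) as the tools that make the phases match, is exactly right.
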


As in [KW], introduce the modification 
$\overset{\sim}{\Psi}^{[M,\,m,\,s ;\, \epsilon]}_{a,\,b ;\,\epsilon^{\prime}}$ 
of the function 
$\Psi^{[M,\,m,\,s ;\,\epsilon]}_{a,\,b ;\,\epsilon^{\prime}}$ by replacing $\Phi^{[m;\,s]}$ in the RHS of (\ref{eq:2.4}) by its modification $\overset{\sim}{\Phi}^{[m;\,s]}$. The following theorem is immediate by Theorem \ref{th:2.2} and Theorem \ref{th:1.12} (2).

\begin{theorem}
\label{th:2.8}
Let $\epsilon$, $\epsilon^\prime=0$ or $\frac{1}{2}$, and let $j$, $k$  $\in\,\epsilon^\prime+\ZZ/M\ZZ$. Then
$$
\overset{\sim}{\Psi}^{[M,\,m,\,s;\,\epsilon]}_{j,\, k;\,\epsilon^\prime} \left(-\frac{1}{\tau}, \frac{z_1}{\tau}, \frac{z_2}{\tau}, t - \frac{z_1 z_2}{\tau} \right)
= \frac{\tau}{M} \sum\limits_{a,\,b \,\in\, \epsilon + \ZZ/M \ZZ}{e^{-\frac{2\pi i (m+1)}{M} (ak+bj)}} \;\overset{\sim}{\Psi}^{[M,\, m,\, s;\,\epsilon^\prime]}_{a,\, b;\,\epsilon} (\tau, z_1, z_2, t);
$$

$$
\overset{\sim}{\Psi}^{[M,\, m,\, s;\,\epsilon]}_{j,\,k ;\,\epsilon^\prime} (\tau+1, z_1, z_2, t) = e^{\frac{2\pi i (m+1)}{M}\,jk} \;\; \overset{\sim}{\Psi}^{[M,\,m,\,s; |\epsilon-\epsilon^\prime|]} (\tau, z_1, z_2, t).
$$
\hfill $\Box$ 
\end{theorem}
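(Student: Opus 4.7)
The plan is to deduce both identities of Theorem~\ref{th:2.8} directly from the definition~(\ref{eq:2.4}) of $\tilde\Psi^{[M,m,s;\epsilon]}_{a,b;\epsilon'}$ (with $\tilde\Phi^{[m;s]}$ in place of $\Phi^{[m;s]}$), combined with Theorem~\ref{th:2.2} (equivalently Remark~\ref{rem:1.13}) and the modular/elliptic properties of $\tilde\Phi^{[m;s]}$ from Theorem~\ref{th:1.12}.

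For the $T$-identity, I would substitute $\tau\mapsto\tau+1$ into~(\ref{eq:2.4}). The prefactor $q^{(m+1)jk/M}$ contributes the scalar $e^{2\pi i(m+1)jk/M}$ appearing on the right. In the argument of $\tilde\Phi^{[m;s]}$ the first slot becomes $M\tau+M$ (removed by Theorem~\ref{th:1.12}(2)), while the middle slots acquire additional shifts $j$ and $k$. Since $j,k\in\epsilon'+\ZZ$, a quick case check over $(\epsilon,\epsilon')\in\{0,\tfrac12\}^2$ shows that $j+\epsilon-|\epsilon-\epsilon'|$ and $k+\epsilon-|\epsilon-\epsilon'|$ both lie in $\ZZ$; Theorem~\ref{th:1.12}(3) then converts these shifts to $|\epsilon-\epsilon'|$, producing $\tilde\Psi^{[M,m,s;|\epsilon-\epsilon'|]}_{j,k;\epsilon'}(\tau,z_1,z_2,t)$.

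For the $S$-identity, I would expand the left-hand side via~(\ref{eq:2.4}); setting $Z_1:=z_1-j+\epsilon\tau$, $Z_2:=z_2-k+\epsilon\tau$, the arguments of $\tilde\Phi^{[m;s]}$ become $\bigl(-M/\tau,\,Z_1/\tau,\,Z_2/\tau,\,t/M-z_1z_2/(M\tau)\bigr)$. Writing the fourth slot as $T^*-Z_1Z_2/(M\tau)$ with $T^*:=t/M+(Z_1Z_2-z_1z_2)/(M\tau)$, I would apply Theorem~\ref{th:2.2} to expand this $\tilde\Phi^{[m;s]}$-value as $\frac{\tau}{M}$ times a sum over $a,b\in\ZZ/M\ZZ$ of terms $q^{(m+1)ab/M}\,e^{2\pi i(m+1)(bZ_1+aZ_2)/M}\,\tilde\Phi^{[m;s]}(M\tau,\,Z_1+a\tau,\,Z_2+b\tau,\,T^*)$. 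Reindexing $a\mapsto a-\epsilon$, $b\mapsto b-\epsilon$ so that $a,b\in\epsilon+\ZZ/M\ZZ$, the shift $Z_1+(a-\epsilon)\tau=z_1+a\tau-j$ (and its analogue for $Z_2$) differs from $z_1+a\tau+\epsilon'$ by the integer $j+\epsilon'$, and Theorem~\ref{th:1.12}(3) absorbs these integer shifts, matching the $\tilde\Phi^{[m;s]}$-argument inside $\tilde\Psi^{[M,m,s;\epsilon']}_{a,b;\epsilon}$.

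The main obstacle is verifying that all the exponential prefactors recombine to yield exactly the factor $e^{-2\pi i(m+1)(ak+bj)/M}$ predicted on the right. The $1/\tau$ contributions from $e^{-2\pi i(m+1)jk/(M\tau)}$, $e^{2\pi i(m+1)(kz_1+jz_2)/(M\tau)}$, and the phase $e^{2\pi i(m+1)(T^*-t/M)}$ arising from the $t$-shift (itself linear in $1/\tau$ via the cross terms of $Z_1Z_2$) must cancel, while the remaining linear and quadratic pieces in $\tau$, together with $q^{(m+1)ab/M}\,e^{2\pi i(m+1)(bZ_1+aZ_2)/M}$ from Theorem~\ref{th:2.2}, must assemble into the defining factor of $\tilde\Psi^{[M,m,s;\epsilon']}_{a+\epsilon,b+\epsilon;\epsilon}$ plus the bilinear pairing $-(ak+bj)/M$. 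This reduces to a completion-of-squares computation built on the identity $ab+\epsilon(a+b)+\epsilon^2=(a+\epsilon)(b+\epsilon)$; it is routine but is the only nontrivial algebraic step beyond Theorems~\ref{th:2.2} and~\ref{th:1.12}.
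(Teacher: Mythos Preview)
Your proposal is correct and follows essentially the same route as the paper, which states only that the theorem is ``immediate by Theorem~\ref{th:2.2} and Theorem~\ref{th:1.12}(2).'' You have simply spelled out the substitution-and-bookkeeping that the paper leaves implicit; in particular your use of Theorem~\ref{th:1.12}(3) to absorb the integer shifts $j+\epsilon-|\epsilon-\epsilon'|$ (for $T$) and $-j-\epsilon'$ (for $S$), together with the observation that the $t$-dependence of $\tilde\Phi^{[m;s]}$ is through the factor $e^{2\pi i(m+1)t}$, is exactly what is needed to complete the argument.
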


As in [KW], in order to state a unified modular transformation formula for modified normalized admissible characters, it is convenient, for each $s \in \ZZ$, $0 \leq s \leq m$, to introduce the following notations :
$$
ch^{[M,\, m,\, s;\,\epsilon]}_{j+\epsilon^\prime,\, k+\epsilon^\prime;\,\epsilon^\prime} := 
ch^{(\epsilon)}_{\Lambda^{[s]+}_{j,\,k};\, \epsilon^\prime},\;\; 
ch^{[M,\, m,\,s;\,\epsilon]}_{M+\epsilon^\prime-j,\, M+\epsilon^\prime-k;\,\epsilon^\prime} = 
-ch^{(\epsilon)}_{\Lambda^{[s]-}_{j,\,k};\,\epsilon^\prime}.
$$
Then 
$$
\{ch^{[M,\, m,\,s;\,\epsilon]}_{j+\epsilon^\prime,\, k+\epsilon^\prime;\,\epsilon^\prime}| j, k \in \ZZ,\, 0 \leq j, k \leq M-1\}
%\, \epsilon, \epsilon^\prime = 0,\frac{1}{2} \}
$$
\noindent  is (up to a sign) precisely the set of all admissible characters (resp. supercharacters), associated to the pair
$(M,\, \Lambda_{m;\,s})$, if $\epsilon^\prime = 0$ and $\epsilon=\frac{1}{2}$ (resp. $\epsilon=0$), and it is the set 
of all twisted admissible characters (resp. supercharacters), associated to 
the pair $(M,\, \Lambda_{m;\,s})$, if $\epsilon^\prime = \frac{1}{2}$ and 
$\epsilon=\frac{1}{2}$ (resp. $\epsilon=0$). 

In view of these observations, introduce the \emph{modified} normalized characters $(\epsilon=\frac{1}{2}, \epsilon^\prime=0)$, supercharacters $(\epsilon=0, \epsilon^\prime=0)$, twisted characters ($\epsilon = \frac{1}{2},\, 
\epsilon^\prime=\frac{1}{2}$), and twisted supercharacters 
($\epsilon=0, \epsilon^\prime = \frac{1}{2}$), letting 
\begin{equation}
	\label{eq:2.5}
	\overset{\sim}{ch}^{[M,\,m,\,s;\,\epsilon]}_{j,\, k;\,\epsilon^\prime} (\tau, z_1, z_2, t) = \frac{{\overset{\sim}{\Psi}}^{[M,\, m,\,s;\,\epsilon]}_{j,\,k;\,\epsilon^\prime} (\tau, z_1, z_2, t)}{\widehat{R}^{(\epsilon)}_{\epsilon^\prime} (\tau,z_1, z_2, t)}, \;\, j, k \in \epsilon^\prime + \ZZ,\,\, 0\leq j,\,k<M.
\end{equation}
Then from formulae (7.5) and (7.6) in [KW] and Theorem 2.8 we obtain the 
following theorem.

\begin{theorem}
\label{th:2.9}
Let $M$ be a positive integer and let $m$ be a non-negative integer, such that gcd $(M,2m+2)=1$ if $m>0$. One has the following modular transformation 
formulae for each $s\in\ZZ,\, 0\leq s\leq m$ ($\epsilon, \epsilon^\prime=0$ 
or $\frac{1}{2};\,j,k \in \epsilon^\prime + \ZZ,\, 0 \leq j, k<M$) :

$$
\begin{array}{l}
\overset{\sim}{ch}^{[M,\, m,\,s\,;\epsilon]}_{j,\,k;\,\epsilon^\prime} \left(-\frac{1}{\tau}, \frac{z_1}{\tau}, \frac{z_2}{\tau}, t-\frac{z_1 z_2}{\tau}\right)\\
= (-1)^{4 \epsilon \epsilon^\prime} \frac{1}{M} \sum\limits_{a, b\, \in\, \epsilon + \ZZ \atop 0 \leq a, b<M}{e^{-\frac{2\pi i (m+1)}{M} (ak+bj)}} \overset{\sim}{ch}^{[M, m,\,s;\,\epsilon^\prime]}_{a,b;\,\epsilon} (\tau, z_1, z_2, t); \\
\overset{\sim}{ch}^{[M,\, m,\,s;\,\epsilon]}_{j,k;\,\epsilon^\prime} (\tau+1, z_1, z_2, t) =
e^{2\pi i \frac{(m+1)jk}{M}-\pi i \epsilon^\prime} 
\;\overset{\sim}{ch}^{[M, m,\,s;\,|\epsilon-\epsilon^\prime|]}_{j,k;\,\epsilon^\prime} (\tau, z_1, z_2, t).
\end{array}
$$
\end{theorem}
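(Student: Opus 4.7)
The plan is to deduce both transformation formulae by taking ratios of the modular transformations of the numerators $\tilde{\Psi}^{[M,m,s;\epsilon]}_{j,k;\epsilon'}$ provided by Theorem \ref{th:2.8} and those of the (super)denominators $\hat{R}^{(\epsilon)}_{\epsilon'}$ provided by formulae (7.5) and (7.6) of \cite{KW}. This is legitimate because, by definition \eqref{eq:2.5}, one has $\tilde{ch}^{[M,m,s;\epsilon]}_{j,k;\epsilon'} = \tilde{\Psi}^{[M,m,s;\epsilon]}_{j,k;\epsilon'}/\hat{R}^{(\epsilon)}_{\epsilon'}$, and the hypothesis $\gcd(M,2m+2)=1$ enters only through Theorem \ref{th:2.8}, whose proof rests on Theorem \ref{th:2.2}.

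For the $S$-transformation I would first invoke Theorem \ref{th:2.8} to evaluate the numerator at $(-1/\tau, z_1/\tau, z_2/\tau, t - z_1z_2/\tau)$ as $(\tau/M)$ times the sum $\sum_{a,b}e^{-2\pi i(m+1)(ak+bj)/M}\tilde{\Psi}^{[M,m,s;\epsilon']}_{a,b;\epsilon}(\tau,z_1,z_2,t)$. Then, using the explicit expression \eqref{eq:2.6c} of $\hat{R}^{(\epsilon)}_{\epsilon'}$ as a ratio of Jacobi theta functions and $\eta(\tau)^3$, formula (7.5) of \cite{KW} yields $\hat{R}^{(\epsilon)}_{\epsilon'}|_S = (-1)^{4\epsilon\epsilon'}\,\tau\,\hat{R}^{(\epsilon')}_{\epsilon}$. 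Dividing cancels the factor $\tau$, giving the claimed prefactor $(-1)^{4\epsilon\epsilon'}/M$. For the $T$-transformation, Theorem \ref{th:2.8} gives $\tilde{\Psi}^{[M,m,s;\epsilon]}_{j,k;\epsilon'}|_T = e^{2\pi i(m+1)jk/M}\tilde{\Psi}^{[M,m,s;|\epsilon-\epsilon'|]}_{j,k;\epsilon'}$, while formula (7.6) of \cite{KW} supplies $\hat{R}^{(\epsilon)}_{\epsilon'}|_T = e^{\pi i\epsilon'}\hat{R}^{(|\epsilon-\epsilon'|)}_{\epsilon'}$, so the ratio produces exactly $e^{2\pi i(m+1)jk/M - \pi i\epsilon'}$ as required.

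The main obstacle is not structural but bookkeeping: each of the four theta functions $\vartheta_{ab}$ appearing in \eqref{eq:2.6c} contributes its own characteristic-shifting phase under both $S$ and $T$, and these must be combined with the standard phases from $\eta(\tau)^3$ and $\vartheta_{11}(\tau,z_1+z_2)$ so as to produce precisely $(-1)^{4\epsilon\epsilon'}$ under $S$ and $e^{\pi i\epsilon'}$ under $T$. In particular, one must verify that the sign $(-1)^{2\epsilon(1-2\epsilon')}$ built into \eqref{eq:2.6c} combines correctly with the theta transformations so that swapping $\epsilon \leftrightarrow \epsilon'$ under $S$ yields the asymmetric factor $(-1)^{4\epsilon\epsilon'}$ rather than something more complicated. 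This phase tracking is precisely what is carried out in the Appendix and Section 7 of \cite{KW}; once formulae (7.5)-(7.6) there are taken as given, the proof of Theorem \ref{th:2.9} reduces to the direct ratio computation sketched above.
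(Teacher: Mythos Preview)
Your proposal is correct and follows exactly the same approach as the paper: the paper simply states that the theorem follows from formulae (7.5) and (7.6) in \cite{KW} (the $S$- and $T$-transformations of the (super)denominators $\hat{R}^{(\epsilon)}_{\epsilon'}$) together with Theorem~\ref{th:2.8} (the transformations of the numerators $\tilde{\Psi}^{[M,m,s;\epsilon]}_{j,k;\epsilon'}$), which is precisely the ratio computation you describe. Your discussion of the phase bookkeeping is a helpful elaboration of what the paper leaves implicit, but structurally the arguments are identical.
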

%%%%%%%%%%%%%%%%%%%%%%%%%%%%%%%%%%%%%%%%%%%%%%%%%%%%%%%

As in [KW], Section 9, in order to perform the quantum Hamiltonian reduction for $\widehat{sl}_{2|1}$, we need a different choice of the twisting vector : $\xi^\prime=\frac{1}{2}(\alpha_1-\alpha_2)$. Then the obtained twisted denominators differ only by a sign, and only when $\epsilon=\epsilon^\prime=\frac{1}{2}$, so we keep for them the same notation. Their modular transformation formulae also differ by a sign (see equations (9.16) and (9.17) in [KW]). The character formulae differ (in the twisted case) from those, given by Proposition 
\ref{prop:2.7}, and are as follows (cf Proposition 9.2 from [KW]).

\begin{proposition}
\label{prop:2.10}
\begin{list}{}{}
\item(a) If $\Lambda=\Lambda^{[s]+}_{j,k}$, where $j,k\in \ZZ,\, 0\leq j,k,j+k \leq M-1,$ then
$$
\left(\widehat{R}^{(\epsilon)}_{\,\,\epsilon^\prime} ch^{(\epsilon)}_{\Lambda;\,\epsilon^\prime} \right) (\tau, z_1, z_2, t) = \Psi^{[M,\,m,\,s;\,\epsilon]}_{j+\epsilon^\prime,\, k-\epsilon^\prime;\, \epsilon^\prime} (\tau, z_1,z_2,t).
$$

\item(b) If $\Lambda=\Lambda^{[s]-}_{j,\,k}$, where $j,k \in \ZZ$, $1 \leq j, k, j+k \leq M$, then 
$$\left(\widehat{R}^{(\epsilon)}_{\,\,\epsilon^\prime} 
ch^{(\epsilon)}_{\Lambda;\, \epsilon^{\prime}} \right) (\tau, z_1, z_2, t) = -\Psi^{[M,\,m,\,s ;\,\epsilon]}_{M+\epsilon^\prime-j,\, M-\epsilon^\prime-k;\,\epsilon^\prime} (\tau, z_1, z_2, t).
$$
\end{list}
\hfill $\Box$
\end{proposition}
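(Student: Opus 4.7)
The plan is to follow the same strategy as Proposition \ref{prop:2.7}, simply replacing the twisting vector $\xi = -\frac{1}{2}(\alpha_1+\alpha_2)$ by $\xi' = \frac{1}{2}(\alpha_1-\alpha_2)$ and tracking how the sign change on the $\alpha_2$-component propagates. The crucial observation is that, in the coordinates of (\ref{eq:0.1}), the twist $t_{\xi}$ shifts $(z_1,z_2)$ by $(+\tfrac{\tau}{2}, +\tfrac{\tau}{2})$ whereas $t_{\xi'}$ shifts them by $(+\tfrac{\tau}{2}, -\tfrac{\tau}{2})$; this is exactly what turns the second index $k+\epsilon'$ of Proposition \ref{prop:2.7} into $k-\epsilon'$ in the present statement.

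First I would start from Lemma \ref{lem:2.1}, which writes $\widehat{R}^-\mathrm{ch}^-_\Lambda$ for each principal admissible $\Lambda = \Lambda^{[s]\pm}_{j,k}$ as a phase factor times $\Phi^{[m;s]}(M\tau, z_1+a\tau, z_2+b\tau, t/M)$, and use Lemma \ref{lem:1.1}(6) together with the periodicity in Lemma \ref{lem:1.1}(2) to subsume the supercharacter-vs-character passage (parametrised by $\epsilon\in\{0,\frac12\}$) into a $+\epsilon$ shift of $z_1, z_2$ inside $\Phi^{[m;s]}$. Next I would apply the twist $t_{\xi'}$ to both sides: using (\ref{eq:0.5}) for $\gamma = \xi'$ at level $k = \frac{m+1}{M}-1$, this produces the additional $(z_1,z_2) \mapsto (z_1+\tfrac{\tau}{2}, z_2-\tfrac{\tau}{2})$ shift, together with a scalar prefactor arising from the $\lambda(K)\xi'$ and $((\lambda|\xi') + \tfrac12\lambda(K)(\xi'|\xi'))\delta$ terms of (\ref{eq:0.5}). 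Combining the $(\tfrac{\tau}{2}, -\tfrac{\tau}{2})$ twist shift with the $(j\tau, k\tau)$ shift already present yields arguments $z_1 + (j+\epsilon')\tau + \epsilon$ and $z_2 + (k-\epsilon')\tau + \epsilon$ inside $\Phi^{[m;s]}$, which match the definition (\ref{eq:2.4}) of $\Psi^{[M,m,s;\epsilon]}_{j+\epsilon', k-\epsilon';\epsilon'}$ up to the prefactor.

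It then remains to check that the multiplicative prefactors from Lemma \ref{lem:2.1}, from Lemma \ref{lem:1.1}(5) (used to pull the $\pm\tfrac{\tau}{2}$ shifts outside $\Phi^{[m;s]}$), and from the twist scalar produced by (\ref{eq:0.5}), combine to exactly $q^{(m+1)(j+\epsilon')(k-\epsilon')/M}\, e^{2\pi i(m+1)((k-\epsilon')z_1+(j+\epsilon')z_2)/M}$, as required. Part (b) would then follow from part (a) by the same mechanism as in Proposition \ref{prop:2.7}: the structure of $\Lambda^{[s]-}_{j,k}$ (which uses the simple subset $S^-_{j,k}$ and $y = r_{\alpha_1+\alpha_2}$) produces the indices $(M+\epsilon'-j, M-\epsilon'-k)$ together with an overall sign $-1$, both directly visible from Lemma \ref{lem:2.1} combined with Lemma \ref{lem:1.1}(6).

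The main obstacle will be this last bookkeeping step. The quadratic correction in (\ref{eq:0.5}) involves $(\xi'|\xi') = -\tfrac12$, whereas in Proposition \ref{prop:2.7} the analogous correction used $(\xi|\xi) = +\tfrac12$; consequently the quadratic $q$-power must come out as $(j+\epsilon')(k-\epsilon')$ rather than $(j+\epsilon')(k+\epsilon')$, and one has to verify that no spurious $\pm 1$ or fractional $q$-power is left over after the prefactor reorganisation. No new analytic input beyond (\ref{eq:0.5}) and the elliptic transformation rules of Lemma \ref{lem:1.1} is required.
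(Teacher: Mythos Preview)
Your approach is correct and is exactly what the paper does: Proposition~\ref{prop:2.10} is stated without proof and the text immediately before it says ``cf Proposition 9.2 from [KW]'', i.e.\ one repeats the derivation of Proposition~\ref{prop:2.7} with $\xi$ replaced by $\xi'$, tracking the sign flip on the $\alpha_2$-component to obtain $k-\epsilon'$ in place of $k+\epsilon'$.

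One small navigational point: your invocation of Lemma~\ref{lem:1.1}(5) to ``pull the $\pm\tfrac{\tau}{2}$ shifts outside $\Phi^{[m;s]}$'' is not quite the right tool, since that lemma concerns \emph{simultaneous integer} shifts $z_1\mapsto z_1+j\tau,\; z_2\mapsto z_2+j\tau$, whereas here the shifts are asymmetric and half-integral. In fact no such pulling-out is needed: the definition~(\ref{eq:2.4}) of $\Psi^{[M,m,s;\epsilon]}_{a,b;\epsilon'}$ already allows arbitrary real $a,b$, so after applying $t_{-\xi'}$ in coordinates (which sends $(z_1,z_2,t)$ to $(z_1+\tfrac{\tau}{2},\, z_2-\tfrac{\tau}{2},\, t+\tfrac{z_1-z_2}{2}-\tfrac{\tau}{4})$, the analogue of Lemma~\ref{lem:5.10}) one simply matches the resulting expression against~(\ref{eq:2.4}) with $(a,b)=(j+\epsilon',\,k-\epsilon')$. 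The prefactor check you flag as the main obstacle is then a direct substitution, and the passage from (a) to (b) via Lemma~\ref{lem:2.1} and Lemma~\ref{lem:1.1}(6) goes through unchanged from Proposition~\ref{prop:2.7}.
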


\section{Modular transformation formulae for modified characters of admissible $N=2$ modules}
\label{sec:3}

Recall (see \cite{KW}, Section 9) 
that the quantum Hamiltonian reduction associates to a principal admissible 
$\widehat{sl}_{2|1}$-
module $L(\Lambda)$ of level $\frac{m+1}{M}-1$, 
where 
$m,M\in \ZZ,\, m \geq 0,\, M\geq 1$, and 
$gcd(M,2m+2)=1$ if $m>0$, a module 
$H(\Lambda)$ over the $N=2$ superconformal algebra with central charge
\begin{equation}
	\label{eq:3.1}
	c=3(1-\dfrac{2m+2}{M}).
\end{equation}

In $[KW]$ we considered the quantum Hamiltonian reduction for the principal
admissible $\widehat{sl}_{2|1}$-
modules $L(\Lambda)$, such that $\Lambda^0=\Lambda_{m;\,s}$ with $s=0$. Here we consider the ease of arbitrary $s\in \ZZ,\, 0\leq s\leq m$, 
so we may assume that $m\geq 1$.

Recall that $H(\Lambda)=0$ iff $(\Lambda|\alpha_0)\in \ZZ_{\geq 0}$ and that 
$H(\Lambda)$ is irreducible otherwise.
If $M=1$, then a principal admissible $\widehat{sl}_{2|1}$-module $L(\Lambda)$ is partially integrable, hence $(\Lambda|\alpha_0)\in\ZZ_{\geq 0}$ and therefore $H(\Lambda)=0$.
If $M>1$, it follows from the formulas for the principal admissible weights $\Lambda=\Lambda^{[s]\pm}_{k_1,k_2}$ in Section 2, 
that $(\Lambda|\alpha_0)\in\ZZ_{\geq 0}$ iff $k_0=0$ (recall that $k_0=M-k_1-k_2\mp 1$). 

Thus, in what follows we may assume that $M\geq 2$ and $k_0>0$. Then $H(\Lambda)$ is an irreducible module over the $NS$ type $N=2$ superconformal algebra. Using the formulas for principal admissible weights in Section 2 and formulas (9.12) and (9.13) from [KW] for the lowest energy $h_{\Lambda}$ and spin $s_{\Lambda}$ of $H(\Lambda)$, we obtain for them the following explicit formulas :
\begin{equation}
		\label{eq:3.2}
		h_{\Lambda^{[s]\pm}_{k_1,k_2}}=\dfrac{m+1}{M} 
\left( ( k_1\pm\dfrac{1}{2}) ( k_2\pm \dfrac{1}{2}) -\dfrac{1}{4}\right)-
s\left( 
k_{(3\mp 1)/2}\pm \dfrac{1}{2} \right),
\end{equation}
\begin{equation}
	\label{eq:3.3}
	s_{\Lambda^{[s]\pm}_{k_1,k_2}}= \pm \dfrac{m+1}{M} (k_2-k_1)-s.
\end{equation}

Recall that the twisted quantum Hamiltonian reduction associates to a 
principal admissible twisted $\widehat{sl}_{2|1}$-module $L^{\tw}(\Lambda)$, where $\Lambda=\Lambda^{[s]\pm}_{k_1,k_2}$ has level $\dfrac{m+1}{M}-1$, a module $H^{\tw}(\Lambda)$ over the Ramond type $N=2$ superconformal algebra with 
central charge (\ref{eq:3.1}). As before, $gcd(M,2m+2)=1$ and we may assume that $m\geq 1,\, M\geq2$; also the module $H^{\tw}(\Lambda)=0$ iff $k_0=0$, and it is irreducible otherwise. Again, it is easy to compute the corresponding characteristic numbers\;:

%%%%%%%%%%%%%%%%%%%%%%%%%%%%%%%%%%%%%%%%%%%%%%%%%%%%%%%
\begin{eqnarray}
\label{eq:3.4}
	&h^{\tw}_{\Lambda^{[s]+}_{k_1, k_2}} &\!=\; \frac{m+1}{M}\, k_2 (k_1+1)-\frac{m+1}{4M}+\frac{1}{8} - s (k_1+1), \\
	\label{eq:3.5}
	&h^{\tw}_{\Lambda^{[s]-}_{k_1, k_2}} &\!=\; \frac{m+1}{M}\, k_2 (k_1-1) - \frac{m+1}{4M}+\frac{1}{8} - s k_2, \\
	\label{eq:3.6}
&s^{\tw}_{\Lambda^{[s]\pm}_{k_1, k_2}}  &\!=\; \pm \frac{m+1}{M}\, (k_2-k_1) - \frac{m+1}{M} + \frac{1}{2}-s.
\end{eqnarray}

Note that for each $s$ the set of all principal admissible weights 
(with $k_0>0$) of level $\frac{m+1}{M}-1$ is a union of two sets :
$$
\begin{array}{ll}
	\A^+ &= \left\{ \Lambda^{[s]+}_{k_1, k_2}|\, k_1, k_2 \in \ZZ_{\geq 0}, k_1+k_2 \leq M-2 \right\}, \\[2ex]
	\A^{-} &=  \left\{ \Lambda^{[s]-}_{k_1, k_2}|\, k_1, k_2 \in \ZZ_{\geq 1}, k_1+k_2 \leq M \right\}.
\end{array}
$$
Note also that we have the following bijective map :
$$
\nu : \A^+ \rightarrow \A^-,\,\,\, \nu \left(\Lambda^{[s]+}_{k_1, k_2}\right) = \Lambda^{[s]-}_{k_2+1, k_1+1}.
$$
It is immediate to see from (\ref{eq:3.2}), (\ref{eq:3.3}) (resp. (\ref{eq:3.4})-(\ref{eq:3.6})) that 
$$
h_{\Lambda^{[s]+}_{k_1,k_2}} = h_{\nu\left(\Lambda^{[s]+}_{k_1,k_2}\right)}, \;\; s_{\Lambda^{[s]+}_{k_1,k_2}} = s_{\nu\left(\Lambda^{[s]+}_{k_1,k_2}\right)}
\,\,\hbox{for}\,\, \Lambda^{[s]+}_{k_1,k_2} \in \A^+, 
$$
and the same holds for $h^{\tw}$ and $s^{\tw}$. Since the irreducible $N=2$ modules are uniqueley determined by their central charges and the characteristic numbers, the sets $\A^+$ and $\A^-$ correspond to the same set of irreducible $N=2$ modules. Hence, it suffices to consider only the highest weights $\Lambda^{[s]+}_{k_1, k_2} \in \A^+$. 

In order to compute the characters and supercharacters of the corresponding $N=2$ modules $H(\Lambda)$ and $H^{\tw} (\Lambda)$, we use formulae (9.4)  and (9.8) from [KW].

First, we have formula (9.21) from [KW] for the $N=2$ normalized denominators :
$$
\overset{2}{R}^{(\epsilon)}_{\epsilon^\prime} (\tau, z) = 
\frac{ (-1)^{(1-2\epsilon)(1-2\epsilon^\prime)}\eta(\tau)^3}
{\vartheta_{1-2\epsilon^\prime,1-2\epsilon} \;(\tau,z)},
$$

\noindent where $\epsilon, \epsilon^\prime=0$ or $\frac{1}{2}$. 
Here the superscript $\epsilon$ refers to the denominator 
if $\epsilon=\frac{1}{2}$ and to the  superdenominator if $\epsilon=0$, 
and the subscript $\epsilon'$ refers to the Neveu-Schwarz sector if 
$\epsilon^\prime=\frac{1}{2}$, and to the Ramond sector if $\epsilon^\prime=0$.

It is convenient to introduce the following two reindexings of the set 
$\A^{(+)}$ :
$$
\begin{array}{ll}
\A_{NS} &\!=\; \left\{\Lambda_{j,k} = \Lambda^{[s]+}_{j-\frac{1}{2}, k-\frac{1}{2}} |\,\,j,k \in \frac{1}{2} + \ZZ_{\geq 0},\,\, j+k \leq M-1 \right\}, \\[2ex]
\A_{R} &\!=\; \left\{\Lambda_{j,k} = \Lambda^{[s]+}_{j-1, k} |
\,\,j\in \ZZ_{\geq 1},\,k\in \ZZ_{\geq 0},\, j+k\leq M-1 \right\}.
\end{array}
$$
\noindent We let 
$$
H_{NS}(\Lambda_{j,k})=H \left(\Lambda^{[s]+}_{j-\frac{1}{2}, k-\frac{1}{2}}\right),\,
\Lambda_{j,k} \in \A_{NS};\,\, H_{R}(\Lambda_{j,k}) = H \left(\Lambda^{[s]+}_{j-1,k}\right), \,\Lambda_{j,k} \in \A_R.
$$
\noindent It follows from (\ref{eq:3.2})-(\ref{eq:3.6}) that the lowest energy and the spin of these $N=2$ modules (with central charge (\ref{eq:3.1})) 
%$c=3\left(1- \frac{2m+2}{M} \right)$ 
are as follows :
\begin{equation}
	\label{eq:3.7}
h^{NS}_{jk} = \frac{m+1}{M} jk - \frac{m+1}{4M} -sj,\;\; s^{NS}_{jk} = \frac{m+1}{M} (k-j)-s;	
\end{equation} 
\begin{equation}
	\label{eq:3.8}
h^{R}_{jk} = \frac{m+1}{M}jk - \frac{m+1}{4M}+\frac{1}{8}-sj,\;\;s^{R}_{jk} = \frac{m+1}{M} (k-j) + \frac{1}{2}-s.	
\end{equation}

Introduce the following notation for the characters and supercharacters of these $N=2$ modules :
$$
\begin{array}{ll}
ch^{N=2 [M, m,s;\epsilon]}_{j,k;\frac{1}{2}} (\tau,z) &\!\!=\; ch^{\pm}_{H_{NS}(\Lambda_{j,k})} (\tau,z),\, \Lambda_{j,k} \in \A_{NS}; \\[1.5ex]
ch^{N=2 [M, m,s;\epsilon]}_{j,k;0} (\tau,z) &\!\!=\; ch^{\pm}_{H_{R}
(\Lambda_{j,k})} (\tau,z),\, \Lambda_{j,k} \in \A_{R}.
\end{array}
$$

\noindent Formulae (9.4) and (9.8) from [KW] imply the following expressions for these characters :
\begin{equation}
	\label{eq:3.9}
\left( \overset{2}{R}^{(\epsilon)}_{\epsilon^\prime} 
ch^{N=2 [M, m,s;\epsilon]}_{j,k;\epsilon^\prime}\right) (\tau,z) = 
\Psi^{[M, m,s ; \epsilon]}_{j,k;\epsilon^\prime} (\tau, -z, z, 0),
\end{equation}
where the functions $\Psi^{[M,m,s;\epsilon]}_{j,k;\epsilon} (\tau,z_1,z_2,t)$ are defined by (\ref{eq:2.4}) and $j,k \in \epsilon^\prime + \ZZ_{\geq 0}$, subject to restrictions $j+k \leq M-1, \,j>0$.

Introduce the \emph{modified} $N=2$ characters and supercharacters, letting 
$$
\left(\overset{2}{R}^{(\epsilon)}_{\epsilon^\prime} \overset{\sim}{ch}^{N=2[M,m,s;\epsilon]}_{j,k;\epsilon^\prime}\right) (\tau,z) = \overset{\sim}{\Psi}^{[M, m,s;\epsilon]}_{j,k;\epsilon} (\tau,-z,z,0),
$$

\noindent where the modification $\overset{\sim}{\Psi}$ of $\Psi$ was introduced in Section 2 (before Theorem \ref{th:2.8}). Theorem \ref{th:2.8} along with Lemma 9.3 from [KW] give the following modular transformation properties of the modified $N=2$ characters and supercharacters, in the Neveu-Schwarz and Ramond sectors.

\begin{theorem}
\label{th:2.11}
Let $M \in \ZZ_{\geq 2}$ and let $m \in \ZZ_{\geq 0}$ be such that gcd\,$(M,2m+2)=1$ if $m>0$. Let $c= 3 \left(1-\frac{2m+2}{M}\right)$. 
Let $s\in\ZZ$ be such that $0\leq s\leq m$. 
Let $\epsilon, \epsilon^\prime=0$ or $\frac{1}{2}$, and let 
$$
\Omega^{(M)}_{\epsilon}= \{(j,k) \in (\epsilon +\ZZ_{\geq 0})^2 |\,\, j+k \leq M-1,\,j >0\}.
$$ 
Then we have the following modular transformation formulae for 
$\overset{\sim}{ch}^{N=2[M, m,s\,;\epsilon]}_{j, k;\epsilon^\prime},\, 
(j,k)\in \Omega^{(M)}_{\epsilon^\prime}$:
$$
\overset{\sim}{ch}^{N=2[M, m,s\,;\epsilon]}_{j, k;\epsilon^\prime} \left(-\frac{1}{\tau}, \frac{z}{\tau} \right) = e^{\frac{\pi i c}{6\tau} z^2} \sum\limits_{(a,b) \in \Omega^{(M)}_{\epsilon}}{S^{[M, m, \epsilon, \epsilon^\prime]}_{(j,k),(a,b)}\, 
\overset{\sim}{ch}^{N=2[M, m,s\,;\epsilon^\prime]}_{a, b;\epsilon} (\tau, z)},
$$
\noindent where
$$
S^{[M, m, \epsilon, \epsilon^\prime]}_{(j,k),(a,b)} = (-i)^{(1-2\epsilon)(1-2\epsilon^\prime)} \frac{2}{M} e^{\frac{\pi i (m+1)}{M} (j-k)(a-b)} \sin \frac{m+1}{M} (j+k)(a+b)\pi;
$$
$$
\overset{\sim}{ch}^{N=2[M, m,s;\epsilon]}_{j, k;\epsilon^\prime} (\tau+1,z) = e^{\frac{2\pi i (m+1)}{M} jk- \frac{\pi i \epsilon^\prime}{2}} \overset{\sim}{ch}^{N=2[M, m, s;|\epsilon -\epsilon^\prime|]}_{j, k;\epsilon^\prime} (\tau,z).
$$
\end{theorem}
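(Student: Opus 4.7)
The strategy is to derive both modular formulae by specializing Theorem \ref{th:2.8} to the ``diagonal'' line $(z_1, z_2) = (-z, z)$ and then dividing by the $N=2$ denominator $\overset{2}{R}^{(\epsilon)}_{\epsilon'}(\tau, z)$, whose modular properties are recorded in Lemma 9.3 of [KW] (and which is given explicitly by the ratio in the formula above (\ref{eq:3.9})). This is the same dictionary, specialization $(z_1,z_2)=(-z,z)$ plus removal of the denominator, that is used in Section 9 of [KW] to pass from $\widehat{sl}_{2|1}$ to $N=2$ modular data.

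\textbf{T-transformation.} Substituting $(z_1, z_2, t) = (-z, z, 0)$ in the second formula of Theorem \ref{th:2.8} gives
$$\widetilde{\Psi}^{[M,m,s;\epsilon]}_{j,k;\epsilon'}(\tau+1, -z, z, 0) = e^{2\pi i(m+1)jk/M}\, \widetilde{\Psi}^{[M,m,s;|\epsilon-\epsilon'|]}_{j,k;\epsilon'}(\tau, -z, z, 0).$$
The $T$-transformation of $\overset{2}{R}^{(\epsilon)}_{\epsilon'}$ provided by Lemma 9.3 of [KW] contributes exactly the $e^{-\pi i\epsilon'/2}$ phase and the required shift $\epsilon \mapsto |\epsilon-\epsilon'|$ in the superscript.

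\textbf{S-transformation.} In the first formula of Theorem \ref{th:2.8}, set $(z_1, z_2) = (-z, z)$ and $t = -z^2/\tau$ so that $t - z_1 z_2/\tau = 0$. Since $\Psi^{[M,m,s;\epsilon']}_{a,b;\epsilon}$ (and hence its real-analytic modification, because $\Phi^{[m;s]}_{\mathrm{add}}$ has the same $t$-dependence through the overall $e^{2\pi i(m+1)t/M}$ factor visible in \eqref{eq:15}) depends on $t$ only by this phase, we obtain
$$\widetilde{\Psi}^{[M,m,s;\epsilon]}_{j,k;\epsilon'}\!\left(-\tfrac{1}{\tau}, -\tfrac{z}{\tau}, \tfrac{z}{\tau}, 0\right) = \frac{\tau}{M}e^{-\frac{2\pi i(m+1)z^2}{M\tau}}\!\!\!\sum_{a,b\in\epsilon+\ZZ/M\ZZ}\!\!\!\! e^{-\frac{2\pi i(m+1)(ak+bj)}{M}}\, \widetilde{\Psi}^{[M,m,s;\epsilon']}_{a,b;\epsilon}(\tau, -z, z, 0).$$
Dividing by $\overset{2}{R}^{(\epsilon)}_{\epsilon'}(-1/\tau, z/\tau)$ and using Lemma 9.3 of [KW] together with the explicit product formula (\ref{eq:2.6c}) of [KW] converts the prefactor into $(1/M)(-i)^{(1-2\epsilon)(1-2\epsilon')}e^{\pi i c z^2/(6\tau)}$, after combining $-2\pi i(m+1)z^2/(M\tau)$ with the quadratic exponential from $\vartheta$-transformation.

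\textbf{Collapsing the sum.} The main obstacle is reducing the $M^2$-term double sum over $(a,b)\in(\epsilon+\ZZ/M\ZZ)^2$ to the sum over $\Omega^{(M)}_\epsilon$ decorated with the sine kernel. This uses the antisymmetry
$$\widetilde{\Psi}^{[M,m,s;\epsilon']}_{-a,-b;\epsilon}(\tau, -z, z, 0) = -\widetilde{\Psi}^{[M,m,s;\epsilon']}_{a,b;\epsilon}(\tau, -z, z, 0),$$
which descends from $\Phi^{[m;s]}(\tau, -z_2, -z_1, t) = -\Phi^{[m;s]}(\tau, z_1, z_2, t)$ (Lemma \ref{lem:1.1}(6)), together with the fact that the modification \eqref{eq:15} also reverses sign under this involution (by evenness of $R_{j;m+1}(\tau,v)$ in $v$ from Lemma \ref{lem:1.6}(2) and oddness of $\Theta_{j,m+1}-\Theta_{-j,m+1}$ via (\ref{eq:1.a})). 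Pairing $(a,b)$ with $(M-a, M-b)$ in the sum turns $e^{-2\pi i(m+1)(ak+bj)/M}$ into an $e^{-\pi i(m+1)(j-k)(a-b)/M}$ phase times the sine factor; terms with $a+b\equiv 0\pmod M$ (corresponding to $k_0 = 0$, i.e.\ the erased $N=2$ modules) are killed by the sine, and the residual choice of half-fundamental domain yields precisely $\Omega^{(M)}_\epsilon$. The bookkeeping of half-integer shifts in the Ramond ($\epsilon = 0$) case, and the verification that $\widetilde{\Psi}$ respects the relabelings implicit in Lemmas \ref{lem:1.1}(2)--(5) and Theorem \ref{th:1.12}(4), constitutes the principal technical calculation.
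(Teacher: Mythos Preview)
Your approach is exactly the one the paper indicates: Theorem \ref{th:2.8} specialized to $(z_1,z_2,t)=(-z,z,0)$, combined with Lemma 9.3 of [KW] for the $N=2$ denominator. The paper itself offers no further detail beyond referring to Theorem 9.4 of [KW] for $s=0$ and stating that the general $s$ case is identical, so your write-up is more explicit than the paper's.

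Two small slips in the bookkeeping, neither fatal:

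\emph{The antisymmetry has the wrong index order.} From Lemma \ref{lem:1.1}(6) (and its modified version) one gets, on the line $(z_1,z_2)=(-z,z)$,
\[
\widetilde{\Psi}^{[M,m,s;\epsilon']}_{-a,-b;\epsilon}(\tau,-z,z,0)=-\widetilde{\Psi}^{[M,m,s;\epsilon']}_{b,a;\epsilon}(\tau,-z,z,0),
\]
not $-\widetilde{\Psi}_{a,b}$. The involution on $\widetilde{\Phi}^{[m;s]}$ sends $(z_1,z_2)\mapsto(-z_2,-z_1)$, and inside the argument of $\widetilde{\Phi}^{[m;s]}(M\tau,\,-z+a\tau+\epsilon',\,z+b\tau+\epsilon',\,0)$ this swaps the roles of $a$ and $b$. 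The correct pairing for the collapse is therefore $(a,b)\leftrightarrow(-b,-a)$; checking the exponentials, this is exactly what produces $e^{\frac{\pi i(m+1)}{M}(j-k)(a-b)}\sin\frac{(m+1)}{M}(j+k)(a+b)\pi$.

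\emph{The reason $\Phi^{[m;s]}_{\mathrm{add}}$ inherits the symmetry.} It is not that $R_{j;m+1}(\tau,v)$ is even in $v$ (Lemma \ref{lem:1.6}(2) says $R_{j;m+1}(\tau,v)=-R_{2m+2-j;m+1}(\tau,-v)$, which is different). Rather, under $(z_1,z_2)\mapsto(-z_2,-z_1)$ the variable $v=\tfrac{z_1-z_2}{2}$ is \emph{fixed}, so $R_{j;m+1}(\tau,v)$ needs no symmetry at all; the sign comes entirely from $(\Theta_{j,m+1}-\Theta_{-j,m+1})(\tau,z_1+z_2)$ via (\ref{eq:1.a}), as you say.

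With these two corrections in place your argument goes through and matches the paper's.
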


This theorem coincides with Theorem 9.4 in [KW] in the case $s=0$. For an arbitrary $s \in [0,m]$ the proof is the same, and the modular transformation 
formulae are the same. Note that, unlike in the Lie algebra case, modular transformations do not mix characters with different $s$.

\section{Transformation properties of the mock theta functions ${\bf\Phi^{\bs{[B;m]}}}$.}
\label{sec:4}

In this section we study modular and elliptic transformation properties of supercharacters $ch^-_{L(\Lambda)}$ of partially integrable highest weight modules $L(\Lambda)$ over the affine Lie superalgebra 
$\hat{\fg}$,
where $\fg=osp_{3|2}$. We choose the set of simple roots 
of $\hat{\fg}$ to be $\hat{\Pi}=\{\alpha_0, \alpha_1, \alpha_2\}$, 
where $\alpha_0$ and $\alpha_2$ are even and $\alpha_1$ is odd, and the scalar products are :
\begin{equation}
\label{eq:4.1a}
(\alpha_0|\alpha_0)=2,\;\; (\alpha_1|\alpha_1)=0,\;\; (\alpha_2|\alpha_2)=-\frac{1}{2},\;\; (\alpha_0|\alpha_1)=-1,\;\; (\alpha_1|\alpha_2)=\frac{1}{2},\;\; (\alpha_0|\alpha_2)=0.
\end{equation}
For the underlying finite-dimensional Lie superalgebra $osp_{3|2}$ we have: 
its set of positive even roots is 
$\Delta_{\bz, +}=\{\alpha_2, \theta\}$, where $\theta=2(\alpha_1+\alpha_2)$, 
its set of positive odd roots is
$\Delta_{\bar{1},+}=\{\alpha_1, \alpha_1+\alpha_2, \alpha_1+2\alpha_2\}$, 
hence the highest root is $\theta$, the Weyl vector 
$\rho=-\frac{1}{2}\alpha_1$, and therefore, by ($3.1$) from [KW], the dual 
Coxeter number $h^\vee =\frac{1}{2}$. Furthermore, the Weyl group of $\fg$ 
is  $W=\{1, r_{\alpha_2}, r_\theta, r_{\alpha_2} r_\theta \}$, 
and also $\Delta^{\#}_{\bz}=\{\pm \theta \}$ (see (\ref{eq:3.3}) from [KW]) 
and $L^\# = \ZZ \theta$, since $(\theta|\theta)=2$. 

By the definition of a partially integrable module (see [KW], Definition 3.2), 
an $\widehat{osp}_{3|2}$-module $L(\Lambda)$ is partially integrable iff 
$\Lambda=m \Lambda_0$, where $m$ is a non-negative integer. The numerator $\widehat{R}^- ch^-_{L(m \Lambda_0)}$ of the supercharacter of $L(m \Lambda_0)$ is given by Conjecture 3.8 in [KW], proved in [GK]. 
%Since $\widehat{\varrho}=h^v \Lambda_0 + \varrho =\frac{1}{2} \Lambda_0-\frac{1}{2} \alpha_1$, and $\Gamma_{\alpha_2} \frac{e^{m \Lambda_0 + \varrho}}{1+ e^{-\alpha_1}} = \Gamma_{\theta} \frac{e^{m \Lambda_0 + \varrho}}{1+e^{-\alpha_1}}$, 

In order to write down an explicit formula, note that 
$$
\hat{\rho}\, (= h^\vee \Lambda_0 + \rho) = \frac{1}{2} \Lambda_0 -\frac{1}{2} \alpha_1,\; \epsilon^- (t_{n \theta}) = (-1)^n,\; \epsilon^-(r_{\alpha_2})=-1,\; \epsilon^- (r_\theta)=1.
$$
\noindent We choose $T_{0}= \{\alpha_1\}$. 
%and note that $j_{m \Lambda_0}=2$ since $r_{\alpha_2} \frac{e^{m \Lambda_0 + \rho}}{1-e^{-\alpha_1}} =- r_\theta \frac{e^{m \Lambda_0 + \rho}}{1-e^{-\alpha_1}}$. 
Then 
we   get the following formula : 
$$
\widehat{R}^- ch^-_{L(m \Lambda_0)}=\sum\limits_{w \in \widehat{W}^\#}{\epsilon^- (w)} w \frac{e^{\left(m+\frac{1}{2}\right) \Lambda_0 - \frac{1}{2} \alpha_1}}{1-e^{-\alpha_1}},
$$
where $\widehat{W}^\# = <1,r_{\theta}> \ltimes t_{L^\#} 
\,\,(\subset \widehat{W})$. Explicitly: 
%\bigskip
\begin{equation}
\label{eq:4.0}
 \widehat{R}^- ch^-_{L(m \Lambda_0)}=  e^{(m +\frac{1}{2})\Lambda_0}
\end{equation}
\[\times \sum\limits_{j \in \ZZ}{(-1)^j \left(\frac{e^{-(2m+1) j (\alpha_1+\alpha_2) -\frac{1}{2} \alpha_1}\; q^{\left(m+\frac{1}{2}\right) j^2 + \frac{j}{2}}}{1-e^{-\alpha_1} q^j}
-\frac{e^{-(2m+1) j (\alpha_1+\alpha_2) -\frac{1}{2}(\alpha_1+2\alpha_2)}\; q^{\left(m+\frac{1}{2}\right) j^2 + \frac{j}{2}}}{1-e^{-(\alpha_1 + 2\alpha_2)} q^j} \right).}\]

Introduce the following coordinates in the Cartan subalgebra $\widehat{\fh}$ 
of $\widehat{osp}_{3|2}$ :
\begin{equation}
	\label{eq:4.1}
h=2\pi i (-\tau \Lambda_0 - z_1 (\alpha_1+2\alpha_2) - z_2 \alpha_1 + t \delta) 
:=	(\tau, z_1, z_2, t),
\end{equation}
so that for $z=-z_1(\alpha_1+2\alpha_2) - z_2 \alpha_1$ we have :
\begin{equation}
	\label{eq:4.2}
	(z|z) = 2 z_1 z_2.
\end{equation}
\noindent Let
$$
\Phi^{[B;m]} (\tau, z_1, z_2, t) = \left(\widehat{R}^- ch^-_{L(m \Lambda_0)}\right) (h)
$$
be the  numerator of $ch^-_{L(m\Lambda_0)}=ch^-_{m\Lambda_0}$ in coordinates
(\ref{eq:4.1}). By ( \ref{eq:4.0}) we have:
\begin{equation}
	\label{eq:4.3}
\Phi^{[B;m]} (\tau, z_1, z_2, t) = \Phi^{[B;m]}_1 (\tau, z_1, z_2, t) - \Phi^{[B;m]}_1 (\tau, z_2, z_1, t),	
\end{equation}
where
\begin{equation}
	\label{eq:4.4}
	\Phi^{[B;m]}_1 (\tau, z_1, z_2, t) = e^{2 \pi i \left(m+\frac{1}{2}\right)t} 
	\sum\limits_{j \in \ZZ}{(-1)^j\; \frac{e^{\pi i j (2m+1)(z_1+z_2) + \pi i z_1} q^{\left(m+\frac{1}{2}\right)j^2+\frac{j}{2}}} {1-e^{2\pi i z_1}\; q^j}}.
\end{equation}
Let $\Phi^{[B;m]}_1 (\tau, z_1, z_2) = \Phi^{[B;m]}_1 (\tau, z_1, z_2,0).$

Recall the right action of $A=\left(\begin{array}{ll} a&b\\c &d \end{array}\right) \in \text{SL}_2 (\RR)$ on the space of meromorphic functions in the domain 
$X=\{h \in \widehat{\fh}^\ast | \;\text{Re}\; \delta (h) > 0$ (or Im $\tau  > 0$) \} for $\ell=2\,(=$ rank $osp_{3|2})$ :
\begin{equation}
	\label{eq:4.5}
	F|_A (\tau, z, t) = (c\tau +d)^{-1} F\left(\frac{a\tau + b}{c\tau+d}\,, \frac{z}{c\tau+d}\,, t-\frac{c(z|z)}{2(c \tau+d)} \right).
\end{equation}
For $S=\left( \begin{array}{lr} 0 & -1 \\ 1 & 0 \end{array}\right)$ we have,
by (\ref{eq:4.2}):
\begin{equation}
	\label{eq:4.6}
%	\Phi^{[B;m]}_1 
F|_S\; (\tau, z_1, z_2, t) = \tau^{-1}\; 
%\Phi^{[B;m]}_1 
F\left(-\frac{1}{\tau}, \frac{z_1}{\tau}, \frac{z_2}{\tau}, t-\frac{z_1z_2}{\tau}\right).
\end{equation}
%
%and the same for $\Phi^{[B;m]}$.

Note that the meromorphic function $\Phi^{[B;m]}_1 (\tau,z_1,z_2)$, 
viewed as a function 
of $z_1$, 
%(\text{resp.}\; z_2)$ 
has poles at $z_1
% (\text{resp.}\;z_2)
 \in \ZZ +\tau \ZZ$, all of them are simple and have residues : 
$$
%\begin{array}{l@{\!\;\,}ll@{\!\;\,}l}
\text{Res}_{z_1= n + j \tau} \Phi^{[B;m]}_1 (\tau, z_1, z_2) = \frac{(-1)^{(j+1)(n+1)}}{2\pi i} e^{-\pi i j (2m+1)z_2}. 
%\\[1.5ex]
%\text{Res}_{z_2= n + j \tau} \Phi^{[B;m]}_1 (\tau, z_1, z_2) &= \frac{(-1)^{(j+1)(m+1)}}{2\pi i} e^{-\pi i j (2m+1)z_1}. \\
%\end{array}
$$
The function $(\Phi^{[B;m]}_1|_S) (\tau, z_1, z_2, t=0)$ has the same poles, all of them are simple, and have the same residues at these poles. Hence for the functions $\Phi^{[B;m]}$ and $\Phi^{[B;m]}_1$  we have an analogue of
Lemma \ref{lem:1.1}(0):

\begin{lemma}
\label{lem:4.1}
The functions $\Phi^{[B;m]}_1 - \Phi^{[B;m]}_1|_S$ and $\Phi^{[B;m]} - 
\Phi^{[B;m]}|_S$ are holomorphic in the domain $X$.
\hfill $\Box$
\end{lemma}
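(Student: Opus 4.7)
The lemma is the direct analogue of Lemma~\ref{lem:1.1}(0) for the new mock theta function $\Phi^{[B;m]}_1$, and I will follow the same strategy: show that the difference $\Phi^{[B;m]}_1-\Phi^{[B;m]}_1|_S$, viewed as a meromorphic function of $z_1$, has vanishing residue at every simple pole $z_1\in \ZZ+\tau\ZZ$, so it is in fact holomorphic on $X$, and then transfer the conclusion to $\Phi^{[B;m]}$ via (\ref{eq:4.3}). The residue of $\Phi^{[B;m]}_1$ at $z_1=n+j\tau$ is already recorded in the excerpt; the remaining task is to verify the matching residue of $\Phi^{[B;m]}_1|_S$ and then package the two statements together.

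\textbf{Residue computation.} In the sum (\ref{eq:4.4}), the $j$-th denominator $1-e^{2\pi i z_1}q^j$ vanishes exactly on $z_1\in\ZZ-j\tau$, so the pole at $z_1=n+j\tau$ is contributed only by the summand with index $-j$. The pole factor gives $-1/(2\pi i)$; the $q$-power $q^{(m+1/2)j^2-j/2}$ in the numerator exactly cancels the exponentials $e^{-\pi i(2m+1)j^2\tau+\pi i j\tau}$ produced by the substitution $z_1=n+j\tau$, and using that $2m+1$ is odd so that $e^{\pi i(2m+1)jn}=(-1)^{jn}$, one recovers the stated residue $\frac{(-1)^{(j+1)(n+1)}}{2\pi i}e^{-\pi i j(2m+1)z_2}$. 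For $\Phi^{[B;m]}_1|_S$, by (\ref{eq:4.6}) the pole at $z_1=n+j\tau$ now comes from the summation index $j'=n$ in $\Phi^{[B;m]}_1(-1/\tau,z_1/\tau,z_2/\tau)$ (the roles of $n$ and $j$ swap). The new pole factor contributes $-\tau/(2\pi i)$, which combines with the overall $\tau^{-1}$ in (\ref{eq:4.6}) to give $-1/(2\pi i)$. The $t$-shift in (\ref{eq:4.6}) contributes a factor $e^{-\pi i(2m+1)z_1z_2/\tau}$ which, evaluated at $z_1=n+j\tau$, exactly cancels $e^{\pi i n(2m+1)z_2/\tau}$ coming from the numerator; similarly $q_\tau^{(m+1/2)n^2}$ cancels $e^{\pi i n^2(2m+1)/\tau}$ and $q_\tau^{n/2}$ cancels $e^{\pi i n/\tau}$. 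What survives is $(-1)^n\cdot(-1)^{nj}\cdot(-1)^j\cdot e^{-\pi i j(2m+1)z_2}=(-1)^{(j+1)(n+1)}e^{-\pi i j(2m+1)z_2}$ (again using oddness of $2m+1$), matching the residue of $\Phi^{[B;m]}_1$.

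\textbf{Transfer to $\Phi^{[B;m]}$ and the main obstacle.} Since neither $\Phi^{[B;m]}_1$ nor $\Phi^{[B;m]}_1|_S$ has singularities in $z_2$, and their $z_1$-residues cancel, the difference $\Phi^{[B;m]}_1-\Phi^{[B;m]}_1|_S$ is holomorphic on $X$. The involution $(z_1,z_2)\leftrightarrow(z_2,z_1)$ commutes with the $S$-action in (\ref{eq:4.6}), because $(z|z)=2z_1z_2$ is symmetric in $z_1,z_2$; hence by (\ref{eq:4.3}),
\[
\Phi^{[B;m]}-\Phi^{[B;m]}\big|_S \;=\; \bigl(\Phi^{[B;m]}_1-\Phi^{[B;m]}_1\big|_S\bigr)(\tau,z_1,z_2,t)\;-\;\bigl(\Phi^{[B;m]}_1-\Phi^{[B;m]}_1\big|_S\bigr)(\tau,z_2,z_1,t),
\]
which is holomorphic on $X$. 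The only delicate point in the plan is the phase bookkeeping for the $|_S$-residue: the $\tau^{-1}$ normalization, the $t$-shift factor $e^{-\pi i(2m+1)z_1z_2/\tau}$, the $q_\tau$-powers, and the $1/\tau$-exponentials from the numerator of (\ref{eq:4.4}) after rescaling must all collapse to a finite phase, and the correct parity $(-1)^{(j+1)(n+1)}$ emerges only because $2m+1$ is odd.
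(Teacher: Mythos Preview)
Your proof is correct and follows exactly the same route as the paper: compute the residue of $\Phi^{[B;m]}_1$ and of $\Phi^{[B;m]}_1|_S$ at each simple pole $z_1=n+j\tau$, observe that both equal $\frac{(-1)^{(j+1)(n+1)}}{2\pi i}e^{-\pi i j(2m+1)z_2}$, hence the difference is holomorphic, and then use (\ref{eq:4.3}) together with the $(z_1,z_2)$-symmetry of the $S$-action to transfer this to $\Phi^{[B;m]}$. Your write-up supplies more computational detail than the paper (which simply records the residue formula and asserts equality), but the argument is the same.
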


\begin{lemma}
\label{lem:4.2}
The functions $\Phi^{[B;m]}$ (and $\Phi^{[B;m]}_1$) satisfy the following properties : 
\begin{list}{}{}
\item (1) $\Phi^{[B;m]} (\tau, -z_1, -z_2,t) = - \Phi^{[B;m]} (\tau, z_1, z_2, t)$.

\item(2) $\Phi^{[B;m]} (\tau, z_1+a, z_2 +b, t) = (-1)^a\; \Phi^{[B;m]} (\tau, z_1, z_2, t)$ if $a, b \in \ZZ$ have the same parity.

\item(3) $\Phi^{[B;m]} (\tau, z_1+j\tau, z_2+j\tau, t) = (-1)^j \;e^{-\pi i j (2m+1)(z_1+z_2)}\;\, q^{-j^2 \left(m+\frac{1}{2}\right)}\;\, \Phi^{[B;m]} (\tau, z_1, z_2, t)$. 
\end{list}
\end{lemma}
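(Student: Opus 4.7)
The plan is to verify each of the three properties by direct manipulation of the defining series (\ref{eq:4.4}) for $\Phi^{[B;m]}_1$, combined with the antisymmetrization (\ref{eq:4.3}). Since (\ref{eq:4.3}) is antisymmetric under swapping $z_1 \leftrightarrow z_2$, it suffices in each case to establish the analogous identity for $\Phi^{[B;m]}_1$ alone; the claim for $\Phi^{[B;m]}$ then follows after also applying the identity to the swapped term.

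For (1), I substitute $(z_1, z_2) \to (-z_1, -z_2)$ in (\ref{eq:4.4}) and reindex $j \to -j$. To restore the original denominator, multiply numerator and denominator of each summand by $-e^{2\pi i z_1} q^j$: the denominator $1 - e^{-2\pi i z_1} q^{-j}$ becomes $1 - e^{2\pi i z_1} q^j$, while the numerator picks up an overall minus sign and a factor $e^{2\pi i z_1} q^j$ that combines with $e^{-\pi i z_1}$ and $q^{-j/2}$ to rebuild $e^{\pi i z_1} q^{j/2}$. Thus $\Phi^{[B;m]}_1(\tau, -z_1, -z_2, t) = -\Phi^{[B;m]}_1(\tau, z_1, z_2, t)$, and (1) follows from (\ref{eq:4.3}).

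For (2), with $a, b \in \ZZ$ of the same parity, the substitution $(z_1, z_2) \to (z_1 + a, z_2 + b)$ leaves the denominator invariant (since $e^{2\pi i a} = 1$) and multiplies the numerator of each term in (\ref{eq:4.4}) by $e^{\pi i a}\, e^{\pi i j(2m+1)(a+b)} = (-1)^a (-1)^{j(a+b)}$. Since $a+b$ is even, the $j$-dependent sign disappears, leaving the global factor $(-1)^a = (-1)^b$, which is compatible with the swap in (\ref{eq:4.3}).

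For (3), the strategy is to prove the case $j=1$ and then iterate on $j \in \ZZ$. Substituting $(z_1, z_2) \to (z_1 + \tau, z_2 + \tau)$ in (\ref{eq:4.4}) and shifting the summation index $j \to j-1$ restores the original denominator; the combined $q$-exponents reduce, via $2m+1 = 2(m + \tfrac{1}{2})$, to the algebraic identity $(m+\tfrac12)(j-1)^2 + 2(m+\tfrac12)(j-1) = (m+\tfrac12)(j^2 - 1)$, and the linear terms $(j-1)/2 + 1/2 = j/2$ together complete the square to yield an overall factor $q^{-(m+1/2)}$. Combining with the sign $(-1)^{j-1}$ and the $z$-dependent exponential $e^{-\pi i(2m+1)(z_1+z_2)}$ produces (3) for $j=1$; iteration gives the claim for all positive $j$, and the same argument (or Lemma \ref{lem:4.1} combined with (1)) handles $j < 0$. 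The only real obstacle is the book-keeping of $q$-exponents in (3); no conceptual difficulty arises.
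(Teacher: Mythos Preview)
Your proof is correct and follows exactly the approach indicated in the paper: claim~(1) by the reindexing $j\to -j$ and the numerator/denominator trick, claim~(2) by direct inspection of the phase factors, and claim~(3) by the shift $j\to j-1$ to get the base case and then induction---this is precisely how the analogous Lemma~\ref{lem:1.1}(5) is proved. One tiny slip: the parenthetical ``(or Lemma~\ref{lem:4.1} combined with (1))'' is not right, since Lemma~\ref{lem:4.1} is about holomorphicity of $\Phi^{[B;m]}-\Phi^{[B;m]}|_S$; you presumably meant that part~(1) alone lets you pass from positive to negative shifts, which is indeed the case.
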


\begin{proof}
The proof of (1) is straightforward by changing $j$ to $-j$ in the RHS of (\ref{eq:4.4}). The proof of (2) and (3) is the same as that of Lemma \ref{lem:1.1} (2) and (5). 
\end{proof}

As in Section \ref{sec:1}, we change the coordinates, letting 	
$$
z_1=u+v,\;\; z_2=u-v,\;\;\; \text{i.e.}\;\;\; u=\frac{z_1+z_2}{2},\;\; v=\frac{z_1-z_2}{2},
$$
and denote 
$\varphi^{[B;m]} (\tau, u, v, t) = \Phi^{[B;m]} (\tau, z_1, z_2, t)$, and 
similarly for $\varphi^{[B;m]}_1$ and $\Phi^{[B;m]}_1$.

\vspace{1ex}
\noindent Then we get :

$$
\varphi^{[B;m]} (\tau, u, v, t) = \varphi^{[B;m]}_1 (\tau, u, v, t) - \varphi^{[B;m]}_1 (\tau, u, -v, t),
$$
\noindent where 
\begin{equation}
\label{eq:4.7}
\varphi^{[B;m]}_1 (\tau, u, v, t) = e^{2\pi i \left(m+\frac{1}{2}\right) t} 
\sum\limits_{j \in \ZZ}{(-1)^j\;\;  \frac{e^{2\pi i j (2m+1) u + \pi i (u+v)}\; q^{\left(m+\frac{1}{2}\right)j^2 +\frac{j}{2}}} {1-e^{2\pi i (u+v)} q^j}}.
\end{equation}
\indent In order to state an analogue of Lemma \ref{lem:1.2} (3), (4), we shall need the following ``alternate'' analogues of the theta functions $\Theta_{j,m}$, defined by (\ref{eq:1.2}) : 
\begin{equation}
	\label{eq:4.8}
	\Theta^-_{j,m} (\tau, z, t) = e^{2\pi i m t} \sum\limits_{n\in \ZZ}{(-1)^n e^{2\pi i m z \left(n+\frac{j}{2m}\right)} q^{m \left(n+\frac{j}{2m}
\right)^2}}.
\end{equation} 
Here $m \in \frac{1}{4} \ZZ_{\geq 1}$ is the degree and $j\in \frac{1}{2} \ZZ$. These are holomorphic functions in the domain $X_0$. As before, let $\Theta^-_{j,m} (\tau, z) = \Theta^-_{j,m} (\tau, z, 0)$. We obviously have :
\begin{equation}
	\label{eq:4.9}
\Theta^-_{j+2ma,m} (\tau,z) = (-1)^a \;\;\Theta^-_{j,m} (\tau, z) \;\text{if}\; a \in \ZZ.	
\end{equation} 
In particular, $\Theta^-_{j,m}$ depends only on $j \;\text{mod}\; 4m$. 

The functions $\Theta^-_{j,m} (\tau, z)$ satisfy the following modular transformation properties.

\begin{proposition}
\label{prop:4.3}
Let $m \in \frac{1}{4} \ZZ_{\geq 1}$. Then we have 
\begin{list} {}{}
\item (1) $\Theta^-_{j,m} \left(-\frac{1}{\tau}, \frac{z}{\tau}\right) = 
\left(\frac{-i\tau}{2m}\right)^{\frac{1}{2}} e^{\frac{\pi i m z^2}{2\tau}} \sum\limits^{4m-1}_{k = 1 \atop k\;\text{odd}}{e^{\frac{-\pi i j k}{2 m}} \Theta^-_{\frac{k}{2},m} (\tau,z)}$, provided that $j\in \frac{1}{2}+\ZZ$.

\item (2) If $j\in \ZZ$, then the formula in (1) holds if we replace $\Theta^-_{\frac{k}{2}, m}$ in the RHS by $\Theta_{\frac{k}{2},m}$. 

\item (3) If $j \in \frac{1}{2} + \ZZ$, then the formula in (1) holds if we replace $\Theta^-_{j,m} (\text{resp} \; \Theta^-_{\frac{k}{2},m})$ in the LHS (resp. RHS) by $\Theta^-_{j,m} +  \Theta^-_{-j,m}$ $(\text{resp. by}\; 
\Theta^-_{\frac{k}{2},m} + \Theta^-_{-\frac{k}{2},m})$.

\item (4) If $m\in \ZZ_{\geq 0}$ and $j\in \ZZ$, then
$$
\Theta^-_{\pm(j+\frac{1}{2}), m+\frac{1}{2}} (\tau+1,z) = 
e^{\frac{\pi i}{2m+1}(j+\frac{1}{2})^2}
\Theta^-_{\pm(j+\frac{1}{2}), m+\frac{1}{2}} (\tau,z). 
$$ 
\end{list}
%\hfill $\Box$
\end{proposition}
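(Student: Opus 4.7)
My plan is to prove all four parts directly, using Poisson summation for the S-transform claims (1)--(3) and a short direct computation for (4). For (1) and (2), I apply Poisson summation to
\[
\Theta^-_{j,m}(-1/\tau, z/\tau) = \sum_{n \in \ZZ} e^{\pi i n + 2\pi i m z(n+j/(2m))/\tau - 2\pi i m(n+j/(2m))^2/\tau},
\]
viewing $(-1)^n = e^{\pi i n}$ as an extra linear term in $y = n+j/(2m)$. Completing the square shifts the center to $y_0 = (z+\tau/(2m))/2$ and produces the constant-in-$y$ residue
\[
\frac{\pi i m(z+\tau/(2m))^2}{2\tau} = \frac{\pi i m z^2}{2\tau} + \frac{\pi i z}{2} + \frac{\pi i \tau}{8m};
\]
the first piece is the weight factor claimed in (1), (2), and the other two will cancel below. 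The Gaussian Poisson formula $\sum_n e^{-\pi a(n+\beta)^2} = a^{-1/2}\sum_k e^{-\pi k^2/a + 2\pi i k\beta}$ with $a = 2im/\tau$ gives the normalization $\sqrt{-i\tau/(2m)}$ and converts the sum to one over $k \in \ZZ$.

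A second square-completion $k \mapsto \kappa = k-\tfrac{1}{2} \in -\tfrac{1}{2}+\ZZ$ makes the auxiliary factors $e^{\pm\pi i z/2}$, $q^{\pm 1/(16m)}$, $e^{\pm\pi i\tau/(8m)}$, $e^{\pm\pi i j/(2m)}$ cancel in pairs, leaving
\[
\Theta^-_{j,m}(-1/\tau, z/\tau) = \sqrt{-i\tau/(2m)}\,e^{\pi i m z^2/(2\tau)} \sum_{\kappa \in -\frac12+\ZZ} q^{\kappa^2/(4m)} e^{\pi i j\kappa/m} e^{-\pi i\kappa z}.
\]
Writing $\kappa = \ell/2$ with $\ell$ odd and decomposing $\ell = 4mp + k$ with $k$ odd in $[1,4m-1]$, the factor $e^{\pi i j\ell/(2m)}$ splits as $e^{2\pi i jp}\cdot e^{\pi i jk/(2m)}$. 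This is the fork: $e^{2\pi i jp} = (-1)^p$ for $j\in\tfrac{1}{2}+\ZZ$ and $=1$ for $j \in \ZZ$. In the first case the $(-1)^p$ regroups the inner $p$-sum as $\Theta^-_{k/2,m}(\tau,-z) = \Theta^-_{-k/2,m}(\tau,z)$; in the second it yields $\Theta_{-k/2,m}(\tau,z)$. Relabeling $k \mapsto 4m-k$, which invokes the period-$4m$ property $\Theta^-_{j+2ma,m} = (-1)^a\Theta^-_{j,m}$ from (\ref{eq:4.9}) together with $e^{2\pi i j} = \pm 1$, flips the exponential phase to $e^{-\pi i jk/(2m)}$ and converts indices $-k/2$ to $k/2$, proving (1) and (2).

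Claim (3) follows from (1) by applying it to both $j$ and $-j$; substituting $k \mapsto 4m-k$ in the equation for $-j$ (whose signs, by the same periodicity of $\Theta^-$ and $e^{2\pi i j} = -1$, conspire to produce $\Theta^-_{-k/2,m}$) and adding gives the claim. Claim (4) is a short direct check: for $y = n+(j+\tfrac{1}{2})/(2m+1)$, expanding $(m+\tfrac{1}{2})y^2 = mn^2 + \tfrac{n^2}{2} + n(j+\tfrac{1}{2}) + \tfrac{(j+\frac12)^2}{2(2m+1)}$ shows that $e^{2\pi i(m+\frac12)y^2} = (-1)^{n^2}\cdot(-1)^n\cdot e^{\pi i(j+\frac12)^2/(2m+1)} = e^{\pi i(j+\frac12)^2/(2m+1)}$ (since $n^2 \equiv n \pmod 2$), and this $n$-independent factor pulls out of the $\Theta^-$ series. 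The main obstacle is the careful bookkeeping in the Poisson step: the tempting shortcut of using $\Theta^-_{j,m}(\tau,z) = e^{-\pi i j/(2m)}\Theta_{j,m}(\tau, z+1/(2m))$ together with the standard S-formula for $\Theta_{j,m}$ does not work cleanly when $j$ is half-integral, because the usual integer-indexed $\Theta$-to-$\Theta$ S-formula is valid only for $j \in \ZZ$; for $j\in\tfrac12+\ZZ$ the $(-1)^n$ must be tracked through Poisson explicitly, and the dual lattice acquires a half-integer shift that is precisely what forces the output $\Theta^-_{k/2,m}$ with odd $k$ appearing in (1).
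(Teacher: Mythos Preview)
Your argument is correct and complete for the cases actually used in the paper (where the degree is $m+\tfrac12$ with $m\in\ZZ_{\ge 0}$, so $4m$ is even), but it takes a genuinely different route from the paper's own proof.

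The paper does not perform Poisson summation at all. Instead it invokes the single identity
\[
\Theta^{\pm}_{j,m}(\tau,z)\;=\;\Theta_{2j,\,4m}\!\left(\tau,\tfrac{z}{2}\right)\;\pm\;\Theta_{2j-4m,\,4m}\!\left(\tau,\tfrac{z}{2}\right),
\]
which expresses $\Theta^{-}_{j,m}$ as a difference of two \emph{ordinary} classical theta functions of degree $4m$ with integer index $2j$. Since $2j\in\ZZ$ and $4m\in\ZZ$, the standard $S$-transform (formula~(A.5) of \cite{KW}) applies directly to each piece, and (1)--(3) fall out by recombining. Part~(4) is quoted as a special case of Proposition~\ref{prop:A.3} in the Appendix. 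So the very ``shortcut'' you warn against --- reducing to the integer-indexed $S$-formula --- is exactly what the paper does, only with the doubling identity above rather than the shift identity $\Theta^{-}_{j,m}(\tau,z)=e^{-\pi i j/(2m)}\Theta_{j,m}(\tau,z+\tfrac{1}{2m})$ that you (correctly) reject. The doubling trick sidesteps the half-integer obstruction because $2j$ is always an integer.

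What each approach buys: the paper's reduction is a two-line proof once the identity is stated, and it works uniformly for all $m\in\tfrac14\ZZ_{\ge 1}$; your direct Poisson computation is self-contained (no appeal to \cite{KW}) and makes the mechanism transparent, but your lattice decomposition $\ell=4mp+k$ with $k$ odd in $[1,4m-1]$ relies on $4m$ being even (so that adding $4m$ preserves the parity of $\ell$). When $4m$ is odd this step fails as written, so in that regime your argument would need a modified set of representatives. This gap is irrelevant to every application in the paper, but you should flag the parity hypothesis on $4m$, or note that the doubling identity above handles the remaining case for free.
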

\begin{proof} 
Formulae (1)-(3) follow from (A.5) in the Appendix of \cite{KW}, using
the obvious identity
$$ 
\Theta^\pm_{j,m}(\tau,z)= \Theta_{2j,4m}(\tau,\frac{z}{2})\pm \Theta_{2j-4m,4m}
(\tau, \frac{z}{2}). 
$$
Formula (4) is a special case of Proposition \ref{prop:A.3} from the 
Appendix to the present paper.
\end{proof}

\begin{lemma}
\label{lem:4.4}
The functions $\varphi^{[B;m]}_1$ and $\varphi^{[B;m]}$ satisfy the following properties :
\begin{list}{}{}
\item(1) $\varphi^{[B;m]}_1 (\tau, u+a, v+b, t) = (-1)^{a+b} \varphi^{[B;m]}_1 (\tau, u, v, t)$ if $a, b \in \ZZ$, and the same holds for the functions
$\varphi^{[B;m]}$.

\item(2) $\varphi^{[B;m]}_1(\tau, u,v, t)+e^{2\pi i \left(m+\frac{1}{2}\right) (2v-\tau)} \varphi^{[B;m]}_1 (\tau, u, v-\tau, t)$

$=e^{2\pi i \left(m+\frac{1}{2}\right) t} \sum\limits^{2m}_{k=0}{e^{2\pi i \left(k+\frac{1}{2}\right) v} \; q^{-\frac{\left(k+\frac{1}{2}\right)^2} {2(2m+1)}}  \;\;\Theta^-_{k+\frac{1}{2}, m+\frac{1}{2}} (\tau,2u)}$.

\item(3) $ \varphi^{[B;m]} (\tau,u,v,t) + e^{2\pi i \left(m+\frac{1}{2}\right)(2v-\tau)}  \varphi^{[B;m]} (\tau,u,v-\tau,t)$

$=e^{2\pi i \left(m+\frac{1}{2}\right) t} \sum\limits^{2m}_{k=0}
{e^{2\pi i \left(k+\frac{1}{2}\right) v}\; q^{-\frac{\left(k+\frac{1}{2}\right)^2} {2(2m+1)}}  
(\Theta^-_{k+\frac{1}{2}, m+\frac{1}{2}} + \Theta^-_{-\left(k+\frac{1}{2}\right), m+\frac{1}{2}})(\tau, 2u)}$.
\end{list}
\end{lemma}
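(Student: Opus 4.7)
The plan is to follow the geometric-sum strategy of
Lemmas~\ref{lem:1.1}--\ref{lem:1.3}, adapted to the ``alternating''
setting of $\widehat{osp}_{3|2}$. For (1), substitute directly in
(\ref{eq:4.7}): since $a,b\in\ZZ$ and $2m+1\in\ZZ$, both $e^{2\pi i j(2m+1)a}$ and $e^{2\pi i(a+b)}$ reduce to $1$, and the only surviving effect of the shift on the $j$-th summand is the factor $e^{\pi i(a+b)}=(-1)^{a+b}$ coming from $e^{\pi i(u+v)}$. The statement for $\varphi^{[B;m]}$ then follows from (\ref{eq:4.3}), noting that $(u,v)\mapsto(u,-v)$ replaces $b$ by $-b$, of the same parity.

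For (2), I mimic the proof of Lemma~\ref{lem:1.2}(3). In the series for $\varphi^{[B;m]}_1(\tau,u,v-\tau,t)$ I replace $j$ by $j+1$, so that both series acquire the common denominator $1-e^{2\pi i(u+v)}q^j$. After extracting a common prefactor, the two numerators combine into $1-X^{2m+1}$ with $X=e^{2\pi i(u+v)}q^j$; dividing by $1-X$ yields the geometric sum $\sum_{k=0}^{2m}X^k$. Switching the order of summation and completing the square in $j$ in the $q$-exponent identifies the inner sum as $e^{-2\pi i u(k+\frac12)}q^{-(k+\frac12)^2/(2(2m+1))}\Theta^-_{k+\frac12,\,m+\frac12}(\tau,2u)$: the alternation $(-1)^n$ in (\ref{eq:4.8}) exactly absorbs the $(-1)^j$ in (\ref{eq:4.7}), and the residual $u$-phase collapses to $e^{2\pi i(k+\frac12)v}$.

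For (3), use (\ref{eq:4.3}) and Lemma~\ref{lem:4.2}(1) to write the LHS as $S(v)-T(v)$, where $S(v)$ is the RHS of (2) and $T(v):=\varphi^{[B;m]}_1(\tau,u,-v,t)+e^{2\pi i(m+\frac12)(2v-\tau)}\varphi^{[B;m]}_1(\tau,u,\tau-v,t)$. Setting $w=\tau-v$ and rearranging via (2) applied at $w$, one finds $T(v)=e^{2\pi i(m+\frac12)(2v-\tau)}S(\tau-v)$, so (2) already makes $T(v)$ explicit. The key step is then to reindex $k\mapsto 2m-k$ and invoke the quasi-periodicity (\ref{eq:4.9}) with $a=1$, which trades $\Theta^-_{(2m+1)-(k+\frac12),\,m+\frac12}$ for $-\Theta^-_{-(k+\frac12),\,m+\frac12}$; the $q$-exponent then collapses to $-(k+\frac12)^2/(2(2m+1))$ and the $v$-phase becomes $e^{2\pi i(k+\frac12)v}$, yielding the claimed $\Theta^-+\Theta^-$ sum.

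The main obstacle is the bookkeeping in (3): one must verify that the $q^{-(m+\frac12)}$ picked up from $e^{-2\pi i(m+\frac12)\tau}$ in the prefactor of $T(v)$, together with the $q^{k+\frac12}$ from $e^{2\pi i(k+\frac12)(\tau-v)}$, cancels precisely against the $q$-contribution of $((2m+1)-(k+\frac12))^2/(2(2m+1))$, so as to produce the clean exponent expected for $\Theta^-_{-(k+\frac12),\,m+\frac12}$. This identity hinges on $2m+1$ being exactly twice the degree $m+\frac12$ of $\Theta^-_{\cdot,\,m+\frac12}$.
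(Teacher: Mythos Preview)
Your proposal is correct and follows essentially the same approach as the paper: claim~(1) is immediate from the definition (\ref{eq:4.7}); claim~(2) is proved exactly as in the paper by shifting $j\to j+1$ in $\varphi_1^{[B;m]}(\tau,u,v-\tau,t)$, combining numerators into the geometric sum $\sum_{k=0}^{2m}X^k$ with $X=e^{2\pi i(u+v)}q^j$, and identifying the inner $j$-sum with $\Theta^-_{k+\frac12,m+\frac12}(\tau,2u)$; and claim~(3) is deduced from~(2) via the decomposition $\varphi^{[B;m]}=\varphi_1^{[B;m]}(\cdot,v)-\varphi_1^{[B;m]}(\cdot,-v)$ together with the reindexing $k\mapsto 2m-k$ and~(\ref{eq:4.9}). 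Your bookkeeping of the $q$-exponent in~(3) is also correct; one minor remark is that the reference to Lemma~\ref{lem:4.2}(1) is not strictly needed in your argument, since the decomposition (\ref{eq:4.3}) alone already yields the splitting into $S(v)-T(v)$.
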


\begin{proof}
\label{}
Claim (1) is obvious, and claim (3)  follows from claim (2), using Lemma \ref{lem:4.2} (1) and (\ref{lem:4.9}). The proof of (2) is the same as that of Lemma \ref{lem:1.1} (3).  Namely, taking the shift $j \rightarrow  j+1$ in the RHS of (\ref{eq:4.7}), and assuming without loss of generality, that $t=0$, we have :
$$
\begin{array}{ll}
&\varphi^{[B;m]}_1 (\tau, u, v-\tau, 0) = -\sum\limits_{j \in \ZZ}{(-1)^j\; \frac{e^{2\pi i (j+1)  (2m+1)u + \pi i (u+v)}}{1- e^{2 \pi i (u+v)} q^j} q^{\left(m+\frac{1}{2}\right) (j+1)^2 +\frac{j}{2}}}\\
&=-e^{-2 \pi i (2m+1)v} q^{m+\frac{1}{2}} \sum\limits_{j \in \ZZ}
{(-1)^j \;\frac{e^{2\pi i j (2m+1)u + \pi i (u+v)} q^{\left(m+\frac{1}{2}\right)j^2+\frac{j}{2}}} {1-e^{2\pi i(u+v)} q^j}} \left(e^{2\pi i (u+v)} q^j \right)^{2m+1}.
\end{array}
$$
\noindent Multiplying both sides by $e^{2 \pi i (2m+1)v} q^{-\left(m+\frac{1}{2}\right)}$, we obtain :
\begin{eqnarray}
	\label{eq:4.10}
&&e^{2\pi i (2m+1)v} q^{-\left(m + \frac{1}{2}\right)} \varphi^{[B;m]}_1 (\tau,u, v-\tau,0) \\
&& = -\sum\limits_{j\in \ZZ}{(-1)^j \frac{e^{2\pi i j (2m+1)u + \pi i (u+v)} q^{\left(m+\frac{1}{2}\right)j^2+\frac{j}{2}}} {1-e^{2\pi i(u+v)} q^j} 
\left(e^{2\pi i (u+v)} q^j \right)^{2m+1}}. \nonumber
\end{eqnarray}
Adding (\ref{eq:4.8}) and (\ref{eq:4.10}), we obtain : 
$$
\begin{array}{ll}
&\varphi^{[B;m]}_1 (\tau, u, v, 0) + e^{2\pi i (2m+1)v}\; q^{-\left(m+\frac{1}{2}\right)} \;\varphi^{[B;m]}_1 (\tau,u,v-\tau, 0) \\[1ex]
&=\displaystyle\sum\limits_{j \in \ZZ}{(-1)^j {e^{2\pi i j (2m+1)u + \pi i (u+v)}}\; q^{\left(m+\frac{1}{2}\right)j^2 +\; \frac{j}{2}}\;\, \frac{1-(e^{2\pi i (u+v)} q^j)^{2m+1}} {1-e^{2 \pi i (u+v)} q^j}} \\[1ex]
&=\displaystyle\sum\limits^{2m}_{k=0}{e^{2 \pi i \left(k+\frac{1}{2}\right)v} \;q^{-\frac{\left(k+\frac{1}{2}\right)^2} {2(2m+1)}}} 
\sum\limits_{j \in \ZZ}{(-1)^j  e^{2 \pi i (2m+1) \left( j + \frac{k+\frac{1}{2}}{2m+1}\right)u} q^{\left(m+\frac{1}{2}\right) \left(j+\frac{k+\frac{1}{2}}{2m+1}\right)^2}}\\[1ex]
&=\displaystyle\sum\limits^{2m}_{k=0} {e^{2\pi i \left(k+\frac{1}{2}\right)v}\; q^{-\frac{\left(k+\frac{1}{2}\right)^2}{2(2m+1)}} \;\, \Theta^-_{k+\frac{1}{2},m+\frac{1}{2}} (\tau,2u)},
\end{array}
$$
proving claim (2).
\end{proof}

Next, in analogy with Section \ref{sec:1}, introduce the following function :
$$
G^{[B;m]} (\tau,u,v,t) := \varphi^{[B;m]} (\tau, u, v, t) - \varphi^{[B;m]}|_S (\tau, u, v, t),
$$
\noindent where (cf. (\ref{eq:4.6})) :
$$
\varphi^{[B;m]}|_S (\tau, u, v, t) = \frac{1}{\tau} \varphi^{[B;m]} \left(-\frac{1}{\tau}, \frac{u}{\tau}, \frac{v}{\tau}, t - \frac{u^2-v^2}{\tau}\right).
$$

\begin{lemma}
\label{lem:4.5}
The function $G^{[B;m]}$  satisfies the following three properties (1), (2), (3) :
\begin{list} {}{}
\item (1) $G^{[B;m]}$ is a holomorphic function in the domain $X$. 

\item(2)  $G^{[B;m]} (\tau, u, v, t) + e^{\pi i (2m+1)(2v-\tau)} G(\tau,u,v-\tau,t)$

$=e^{2\pi i \left(m+\frac{1}{2}\right) t} \sum\limits^{2m}_{k=0}{e^{2\pi i \left(k+\frac{1}{2}\right) v} q^{-\frac{\left(k+\frac{1}{2}\right)^2}{2(2m+1)}} \left(\Theta^-_{k+\frac{1}{2},\,m+\frac{1}{2}} + \Theta^-_{-\left(k+\frac{1}{2}\right),\,m+\frac{1}{2}}\right) (\tau,2u)}.
$

\item(3) $G^{[B;m]} (\tau,u,v+1,t) + G^{[B;m]} (\tau,u,v,t) 
= \frac{i}{\sqrt{2m+1}} (-i \tau)^{-\frac{1}{2}} 
e^{2\pi i \left(m+\frac{1}{2}\right) t}$
 
$\times \sum\limits^{2m}_{k=0}{e^{\frac{\pi i (2m+1)}{\tau} \left(v+\frac{k+\frac{1}{2}}{2m+1}\right)^2}} 
\displaystyle\sum\limits_{1 \leq j \leq 4m+1 \atop j\,
\text{odd}}  
{e^{-\frac{\pi i j}{2m+1} \left(k+\frac{1}{2}\right)} \left(\Theta^-_{\frac{j}{2},\,m+\frac{1}{2}} + \Theta^-_{-\frac{j}{2},m+\frac{1}{2}}\right) (\tau,2u)}.
$

\item (4) The function $G^{[B;m]} (\tau, u, v, t)$ is uniquely determined by the above three properties (1), (2), (3).
\end{list}
\end{lemma}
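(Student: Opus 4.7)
The plan is to mirror the structure of the proof of Lemma \ref{lem:1.4}, with Lemma \ref{lem:4.1} playing the role of Lemma \ref{lem:1.1}(0), Lemma \ref{lem:4.4}(3) replacing Lemma \ref{lem:1.3}, and Proposition \ref{prop:4.3} replacing the classical modular transformation of $\Theta_{j,m}$. Throughout, one may assume $t=0$, since the $t$-dependence of $\varphi^{[B;m]}$ is the scalar factor $e^{2\pi i(m+\frac{1}{2})t}$ visible in (\ref{eq:4.7}). Property (1) is immediate from Lemma \ref{lem:4.1} composed with the biholomorphic change of coordinates $(z_1,z_2)\leftrightarrow(u,v)$.

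For (2), write $G^{[B;m]}=\varphi^{[B;m]}-\varphi^{[B;m]}|_S$ and split the left-hand side accordingly. The $\varphi^{[B;m]}$ contribution yields the desired right-hand side directly from Lemma \ref{lem:4.4}(3). The $\varphi^{[B;m]}|_S$ contribution is handled via the definition of the $|_S$-action: a short calculation verifies the identity
$$e^{\pi i(2m+1)(2v-\tau)}\, e^{-2\pi i(m+\frac{1}{2})(u^2-(v-\tau)^2)/\tau}=e^{-2\pi i(m+\frac{1}{2})(u^2-v^2)/\tau},$$
so that the two $|_S$-terms acquire a common exponential prefactor and combine into a multiple of
$\varphi^{[B;m]}(-\frac{1}{\tau},\frac{u}{\tau},\frac{v}{\tau},0)+\varphi^{[B;m]}(-\frac{1}{\tau},\frac{u}{\tau},\frac{v}{\tau}-1,0)$,
which vanishes by the anti-periodicity $\varphi^{[B;m]}(\cdot,\cdot,v'-1,\cdot)=-\varphi^{[B;m]}(\cdot,\cdot,v',\cdot)$ of Lemma \ref{lem:4.4}(1) (taking $a=0$, $b=-1$).

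For (3), the key point is that under $v\mapsto v+1$ the two $\varphi^{[B;m]}$ terms cancel by Lemma \ref{lem:4.4}(1), while the two $\varphi^{[B;m]}|_S$ terms, after factoring out the common phase $-\frac{1}{\tau}e^{-2\pi i(m+\frac{1}{2})(u^2-v^2)/\tau}$, reduce to the bracket
$$\varphi^{[B;m]}\bigl(-\tfrac{1}{\tau},\tfrac{u}{\tau},\tfrac{v}{\tau},0\bigr)+e^{2\pi i(m+\frac{1}{2})(2v+1)/\tau}\,\varphi^{[B;m]}\bigl(-\tfrac{1}{\tau},\tfrac{u}{\tau},\tfrac{v+1}{\tau},0\bigr).$$
Applying Lemma \ref{lem:4.4}(3) with the substitution $\tau\to-1/\tau$, $u\to u/\tau$, $v\to v/\tau$ (so that $v-\tau$ in that lemma becomes $(v+1)/\tau$) expresses this bracket as a sum indexed by $k\in\{0,\ldots,2m\}$ of the symmetrized theta functions $(\Theta^-_{k+\frac{1}{2},m+\frac{1}{2}}+\Theta^-_{-(k+\frac{1}{2}),m+\frac{1}{2}})(-\frac{1}{\tau},\frac{2u}{\tau})$ against Gaussian factors in $v/\tau$. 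Using Proposition \ref{prop:4.3}(3) to convert those theta functions back to argument $(\tau,2u)$ produces a further sum over odd $j\in[1,4m+1]$, contributing the factor $(-i\tau/(2m+1))^{1/2}e^{2\pi i(m+\frac{1}{2})u^2/\tau}$. Completing the square merges the exponentials $e^{\pi i(2m+1)v^2/\tau}\cdot e^{2\pi i(k+\frac{1}{2})v/\tau}\cdot e^{\pi i(k+\frac{1}{2})^2/((2m+1)\tau)}$ into $e^{\pi i(2m+1)(v+(k+\frac{1}{2})/(2m+1))^2/\tau}$, and the residual $-\frac{1}{\tau}(-i\tau/(2m+1))^{1/2}$ simplifies to $\frac{i}{\sqrt{2m+1}}(-i\tau)^{-1/2}$, yielding exactly the claimed right-hand side. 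This bookkeeping of phase factors (the completion of the square together with the $(-i\tau)^{1/2}/\tau=-i(-i\tau)^{-1/2}$ identity) is the main technical obstacle.

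For the uniqueness statement (4), any $F:=F_1-F_2$ with $F_1,F_2$ both satisfying (1)--(3) is holomorphic in $v$ and satisfies the homogeneous versions of (2), (3): $F(\tau,u,v+1,t)=-F(\tau,u,v,t)$ and $F(\tau,u,v-\tau,t)=-e^{-\pi i(2m+1)(2v-\tau)}F(\tau,u,v,t)$. Set $\theta(\tau,v):=\vartheta_{11}((2m+1)\tau,(2m+1)v)$; since $2m+1$ is odd, the classical transformation rules of $\vartheta_{11}$ give $\theta(v+1)=-\theta(v)$ and $\theta(v-\tau)=-e^{2\pi i(2m+1)v-\pi i(2m+1)\tau}\theta(v)$. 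Hence $F\cdot\theta$ is holomorphic and doubly periodic in $v$ with periods $1$ and $\tau$, so it is constant in $v$; evaluating at $v=0$ where $\vartheta_{11}(\cdot,0)=0$ forces this constant to be zero, and since $\theta\not\equiv 0$ we conclude $F\equiv 0$.
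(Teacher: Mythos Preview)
Your proof is correct and follows essentially the same route as the paper, which simply refers back to the proof of Lemma~\ref{lem:1.4} with Lemma~\ref{lem:4.1}, Lemma~\ref{lem:4.4}, and Proposition~\ref{prop:4.3}(3) substituted in the appropriate places. Your uniqueness argument in (4), using a single factor $\vartheta_{11}((2m+1)\tau,(2m+1)v)$ rather than a square, is exactly the adaptation needed here (since $2m+1$ is odd and the homogeneous relations now involve sign changes under $v\mapsto v+1$), and matches what the paper intends by ``the same as that of Lemma~\ref{lem:1.4}(4).''
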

\begin{proof}
\label{}
Property (1) of $G^{[B;m]}$ is immediate by Lemma \ref{lem:4.1}. The proof of property (2) is straightforward (as that of the analogous Lemma \ref{lem:1.4} (2)).

The proof of property (3) follows the same lines as that of Lemma \ref{lem:1.4} (3), using Proposition \ref{prop:4.3} (3) and Lemma \ref{lem:4.4} (1).

The proof of the uniqueness of the function, satisfying properties (1), (2), (3) is the same as that of Lemma \ref{lem:1.4} (4).
\end{proof}

We keep proceeding in the same way as in Section \ref{sec:1}. The omitted proofs are the same. 

\begin{lemma}
\label{lem:4.6}
Let $a_j (\tau, v)\,\, (j \in \ZZ,\, 0 \leq j \leq 2m)$ be functions 
satisfying the following conditions (i), (ii), (iii) :
\begin{list}{}{}
\item (i) $a_j (\tau, v)$ is holomorphic with respect to $v$, 

\item(ii) $a_j (\tau, v) + e^{\pi i (2m+1)(2v-\tau)} a_j (\tau, v-\tau) = e^{2\pi i \left(j +\frac{1}{2} \right) v} q^{-\frac{1}{2(2m+1)} \left(j+\frac{1}{2}\right)^2}$

\item (iii) $a_j (\tau,v) + a_j (\tau,v+1) = \frac{i}{\sqrt{2m+1}} (-i \tau)^{-\frac{1}{2}} \displaystyle \sum\limits^{2m}_{k=0} 
e^{-\frac{2 \pi i}{2m+1}\left(j+\frac{1}{2}\right)\left(k+\frac{1}{2}\right)} 
e^{\frac{2 \pi i \left(m+\frac{1}{2}\right)} {\tau} \left(v+\frac{k+\frac{1}{2}}{2m+1}\right)^2}$
\end{list} 
\noindent ($a_j (\tau,v)$ is uniquely determined by these properties).

\noindent Then the function 
\begin{equation}
	\label{eq:4.11}
G^{[B;m]} (\tau,u,v,t) := 	e^{2\pi i \left(m+\frac{1}{2}\right)t} \sum\limits^{2m}_{j=0}{a_j(\tau,v) \left(\Theta^{-}_{j+\frac{1}{2},m+\frac{1}{2}} + \Theta^{-}_{-\left(j+\frac{1}{2}\right),m+\frac{1}{2}}\right) (\tau,2u)}
\end{equation}
satisfies the properties (1), (2), (3) of Lemma \ref{lem:4.5}.
\end{lemma}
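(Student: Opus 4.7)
The plan is to verify properties (1), (2), (3) of Lemma \ref{lem:4.5} directly from the defining formula (\ref{eq:4.11}), using the three conditions (i)--(iii) imposed on the functions $a_j(\tau,v)$; no deeper input is needed. Property (1) is the easy one: condition (i) says each $a_j(\tau,v)$ is holomorphic in $v$, and the alternate theta functions $\Theta^-_{\pm (j+\frac{1}{2}),m+\frac{1}{2}}(\tau, 2u)$ are holomorphic in the domain $X$, so their finite linear combination $G^{[B;m]}$ is holomorphic.

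For property (2), I would substitute $v \mapsto v-\tau$ in the RHS of (\ref{eq:4.11}) and note that $(\Theta^-_{j+\frac{1}{2},m+\frac{1}{2}} + \Theta^-_{-(j+\frac{1}{2}),m+\frac{1}{2}})(\tau,2u)$ is independent of $v$, hence can be pulled outside. Multiplying the resulting identity by $e^{\pi i(2m+1)(2v-\tau)}$ and adding, condition (ii) applied termwise produces exactly
$$ \sum_{j=0}^{2m} e^{2\pi i (j+\frac{1}{2})v}\, q^{-\frac{(j+\frac{1}{2})^2}{2(2m+1)}}\bigl(\Theta^-_{j+\frac{1}{2},m+\frac{1}{2}}+\Theta^-_{-(j+\frac{1}{2}),m+\frac{1}{2}}\bigr)(\tau,2u), $$
which, after multiplication by $e^{2\pi i(m+\frac{1}{2})t}$, is the RHS of property (2) of Lemma \ref{lem:4.5}.

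For property (3), I would substitute $v \mapsto v+1$ in (\ref{eq:4.11}) and add the result to $G^{[B;m]}(\tau,u,v,t)$. Pulling the $v$-independent theta factors $(\Theta^-_{j+\frac{1}{2},m+\frac{1}{2}} + \Theta^-_{-(j+\frac{1}{2}),m+\frac{1}{2}})(\tau,2u)$ out of each term and applying condition (iii), the sum becomes
$$ \frac{i}{\sqrt{2m+1}}(-i\tau)^{-\frac{1}{2}} e^{2\pi i(m+\frac{1}{2})t} \sum_{j=0}^{2m}\sum_{k=0}^{2m} e^{-\frac{2\pi i}{2m+1}(j+\frac{1}{2})(k+\frac{1}{2})} e^{\frac{2\pi i(m+\frac{1}{2})}{\tau}\bigl(v+\frac{k+\frac{1}{2}}{2m+1}\bigr)^2}\bigl(\Theta^-_{j+\frac{1}{2},m+\frac{1}{2}}+\Theta^-_{-(j+\frac{1}{2}),m+\frac{1}{2}}\bigr)(\tau,2u). $$
Swapping the order of the two finite sums and then reindexing the inner sum by $j' = 2j+1$ (so $j'$ runs over odd integers in $[1,4m+1]$, and $\Theta^-_{j+\frac{1}{2},m+\frac{1}{2}} = \Theta^-_{j'/2,m+\frac{1}{2}}$, while $2\pi(m+\frac{1}{2}) = \pi(2m+1)$ in the exponential prefactor), converts this into precisely the RHS of property (3) of Lemma \ref{lem:4.5}.

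The only "obstacle" is really bookkeeping: keeping track of the substitution $j' = 2j+1$ that turns the sum indexed by $j \in \{0,\dots,2m\}$ in (\ref{eq:4.11}) into the sum over odd $j'$ in Lemma \ref{lem:4.5}(3), and checking that the phases $e^{-2\pi i(j+\frac{1}{2})(k+\frac{1}{2})/(2m+1)}$ and $e^{-\pi i j'(k+\frac{1}{2})/(2m+1)}$ agree under this change of variable. Once these identifications are made, the verification is mechanical, justifying the claim that the proof is straightforward.
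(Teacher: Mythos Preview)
Your proposal is correct and is exactly the direct verification the paper has in mind: the paper gives no argument for Lemma~\ref{lem:4.6} beyond noting that the proof is the same as that of Lemma~\ref{lem:1.5}, whose proof in turn is declared ``straightforward''. Your termwise application of conditions (i)--(iii), together with the reindexing $j'=2j+1$ to match the odd-index sum in Lemma~\ref{lem:4.5}(3), is precisely that straightforward check.
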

%\hfill$\Box$

Now we introduce the following functions ($j \in  \ZZ,\,\,m\in \ZZ_{\geq 0}$):

\bigskip
\noindent $ R^{[B]}_{j+\frac{1}{2}, m+\frac{1}{2}} (\tau,v):=  $
\[ \sum\limits_{n\in \ZZ+\frac{1}{2} \atop n \equiv j +\frac{1}{2} \;\text{mod}\; 2m+1} 
\hspace{-5ex}(-1)^{n-\left(j+\frac{1}{2}\right)} \left\{\text{sgn} (n) - E \left( \left(n+(2m+1) \frac{\text{Im}\; (v)}{\text{Im}\;(\tau)}\right) \sqrt{\frac{\text{Im}\; (\tau)}{m+\frac{1}{2}}}\;\right)\right\} e^{-\frac{\pi i n^2 \tau}{2m+1}-2\pi i n v}. \]

\begin{lemma}
\label{lem:4.7}
The function 
$R^{[B]}_{j+\frac{1}{2}, m+\frac{1}{2}}$ 
has the following properties :
\begin{list}{}{}
\item (1) $R^{[B]}_{j+\frac{1}{2}, m+\frac{1}{2}} (\tau, v+1) = -R^{[B]}_{j+\frac{1}{2}, m+\frac{1}{2}} (\tau,v)$.

\item(2) For $0 \leq j \leq 2m$ we have :

$
\begin{array}{l@{\quad}l}
&R^{[B]}_{j+\frac{1}{2}, m+\frac{1}{2}} (\tau, v-\tau) =- e^{\pi i (2m+1)(\tau-2v)} R^{[B]}_{j+\frac{1}{2}, m+\frac{1}{2}} (\tau,v) \\[2ex]
&+ 2e^{-\frac{\pi i \tau}{2m+1}\left(j+\frac{1}{2}\right)^2 + 2\pi i  \tau \left(j+\frac{1}{2}\right)} e^{-2\pi i \left(j+\frac{1}{2}\right) v}.
\end{array}
$

\item(3) $R^{[B]}_{2m - j +\frac{1}{2}, m+\frac{1}{2}} (\tau,v) = 
R^{[B]}_{j +\frac{1}{2}, m+\frac{1}{2}} (\tau,-v)$.

\item(4) $R^{[B]}_{j+\frac{1}{2}, m+\frac{1}{2}}(\tau+1,v)=
e^{\frac{-\pi i}{2m+1}(j+\frac{1}{2})^2}R^{[B]}_{j+\frac{1}{2}, m+\frac{1}{2}}(\tau,v).$ 
\hfill $\Box$
\end{list}
\end{lemma}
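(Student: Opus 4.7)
The plan is to mimic the proof of Lemma \ref{lem:1.6}, exploiting the symmetries built into the definition of $R^{[B]}_{j+\frac12,m+\frac12}$. Throughout, observe the crucial arithmetic fact that $2m+1$ is odd, which makes $(-1)^{2m+1}=-1$ and $(-1)^{k(2m+1)}=(-1)^k$; this is the source of the alternating signs that distinguish $R^{[B]}$ from the function $R_{j;m+1}$ of Section \ref{sec:1}.

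For claim (1), substituting $v\mapsto v+1$ in the defining series changes only the factor $e^{-2\pi inv}\mapsto e^{-2\pi in(v+1)}=e^{-2\pi in}e^{-2\pi inv}$. Since $n\in\ZZ+\tfrac12$ we have $e^{-2\pi in}=-1$, and the sign can be pulled out of the sum, giving $R^{[B]}_{j+\frac12,m+\frac12}(\tau,v+1)=-R^{[B]}_{j+\frac12,m+\frac12}(\tau,v)$.

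For claim (3), I would substitute $n\mapsto-n$ in the sum for $R^{[B]}_{j+\frac12,m+\frac12}(\tau,-v)$. The congruence class $n\equiv j+\tfrac12\pmod{2m+1}$ becomes $n\equiv -(j+\tfrac12)\equiv (2m-j)+\tfrac12\pmod{2m+1}$. Using that $\mathrm{sgn}$ and $E$ are both odd, that the exponent $e^{-\pi in^2\tau/(2m+1)}$ is even in $n$, and tracking the sign $(-1)^{n-(j+\frac12)}$ (which, after the index change, becomes $-(-1)^{n-(2m-j+\frac12)}$ because $2m+1$ is odd), one obtains the stated reflection identity.

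The main work is in claim (2), which follows the template of equation (\ref{eq:1.5}). The shift $n\mapsto n-(2m+1)$ in the series for $R^{[B]}_{j+\frac12,m+\frac12}(\tau,v-\tau)$ preserves the congruence class and converts the $E$-argument back to the form used in $R^{[B]}(\tau,v)$, at the cost of a global factor $e^{\pi i(2m+1)(\tau-2v)}$ in the exponential and an overall sign $(-1)^{2m+1}=-1$ from the shifted $(-1)^{n-(j+\frac12)}$-factor. The sign function, however, does not shift cleanly: one uses
\[
\mathrm{sgn}\bigl(n+(2m+1)\bigr)=\mathrm{sgn}(n)+2\cdot\mathbb{1}_{-(2m+1)<n<0},
\]
and in the congruence class $n\equiv j+\tfrac12$ with $0\le j\le 2m$ the indicator picks out exactly one term, namely $n=j-2m-\tfrac12$. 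Isolating that single correction term, simplifying its exponent via the identity
\[
\tfrac{(j-2m-\frac12)^2}{2m+1}-(2m+1)=\tfrac{(j+\frac12)^2}{2m+1}-2(j+\tfrac12),
\]
and combining with the rest of the shifted sum yields precisely the formula in (2). The main obstacle is the bookkeeping in this single step, but it is entirely parallel to Lemma \ref{lem:1.6}(3).

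Finally, claim (4) is a direct computation. Replacing $\tau$ by $\tau+1$ leaves $\mathrm{Im}\,\tau$ and hence both $E(\cdot)$ and $\mathrm{sgn}(n)$ unchanged, so the only effect is the factor $e^{-\pi in^2/(2m+1)}$ in each term. Writing $n=j+\tfrac12+k(2m+1)$ for $k\in\ZZ$, one checks that
\[
\tfrac{n^2}{2m+1}=\tfrac{(j+\frac12)^2}{2m+1}+2k(j+\tfrac12)+k^2(2m+1),
\]
and since $2m+1$ is odd the two $k$-dependent terms contribute $(-1)^k\cdot(-1)^{k^2}=(-1)^{2k}=1$. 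Thus $e^{-\pi in^2/(2m+1)}=e^{-\pi i(j+\frac12)^2/(2m+1)}$ is independent of $k$ and can be factored out, giving (4).
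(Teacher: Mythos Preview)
Your proof is correct and follows essentially the same approach as the paper, which omits the argument for Lemma \ref{lem:4.7} on the grounds that it parallels Lemma \ref{lem:1.6}. Your tracking of the alternating sign $(-1)^{n-(j+\frac12)}$ under the shift $n\mapsto n+(2m+1)$ (using that $2m+1$ is odd) and your isolation of the single correction term $n'=j-2m-\tfrac12$ in claim (2) are exactly the right adaptations of the proof of (\ref{eq:1.5}).
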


\noindent Letting $(\tau,v) \rightarrow \left(-\frac{1}{\tau}, \frac{v}{\tau}\right)$ in Lemma \ref{lem:4.7} (1), (2), we obtain the following :

\begin{lemma}
\label{lem:4.8}
$$$$
\vspace{-5ex}
\begin{list}{}{}
\item (1) $R^{[B]}_{j +\frac{1}{2}, m+\frac{1}{2}} \left(-\frac{1}{\tau}, \frac{v-\tau}{\tau}\right) = -R^{[B]}_{j +\frac{1}{2}, m+\frac{1}{2}} \left(-\frac{1}{\tau}, \frac{v}{\tau}\right)$.

\item (2) For $0 \leq j \leq 2m$, we have :

$
\begin{array}{ll}
&R^{[B]}_{j +\frac{1}{2}, m+\frac{1}{2}} \left(-\frac{1}{\tau}, \frac{v+1}{\tau}\right) = -e^{-\frac{\pi i (2m+1)}{\tau} (2v+1)} 
R^{[B]}_{j +\frac{1}{2}, m+\frac{1}{2}} \left(-\frac{1}{\tau}, \frac{v}{\tau}\right) \\[2ex] 
&+ 2e^{\frac{\pi i}{(2m+1)\tau}\left(j+\frac{1}{2}\right)^2 - \frac{2\pi i}{\tau} \left(j+\frac{1}{2}\right)} 
e^{-\frac{2\pi i}{\tau}\left(j+\frac{1}{2}\right)v}.
\end{array}
$
\end{list}
\hfill $\Box$
\end{lemma}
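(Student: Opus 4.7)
The plan is to obtain Lemma \ref{lem:4.8} as a direct consequence of Lemma \ref{lem:4.7} (1),(2) via the change of variables $(\tau,v)\mapsto(-1/\tau,\,v/\tau)$. Since $R^{[B]}_{j+\frac12,m+\frac12}(\tau,v)$ is defined as an absolutely convergent series for all $(\tau,v)\in\CC\times\CC$ with $\mathrm{Im}\,\tau>0$, and this substitution keeps $\tau$ in the upper half-plane, both identities of Lemma \ref{lem:4.7} remain valid after substitution; only a careful bookkeeping of how the arguments transform is required.

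For part (1), I would first rewrite Lemma \ref{lem:4.7}(1) in the equivalent shifted form
\[
R^{[B]}_{j+\frac12,\,m+\frac12}(\tau,\,v-1)\;=\;-R^{[B]}_{j+\frac12,\,m+\frac12}(\tau,\,v),
\]
which is immediate by replacing $v$ with $v-1$ in the stated identity. Then I would apply the substitution $\tau\to-1/\tau,\,v\to v/\tau$: the argument $v-1$ becomes $v/\tau-1=(v-\tau)/\tau$, and we arrive at the claim of Lemma \ref{lem:4.8}(1).

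For part (2), I would apply the same substitution directly in Lemma \ref{lem:4.7}(2). The LHS argument $v-\tau$ becomes $v/\tau-(-1/\tau)=(v+1)/\tau$, giving $R^{[B]}_{j+\frac12,m+\frac12}(-1/\tau,(v+1)/\tau)$. On the RHS, the exponential $e^{\pi i(2m+1)(\tau-2v)}$ transforms into
\[
e^{\pi i(2m+1)(-1/\tau-2v/\tau)}\;=\;e^{-\pi i(2m+1)(2v+1)/\tau},
\]
while the explicit correction term transforms into
\[
2\,e^{\,\pi i(j+\frac12)^2/((2m+1)\tau)\,-\,2\pi i(j+\frac12)/\tau}\,e^{-2\pi i(j+\frac12)v/\tau},
\]
exactly matching the claimed right-hand side.

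There is no genuine obstacle here: the lemma is an essentially cosmetic reformulation of Lemma \ref{lem:4.7}, positioned to be combined later with the $\Theta^-$ modular transformation formulas of Proposition \ref{prop:4.3}. The only point requiring a moment's care is that the equality in Lemma \ref{lem:4.7}(2) is stated for the range $0\leq j\leq 2m$, so the resulting identity in Lemma \ref{lem:4.8}(2) inherits the same range restriction, which is consistent with its use in constructing the functions $a_j(\tau,v)$ in the next step.
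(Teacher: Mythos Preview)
Your proposal is correct and matches the paper's own argument exactly: the paper simply states that Lemma \ref{lem:4.8} is obtained by letting $(\tau,v)\to(-1/\tau,\,v/\tau)$ in Lemma \ref{lem:4.7}(1),(2), and your write-up carries out precisely that substitution with the correct bookkeeping.
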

%%%%%%%%%%%%%%%%%%%%%%%%%%%%%%%%%%%%%%%%%%%%%%%%%%%

Let $(a,b)$ and $y$ be the real coordinates defined by (\ref{eq:1.8a}). Then we have the following analogue of Lemma \ref{lem:1.7} and Lemma \ref{lem:1.8}.
\begin{lemma}
\label{lem:4.9}
$$$$

\vspace{-5ex}

\begin{list}{}{}
\item (1) $(\frac{\partial}{\partial a}+\tau \frac{\partial}{\partial b}) R^{[B]}_{j+\frac{1}{2},m+\frac{1}{2}}(\tau,v)
=-\sqrt{2(2m+1)y}\; e^{-2\pi(2m+1)a^2y}\; \Theta^{-}_{j+\frac{1}{2},m+\frac{1}{2}}(-\bar{\tau},-2\bar{v})$.

\item (2) $(\frac{\partial}{\partial a}+\tau \frac{\partial}{\partial b}) R^{[B]}_{j+\frac{1}{2},m+\frac{1}{2}} (-\frac{1}{\tau},\frac{v}{\tau})=-\frac{\tau}{|\tau|}\sqrt{2(2m+1)y}\; e^{-2\pi(2m+1)\frac{b^2y}{|\tau|^2}}\; 
\Theta^{-}_{j+\frac{1}{2},m+\frac{1}{2}}(\frac{1}{\bar{\tau}},-\frac{2\bar{v}}{\bar{\tau}}).$

\item (3) $(\frac{\partial}{\partial a}+\tau \frac{\partial}{\partial b}) R^{[B]}_{j+\frac{1}{2},m+\frac{1}{2}}(-\frac{1}{\tau},\frac{v}{\tau})=-i(-i\tau)^{\frac{1}{2}} \sqrt{2y}\;e^{\frac{\pi(2m+1)}{2y\tau}(\bar{\tau}v^2-2\tau v\bar{v}+\tau \bar{v}^2)}$

$$ \times 
\sum\limits^{2m}_{k=0} 
e^{-\frac{2\pi i}{2m+1}(j+\frac{1}{2})(k+\frac{1}{2})} \Theta^{-}_{k+\frac{1}{2},m+\frac{1}{2}}(-\bar{\tau},2\bar{v}).$$
\end{list}
\hfill $\Box$
\end{lemma}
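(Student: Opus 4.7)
The plan is to treat the three parts in order, with (1) as the direct computation and (2), (3) as formal consequences, exactly parallel to the structure of Lemmas \ref{lem:1.7}--\ref{lem:1.8}.

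For part (1), I exploit the observation (cf.\ (\ref{eq:1.6})) that in the real coordinates $(a,b,y,\Re\tau)$ defined by (\ref{eq:1.8a}), the operator $\frac{\partial}{\partial a}+\tau\frac{\partial}{\partial b}$ is a multiple of $\partial/\partial\bar v$, so it annihilates any factor that is holomorphic in $v$ and $\tau$ alone. In the definition of $R^{[B]}_{j+\frac12,m+\frac12}(\tau,v)$, only the factor $E(\,\cdot\,)$ fails to be holomorphic, and its argument $w=(n+(2m+1)a)\sqrt{y/(m+\frac12)}$ depends on $a$ but not on $b$. Using $m+\frac12=(2m+1)/2$, one computes $(\frac{\partial}{\partial a}+\tau\frac{\partial}{\partial b})w=\sqrt{2(2m+1)y}$, and $E'(w)=2e^{-\pi w^2}$. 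Taking into account the minus sign in front of $E$, I obtain a single Gaussian times the holomorphic exponential $e^{-\pi i n^2\tau/(2m+1)-2\pi i n v}$. The key algebraic step is to repackage the full exponent using (\ref{eq:1.7}): $2y+i\tau=i\bar\tau$ consolidates the $n^2$ terms into $-\pi i n^2\bar\tau/(2m+1)$, and $2iay=v-\bar v$ consolidates the linear-in-$n$ terms into $-2\pi i n\bar v$, while the $a^2$ term becomes the prefactor $e^{-2\pi(2m+1)a^2y}$. Matching the remaining sum against the Fourier expansion of $\Theta^-_{j+\frac12,m+\frac12}$ in (\ref{eq:4.8}) (after the substitution $(\tau,z)\mapsto(-\bar\tau,-2\bar v)$ and the index reparametrization $N=(2m+1)n+(j+\tfrac12)$) yields (1).

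For part (2), I pass to the primed coordinates $\tau'=-\frac1\tau$, $v'=\frac v\tau$. Writing $v'=a'\tau'-b'$ one has $a'=-b$, $b'=a$, $y'=y/|\tau|^2$, and by the chain rule $\frac{\partial}{\partial a'}+\tau'\frac{\partial}{\partial b'}=\frac{1}{\tau}\bigl(\frac{\partial}{\partial a}+\tau\frac{\partial}{\partial b}\bigr)$. Applying (1) at $(\tau',v')$ and multiplying by $\tau$ produces (2), where the prefactor $\tau/|\tau|$ arises from $\tau\cdot\sqrt{y'}=\tau\sqrt{y}/|\tau|$, and the arguments of $\Theta^-$ become $(-\bar{\tau'},-2\bar{v'})=(1/\bar\tau,-2\bar v/\bar\tau)$.

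For part (3), I substitute the modular transformation of $\Theta^-_{j+\frac12,m+\frac12}$ into (2). Applying Proposition \ref{prop:4.3}(1) with $m\to m+\frac12$, $j\to j+\frac12$, $\tau\to-\bar\tau$, $z\to 2\bar v$ expresses $\Theta^-_{j+\frac12,m+\frac12}(1/\bar\tau,-2\bar v/\bar\tau)$ as a finite sum over $k$ odd, $1\le k\le 4m+1$, of $\Theta^-_{k/2,m+\frac12}(-\bar\tau,2\bar v)$ with coefficients $(i\bar\tau/(2m+1))^{1/2}e^{-2\pi i(m+\frac12)\bar v^2/\bar\tau}e^{-\pi i(j+\frac12)k/(2m+1)}$. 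Writing $k=2\ell+1$ with $0\le\ell\le 2m$ turns the phase into the desired $e^{-2\pi i(j+\frac12)(\ell+\frac12)/(2m+1)}$ and reindexes the theta functions as $\Theta^-_{\ell+\frac12,m+\frac12}$.

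The main obstacle will be the exponential-bookkeeping in part (3): one must verify that $-\frac{\tau}{|\tau|}\sqrt{2(2m+1)y}\cdot e^{-2\pi(2m+1)b^2y/|\tau|^2}\cdot (i\bar\tau/(2m+1))^{1/2}e^{-2\pi i(m+\frac12)\bar v^2/\bar\tau}$ simplifies to $-i(-i\tau)^{1/2}\sqrt{2y}\cdot e^{\pi(2m+1)(\bar\tau v^2-2\tau v\bar v+\tau\bar v^2)/(2y\tau)}$. This reduces to an identity among $\tau,\bar\tau,y,v,\bar v$, which one confirms by expanding the quadratic form $\bar\tau v^2-2\tau v\bar v+\tau\bar v^2$ using $v=a\tau-b$, $\bar v=a\bar\tau-b$, and the relations $\tau-\bar\tau=2iy$, and checking against the contributions $b^2y/|\tau|^2$ coming from $a'^2 y'$ and $\bar v^2/\bar\tau$. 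The scalar prefactors match via $\tau/|\tau|\cdot (i\bar\tau)^{1/2}/(2m+1)^{1/2}\cdot\sqrt{2(2m+1)y}=i(-i\tau)^{1/2}\sqrt{2y}$, using $\tau\bar\tau=|\tau|^2$ and a branch choice of the square root.
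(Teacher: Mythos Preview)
Your approach is correct and coincides exactly with the paper's: Lemma~\ref{lem:4.9} is stated without proof there because it is the direct $osp_{3|2}$ analogue of Lemmas~\ref{lem:1.7} and~\ref{lem:1.8}, and your three parts reproduce precisely those arguments with $m+1$ replaced by $m+\tfrac12$ and $\Theta$ replaced by $\Theta^-$. One minor slip: in part~(2) the correct primed coordinates are $a'=b$, $b'=-a$ (not $a'=-b$, $b'=a$), which is what makes your chain-rule identity $\partial_{a'}+\tau'\partial_{b'}=\tfrac{1}{\tau}(\partial_a+\tau\partial_b)$ hold with the stated sign; since only $(a')^2$ enters the final formula this does not affect the outcome.
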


Let 

\bigskip
\noindent $ \overset{\sim}{a}_{j}(\tau,v)\!=  $

\[ \!R^{[B]}_{j+\frac{1}{2},m+\frac{1}{2}}(\tau,v)+ \frac{i}{\sqrt{2m+1}}(-i\tau)^{-\frac{1}{2}} e^{\frac{2\pi i(m+\frac{1}{2})}{\tau}v^2} \sum\limits^{2m}_{k=0} e^{-\frac{2\pi i}{2m+1}(j+\frac{1}{2})(k+\frac{1}{2})} R^{[B]}_{k+\frac{1}{2},m+\frac{1}{2}}\left(\!-\frac{1}{\tau},\frac{v}{\tau}\!\right). \]
Then we have the following analogue of Lemma \ref{lem:1.9}.

\begin{lemma}
\label{lem:4.10}
The functions $\overset{\sim}{a}_{j}(\tau,v),\,\, 0\leq j\leq 2m$, satisfy the following properties :
\begin{list}{}{}
\item (1) $\overset{\sim}{a}_{j}(\tau,v)$ is holomorphic with respect to $v$,

\item (2) $\overset{\sim}{a}_{j}(\tau,v)+e^{\pi i(2v-\tau)} \overset{\sim}{a}_{j}(\tau,v-\tau)=2e^{-\frac{\pi i \tau}{2m+1}(2m-j+\frac{1}{2})^2} 
e^{2\pi i(2m-j+\frac{1}{2})v}$,

\item (3) $\overset{\sim}{a}_{j}(\tau,v)+\overset{\sim}{a}_{j}(\tau,v+1)=\frac{2i}{\sqrt{2m+1}}(-i\tau)^{-\frac{1}{2}} \sum\limits^{2m}_{k=0}e^{-\frac{2\pi i}{2m+1}(2m-j+\frac{1}{2})(k+\frac{1}{2})}e^{\frac{\pi i(2m+1)}{\tau}(v+\frac{k+\frac{1}{2}}{2m+1})^2}$.
\end{list}
\hfill $\Box$
\end{lemma}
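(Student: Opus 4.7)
The plan is to mimic the proof of Lemma \ref{lem:1.9}, replacing the relevant inputs with their ``$[B]$''-analogues from Lemmas \ref{lem:4.7}, \ref{lem:4.8}, and \ref{lem:4.9}. All three properties (1), (2), (3) follow by direct computation from the definition of $\widetilde{a}_j$, but property (1) requires a small orthogonality argument that I will describe first.

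For property (1), I would show that the operator $\frac{\partial}{\partial a}+\tau\frac{\partial}{\partial b}=-2iy\frac{\partial}{\partial\overline v}$ (cf.\ (\ref{eq:1.6})) annihilates $\widetilde{a}_j$. The prefactors $(-i\tau)^{-1/2}$ and $e^{2\pi i(m+\frac12)v^2/\tau}$ are holomorphic in $v$, so they commute with this operator. Applying Lemma \ref{lem:4.9}(1) to the first summand of $\widetilde{a}_j$, and applying Lemma \ref{lem:4.9}(3) inside the sum over $k$ in the second summand, one gets two expressions involving $\Theta^-_{\pm(j+\frac12),m+\frac12}(-\overline\tau,\mp 2\overline v)$. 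The double sum over $k$ (with $n$ the inner index) collapses via the orthogonality
$$
\sum_{k=0}^{2m}e^{-\frac{2\pi i}{2m+1}(j+n+1)(k+\frac12)}=-(2m+1)\,\delta_{n,\,2m-j},
$$
and then $\Theta^-_{2m-j+\frac12,m+\frac12}=-\Theta^-_{-(j+\frac12),m+\frac12}$ by (\ref{eq:4.9}), while $\Theta^-_{-(j+\frac12),m+\frac12}(-\overline\tau,2\overline v)=\Theta^-_{j+\frac12,m+\frac12}(-\overline\tau,-2\overline v)$. Finally, the Gaussian prefactors match: using $2iy=\tau-\overline\tau$ and $v-\overline v=2iya$ from (\ref{eq:1.7}) one checks
$$
\frac{\pi i(2m+1)}{\tau}v^2+\frac{\pi(2m+1)}{2y\tau}\bigl(\overline\tau v^2-2\tau v\overline v+\tau\overline v^2\bigr)=-2\pi(2m+1)a^2y,
$$
so the two contributions cancel and $\frac{\partial}{\partial\overline v}\widetilde{a}_j=0$.

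For property (2), I would substitute $v\mapsto v-\tau$ in the definition of $\widetilde{a}_j$, apply Lemma \ref{lem:4.8}(1) to rewrite $R^{[B]}_{k+\frac12,m+\frac12}(-\frac1\tau,\frac{v-\tau}\tau)=-R^{[B]}_{k+\frac12,m+\frac12}(-\frac1\tau,\frac v\tau)$, and observe that the exponential identity
$$
e^{\pi i(2m+1)(2v-\tau)}\,e^{\frac{2\pi i(m+\frac12)(v-\tau)^2}{\tau}}=e^{\frac{2\pi i(m+\frac12)v^2}{\tau}}
$$
lets the ``modification'' part of $\widetilde{a}_j(\tau,v)+e^{\pi i(2m+1)(2v-\tau)}\widetilde{a}_j(\tau,v-\tau)$ telescope to zero. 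What remains is $R^{[B]}_{j+\frac12,m+\frac12}(\tau,v)+e^{\pi i(2m+1)(2v-\tau)}R^{[B]}_{j+\frac12,m+\frac12}(\tau,v-\tau)$, which by Lemma \ref{lem:4.7}(2) equals $2\,e^{-\frac{\pi i\tau}{2m+1}(j+\frac12)^2+2\pi i\tau(j+\frac12)+\pi i(2m+1)(2v-\tau)-2\pi i(j+\frac12)v}$. A routine rearrangement, using $(2m-j+\tfrac12)^2=(2m+1)^2-2(2m+1)(j+\tfrac12)+(j+\tfrac12)^2$, turns this into $2e^{-\frac{\pi i\tau}{2m+1}(2m-j+\frac12)^2+2\pi i(2m-j+\frac12)v}$, as required.

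For property (3), substitute $v\mapsto v+1$ in the definition: by Lemma \ref{lem:4.7}(1) the $R^{[B]}_{j+\frac12,m+\frac12}(\tau,v+1)$ term contributes $-R^{[B]}_{j+\frac12,m+\frac12}(\tau,v)$, and Lemma \ref{lem:4.8}(2) splits the $v+1$ values of the $k$-th $R^{[B]}$ into two pieces. The first piece, multiplied by the outer $e^{\frac{2\pi i(m+\frac12)(v+1)^2}{\tau}}$ and the factor $-e^{-\pi i(2m+1)(2v+1)/\tau}$, reproduces exactly $-(\widetilde{a}_j(\tau,v)-R^{[B]}_{j+\frac12,m+\frac12}(\tau,v))$, so the ``$-R^{[B]}$'' pieces cancel and one is left with $-\widetilde{a}_j(\tau,v)$ plus the second piece. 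The second piece, after completing the square, yields an exponential $e^{\frac{\pi i(2m+1)}{\tau}((v+1)-\frac{k+\frac12}{2m+1})^2}$. The substitution $k\mapsto 2m-k$ relabels this as $e^{\frac{\pi i(2m+1)}{\tau}(v+\frac{k+\frac12}{2m+1})^2}$; at the same time the coefficient $e^{-\frac{2\pi i(j+\frac12)(2m-k+\frac12)}{2m+1}}$ becomes $-e^{\frac{2\pi i(j+\frac12)(k+\frac12)}{2m+1}}=e^{-\frac{2\pi i(2m-j+\frac12)(k+\frac12)}{2m+1}}$ using $e^{-2\pi i(j+\frac12)}=-1$ and the same trick on $k$. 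This matches the claimed right-hand side of (3) exactly.

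The main obstacle is property (1): the bookkeeping of the Gaussian prefactors in Lemma \ref{lem:4.9}(3) (which involves the slightly awkward expression $\frac{\pi(2m+1)}{2y\tau}(\overline\tau v^2-2\tau v\overline v+\tau\overline v^2)$) and the fact that one must carefully distinguish $\Theta^-_{\pm(k+\frac12),m+\frac12}(-\overline\tau,\pm 2\overline v)$ using the symmetry and the mod-$(2m+1)$ quasi-periodicity of $\Theta^-$ with its characteristic sign. Once one sees that the orthogonality on $\ZZ/(2m+1)\ZZ$ picks out $n=2m-j$ with a compensating sign from $e^{-\pi i(2m+1)/(2m+1)}=-1$, everything fits together; (2) and (3) are then routine quasi-periodicity computations.
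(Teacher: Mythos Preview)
Your proposal is correct and follows exactly the approach the paper intends: the paper omits the proof of Lemma~\ref{lem:4.10} with the remark that it proceeds ``in the same way as in Section~\ref{sec:1},'' i.e.\ by mimicking the proof of Lemma~\ref{lem:1.9} with the $[B]$-inputs from Lemmas~\ref{lem:4.7}--\ref{lem:4.9}, which is precisely what you do. (Note that the factor $e^{\pi i(2v-\tau)}$ in the stated part~(2) is a typo for $e^{\pi i(2m+1)(2v-\tau)}$, consistent with Lemma~\ref{lem:4.6}(ii) and Lemma~\ref{lem:4.7}(2); you correctly work with the intended factor.)
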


Lemma \ref{lem:4.10} shows that the functions $a_j(\tau,v):=\frac{1}{2}\overset{\sim}{a}_{2m-j}(\tau,v)\,\, (0\leq j \leq 2m)$ satisfy the conditions $(i),(ii),(iii)$  of Lemma \ref{lem:4.6}. Hence, by Lemma \ref{lem:4.6} and Lemma \ref{lem:4.5}(4), we have the following analogue of Proposition \ref{prop:1.10}

\begin{proposition}
\label{prop:4.11}
$$
G^{[B;m]}(\tau,u,v,t)=\frac{1}{2}e^{2\pi i(m+\frac{1}{2})t} \sum\limits^{2m}_{j=0} 	\overset{\sim}{a}_{2m-j}(\tau,v) (\Theta^{-}_{j+\frac{1}{2},m+\frac{1}{2}} + \Theta^{-}_{-(j+\frac{1}{2}),m+\frac{1}{2}}) (\tau,2u).
$$
\hfill $\Box$
\end{proposition}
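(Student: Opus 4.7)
The plan is to mimic the argument used for Proposition \ref{prop:1.10} in Section \ref{sec:1}: construct a candidate expression for $G^{[B;m]}$ out of the $\tilde a_j$ and the theta functions $\Theta^-_{\pm(j+\frac12),m+\frac12}$, verify that this candidate satisfies the three defining properties (1), (2), (3) of Lemma \ref{lem:4.5}, and then invoke the uniqueness statement Lemma \ref{lem:4.5}(4) to conclude.

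First I would set $a_j(\tau,v) := \tfrac12\,\tilde a_{2m-j}(\tau,v)$ for $0\le j\le 2m$ and verify that this family satisfies conditions (i), (ii), (iii) of Lemma \ref{lem:4.6}. Condition (i) is immediate from Lemma \ref{lem:4.10}(1). For (ii), apply Lemma \ref{lem:4.10}(2) with the index $j$ replaced by $2m-j$: the exponent on the right becomes $(2m-(2m-j)+\tfrac12)^2=(j+\tfrac12)^2$, and the factor $2$ exactly absorbs the normalization $\tfrac12$, yielding the required identity (assuming, as one must, that the exponent $e^{\pi i(2v-\tau)}$ in Lemma \ref{lem:4.10}(2) is a misprint for $e^{\pi i(2m+1)(2v-\tau)}$, as dictated by Lemma \ref{lem:4.7}(2)). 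Condition (iii) is proved in the same way using Lemma \ref{lem:4.10}(3), after reindexing $j\mapsto 2m-j$ (the sum over $k$ is unaffected).

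Next, Lemma \ref{lem:4.6} tells us that the function
\[
F(\tau,u,v,t) := e^{2\pi i(m+\tfrac12)t}\sum_{j=0}^{2m} a_j(\tau,v)\bigl(\Theta^-_{j+\frac12,m+\frac12}+\Theta^-_{-(j+\frac12),m+\frac12}\bigr)(\tau,2u)
\]
satisfies properties (1), (2), (3) of Lemma \ref{lem:4.5}. Since $G^{[B;m]}$ also satisfies these three properties, and they determine the function uniquely by Lemma \ref{lem:4.5}(4), we conclude $G^{[B;m]}=F$, which is exactly the asserted formula after substituting $a_j=\tfrac12\,\tilde a_{2m-j}$.

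The only delicate point is the index reshuffle $j\leftrightarrow 2m-j$: one has to check that Lemma \ref{lem:4.10}(2)–(3), which are phrased in terms of $\tilde a_j$ but with the ``dual'' exponent $(2m-j+\tfrac12)$, give, after relabeling, precisely the exponent $(j+\tfrac12)$ appearing in Lemma \ref{lem:4.6}(ii)–(iii); this is the analogue of the book-keeping in the $\sl_{2|1}$ case but slightly more subtle because of the use of the alternating theta $\Theta^-$ and the half-integer shift $\tfrac12$. Modulo this routine but careful matching of exponents, the argument is identical to that of Proposition \ref{prop:1.10}.
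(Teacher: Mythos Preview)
Your argument is correct and is exactly the paper's own approach: one sets $a_j=\tfrac12\,\tilde a_{2m-j}$, checks via Lemma \ref{lem:4.10} (with the index shift $j\mapsto 2m-j$) that these satisfy (i)--(iii) of Lemma \ref{lem:4.6}, and then concludes by the uniqueness in Lemma \ref{lem:4.5}(4). Your observation that the factor $e^{\pi i(2v-\tau)}$ in Lemma \ref{lem:4.10}(2) must be a misprint for $e^{\pi i(2m+1)(2v-\tau)}$ is also correct.
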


Finally, let (cf. (1.13) and (1.14)) :
\begin{equation}
		\label{eq:4.12}
		\varphi^{[B;m]}_{add}(\tau,u,v,t):=
\frac{1}{2}e^{2\pi i(m+\frac{1}{2})t}\sum\limits^{2m}_{j=0} 
R^{[B]}_{j+\frac{1}{2},m+\frac{1}{2}}(\tau,v)
(\Theta^{-}_{j+\frac{1}{2},m+\frac{1}{2}}+\Theta^{-}_{-(j+\frac{1}{2}),m+\frac{1}{2}})(\tau,2u)
\end{equation}
and
\begin{equation}
		\label{eq:4.13}
		\overset{\sim}{\varphi}^{[B;m]}:=\varphi^{[B;m]} + \varphi^{[B;m]}_{add}.
\end{equation}

Then we have, as in the proof of Theorem \ref{th:1.11}:
\begin{lemma}
\label{lem:4.12}
$$$$
\vspace{-7ex}
\begin{list}{}{}
\item (1) $\varphi^{[B;m]}_{add}-\varphi^{[B;m]}_{add}|_{S}=-G^{[B;m]};\,\,
\varphi^{[B;m]}_{add}(\tau+1,u,v,t)=\varphi^{[B;m]}_{add}(\tau,u,v,t)$.

\item (2) $\varphi^{[B;m]}_{add}(\tau,u+a,v+b,t)=(-1)^{a+b}\varphi^{[B;m]}_{add}(\tau,u,v,t)$ if $a,b\in \ZZ$.

\item (3) $\varphi^{[B;m]}_{add}(\tau,u,-v,t)=-\varphi^{[B;m]}_{add}(\tau,u,v,t);\,\, \varphi^{[B;m]}_{add}(\tau,-u,v,t)=\varphi^{[B;m]}_{add}(\tau,u,v,t).$
\end{list}
\hfill
$\Box$
\end{lemma}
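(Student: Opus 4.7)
The plan is to follow the template of the proof of Theorem~\ref{th:1.11} verbatim, using the alternate theta functions $\Theta^-_{j+\frac{1}{2},m+\frac{1}{2}}$ and modified Zwegers functions $R^{[B]}_{j+\frac{1}{2},m+\frac{1}{2}}$ in place of their Section~\ref{sec:1} counterparts.

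For the first identity in (1) (which is the core), I would apply $|_S$ to the definition (\ref{eq:4.12}), transform the theta factor via Proposition~\ref{prop:4.3}(3), and observe that the prefactor $(-i\tau/(2m+1))^{1/2}$ together with the overall $1/\tau$ and the Gaussian $e^{2\pi i(m+\frac{1}{2})u^2/\tau}$ combine so that, after swapping the order of summation, the inner sum equals (by the definition of $\tilde a_\ell$ stated just before Lemma~\ref{lem:4.10}) the quantity $R^{[B]}_{\ell+\frac{1}{2},m+\frac{1}{2}}(\tau,v)-\tilde a_\ell(\tau,v)$. The phase $e^{-\pi i(j+\frac{1}{2})(2\ell+1)/(2m+1)}$ arising from Proposition~\ref{prop:4.3}(3) is readily checked to coincide with the phase $e^{-2\pi i(j+\frac{1}{2})(\ell+\frac{1}{2})/(2m+1)}$ in the definition of $\tilde a_\ell$. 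This gives $\varphi^{[B;m]}_{\rm add}|_S=\varphi^{[B;m]}_{\rm add}-\tfrac12 e^{2\pi i(m+\frac12)t}\sum_\ell \tilde a_\ell(\tau,v)\bigl(\Theta^-_{\ell+\frac12,m+\frac12}+\Theta^-_{-(\ell+\frac12),m+\frac12}\bigr)(\tau,2u)$. On the other side, Proposition~\ref{prop:4.11} expresses $G^{[B;m]}$ as an analogous sum but with $\tilde a_{2m-j}$ multiplying the $j$-th pair of theta functions; reindexing $j\mapsto 2m-\ell$ and using (\ref{eq:4.9}) (which yields $\Theta^-_{2m-\ell+\frac12,m+\frac12}=-\Theta^-_{-(\ell+\frac12),m+\frac12}$ and symmetrically for the other index) produces a $-1$ per term, so $G^{[B;m]}$ equals exactly the negative of the extra term, proving $\varphi^{[B;m]}_{\rm add}-\varphi^{[B;m]}_{\rm add}|_S=-G^{[B;m]}$. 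The second identity in (1) is routine: by Lemma~\ref{lem:4.7}(4) and Proposition~\ref{prop:4.3}(4), $R^{[B]}_{j+\frac12,m+\frac12}(\tau,v)$ and $\Theta^-_{\pm(j+\frac12),m+\frac12}(\tau,2u)$ pick up inverse exponential factors under $\tau\mapsto\tau+1$, so each summand is invariant.

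For (2), shifting $u\mapsto u+a$ with $a\in\ZZ$ multiplies each $\Theta^-_{\pm(j+\frac12),m+\frac12}(\tau,2u)$ by $e^{\pm 2\pi i(j+\frac12)a}=(-1)^a$ (by direct calculation from the series (\ref{eq:4.8}) using $e^{4\pi i(m+\frac12)}=1$), while by Lemma~\ref{lem:4.7}(1) shifting $v\mapsto v+b$ multiplies $R^{[B]}_{j+\frac12,m+\frac12}(\tau,v)$ by $(-1)^b$, giving the total factor $(-1)^{a+b}$. For (3), the identity in $v$ follows from Lemma~\ref{lem:4.7}(3) combined with the index swap $j\leftrightarrow 2m-j$ and the sign from (\ref{eq:4.9}), exactly as in the core step above. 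The identity in $u$ follows from the observation $\Theta^-_{j,m}(\tau,-z)=\Theta^-_{-j,m}(\tau,z)$, proved by sending $n\mapsto -n$ in the defining series (\ref{eq:4.8}); this exchanges the two summands inside each parenthesis, leaving the sum invariant.

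The main obstacle is the bookkeeping in the $|_S$ computation: the $-1$ arising from reindexing $\tilde a_{2m-j}\mapsto\tilde a_\ell$ must be matched against the $-1$ produced by (\ref{eq:4.9}) for $\Theta^-_{2m-\ell+\frac12,m+\frac12}$, and the prefactor $i/\sqrt{2m+1}$ in the definition of $\tilde a_\ell$ must combine correctly with the factor $(-i\tau)^{-1/2}/(-i)$ coming from the simplification of $(-i\tau)^{1/2}/\tau$. Once these signs are tracked carefully, everything falls into place.
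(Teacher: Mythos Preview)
Your proposal is correct and follows essentially the same approach as the paper, which simply indicates that the proof runs ``as in the proof of Theorem~\ref{th:1.11}.'' You have carried out exactly this template with the appropriate substitutions (Proposition~\ref{prop:4.3}(3),(4), Lemma~\ref{lem:4.7}, Proposition~\ref{prop:4.11}, and the reindexing via (\ref{eq:4.9})), and your sign bookkeeping in the $|_S$ computation and the $j\leftrightarrow 2m-\ell$ reindexing is accurate.
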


Thus we obtain the following analogue of Theorem \ref{th:1.11}.
\begin{theorem}
\label{th:4.13}
$$$$
\vspace{-7ex}
\begin{list}{}{}
\item (1) $\overset{\sim}{\varphi}^{[B;m]}|_{S}=\overset{\sim}{\varphi}^{[B;m]}$, 
i.e. $\overset{\sim}{\varphi}^{[B;m]}(-\frac{1}{\tau},\frac{u}{\tau},\frac{v}{\tau},t+\frac{v^2-u^2}{\tau})=\tau \overset{\sim}{\varphi}^{[B;m]} (\tau, u, v,t).$

\item (2) $\overset{\sim}{\varphi}^{[B;m]}(\tau+1,u,v,t)=
{\overset{\sim}\varphi}^{[B;m]}(\tau,u,v,t).$ 
\item (3) For $a,b\in \ZZ$, we have\,:

$
\overset{\sim}{\varphi}^{[B;m]}(\tau,u+a,v+b,t)=(-1)^{a+b}\overset{\sim}{\varphi}^{[B;m]}(\tau,u,v,t);$

$\overset{\sim}{\varphi}^{[B;m]}(\tau,u+a\tau,v+b\tau,t)=(-1)^{a+b}e^{2\pi i(2m+1)(bv-au)}q^{(m+\frac{1}{2})(b^2-a^2)}\overset{\sim}{\varphi}^{[B;m]}(\tau,u,v,t).$

\item (4) $\overset{\sim}{\varphi}^{[B;m]}(\tau,u,-v,t)=-\overset{\sim}{\varphi}^{[B;m]}(\tau,u,v,t), \,\,
\overset{\sim}{\varphi}^{[B;m]}(\tau,-u,v,t)=\overset{\sim}{\varphi}^{[B;m]}(\tau,u,v,t)$.

\hfill
$\Box$

\end{list}

\end{theorem}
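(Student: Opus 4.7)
The plan is to mirror the proof of Theorem \ref{th:1.11}, reducing each assertion to the corresponding properties of $\varphi^{[B;m]}$ (Lemmas \ref{lem:4.2}, \ref{lem:4.4}) and of the correction term $\varphi^{[B;m]}_{\text{add}}$ (Lemma \ref{lem:4.12}), organized so that the $S$-anomalies cancel. The heart of the argument is the identity from Lemma \ref{lem:4.12}(1), which can be rearranged as
\begin{equation*}
\varphi^{[B;m]}_{\text{add}}\big|_S - \varphi^{[B;m]}_{\text{add}} \;=\; G^{[B;m]} \;=\; \varphi^{[B;m]} - \varphi^{[B;m]}\big|_S .
\end{equation*}
Adding these two equal differences yields $(\varphi^{[B;m]}+\varphi^{[B;m]}_{\text{add}})\big|_S = \varphi^{[B;m]}+\varphi^{[B;m]}_{\text{add}}$, which is the $S$-invariance of claim (1).

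For the $\tau\mapsto\tau+1$ statement in (2), the correction term is invariant by the second formula of Lemma \ref{lem:4.12}(1); for $\varphi^{[B;m]}$ itself one inspects the series (\ref{eq:4.7}), whose $q$-exponent $(m+\tfrac12)j^{2}+\tfrac{j}{2}$ satisfies $2\bigl((m+\tfrac12)j^{2}+\tfrac{j}{2}\bigr)=(2m+1)j^{2}+j\in 2\ZZ$ for every $j\in\ZZ$, so the shift acts trivially.

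Claim (3) has two parts. The integer-translation formula is immediate from Lemma \ref{lem:4.4}(1) for $\varphi^{[B;m]}$ and Lemma \ref{lem:4.12}(2) for $\varphi^{[B;m]}_{\text{add}}$, since both produce the same sign $(-1)^{a+b}$. The $\tau$-translation formula is derived, in the style of Theorem \ref{th:1.11}(3), by rewriting (1) as
\begin{equation*}
\tau\,\overset{\sim}{\varphi}^{[B;m]}(\tau,u,v,t)\;=\;e^{-\frac{2\pi i(m+1/2)(u^{2}-v^{2})}{\tau}}\,\overset{\sim}{\varphi}^{[B;m]}\!\left(-\tfrac{1}{\tau},\tfrac{u}{\tau},\tfrac{v}{\tau},t\right),
\end{equation*}
substituting $(u,v)\mapsto(u+a\tau,v+b\tau)$, using the integer-translation formula just established to pull the shifts $a,b$ out of the $S$-transformed argument (picking up $(-1)^{a+b}$), and tracking the residual exponentials via
\begin{equation*}
(v+b\tau)^{2}-(u+a\tau)^{2}-(v^{2}-u^{2}) \;=\; 2\tau(bv-au)+\tau^{2}(b^{2}-a^{2}),
\end{equation*}
together with the overall $e^{2\pi i(m+1/2)t}$-dependence of $\overset{\sim}{\varphi}^{[B;m]}$.

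For claim (4), the antisymmetry under $v\mapsto -v$ holds for $\varphi^{[B;m]}$ by the defining formula (\ref{eq:4.3}) (since $v\mapsto -v$ corresponds to $z_{1}\leftrightarrow z_{2}$) and for $\varphi^{[B;m]}_{\text{add}}$ by Lemma \ref{lem:4.12}(3). The symmetry under $u\mapsto -u$ then follows by composing this $v$-antisymmetry with the overall oddness $\Phi^{[B;m]}(\tau,-z_{1},-z_{2},t)=-\Phi^{[B;m]}(\tau,z_{1},z_{2},t)$ from Lemma \ref{lem:4.2}(1) (the map $(u,v)\mapsto(-u,v)$ is the composition of $(z_{1},z_{2})\mapsto(-z_{1},-z_{2})$ with $(z_{1},z_{2})\mapsto(z_{2},z_{1})$), and again from Lemma \ref{lem:4.12}(3) for the correction. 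I expect no serious obstacle once Lemma \ref{lem:4.12} is in hand; the only delicate step in that lemma is verifying the first identity for $\varphi^{[B;m]}_{\text{add}}$, which requires combining Lemma \ref{lem:4.7}(3) (the $(j{+}\tfrac12)\leftrightarrow -(j{+}\tfrac12)$ exchange of the modified Zwegers functions) with $\Theta^{-}_{\pm(j+1/2),m+1/2}(\tau,-2u)=\Theta^{-}_{\mp(j+1/2),m+1/2}(\tau,2u)$, in direct analogy with the arguments of Section \ref{sec:1}.
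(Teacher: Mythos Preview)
Your proof is correct and follows exactly the approach the paper intends: Theorem \ref{th:4.13} is stated with a box and the preceding line ``Thus we obtain the following analogue of Theorem \ref{th:1.11}'' indicates that it is deduced from Lemma \ref{lem:4.12} (together with Lemmas \ref{lem:4.2} and \ref{lem:4.4}) in precisely the way you describe, mirroring the proof of Theorem \ref{th:1.11}. Your additional verification that the $q$-exponent $(m+\tfrac12)j^{2}+\tfrac{j}{2}$ is an integer for claim (2) is a nice explicit detail the paper leaves implicit.
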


Translating the formulas for $\varphi^{[B;m]}$ and its modification to $\Phi^{[B;m]}$ and its modification, we obtain :
$$
\Phi^{[B;m]}_{add}(\tau,z_1,z_2,t)= \frac{1}{2}e^{2\pi i(m+\frac{1}{2})t} \sum\limits_{j=0}^{2m} R^{[B]}_{j+\frac{1}{2},m+\frac{1}{2}}
(\tau,\frac{z_1-z_2}{2})
(\Theta^-_{j+\frac{1}{2},m+\frac{1}{2}}+\Theta^-_{-(j+\frac{1}{2}),m+\frac{1}{2}}) (\tau,z_1+z_2)
$$
and
$$
\overset{\sim}{\Phi}^{[B;m]} := \Phi^{[B;m]} + \Phi^{[B;m]}_{add},
$$
where $\Phi^{[B;m]}_{add}$ is obtained from $\varphi^{[B;m]}_{add}$ by the change of variables $u=\frac{z_1+z_2}{2}, \,\,
v=\frac{z_1-z_2}{2}$. Then Theorem \ref{th:4.13} implies
\begin{theorem}
\label{th:4.14}
The function $\tilde{\Phi}^{[B;m]}$ ($m\in \ZZ_{\geq 0}$) 
has the following modular and elliptic transformation properties : 
\begin{list}{}{}
\item (1) $\overset{\sim}{\Phi}^{[B;m]}|_S=\overset{\sim}{\Phi}^{[B;m]}$, i.e. $\overset{\sim}{\Phi}^{[B;m]}(-\frac{1}{\tau},\frac{z_1}{\tau},\frac{z_2}{\tau},t-\frac{z_1z_2}{\tau})=\tau \overset{\sim}{\Phi}^{[B;m]} (\tau,z_1,z_2,t)$.

\item (2) $\overset{\sim}{\Phi}^{[B;m]}(\tau+1,z_1,z_2,t)=\overset{\sim}{\Phi}^{[B;m]} (\tau,z_1,z_2,t)$.

\item (3) For $a,b\in \ZZ$ such that $a+b\in 2\ZZ$, we have : 

$\overset{\sim}{\Phi}^{[B;m]} (\tau,z_1+a,z_2+b,t)=(-1)^a \overset{\sim}{\Phi}^{[B;m]} (\tau,z_1,z_2,t)$; 

$\overset{\sim}{\Phi}^{[B;m]} (\tau ,z_1+a\tau,z_2+b\tau,t)=(-1)^a e^{-2\pi i(m+\frac{1}{2})(az_2+bz_1)} q^{-(m+\frac{1}{2})ab} \overset{\sim}{\Phi}^{[B;m]} (\tau,z_1,z_2,t)$.

\item (4) $\overset{\sim}{\Phi}^{[B;m]} (\tau,z_2,z_1,t)=-
\overset{\sim}{\Phi}^{[B;m]} 
(\tau,z_1,z_2,t)$; $\,\,
\overset{\sim}{\Phi}^{[B;m]} (\tau,-z_1,-z_2,t)= - \overset{\sim}{\Phi}^{[B;m]} (\tau,z_1,z_2,t)$

\hfill $\Box$

\end{list}
\end{theorem}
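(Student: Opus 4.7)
\noindent
\textbf{Proof proposal for Theorem \ref{th:4.14}.} The plan is to obtain each of (1)--(4) by translating the corresponding property of $\tilde{\varphi}^{[B;m]}$ (Theorem \ref{th:4.13}) through the linear change of variables $u = (z_1+z_2)/2$, $v = (z_1-z_2)/2$, under which $u^2 - v^2 = z_1 z_2$. Since $\tilde{\Phi}^{[B;m]}(\tau,z_1,z_2,t) = \tilde{\varphi}^{[B;m]}(\tau,u,v,t)$ by construction (and similarly for $\Phi^{[B;m]}_{\text{add}}$, which is obtained from $\varphi^{[B;m]}_{\text{add}}$ by the same substitution), each transformation property translates directly, modulo careful bookkeeping of the elliptic factors.

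For (1), the $S$-action on $(\tau,u,v,t)$ sends $t \mapsto t - (u^2-v^2)/\tau = t - z_1 z_2/\tau$, so Theorem \ref{th:4.13}(1) gives exactly part (1). Part (2) is immediate since $\tau \mapsto \tau + 1$ does not affect the change of variables. The first assertion of (4) follows from $(z_1,z_2) \leftrightarrow (z_2,z_1)$ being equivalent to $v \mapsto -v$ (with $u$ fixed), which is the first identity of Theorem \ref{th:4.13}(4); the second assertion follows by also applying $u \mapsto -u$ and combining both identities of Theorem \ref{th:4.13}(4), whose signs multiply to $-1$.

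The main bookkeeping step is (3). For $a,b\in\ZZ$ with $a+b\in 2\ZZ$, both $a' := (a+b)/2$ and $b' := (a-b)/2$ are integers, so translation by $(z_1,z_2) \mapsto (z_1+a,z_2+b)$ corresponds to translation by $(u,v) \mapsto (u+a',v+b')$; Theorem \ref{th:4.13}(3) yields the factor $(-1)^{a'+b'} = (-1)^a$, giving the first formula. For the $\tau$-multiple shift, one applies Theorem \ref{th:4.13}(3) with shifts $(u+a'\tau, v+b'\tau)$ and checks that
$$
2(2m+1)(b'v - a'u) = -2(2m+1)\frac{bz_1 + az_2}{2} = -(2m+1)(bz_1 + az_2),
$$
and $b'^2 - a'^2 = -ab$, matching the stated exponential and $q$-factors. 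These are straightforward but essential arithmetic checks; the rest of the proof is purely formal.

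\hfill $\Box$
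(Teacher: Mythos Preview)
Your proof is correct and follows exactly the paper's own approach: Theorem \ref{th:4.14} is stated immediately after the remark that ``Theorem \ref{th:4.13} implies'' it via the change of variables $u=(z_1+z_2)/2$, $v=(z_1-z_2)/2$, and your bookkeeping of the elliptic factors (in particular $a'+b'=a$, $b'^2-a'^2=-ab$, and $2(b'v-a'u)=-(bz_1+az_2)$) is accurate.
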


\begin{remark}   
\label{rem:4.15}
It follows from (\ref{eq:4.9}) that $\Phi_{add}^{[B;0]}=0$, hence 
$\overset{\sim}{\Phi}^{[B;0]}=\Phi^{[B;0]}$. 
\end{remark}

%%%%%%%%%%%%%%%%%%%%%%%%%%%%%%%%%%
\section{
Modular transformation formulae for modified normalized characters of admissible $\widehat{osp}_{3|2}$-modules.}
\label{sec:5}

Throughout this section $\fg=osp_{3|2}$. Let $m\in \ZZ_{\geq 0}$ and $M\in \ZZ_{\geq 1}$ be such that $\gcd(M,4m+2)=1$. Recall that $h^{\vee}=\frac{1}{2}$ and $\widehat{\rho}=\frac{1}{2}\Lambda_0-\frac{1}{2} \alpha_1$.
Hence, by (\ref{eq:2.1}), all principal admissible weights with respect to a simple subset $S=t_\beta y(S_{M})$, associated to the pair $(M,m\Lambda_0)$, 
are as follows :
\begin{equation}
		\label{eq:5.1}	\Lambda=k\Lambda_0+(k+\frac{1}{2})\beta+\frac{1}{2}(\alpha_1-y\alpha_1)+\frac{1}{2}((\beta|y\alpha_1)-(k+\frac{1}{2})
(\beta|\beta))\delta,
\end{equation}
and all of them have level
\begin{equation}
		\label{eq:5.2}
		k=\frac{2m+1}{2M}-\frac{1}{2}.
\end{equation}

\newpage
There are four kinds of simple subsets :

\vspace{1ex}
$
S^{(1)}_{k_1,k_2}=\{k_0\delta+\alpha_0, k_1\delta+\alpha_1, k_2\delta+\alpha_2\},\, M=k_0+2(k_1+k_2)+1,\, k_i\in \ZZ_{\geq 0},$ 

\vspace{1ex}
$y=1,\, \beta=-k_1(\alpha_1+2\alpha_2)-(k_1+2k_2)\alpha_1;
$

\vspace{2ex}
$
S^{(2)}_{k_1,k_2}=\{k_0\delta-\alpha_0, k_1\delta-\alpha_1, k_2\delta-\alpha_2\},\, M=k_0+2(k_1+k_2)-1,\, k_i\in \ZZ_{\geq 1},$

\vspace{1ex}
$y=r_{\alpha_2}r_{\theta},\, \beta=k_1(\alpha_1+2\alpha_2)+(k_1+2k_2)\alpha_1;
$

\vspace{2ex}
$
S^{(3)}_{k_1,k_2}=\{k_0\delta+\alpha_0, 
k_1\delta+\alpha_1+2\alpha_2, k_2\delta-\alpha_2\},\, M=k_0+2(k_1+k_2)+1,\, k_0,k_1\in \ZZ_{\geq 0}, k_2\in\ZZ_{\geq 1},$

\vspace{1ex}
$y=r_{\alpha_2}, \,\beta=-(k_1+2k_2)(\alpha_1+2\alpha_2)-k_1\alpha_1;
$

\vspace{2ex}
$
S^{(4)}_{k_1,k_2}=\{k_0\delta-\alpha_0, k_1\delta-\alpha_1-2\alpha_2, k_2\delta+\alpha_2\},\, M=k_0+2(k_1+k_2)-1,\, k_0,k_1\in \ZZ_{\geq 1}, k_2\in\ZZ_{\geq 0},$

\vspace{1ex}
$y=r_{\theta},\, \beta=(k_1+2k_2)(\alpha_1+2\alpha_2)+k_1\alpha_1.
$

\vspace{2ex}
By (\ref{eq:5.1}), all principal admissible weights with respect to the simple subsets $S^{(s)}_{k_1,k_2}$, associated to the pair $(M,\Lambda^0=m\Lambda_0)$, are as follows, where $k$ is given by (\ref{eq:5.2}) :
$$
\Lambda^{(1)}_{k_1,k_2}=k\Lambda_0-(k+\frac{1}{2})k_1(\alpha_1+2\alpha_2)-(k+\frac{1}{2}) (k_1+2k_2)\alpha_1-\varphi(k_1,k_2)\delta;
$$
$$
\Lambda^{(2)}_{k_1,k_2}=k\Lambda_0+(k+\frac{1}{2})k_1(\alpha_1+2\alpha_2)+((k+\frac{1}{2})(k_1+2k_2)+1)\alpha_1-\varphi(k_1,k_2)\delta;
$$
$$
\Lambda^{(3)}_{k_1,k_2}=k\Lambda_0-((k+\frac{1}{2})(k_1+2k_2)+\frac{1}{2})(\alpha_1+2\alpha_2)+(\frac{1}{2}- (k+\frac{1}{2})k_1)\alpha_1-\varphi(k_1,k_2)\delta;
$$
$$
\Lambda^{(4)}_{k_1,k_2}=k\Lambda_0+((k+\frac{1}{2})(k_1+2k_2)+\frac{1}{2})(\alpha_1+2\alpha_2)+((k+\frac{1}{2})k_1+\frac{1}{2})\alpha_1-\varphi(k_1,k_2)\delta,
$$
where
$$
\varphi(k_1,k_2)=\frac{1}{2}(\frac{2m+1}{M}k_1(k_1+2k_2)+k_1).
$$

As in the $\widehat{sl}_{2|1}$ case in Section 2, the non-twisted and twisted normalized admissible (super)characters can be written in terms of the following functions :
\begin{equation}
		\label{eq:5.3}	\Psi^{[B;M,m,\epsilon]}_{a,b;\epsilon^\prime}(\tau,z_1,z_2,t)=q^{\frac{2m+1}{2M}ab} e^{\frac{2\pi i(2m+1)}{2M}(bz_1+az_2)} \Phi^{[B;m]}(M\tau,z_1+a\tau+\epsilon,z_2+b\tau+\epsilon,\frac{t}{M}),
\end{equation}
where $\epsilon,\epsilon^\prime=0$ or $\frac{1}{2}$, $a,b\in \epsilon^\prime+\ZZ, a-b\in 2\ZZ$.

\vspace{0.5ex}
Formula (3.28) from [KW] (see [GK] for its proof) gives, using (\ref{eq:4.0}), 
the following expressions for the normalized 
supercharacters of principal admissible modules over $\widehat{osp}_{3|2}$ 
in terms of the functions $\Psi^{[B;M,m,\epsilon]}_{a,b;\epsilon^\prime}$.

\begin{lemma}
\label{lem:5.1}
Let $\Lambda=\Lambda^{(s)}_{k_1,k_2}$ be a principal admissible weight (of level (\ref{eq:5.2})) with respect to the simple subset $S^{(s)}_{k_1,k_2}$,
%for $\widehat{osp}_{3|2}$, 
associated to the pair $(M,m\Lambda_0)$. Then the normalized supercharacters $ch^{-}_{\Lambda}$ are given by the following formulae (where $\widehat{R}^{-}$ is the affine $osp_{3|2}$ superdenominator):
\begin{list}{}{}
\item (1) $\Lambda=
\Lambda^{(1)}_{k_1,k_2}
\;:\;\widehat{R}^{-}ch^{-}_{\Lambda}=\Psi^{[B;M,m;0]}_{k_1,k_1+2k_2;0}$\,;

\item (2) $\Lambda=\Lambda^{(2)}_{k_1,k_2}\;:\;\widehat{R}^-ch^{-}_{\Lambda}=-\Psi^{[B;M,m;0]}_{-k_1,-(k_1+2k_2);0}$\,;

\item (3) $\Lambda=\Lambda^{(3)}_{k_1,k_2}\;:\;\widehat{R}^-\,ch^{-}_{\Lambda}=-\Psi^{[B;M,m;0]}_{k_1+2k_2,k_1;0}$\,;

\item (4) $\Lambda=\Lambda^{(4)}_{k_1,k_2}\;:\;\widehat{R}^-\,ch^{-}_{\Lambda}=\Psi^{[B;M,m;0]}_{-(k_1+2k_2),-k_1;0}$.

\hfill
$\Box$
\end{list}
\end{lemma}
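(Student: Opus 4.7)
The plan is to apply the general character formula (3.28) from \cite{KW} (proved in \cite{GK}) to each of the four types of principal admissible weights $\Lambda^{(s)}_{k_1,k_2}$ listed above the lemma. That formula expresses $\hat{R}^- ch^-_\Lambda$ for a principal admissible weight $\Lambda$ of the form (\ref{eq:2.1}) in terms of the numerator $\hat{R}^- ch^-_{\Lambda^0}$, after the substitution $\tau \mapsto M\tau$, $t \mapsto t/M$, the translation $t_\beta$, and the action of $y$, multiplied by $\epsilon^-(y)$ together with the exponential and $q$-prefactors that arise from these operations and from the shift $\tfrac{1}{2}(\alpha_1 - y\alpha_1)$ in (\ref{eq:5.1}). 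For $\Lambda^0 = m\Lambda_0$, the numerator $\hat{R}^- ch^-_{\Lambda^0}$ is, by (\ref{eq:4.0}), precisely $\Phi^{[B;m]}(\tau, z_1, z_2, t)$ in the coordinates (\ref{eq:4.1}), so each case reduces to a direct substitution and comparison with the definition (\ref{eq:5.3}) of $\Psi^{[B;M,m;0]}_{a,b;0}$.

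Case by case, I first compute the action of $y$ on the simple roots $\alpha_1$ and $\alpha_1 + 2\alpha_2$ (using the scalar products (\ref{eq:4.1a})), then translate these to an action on the coordinates $(z_1,z_2)$ via $z = -z_1(\alpha_1+2\alpha_2) - z_2\alpha_1$. For $y = 1$ (case 1), the action is trivial and $\epsilon^-(y) = 1$; with $\beta = -k_1(\alpha_1+2\alpha_2) - (k_1+2k_2)\alpha_1$ producing the shifts $z_1 \mapsto z_1 + k_1\tau$, $z_2 \mapsto z_2 + (k_1+2k_2)\tau$ together with their standard $q$- and exponential prefactors, one reads off exactly $\Psi^{[B;M,m;0]}_{k_1, k_1+2k_2; 0}$. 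For $y = r_{\alpha_2}r_\theta$ (case 2) one finds $y(\alpha_1) = -\alpha_1$, $y(\alpha_2) = -\alpha_2$, so the action on coordinates is $(z_1,z_2) \mapsto (-z_1,-z_2)$, to which Lemma \ref{lem:4.2}(1) applies. For $y = r_{\alpha_2}$ (case 3) one has $r_{\alpha_2}(\alpha_1) = \alpha_1 + 2\alpha_2$, so $y$ swaps $\alpha_1 \leftrightarrow \alpha_1+2\alpha_2$ and acts as $(z_1,z_2) \mapsto (z_2,z_1)$; here the antisymmetry $\Phi^{[B;m]}(\tau,z_2,z_1,t) = -\Phi^{[B;m]}(\tau,z_1,z_2,t)$, immediate from (\ref{eq:4.3}), does the work. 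For $y = r_\theta$ (case 4) one computes $r_\theta(\alpha_1) = -(\alpha_1+2\alpha_2)$ and $r_\theta(\alpha_2) = \alpha_2$, so $y$ acts on coordinates as $(z_1,z_2) \mapsto (-z_2,-z_1)$, and combining Lemma \ref{lem:4.2}(1) with the antisymmetry from (\ref{eq:4.3}) shows that this transformation preserves $\Phi^{[B;m]}$ (no sign). In each case the translation by $\beta$ then produces the indices $(k_1,k_1+2k_2)$, $(-k_1,-(k_1+2k_2))$, $(k_1+2k_2, k_1)$, $(-(k_1+2k_2),-k_1)$ that appear in the four claimed identities.

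The overall sign in each formula comes from three sources: the sign $\epsilon^-(y)$, computed from the rule that $r_{\alpha_2}$ contributes $-1$ (because $\alpha_2/2$ is not a root) while $r_\theta$ contributes $+1$ (because $\theta/2 = \alpha_1+\alpha_2$ is an odd root); the sign produced by the $y$-transformation of $\Phi^{[B;m]}$ via Lemma \ref{lem:4.2}(1) and the antisymmetry noted above; and the exponential factor $e^{\pi i(\alpha_1 - y\alpha_1)(h)}$ coming from $\tfrac{1}{2}(\alpha_1 - y\alpha_1)$ in (\ref{eq:5.1}), which is $\pm 1$ at integer arguments and gets absorbed into the sign/shift bookkeeping. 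The main obstacle will be making sure all of these contributions combine correctly to give the stated signs $+,-,-,+$ in cases $(1)$--$(4)$; to this end I would first verify case (1) as a sanity check and then use the relations $y_2 = -y_1 y_3 y_4$ or similar identities among the four $y$'s to reduce cases $(2)$--$(4)$ to a single explicit sign computation, rather than doing four independent ones. After this, matching the resulting expression with (\ref{eq:5.3}) is immediate.
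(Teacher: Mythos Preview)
Your proposal is correct and follows exactly the approach the paper takes: the paper itself does not give a detailed proof of this lemma but simply states that formula (3.28) from \cite{KW} (proved in \cite{GK}), combined with the explicit expression (\ref{eq:4.0}) for $\hat{R}^- ch^-_{L(m\Lambda_0)} = \Phi^{[B;m]}$, yields the claimed identities. Your case-by-case analysis of the Weyl group elements $y$, their action on the coordinates $(z_1,z_2)$, and the sign rule $\epsilon^-(r_{\alpha_2}) = -1$, $\epsilon^-(r_\theta) = +1$ (which the paper records explicitly in Section~\ref{sec:4}) is exactly the computation one must carry out to unpack that citation; you simply supply more detail than the paper does.
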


\begin{corollary}
\label{co:5.2}
Let 
$\Lambda^{(s)}_{k_1,k_2}$ 
be a principal admissible weight 
with respect to the simple subset $S^{(s)}_{k_1,k_2}$  
, $s=1,2,3,4$. Let $\epsilon_s=(-1)^{\frac{(s-1)(s-2)}{2}}$. Introduce the 
following reparametrization of indices $(k_1,k_2)$ :
$$
\begin{array}{ll}
s=1:\; j=k_1, k=k_1+2k_2,\;\text{so that}\; 0\leq j \leq k,\, j+k\leq M-1;\\[1.5ex]
s=2:\; j=M-k_1, k=M-(k_1+2k_2),\;\text{so that}\; 1\leq k < j \leq M-1,\, j+k \geq M;\\[1.5ex]
s=3:\; j=k_1+2k_2, k=k_1,\;\text{so that}\; 0\leq k < j, \,j+k \leq M-1;\\[1.5ex]
s=4:\; j=M-(k_1+2k_2), k=M-k_1,\;\text{so that}\; 1\leq j < k \leq M-1,\, j+k \geq M.
\end{array}
$$
Then, as $s$ runs over $1,2,3,4$, the set of pairs $(j,k)$ 
fills exactly the set of points with integer coordinates,
such that $j-k$ is even, in the square 
$0\leq j, k\leq M-1$, and the supercharacter formula becomes :
$$
\widehat{R}^{-}ch^{-}_{\Lambda^{(s)}_{k_1,k_2}}=\epsilon_s \Psi^{[B;M,m;0]}_{j,k;0}.
$$
\end{corollary}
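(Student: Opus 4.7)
The plan is to deduce the unified formula case-by-case from Lemma~\ref{lem:5.1}, using the quasi-periodicity of $\Phi^{[B;m]}$ from Lemma~\ref{lem:4.2}(3) to bring the indices of $\Psi^{[B;M,m;0]}_{a,b;0}$ into the standard range $\{0,1,\ldots,M-1\}$, and then checking by a counting argument that the four resulting index sets exhaust $\{(j,k)\in[0,M-1]^2:\;j-k\in 2\ZZ\}$. The single algebraic identity driving the reduction is
\[
\Psi^{[B;M,m;0]}_{a+M,\,b+M;\,0} \;=\; -\,\Psi^{[B;M,m;0]}_{a,\,b;\,0},
\]
which follows directly from the definition (\ref{eq:5.3}) by substituting $(a,b)\mapsto(a+M,b+M)$ and applying Lemma~\ref{lem:4.2}(3) with $\tau$ replaced by $M\tau$ and $j=1$: the factor $-e^{-\pi i (2m+1)(z_1+a\tau+z_2+b\tau)}q^{-M(m+\frac12)}$ produced by the translation of $\Phi^{[B;m]}$ is exactly cancelled by the corresponding changes in $q^{\frac{2m+1}{2M}ab}$ and in the exponential prefactor of $\Psi$, leaving an overall $-1$.

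With this identity (equivalently, $\Psi_{c-M,d-M;0}=-\Psi_{c,d;0}$), cases $s=1,3$ of Lemma~\ref{lem:5.1} are already in the required form upon setting $(j,k)=(k_1,k_1+2k_2)$ and $(j,k)=(k_1+2k_2,k_1)$ respectively, producing signs $\epsilon_1=+1$ and $\epsilon_3=-1$. For case $s=2$, one has $\widehat{R}^{-}\,ch^{-}=-\Psi_{-k_1,-(k_1+2k_2);0}=-(-\Psi_{j,k;0})=\Psi_{j,k;0}$, giving $\epsilon_2=+1$; for case $s=4$, $\widehat{R}^{-}\,ch^{-}=\Psi_{-(k_1+2k_2),-k_1;0}=-\Psi_{j,k;0}$, giving $\epsilon_4=-1$. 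All four signs match $(-1)^{(s-1)(s-2)/2}$.

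For the counting, in every case $j-k=\pm 2k_2\in 2\ZZ$, so the parity condition is automatic. The four index regions are pairwise disjoint, separated by the sign of $j-k$ together with whether $j+k\le M-1$ (cases $s=1,3$) or $j+k\ge M$ (cases $s=2,4$); the diagonal $j=k$ occurs when $k_2=0$ and is split between $s=1$ (for $j\le(M-1)/2$) and $s=4$ (for $j\ge(M+1)/2$), using that $M$ is odd since $\gcd(M,4m+2)=1$. A direct enumeration of admissible triples $(k_0,k_1,k_2)$ gives cardinalities $\tfrac{(M+1)(M+3)}{8}$, $\tfrac{(M-1)(M-3)}{8}$, $\tfrac{(M-1)(M+1)}{8}$, $\tfrac{(M-1)(M+1)}{8}$ for $s=1,2,3,4$, summing to $(M^2+1)/2$, which equals $|\{(j,k)\in[0,M-1]^2:j-k\text{ even}\}|$. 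Since four pairwise-disjoint sets inject into this finite set with matching total cardinality, they fill it. The main obstacle is purely bookkeeping: keeping track of the sign from the double $M$-shift in cases $s=2,4$ and handling the diagonal correctly.
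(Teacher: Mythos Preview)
Your proof is correct and follows the same approach as the paper: the key step is the quasi-periodicity identity $\Psi^{[B;M,m;0]}_{a+M,b+M;0}=-\Psi^{[B;M,m;0]}_{a,b;0}$ deduced from Lemma~\ref{lem:4.2}(3), which is exactly the paper's formula (\ref{eq:5.4}) specialized to $\epsilon=0$, $a=1$. The paper records only this identity and leaves the remaining bookkeeping implicit; your explicit sign verification for each $s$ and the cardinality count confirming that the four index regions partition the parity-constrained square are a welcome addition, and your handling of the diagonal $j=k$ (split between $s=1$ and $s=4$ according to $j\le (M-1)/2$ or $j\ge (M+1)/2$) is correct, even though the strict inequality ``$j<k$'' in the paper's stated range for $s=4$ is slightly imprecise in this regard.
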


\begin{proof}
\label{}
We use the following simple observation, which follows from Lemma 
\ref{lem:4.2}(3):
\begin{equation} 
\label{eq:5.4}
		\Psi^{[B;M,m;\epsilon]}_{j+aM,k+aM;\epsilon^\prime}=(-1)^{a(1+2\epsilon)} \Psi^{[B;M,m;\epsilon]}_{j,k;\epsilon^\prime}\;\;\text{if}\;\;a\in \ZZ.
\end{equation}
\end{proof}

Formula (\ref{eq:5.3}) and Lemma \ref{lem:5.1} show that, in order to construct a modular invariant family of characters, we need to study modular transformation properties of the functions $\Phi^{[B;M]}(M\tau,z_1,z_2,t)$, or rather their modifications, $\overset{\sim}{\Phi}^{[B;M]}(M\tau,z_1,z_2,t)$.

\vspace{0.5ex}
Let $m\in \ZZ_{\geq 0}$, $M\in \ZZ_{\geq 1}$ be such that $\gcd(M,4m+2)=1$, and let (see (\ref{eq:2.4a})) :
$$
I=I^{[-1]}_{M,4m+2}.
$$

\begin{lemma}
\label{lem:5.3}
$
\Phi^{[B;m]}(\frac{\tau}{M},\frac{z_1}{M},\frac{z_2}{M},0)=\sum\limits_{0\leq a <M \atop b\in I} (-1)^a e^{\frac{\pi i(2m+1)}{M}((a+2b)z_1+az_2)}
$

$
\times q^{\frac{2m+1}{2M}a(a+2b)}\Phi^{[B;m]} (M\tau,z_1+a\tau,z_2+(a+2b)\tau,0).
$
\end{lemma}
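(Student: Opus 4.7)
The plan is to follow the proof of Lemma \ref{lem:2.3} (which itself mimics Lemma 6.2(a) of [KW]), adapting it to the $B(1,1)$ setting. Start by invoking (\ref{eq:4.3}) to write
\[
\Phi^{[B;m]}(\tau/M, z_1/M, z_2/M, 0) \;=\; \Phi^{[B;m]}_1(\tau/M, z_1/M, z_2/M) \;-\; \Phi^{[B;m]}_1(\tau/M, z_2/M, z_1/M),
\]
and treat the two pieces symmetrically (expanding the first in $\mathrm{Im}\,z_1 > 0$ and the second in $\mathrm{Im}\,z_2 > 0$, where the resulting geometric series converge absolutely).

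Next, expand the denominator in formula (\ref{eq:4.4}) as a geometric series
$\frac{1}{1-e^{2\pi i z_1/M}q^{j/M}} = \sum_{k \geq 0}e^{2\pi i k z_1/M} q^{jk/M}$, producing a double sum indexed by $(j,k)\in\ZZ\times\ZZ_{\geq 0}$. Grouping terms, the exponents of $e^{2\pi i z_1/M}$, $e^{2\pi i z_2/M}$ and $e^{2\pi i\tau/M}$ are all linear in $(j,k)$; in particular the exponent of $e^{2\pi i z_2/M}$ is $\tfrac{1}{2}j(2m+1)$. After the rescaling $(\tau,z_i)\mapsto(\tau/M,z_i/M)$ one can combine the $z_2$-exponent with the index $j$, so that the summation over $(j,k)$ becomes a summation over a single integer $n$ (read off from the $z_2$-exponent) together with a secondary index.

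Now apply the division algorithm (\ref{eq:2.3}) with $p = 4m+2$, $q = M$, and $s = -1$: since $\gcd(M,4m+2)=1$, every $n\in\ZZ$ admits a unique representation $n = a\cdot(2m+1) + b\cdot(4m+2)$ (so effectively $2n = 2a(2m+1) + 2b(4m+2)$, matching the pattern used in Lemma \ref{lem:2.3}) with $a$ running over a complete set of residues mod $M$ and $b \in I := I^{[-1]}_{M,4m+2}$. This reindexing converts the double sum into a double sum over $(a,b)$ with $0\leq a<M$ and $b\in I$, and the inner sum over $n'$ reconstructs (after identifying the surviving geometric series) precisely the expression $\Phi^{[B;m]}_1(M\tau, z_1+a\tau, z_2+(a+2b)\tau, 0)$. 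The prefactors $e^{\frac{\pi i(2m+1)}{M}((a+2b)z_1+az_2)}$ and $q^{\frac{2m+1}{2M}a(a+2b)}$ come from the completion of squares in the exponent of $q$, exactly as in the proof of Lemma \ref{lem:2.3}. Doing the same for the second piece (with $z_1$ and $z_2$ swapped) and subtracting yields the RHS.

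The main obstacle is bookkeeping: carefully tracking the $(-1)^j$ factor from (\ref{eq:4.4}) through the reindexing so that it emerges as the $(-1)^a$ in the statement. Since $j$ and $a$ have the same parity modulo $2$ after the decomposition $n = aq + bp$ (because $p = 4m+2$ is even, so $b\cdot p$ contributes no sign change, while $a\cdot q$ with $q = M$ odd — as $\gcd(M,4m+2)=1$ forces $M$ odd — matches the parity of $j$), the sign tracks correctly. The resulting identity is compatible with the elliptic transformation law in Lemma \ref{lem:4.2}(2),(3), providing a useful consistency check: adding an integer multiple of $M$ to $a$ or to $b$ leaves each summand on the RHS unchanged once this elliptic law is applied, confirming that the sum makes sense as stated.
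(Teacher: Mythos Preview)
Your overall strategy---expand $\Phi^{[B;m]}_1$ as a geometric series and reindex via coprimality---is the same as the paper's, but the execution has a real gap in the reindexing step.

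First, the decomposition you wrote, $n = a\cdot(2m+1) + b\cdot(4m+2)$, is not a valid unique representation: $\gcd(2m+1,4m+2)=2m+1\neq 1$, so this expresses only multiples of $2m+1$. Later you switch to $n=aq+bp=aM+b(4m+2)$, which \emph{is} the correct form of (\ref{eq:2.3}), but you are applying it to the wrong index. After the geometric expansion, the series (see (\ref{eq:5.4a}) in the paper) is a double sum over $(j,k)$ with $k$ \emph{odd} (the $e^{\pi i z_1}$ factor in (\ref{eq:4.4}) combines with the geometric-series powers to force $k$ odd). You must decompose the two indices \emph{separately}: $j=j'M+a$ with $0\le a<M$ (ordinary division by $M$), and $k=k'M+(4m+2)b$ with $b\in I=I^{[-1]}_{M,4m+2}$ (this is where (\ref{eq:2.3}) enters, and property (ii) after (\ref{eq:2.4a}) is precisely what guarantees that ``$k$ odd'' is equivalent to ``$k'$ odd''). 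Your attempt to collapse $(j,k)$ into a single $n$ read off from the $z_2$-exponent does not work, because the $z_1$-exponent and the $q$-exponent depend on $j$ and $k$ independently.

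Second, the sign $(-1)^j$ does not simply become $(-1)^a$: since $M$ is odd one has $(-1)^j=(-1)^{j'+a}$, and the factor $(-1)^{j'}$ is absorbed into the reconstructed $\Phi^{[B;m]}_1(M\tau,\ldots)$ (which itself carries a $(-1)^{j'}$ in its definition). Finally, for the second piece $\Phi^{[B;m]}_1(\tau/M,z_2/M,z_1/M)$ the paper does not merely swap $z_1,z_2$: it uses Lemma~\ref{lem:4.2}(1) to pass to $-z_1,-z_2$, applies the $\Phi_1$-formula, and then reindexes $-(a+2b)=nM+a'$ together with Lemma~\ref{lem:4.2}(3) to bring the shifts back into the range $0\le a'<M$. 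Simply ``doing the same with $z_1,z_2$ swapped'' will not produce terms with the required shift pattern $(z_1+a\tau,\,z_2+(a+2b)\tau)$.
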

\begin{proof}
\label{}
Letting $t=0$ and expanding the RHS of (\ref{eq:4.4}) in the geometric progression in the domain $\Im\,z_1>0$ and replacing $(\tau,z_1,z_2)$ by $\frac{1}{M}(\tau,z_1,z_2)$, we obtain :
\begin{equation}
\label{eq:5.4a}
\Phi^{[B;m]}_{1}\left(\frac{\tau}{M},\frac{z_1}{M},\frac{z_2}{M}\right)=
\left(\sum\limits_{j,k\geq0 \atop k\,odd}-\sum\limits_{j,k<0 \atop k\,odd}\right)(-1)^j e^{\frac{\pi i(2m+1)}{M}j(z_1+z_2)+\frac{\pi i}{M}k z_1} q^{\frac{1}{M}\left(\left(m+\frac{1}{2}\right)j^2+\frac{jk}{2}\right)}.
\end{equation}
Decomposing $j$ and $k$ as 
$$
j=j^\prime M+a, k=k^\prime M+(4m+2)b,\;\, \text{where}\;\, 0\leq a < M,\, b \in I,
$$
we have (cf. (i) and (ii) after (\ref{eq:2.4a})):
$$
(-1)^j=(-1)^{j^\prime+a},\, k\,\;\text{odd iff}\;\,k^\prime\,\;\text{odd},\;\, j \geq 0\,\;\text{iff}\;\,j^\prime \geq 0,\, k\geq 0\, \,\;\text{iff}\;\, k^\prime \geq 0.
$$
Hence the RHS of (\ref{eq:5.4a}) becomes:
\medskip\\

\noindent $ \sum\limits_{0\leq a < M \atop b \in I} (-1)^a e^{\frac{\pi i(2m+1)}{M}((a+2b)z_1+az_2)} q^{\frac{2m+1}{2M}a(a+2b)} $
\medskip\\
\noindent $\times(\sum\limits_{j^\prime,k^\prime \geq 0 \atop k^\prime\,odd} - \sum\limits_{j^\prime,k^\prime < 0 \atop k^\prime\,odd}) (-1)^{j^\prime} e^{\pi i(2m+1)j^\prime((z_1+a\tau)+(z_2+(a+2b)\tau))}e^{\pi ik^\prime(z_1+a\tau)} q^{M((m+\frac{1}{2})j^{\prime 2}+\frac{j^\prime k^\prime}{2})}. $
\medskip\\
Therefore
\begin{equation}
\label{eq:5.4b} 
\Phi^{[B;m]}_{1}\left(\frac{\tau}{M}, \frac{z_1}{M}, \frac{z_2}{M}\right) 
\end{equation}
%\tag{5.4b}	
\[ = \sum\limits_{0\leq a< M \atop b \in I} (-1)^{a} e^{\frac{\pi i(2m+1)}{M}((a+2b)z_1+az_2)} q^{\frac{2m+1}{2M}a(a+2b)} \Phi^{[B;m]}_{1}(M\tau,z_1+a\tau,z_2+(a+2b)\tau).\]
\bigskip 
Next, using Lemma \ref{lem:4.2}(1), we obtain from (\ref{eq:5.4b}):
\begin{equation}
\label{eq:5.4c}
\Phi^{[B;m]}_{1}\left(\frac{\tau}{M}, \frac{z_2}{M}, \frac{z_1}{M}\right)  
\end{equation} 
\[= \sum\limits_{0\leq a< M \atop b \in I} (-1)^{a} e^{\frac{-\pi i(2m+1)}{M}((a+2b)z_2+az_1)} q^{\frac{2m+1}{2M}a(a+2b)}  \Phi^{[B;m]}_{1}(M\tau,z_2-a\tau,z_1-(a+2b)\tau). \]
%\tag{5.4c}

%
Decomposing $-(a+2b)=nM+a^\prime$, where $0\leq a^\prime < M$, we obtain, using Lemma \ref{lem:4.2}(3) :
$$
\begin{array}{l@{\!\!\!\!\!\!}l}
	&\Phi^{[B;m]}_{1}(M\tau,z_2-a\tau, z_1-(a+2b)\tau)\\
&= (-1)^{n} e^{-\pi i (2m+1)n(z_1+z_2)} q^{-(2m+1)n(a^\prime+b)-M(n+\frac{1}{2})n^2} \Phi^{[B;m]}_{1}(M\tau,z_2+(a^\prime+2b)\tau,z_1+a^\prime \tau).
\end{array}
$$
Plugging this in (\ref{eq:5.4c}), we obtain :
$$
\begin{array}{ll}
&\Phi^{[B;m]}_{1}\left(\frac{\tau}{M}, \frac{z_2}{M}, \frac{z_1}{M}\right)\\
&= \sum\limits_{0\leq a^{\prime}< M \atop b \in I} (-1)^{a^{\prime}} e^{\frac{\pi i(2m+1)}{M}((a^{\prime}+2b)z_1+a^{\prime} z_2)} q^{\frac{2m+1}{2M}a^{\prime}(a^{\prime}+2b)} \Phi^{[B;m]}_{1}(M\tau,z_2+(a^{\prime}+2b)\tau,z_1+a^\prime\tau).
\end{array}
$$
This formula together with (\ref{eq:5.4b}) concludes the proof.
\end{proof}
%\hfill
%$\Box$

Translating Lemma \ref{lem:5.3} into the language of $\varphi^{[B;m]}$, we 
obtain :
\begin{equation}
		\label{eq:5.5}
		\varphi^{[B;m]}(\frac{\tau}{M},\frac{u}{M},\frac{v}{M},0)=\!\!\!\sum\limits_{0 \leq a < M \atop b \in I}\!\!(-1)^{a+b} 
e^{\frac{2\pi i(2m+1)}{M} (au+bv)} q^{\frac{2m+1}{2M}(a^2-b^2)} \varphi^{[B;m]}(M\tau,u+a\tau,v-b\tau,0).	
\end{equation}

Next, we derive a similar formula for the additional term, hence for the modified function $\overset{\sim}{\varphi}^{[B;m]}$ :
\begin{lemma}
\label{lem:5.4}
Formula (\ref{eq:5.5}) holds if we replace $\varphi^{[B;m]}$ by $\varphi^{[B;m]}_{add}$, hence, by $\overset{\sim}{\varphi}^{[B;m]}$.

\hfill
$\Box$
\end{lemma}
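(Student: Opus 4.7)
The plan is to mimic the proof of Lemma \ref{lem:2.6}: substitute the definition (\ref{eq:4.12}) of $\varphi^{[B;m]}_{add}$ into the LHS, decompose each of the two building blocks $\Theta^{-}_{\pm(j+\frac12), m+\frac12}(\tau/M, 2u/M)$ and $R^{[B]}_{j+\frac12, m+\frac12}(\tau/M, v/M)$ by $B$-type analogues of Lemma \ref{lem:2.5}, and then recollect the double sum as $\varphi^{[B;m]}_{add}(M\tau, u+a\tau, v-b\tau, 0)$.

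Concretely, since $\gcd(M, 4m+2)=1$, the integer $M$ is odd and $\gcd(M, 2m+1)=1$. For each $j\in\{0,1,\ldots,2m\}$ there is a unique $j_0\in\{0,1,\ldots,2m\}$ with $j+\tfrac12\equiv M(j_0+\tfrac12)\bmod(2m+1)$. Decomposing the integer lattice as $n=Mn'+a$ ($0\le a<M$) in the definition (\ref{eq:4.8}) of $\Theta^{-}$ and accounting for the factor $(-1)^n$ yields
\[
\Theta^{-}_{\pm(j+\frac12), m+\frac12}\!\left(\tfrac{\tau}{M},\tfrac{2u}{M}\right)=\sum_{0\le a<M}(-1)^{a}\,q^{\frac{2m+1}{2M}a^{2}}\,e^{\frac{2\pi i(2m+1)}{M}au}\,\Theta^{-}_{\pm(j_0+\frac12), m+\frac12}(M\tau, 2(u+a\tau)).
\]
Decomposing the half-integer lattice as $n=Mn''+(2m+1)b$, with $n''\in\mathbb{Z}+\tfrac12$, $n''\equiv j_0+\tfrac12\bmod(2m+1)$, and $b$ ranging over the set of representatives $I$, in the definition of $R^{[B]}$ gives
\[
R^{[B]}_{j+\frac12, m+\frac12}\!\left(\tfrac{\tau}{M},\tfrac{v}{M}\right)=\sum_{b\in I}(-1)^{b}\,q^{-\frac{2m+1}{2M}b^{2}}\,e^{-\frac{2\pi i(2m+1)}{M}bv}\,R^{[B]}_{j_0+\frac12, m+\frac12}(M\tau, v-b\tau),
\]
where the sign $(-1)^{b}$ arises from the factor $(-1)^{n-(j+1/2)}$ in the definition of $R^{[B]}$ together with the identity $(-1)^{(2m+1)\ell}=(-1)^{\ell}$ (valid because $2m+1$ is odd).

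Inserting both formulas into (\ref{eq:4.12}) evaluated at $(\tau/M, u/M, v/M, 0)$ and swapping the order of summation, the inner sum over $j_0$---which ranges bijectively with $j$ over $\{0,\ldots,2m\}$---reconstructs $\varphi^{[B;m]}_{add}(M\tau, u+a\tau, v-b\tau, 0)$ via (\ref{eq:4.12}), yielding exactly (\ref{eq:5.5}) for $\varphi^{[B;m]}_{add}$. Adding this to Lemma \ref{lem:5.3} and invoking (\ref{eq:4.13}) then gives (\ref{eq:5.5}) for $\overset{\sim}{\varphi}^{[B;m]}$, as claimed.

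The main obstacle is the bookkeeping in the $R^{[B]}$ decomposition: since $M(j_0+\tfrac12)\ne j+\tfrac12$ in general (only congruent mod $2m+1$), the shift of the half-integer lattice offset generates an auxiliary sign $(-1)^{k_0}$ with $k_0=(M(j_0+\tfrac12)-(j+\tfrac12))/(2m+1)$, and an identical factor appears on the $\Theta^{-}$ side from the analogous reindexing. Verifying that these factors cancel---so that only the clean signs $(-1)^{a}$ and $(-1)^{b}$ survive---is the technical heart of the argument, and is precisely where the hypothesis $\gcd(M,4m+2)=1$ (forcing $M$ to be odd and coprime to $2m+1$) is used.
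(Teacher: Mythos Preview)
Your approach is correct and is exactly the argument the paper has in mind: the proof is omitted in the paper precisely because it is the $B$-analogue of Lemma~\ref{lem:2.6}, obtained by substituting $B$-type versions of Lemma~\ref{lem:2.5} into the definition~(\ref{eq:4.12}). Your identification of the cancelling auxiliary signs $(-1)^{k_0}$ on the $R^{[B]}$ and $\Theta^-$ sides is the heart of the matter, and it is right.

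One small inconsistency is worth flagging. With the decomposition $n=Mn''+(2m+1)b$ that you write down, the exponential $e^{-\pi i n^{2}\tau/(M(2m+1))-2\pi i n v/M}$ and the argument of the error function $E$ naturally match those of $R^{[B]}_{j_0+\frac12,m+\frac12}(M\tau,\,v+b\tau)$, not $v-b\tau$; it is for this choice (shift $v+b\tau$, with $b\in I$) that property~(i) following~(\ref{eq:2.4a}) guarantees $\operatorname{sgn}(n)=\operatorname{sgn}(n'')$. Carrying this through gives
\[
\varphi^{[B;m]}_{\mathrm{add}}\!\left(\tfrac{\tau}{M},\tfrac{u}{M},\tfrac{v}{M},0\right)
=\sum_{0\le a<M,\ b\in I}(-1)^{a+b}\,q^{\frac{2m+1}{2M}(a^{2}-b^{2})}\,e^{\frac{2\pi i(2m+1)}{M}(au-bv)}\,\varphi^{[B;m]}_{\mathrm{add}}(M\tau,u+a\tau,v+b\tau,0),
\]
which differs from~(\ref{eq:5.5}) only by the global substitution $v\mapsto -v$. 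The two identities are equivalent by the oddness $\varphi^{[B;m]}_{\mathrm{add}}(\tau,u,-v,t)=-\varphi^{[B;m]}_{\mathrm{add}}(\tau,u,v,t)$ of Lemma~\ref{lem:4.12}(3) (equivalently Lemma~\ref{lem:4.7}(3)), so your conclusion stands; just correct the sign of the shift in your $R^{[B]}$ decomposition or insert this symmetry as a final step.
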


Translating Lemma \ref{lem:5.4} back, into the language of $\overset{\sim}{\Phi}^{[B;m]}$, we obtain

\begin{proposition}
\label{prop:5.5}
Let $m\in \ZZ_{\geq 0}$, $M\in \ZZ_{\geq 1}$ be such that $\gcd(M, 4m+2)=1$. Then 

\vspace{2.5ex}
$
\overset{\sim}{\Phi}^{[B;m]}(\frac{\tau}{M},\frac{z_1}{M},\frac{z_2}{M},0) = \sum\limits_{0\leq a < M \atop b \in I} (-1)^a e^{\frac{\pi i(2m+1)}{M}((a+2b)z_1+az_2)}$

$\times q^{\frac{2m+1}{2M}a(a+2b)} \overset{\sim}{\Phi}^{[B;m]}(M\tau,z_1+a\tau,z_2+(a+2b)\tau,0).	
$

\hfill $\Box$
\end{proposition}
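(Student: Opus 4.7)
The plan is to derive Proposition~\ref{prop:5.5} directly from Lemma~\ref{lem:5.4} by translating from the coordinates $(u,v)$ back to the coordinates $(z_1,z_2)$, and then reindexing the resulting double sum to bring it into the form stated in the Proposition (which parallels Lemma~\ref{lem:5.3}). The only nontrivial ingredient beyond Lemma~\ref{lem:5.4} is the quasi-periodicity of $\tilde{\Phi}^{[B;m]}$ from Theorem~\ref{th:4.14}(3), which will be needed to shift the range of the summation index to $0\le A<M$.

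First I apply Lemma~\ref{lem:5.4} to $\tilde{\varphi}^{[B;m]}$ at $t=0$, and substitute $u=(z_1+z_2)/2$, $v=(z_1-z_2)/2$. Since by definition $\tilde{\Phi}^{[B;m]}(\tau,z_1,z_2,t)=\tilde{\varphi}^{[B;m]}(\tau,u,v,t)$, the LHS becomes $\tilde{\Phi}^{[B;m]}(\tau/M,z_1/M,z_2/M,0)$. On the right, the identity $au+bv=\tfrac12((a+b)z_1+(a-b)z_2)$ converts the exponential factor, the shift $(u+a\tau,v-b\tau)$ corresponds to $(z_1+(a-b)\tau,z_2+(a+b)\tau)$, and $a^2-b^2=(a-b)(a+b)$ gives the $q$-exponent. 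This yields
\[
\tilde{\Phi}^{[B;m]}\bigl(\tfrac{\tau}{M},\tfrac{z_1}{M},\tfrac{z_2}{M},0\bigr)
=\!\!\sum_{0\le a<M,\,b\in I}\!\!(-1)^{a+b}e^{\frac{\pi i(2m+1)}{M}((a+b)z_1+(a-b)z_2)}q^{\frac{2m+1}{2M}(a-b)(a+b)}\tilde{\Phi}^{[B;m]}(M\tau,z_1+(a-b)\tau,z_2+(a+b)\tau,0).
\]

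Next I reindex by setting $A=a-b$, $B=b$, so that $a+b=A+2B$, $a=A+B$, and $(-1)^{a+b}=(-1)^{A+2B}=(-1)^A$. The summand is then precisely that of Proposition~\ref{prop:5.5}; however, for each fixed $B\in I$ the index $A$ ranges over $M$ consecutive integers $\{-B,-B+1,\ldots,-B+M-1\}$ rather than $\{0,1,\ldots,M-1\}$.

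The remaining and only nontrivial step is to verify that each summand, as a function of $A$, is invariant under $A\mapsto A+M$, so that the range of $A$ can be shifted to $\{0,\ldots,M-1\}$. Applying Theorem~\ref{th:4.14}(3) with outer variable $M\tau$ and shifts $a=b=1$ (which satisfy $a+b=2\in 2\ZZ$) gives
\[
\tilde{\Phi}^{[B;m]}(M\tau,z_1+(A+M)\tau,z_2+(A+M+2B)\tau,0)
=-e^{-\pi i(2m+1)(z_1+z_2)}q^{-(m+\tfrac12)(2A+2B+M)}\tilde{\Phi}^{[B;m]}(M\tau,z_1+A\tau,z_2+(A+2B)\tau,0).
\]
On the other hand, direct computation shows that $A\mapsto A+M$ multiplies the sign by $(-1)^M$, the exponential by $e^{\pi i(2m+1)(z_1+z_2)}$, and the $q$-factor by $q^{(2m+1)(A+B)+(2m+1)M/2}$. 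Since $\gcd(M,4m+2)=1$ forces $M$ odd, one has $(-1)^M=-1$, and the three factors combine to cancel exactly the factor from Theorem~\ref{th:4.14}(3).

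The main obstacle is this periodicity verification: it is the only place where hypotheses beyond the formal translation of Lemma~\ref{lem:5.4} are used, and it is essential that $M$ be odd, which is exactly what $\gcd(M,4m+2)=1$ guarantees. Once the invariance under $A\mapsto A+M$ is established, the sum is independent of the particular window of $M$ consecutive $A$'s chosen, and replacing it by $\{0,\ldots,M-1\}$ gives the formula of Proposition~\ref{prop:5.5}.
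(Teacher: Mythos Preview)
Your proof is correct and follows exactly the route the paper indicates: Proposition~\ref{prop:5.5} is obtained by translating Lemma~\ref{lem:5.4} back from $(u,v)$ to $(z_1,z_2)$ coordinates, and you have carefully filled in the reindexing and the periodicity check (via Theorem~\ref{th:4.14}(3), using that $M$ is odd) needed to bring the summation range back to $0\le a<M$. This is precisely what the paper means by ``translating Lemma~\ref{lem:5.4} back,'' only made explicit.
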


Let 
$$
\Omega^{[B;M;\epsilon]}=\{(j,k)\in(\epsilon+\ZZ)^2|-M\leq j < M, 0\leq k <M,j\equiv k\,\text{mod}\,2\};\,\, \Omega^{[B;M]}=\Omega^{[B;M;0]}.
$$
Then, after applying Theorem \ref{th:4.14}, Proposition \ref{prop:5.5} leads to the following result.

\begin{proposition}
\label{prop:5.6}
Let $m\in \ZZ_{\geq 0}$, $M\in \ZZ_{\geq 1}$ be such that $\gcd(M, 4m+2)=1$. 
Then 

\vspace{2.5ex}
$
\overset{\sim}{\Phi}^{[B;m]}(-\frac{M}{\tau},\frac{z_1}{\tau},\frac{z_2}{\tau},t-\frac{z_1z_2}{\tau M})
$

$
 =\frac{\tau}{M} \sum\limits_{(a,b)\in \Omega^{[B;M]}} (-1)^a e^{\frac{2\pi i(2m+1)}{2M}(bz_1+az_2)}
q^{\frac{2m+1}{2M}ab} \overset{\sim}{\Phi}^{[B;m]}(M\tau,z_1+a\tau,z_2+b\tau,t).
$

\hfill $\Box$
\end{proposition}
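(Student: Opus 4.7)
The plan is to derive Proposition~\ref{prop:5.6} from Proposition~\ref{prop:5.5} by the same ``Remark~\ref{rem:1.13} maneuver'' used in the $\widehat{\sl}_{2|1}$ setting to deduce Theorem~\ref{th:2.2} from Lemmas~\ref{lem:2.3} and~\ref{lem:2.6}. The first step is to apply Theorem~\ref{th:4.14}(1) after substituting $(\tau, z_1, z_2, t)\mapsto (\tau/M, z_1/M, z_2/M, t)$, which yields the $\widehat{osp}_{3|2}$ analogue of Remark~\ref{rem:1.13}:
\[
\overset{\sim}{\Phi}^{[B;m]}\!\left(\tfrac{\tau}{M}, \tfrac{z_1}{M}, \tfrac{z_2}{M}, t\right) \;=\; \tfrac{\tau}{M}\,\overset{\sim}{\Phi}^{[B;m]}\!\left(-\tfrac{M}{\tau}, \tfrac{z_1}{\tau}, \tfrac{z_2}{\tau}, t - \tfrac{z_1z_2}{M\tau}\right).
\]

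Next I would extend Proposition~\ref{prop:5.5} from $t=0$ to arbitrary $t$ (automatic, since $\overset{\sim}{\Phi}^{[B;m]}$ depends on $t$ only through the overall factor $e^{2\pi i(m+\frac{1}{2})t}$) and substitute its right-hand side into the formula above. This produces exactly the formula of Proposition~\ref{prop:5.6}, except that the sum is initially indexed by pairs $(a, a+2b)$ with $0\le a<M$ and $b\in I := I^{[-1]}_{M,4m+2}$ rather than by $(\alpha,\beta)\in\Omega^{[B;M]}$. Setting $\alpha=a$ and $\beta=a+2b$, both index sets have cardinality $M^{2}$ and sit inside $\{(\alpha,\beta)\in\ZZ^{2}:\alpha\equiv\beta\bmod 2\}$, so the remaining task is to show they produce equal sums.

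For this I would first verify that the summand is invariant under the lattice translations $(\alpha,\beta)\to(\alpha+a'M,\beta+b'M)$ with $a'+b'\in 2\ZZ$: this is a direct consequence of Theorem~\ref{th:4.14}(3) applied at modular parameter $M\tau$, the sign, exponential and $q$-power contributions cancelling against the explicit prefactors (with $M$ odd entering through $(-1)^{a'M}=(-1)^{a'}$). Then I would exhibit both $S := \{(a,a+2b):0\le a<M,\,b\in I\}$ and $\Omega^{[B;M]}$ as fundamental domains for this lattice acting on $\{\alpha\equiv\beta\bmod 2\}$. For $\Omega^{[B;M]}$ this is immediate. For $S$ one uses that $I$ consists of $M$ integers distinct modulo $M$---a consequence of $\gcd(M,4m+2)=1$ and the uniqueness of the decomposition $n=n'M+b_{n}(4m+2)$---together with $\gcd(2,M)=1$, so that $\{a+2b\bmod M:b\in I\}$ exhausts $\ZZ/M\ZZ$ for each fixed $a$; a short check using $0\le a<M$ and the distinctness of $I$ modulo $M$ rules out any nontrivial collision inside $S$.

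The main obstacle I anticipate is the invariance check: several factors must conspire to cancel, and it is essential both that $M$ be odd (so the parity bookkeeping works) and that the constraint $a'+b'\in 2\ZZ$ match precisely the range of validity of Theorem~\ref{th:4.14}(3). Once the cancellation is in hand, the identification of the two fundamental domains is a straightforward consequence of $\gcd(M,4m+2)=1$.
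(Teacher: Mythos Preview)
Your approach is correct and is precisely what the paper intends by ``after applying Theorem~\ref{th:4.14}, Proposition~\ref{prop:5.5} leads to the following result'': use Theorem~\ref{th:4.14}(1) to pass from the $\tau/M$-scaling of Proposition~\ref{prop:5.5} to the $-M/\tau$ argument, then use Theorem~\ref{th:4.14}(3) to see that the summand depends only on $(\alpha,\beta)$ modulo the lattice $\{(a'M,b'M):a'+b'\in 2\ZZ\}$, so that the index set $\{(a,a+2b):0\le a<M,\,b\in I\}$ may be replaced by $\Omega^{[B;M]}$.

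One small slip: in your displayed ``Remark~\ref{rem:1.13} analogue'' the factor should be $M/\tau$, not $\tau/M$ (compare Remark~\ref{rem:1.13} itself). Equivalently, the identity you actually need is
\[
\overset{\sim}{\Phi}^{[B;m]}\!\Big(\!-\tfrac{M}{\tau},\tfrac{z_1}{\tau},\tfrac{z_2}{\tau},\,t-\tfrac{z_1z_2}{M\tau}\Big)
\;=\;\tfrac{\tau}{M}\,\overset{\sim}{\Phi}^{[B;m]}\!\Big(\tfrac{\tau}{M},\tfrac{z_1}{M},\tfrac{z_2}{M},\,t\Big),
\]
into whose right-hand side you substitute Proposition~\ref{prop:5.5}. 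With this correction the argument goes through exactly as you describe.
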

Using Proposition \ref{prop:5.6}, we obtain the modular transformation formulae for the functions 
$\overset{\sim}{\Psi}^{[B;M,m;\epsilon]}_{j,k;\epsilon^\prime}$,
obtained from $\Psi^{[B;M,m;\epsilon]}_{j,k;\epsilon^\prime}$ in 
(\ref{eq:5.3}) by replacing 
$\Phi^{[B,m]}$ by $\tilde{\Phi}^{[B,m]}$.
\begin{theorem}
\label{th:5.7}
Let $m\in \ZZ_{\geq 0}$, $M\in \ZZ_{\geq 1}$ be such that $\gcd(M, 4m+2)=1$
, and let $\epsilon,\epsilon^\prime=0$ or $\frac{1}{2}$. Then for $j,k\in \Omega^{[B;M;\epsilon^\prime]}$ we have : 
\begin{list}{}{}
\item (1) $\overset{\sim}{\Psi}^{[B;M,m;\epsilon]}_{j,k;\epsilon^\prime}(-\frac{1}{\tau},\frac{z_1}{\tau},\frac{z_2}{\tau},t-\frac{z_1z_2}{\tau M})$ 

$=\frac{\tau}{M}(-1)^{\epsilon^\prime+j}\sum\limits_{(a,b)\in \Omega^{[B;M;\epsilon]}}(-1)^{a-\epsilon}e^{-\frac{2\pi i(2m+1)}{2M}(ak+bj)} 
\overset{\sim}{\Psi}^{[B;M,m;\epsilon^\prime]}_{a,b;\epsilon}(\tau,z_1,z_2,t)$.

\item (2) $\overset{\sim}{\Psi}^{[B;M,m;\epsilon]}_{j,k;\epsilon^\prime}(\tau+1,z_1,z_2,t)=
(-1)^{j-\epsilon^\prime+4\epsilon\epsilon^\prime} e^{\frac{2\pi i(2m+1)}{2M}jk} \overset{\sim}{\Psi}^{[B;M,m;|\epsilon-\epsilon^\prime|]}_{j,k;\epsilon^\prime}(\tau,z_1,z_2,t)$.

\hfill $\Box$
\end{list}
\end{theorem}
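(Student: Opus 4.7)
The proof of Theorem \ref{th:5.7} will follow the same template as the proof of Theorem \ref{th:2.8} in Section \ref{sec:2}: expand the definition of $\tilde{\Psi}$ and apply the modular and elliptic transformations of $\tilde{\Phi}^{[B;m]}$ obtained in Section \ref{sec:4}, combined with Proposition \ref{prop:5.6}.

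For part (1), I would start by substituting the definition (\ref{eq:5.3}) (with $\Phi^{[B;m]}$ replaced by its modification) into the LHS, evaluated at $(-1/\tau, z_1/\tau, z_2/\tau, t - z_1z_2/(\tau M))$. After pulling out the prefactors (which become $e^{-\pi i(2m+1)jk/(M\tau)} e^{\pi i(2m+1)(kz_1+jz_2)/(M\tau)}$), the inner occurrence of $\tilde{\Phi}^{[B;m]}$ has first argument $-M/\tau$ and second and third arguments $(z_1 - j + \epsilon\tau)/\tau$ and $(z_2 - k + \epsilon\tau)/\tau$; setting $Z_1 = z_1 - j + \epsilon\tau$, $Z_2 = z_2 - k + \epsilon\tau$, we are in the form required by Proposition \ref{prop:5.6}. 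Applying that proposition produces a sum over $(a,b)\in\Omega^{[B;M]}$ of $\tilde{\Phi}^{[B;m]}(M\tau, Z_1+a\tau, Z_2+b\tau, T)$ with explicit exponential weights; the new $t$-argument $T$ is determined by requiring $T - Z_1Z_2/(\tau M) = (t-z_1z_2/(\tau M))/M$, and any mismatch between $T$ and $t/M$ contributes only an overall exponential that combines with the prefactors.

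Next I would reindex the sum by $(a,b)\mapsto(a+\epsilon,b+\epsilon)$ so the new indices run over $\Omega^{[B;M;\epsilon]}$, and use Theorem \ref{th:4.14}(3) to shift the inner $\tilde{\Phi}^{[B;m]}$ arguments $Z_1+a\tau = z_1 - j + (a-\epsilon)\tau + \epsilon\tau$ to the form $z_1 + a'\tau + \epsilon'$ demanded by the definition of $\tilde{\Psi}^{[B;M,m;\epsilon']}_{a,b;\epsilon}$. The shifts are by $(\epsilon'-j, \epsilon'-k)$, which are integers with even sum (since $(j,k)\in\Omega^{[B;M;\epsilon']}$ forces $j-k\in 2\ZZ$ and $j+k+2\epsilon'\in 2\ZZ$), so Theorem \ref{th:4.14}(3) yields a sign $(-1)^{\epsilon'-j}$. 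Collecting all exponential factors, the quadratic terms in $z_1z_2$, $jz_2+kz_1$, $j\epsilon$, $k\epsilon$, $ab$, $a\epsilon$, $b\epsilon$, $\epsilon^2\tau$, etc.\ cancel in precisely the way needed to isolate the prefactors of $\tilde{\Psi}^{[B;M,m;\epsilon']}_{a,b;\epsilon}(\tau,z_1,z_2,t)$ together with the $(-1)^{a-\epsilon}e^{-2\pi i(2m+1)(ak+bj)/(2M)}$ coefficient announced in the theorem.

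For part (2), substitute $\tau\mapsto\tau+1$ directly into (\ref{eq:5.3}) and use Theorem \ref{th:4.14}(2) iteratively to reduce $M\tau+M$ to $M\tau$. The inner argument becomes $\tilde{\Phi}^{[B;m]}(M\tau, z_1+j\tau+j+\epsilon, z_2+k\tau+k+\epsilon, t/M)$, and the integer (or half-integer) shift $(j+\epsilon-|\epsilon-\epsilon'|, k+\epsilon-|\epsilon-\epsilon'|)$ can be handled by Theorem \ref{th:4.14}(3); one verifies case by case on $(\epsilon,\epsilon')\in\{0,\tfrac12\}^2$ that the parity condition $a+b\in 2\ZZ$ for the elliptic shift is met (using $j\equiv k \bmod 2$) and that the resulting sign is $(-1)^{j-\epsilon'+4\epsilon\epsilon'}$. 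The remaining $q^{(2m+1)jk/(2M)}$ contributed by $(\tau+1)\cdot jk$ combines with the existing character prefactor to produce the phase $e^{2\pi i(2m+1)jk/(2M)}$.

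The main obstacle is the bookkeeping in part (1): tracking the mismatch between the $t$-argument $T$ and $t/M$ through the $e^{2\pi i(m+\frac12)t}$ factor in $\tilde{\Phi}^{[B;m]}$, and verifying that all the linear and quadratic exponential terms cancel against the prefactor of $\tilde{\Psi}^{[B;M,m;\epsilon']}_{a,b;\epsilon}$. The case analysis in part (2) for $\epsilon'=\tfrac12$, where one splits $j=j_0+\tfrac12$ and applies Theorem \ref{th:4.14}(3) to the integer part $j_0$, is the most delicate sign computation, but it proceeds exactly as in the analogous step for $\widehat{sl}_{2|1}$ in Section \ref{sec:2}.
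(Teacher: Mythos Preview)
Your proposal is correct and follows exactly the route the paper intends: the paper's own proof is just the one-line remark ``Using Proposition \ref{prop:5.6}, we obtain the modular transformation formulae,'' with an implicit appeal to the elliptic transformations of Theorem \ref{th:4.14}, precisely as in the passage from Proposition 5.6 to Theorem 2.8 in the $\widehat{\sl}_{2|1}$ case. Your outline (substitute the definition (\ref{eq:5.3}), apply Proposition \ref{prop:5.6} to the $\tilde{\Phi}^{[B;m]}$ factor, reindex into $\Omega^{[B;M;\epsilon]}$, then use Theorem \ref{th:4.14}(3) to absorb the integer shifts $-j+\epsilon'$, $-k+\epsilon'$) is the intended computation, and your handling of part (2) via Theorem \ref{th:4.14}(2),(3) is likewise the standard argument.
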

%%%%%%%%%%%%%%%%%%%%%%%%%%%%%%%%%%% Section 5 continue 16/11/13 %%%%%%%%%%%%%%%%%%

Comparing the set $\Omega^{[B;m]}$ with Corollary \ref{co:5.2}, we see that points $(j,k)\in \Omega^{[B;m]}$ with $j$ negative are missing. The reason is 
that the set $\{\Lambda^{(s)}_{k_1,k_2} \}$, that occurs in 
Lemma \ref{lem:5.1}, does not exhaust all principal admissible weights. 
This happens because the set of simple roots $\widehat{\Pi}$ of $\widehat{osp}_{3|2}$ with scalar products (\ref{eq:4.1a}) is not the only 
set of simple roots, up to equivalence. The other one is 
$r_{\alpha_1}\widehat{\Pi}$, where $r_{\alpha_1}$ is the odd reflection with respect to the isotropic simple root $\alpha_1$.

The corresponding simple subsets are 
$S^{\prime(s)}_{k_1,k_2}:=r_{\gamma_s}\,S^{(s)}_{k_1,k_2}\;(s=1,2,3,4)$, 
where $\gamma_s$ is the isotropic root in $S^{(s)}_{k_1,k_2}$. Taking into account the condition that a simple subset should consist of positive roots (with respect to $\widehat{\Pi}$), we obtain the following new four kinds of simple 
subsets :
$$
\begin{array}{ll}
	S^{\prime(1)}_{k_1,k_2}=\{(k_0+k_1+1)\delta-\alpha_1-2\alpha_2, -k_1\delta-\alpha_1, (k_1+k_2)\delta+\alpha_1+\alpha_2 \}, \\[1ex]
	M=k_0+2(k_1+k_2)+1,\, k_1+2k_2 \leq M-1, k_1 \leq -1, k_1+k_2\geq 0;\\[2ex]
	
	S^{\prime(2)}_{k_1,k_2}=\{(k_0+k_1-1)\delta+\alpha_1+2\alpha_2, -k_1\delta+\alpha_1, (k_1+k_2)\delta-\alpha_1-\alpha_2 \}, \\[1ex]
	M=k_0+2(k_1+k_2)-1, \,k_1+2k_2 \leq M, k_1 \leq 0, k_1+k_2\geq 1;\\[2ex]
	
	S^{\prime(3)}_{k_1,k_2}=\{(k_0+k_1+1)\delta-\alpha_1, 
-k_1\delta-\alpha_1-2\alpha_2, (k_1+k_2)\delta+\alpha_1+\alpha_2 \}, \\[1ex]
	M=k_0+2(k_1+k_2)+1,\, k_1+2k_2 \leq M-1, k_1 \leq -1, k_1+k_2\geq 0;\\[2ex]
	
	S^{\prime(4)}_{k_1,k_2}=\{(k_0+k_1-1)\delta+\alpha_1, -k_1\delta+\alpha_1+2\alpha_2, (k_1+k_2)\delta-\alpha_1-\alpha_2 \}, \\[1ex]
	M=k_0+2(k_1+k_2)-1,\, k_1+2k_2 \leq M, k_1 \leq 0, k_1+k_2\geq 1.\\
\end{array}
$$
The corresponding principal admissible weights are again 
$\Lambda^{(s)}_{k_1,k_2}$
, but the range of the pairs $(k_1,k_2)$ is different. We shall denote them by
 $\Lambda^{\prime(s)}_{k_1,k_2}$, in order to distinguish them from those, corresponding to the simple subsets $S^{(s)}_{k_1,k_2}$.
For these new principal admissible weights we have the following analogue of 
Lemma \ref{lem:5.1}.

\begin{lemma}
\label{lem:5.8}
Let $\Lambda=\Lambda^{\prime(s)}_{k_1,k_2}$ be a principal admissible weight (of level (\ref{eq:5.2})) with respect to the simple subset $S^{\prime(s)}_{k_1,k_2}$, associated to the pair $(M, m\Lambda_0)$. Then the normalized supercharacters are given by the following formulae :
\begin{list}{}{}
\item (1) $\Lambda=
\Lambda^{\prime(1)}_{k_1,k_2}
\;:\;\widehat{R}^{-}ch^{-}_{\Lambda}=\Psi^{[B;M,m;0]}_{k_1,k_1+2k_2;0}$\,;

\item (2) $\Lambda
=\Lambda^{\prime(2)}_{k_1,k_2}\;:\;\widehat{R}^-ch^{-}_{\Lambda}=-\Psi^{[B;M,m;0]}_{-k_1,-(k_1+2k_2);0}$\,;

\item (3) $\Lambda=\Lambda^{
\prime(3)}_{k_1,k_2}\;:\;\widehat{R}\,ch^{-}_{\Lambda}=-\Psi^{[B;M,m;0]}_{k_1+2k_2,k_1;0}$;

\item (4) $\Lambda=\Lambda^
{\prime(4)}_{k_1,k_2}\;:\;\widehat{R}\,ch^{-}_{\Lambda}=\Psi^{[B;M,m;0]}_{-(k_1+2k_2),-k_1;0}$.

\hfill
$\Box$
\end{list}
\end{lemma}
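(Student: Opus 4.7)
\textbf{Proof plan for Lemma \ref{lem:5.8}.} The plan is to adapt the proof of Lemma~\ref{lem:5.1} to the alternative simple subsets $S^{\prime(s)}_{k_1,k_2} = r_{\alpha_1}\,S^{(s)}_{k_1,k_2}$. Since these are obtained from the original simple subsets via the odd reflection with respect to the isotropic simple root $\alpha_1$, and since odd reflections merely re-present $\widehat{osp}_{3|2}$ (they are induced from a change of Borel, not a genuinely new module structure), the associated principal admissible weights $\Lambda^{\prime(s)}_{k_1,k_2}$ are given by the same formula~(\ref{eq:2.1}), but with the data $(\beta_s',y_s')$ realizing $S^{\prime(s)}_{k_1,k_2}=t_{\beta_s'}y_s'(S_M)$ in place of $(\beta_s,y_s)$.

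First, I would write down $(\beta_s',y_s')$ for each $s=1,2,3,4$ by inspecting the four new simple subsets: each $S^{\prime(s)}_{k_1,k_2}$ contains exactly one isotropic simple root, and the pair $(\beta_s',y_s')$ is determined by matching the form $\{k_0'\delta \pm \gamma_0, k_1'\delta \pm \gamma_1, k_2'\delta \pm \gamma_2\}$ against $S_M = \{(M-1)\delta+\alpha_0, \alpha_1,\alpha_2\}$. Substituting these into~(\ref{eq:2.1}) with $\Lambda^0 = m\Lambda_0$ and $\hat\rho = \tfrac12\Lambda_0 - \tfrac12\alpha_1$ gives explicit expressions for $\Lambda^{\prime(s)}_{k_1,k_2}$. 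The key observation is that these expressions coincide formally with those for $\Lambda^{(s)}_{k_1,k_2}$ obtained in Section~\ref{sec:5}, except that the ranges of $(k_1,k_2)$ now reflect the new positivity constraints (in particular, $k_1 \le -1$ or $k_1 \le 0$).

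Next, I would apply formula (3.28) from \cite{KW}, exactly as in the proof of Lemma~\ref{lem:5.1}: for each principal admissible weight $\Lambda^{\prime(s)}_{k_1,k_2}$ the numerator $\widehat{R}^-\,ch^-_\Lambda$ equals the numerator of the integrable character of $L(m\Lambda_0)$, evaluated at shifted arguments determined by $(\beta_s',y_s')$ and rescaled by $M$. Using the definition~(\ref{eq:5.3}) of $\Psi^{[B;M,m;0]}_{a,b;0}$, a direct substitution matches the resulting expression to the one claimed in Lemma~\ref{lem:5.8}, case by case; the four sign choices $\pm$ match those of the four cases of Lemma~\ref{lem:5.1} because the sign in (3.28) of \cite{KW} is $\epsilon^-(y_s')$, which equals $\epsilon^-(y_s)\,\epsilon^-(r_{\alpha_1}) = \epsilon^-(y_s)$ (odd reflections carry sign $+1$ with respect to $\epsilon^-$).

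The main obstacle is the combinatorial bookkeeping: verifying, in each of the four cases, that the shift $\beta_s'$ produces precisely the $a,b$ entering the subscripts of $\Psi^{[B;M,m;0]}_{a,b;0}$ given in the statement, that the ``modular anomaly'' $q$-power $\frac{(2m+1)ab}{2M}$ and the exponential prefactor $e^{\frac{2\pi i(2m+1)}{2M}(bz_1+az_2)}$ come out correctly, and that the constants $\varphi(k_1,k_2)\delta$ subtracted in $\Lambda^{\prime(s)}_{k_1,k_2}$ are precisely those absorbed into the normalization~(\ref{eq:2.a}). Since odd reflections with respect to isotropic roots preserve the subset $T_{\bar\Lambda}=\{\alpha_1\}$ up to sign (both $\alpha_1$ and $-\alpha_1$ are orthogonal to $\bar{\Lambda}$), the mock-theta part of the KW formula transforms in exactly the same way as in the proof of Lemma~\ref{lem:5.1}, which is what makes the final formulae literally identical to those of Lemma~\ref{lem:5.1}.
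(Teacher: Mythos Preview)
Your approach is correct and matches the paper's: the paper gives no explicit proof (just $\Box$), relying on the observation stated just before Lemma~\ref{lem:5.8} that the weights $\Lambda^{\prime(s)}_{k_1,k_2}$ are given by literally the same formulas as $\Lambda^{(s)}_{k_1,k_2}$ (only the allowed range of $(k_1,k_2)$ changes), so the computation behind Lemma~\ref{lem:5.1} via formula~(3.28) of \cite{KW} carries over verbatim. One small correction: the odd reflection producing $S^{\prime(s)}_{k_1,k_2}$ from $S^{(s)}_{k_1,k_2}$ is $r_{\gamma_s}$, where $\gamma_s$ is the isotropic root in $S^{(s)}_{k_1,k_2}$, not $r_{\alpha_1}$ uniformly; this does not affect your argument.
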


\begin{corollary}
\label{co:5.9}
Let 
$\Lambda = \Lambda^{\prime(s)}_{k_1,k_2}$ 
be a principal admissible weight 
with respect to the simple subset $S^{\prime(s)}_{k_1,k_2}$  
, $s=1,2,3,4$. Introduce the following reparametrization of indices $(k_1,k_2)$\,:
$$
\begin{array}{ll}
s=1:\; j=k_1, k=k_1+2k_2,\;\text{so that}\;\; j\leq -1,\, k \leq M-1,\, j+k\geq 0;\\[1.5ex]
s=2:\; j=-k_1, k=-k_1-2k_2,\;\text{so that}\;\;j\geq 0, \,k\geq -M,\, j+k \leq -2;\\[1.5ex]
s=3:\; j=k_1+2k_2, k=k_1,\;\text{so that}\;\; j \leq M-1,\, k\leq -1,\, j+k\geq 0;\\[1.5ex]
s=4:\; j=-k_1-2k_2, k=-k_1,\;\text{so that}\;\; j \geq -M,\, k\geq 0,\, j+k \leq -2.
\end{array}
$$
Then, as $s$ runs over $1$ and $4$ (resp. $3$ and $2$) the set of pairs 
$(j,k)$ 
fills exactly the points with integer coordinates,
such that $j-k$ is even, in the square 
$-M \leq j < 0,\, 0 \leq k \leq M-1$ (resp. $0 \leq j \leq M-1,\, -M \leq k<0$)
, and we obtain a unified supercharacter 
formula for the principal admissible highest weights
${\Lambda=\Lambda^{(s)}_{k_1,k_2}}$ and ${\Lambda^{\prime(s)}_{k_1,k_2}}$: 
$$
\widehat{R}^{-}ch^{-}_{\Lambda}=\pm \Psi^{[B;M,m;0]}_{j,k;0}.
$$
%
%Furthermore, the set of modules with highest weights $\Lambda^
%{\prime(s)}_{k_1,k_2}$ for $s=1,4$ coincides with that for $s=3,2$ (in fact $s=3$ (resp. $2$) corresponds to $s=1$ (resp. $4$)).
\hfill
$\Box$
\end{corollary}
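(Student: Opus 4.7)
The proof is essentially a bookkeeping exercise, combining the explicit supercharacter formulas of Lemma \ref{lem:5.8} with a careful reading of the constraints placed on $(k_0,k_1,k_2)$ by the requirement that every element of each simple subset $S'^{(s)}_{k_1,k_2}$ be a positive root with respect to $\widehat{\Pi}$. My plan is to handle the four cases $s=1,2,3,4$ in parallel, translating the positivity and level constraints into constraints on $(j,k)$ via the stated reparametrization, and then comparing with Corollary \ref{co:5.2} to get the unified statement.

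First, for each $s$ I would read off the three positivity conditions on $S'^{(s)}_{k_1,k_2}$ by inspecting the coefficient of $\delta$ required to push the negative piece of each simple root into $\widehat{\Delta}_+$, and combine them with the level identity $M=k_0+2(k_1+k_2)\pm 1$. Substituting the reparametrization then yields: for $s=1$, $j=k_1\leq-1$, $k=k_1+2k_2\leq M-1$, $j+k=2(k_1+k_2)\geq 0$; for $s=4$, $-j=k_1+2k_2\leq M$, $-k=k_1\leq 0$, $j+k=-2(k_1+k_2)\leq-2$. The parity relation $j\equiv k\pmod 2$ is automatic because $k-j=\pm 2k_2$. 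A parallel substitution handles $s=3$ (with $(j,k)=(k_1+2k_2,k_1)$) and $s=2$ (with $(j,k)=(-k_1,-(k_1+2k_2))$); the argument is identical up to swapping the roles of the two coordinates.

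Next, I would check that, as $s$ ranges over $\{1,4\}$, the image of $(k_1,k_2)$ fills the set $\{(j,k)\in\ZZ^2\mid -M\leq j<0,\;0\leq k\leq M-1,\;j\equiv k\pmod 2\}$ exactly once: the two images are disjoint because one lies in $\{j+k\geq 0\}$ and the other in $\{j+k\leq -2\}$, and their union exhausts the lattice by an elementary count. The symmetric argument for $\{3,2\}$ yields the second quadrant. The signs in $\widehat{R}^-\,ch^-_{\Lambda}=\pm\Psi^{[B;M,m;0]}_{j,k;0}$ are then read directly off Lemma \ref{lem:5.8}: $+$ for $s=1,4$ and $-$ for $s=2,3$. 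Combining with Corollary \ref{co:5.2} (which handles $0\leq j,k\leq M-1$) gives the promised unified formula for every principal admissible $\Lambda$.

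The only step requiring genuine care is the boundary behaviour, i.e.\ confirming that extremal values such as $j=-M,\,k=0$ (attained only from $s=4$ with $k_1=0$, forcing $k_2\geq 1$) or $j=-1,\,k=M-1$ (attained only from $s=1$) are included exactly once, so that the coverage is a bijection rather than merely a surjection. A small sanity check, e.g.\ $M=3,\,m=0$, gives the four lattice points $(-1,1),(-2,0),(-2,2),(-3,1)$ arising as $2+2$ solutions from $s=1$ and $s=4$ respectively, matching the claim; the analogous verification in general $M$ is a direct translation of the three linear inequalities on $(k_1,k_2)$ into a shifted triangular region of $(j,k)$-space.
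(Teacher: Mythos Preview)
Your argument is correct and is exactly the natural one; the paper itself gives no proof of this corollary (it is marked with a $\Box$), since it is an immediate bookkeeping consequence of Lemma~\ref{lem:5.8} together with the constraints defining the simple subsets $S'^{(s)}_{k_1,k_2}$, just as Corollary~\ref{co:5.2} follows from Lemma~\ref{lem:5.1}.

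One small slip worth fixing: both of your ``extremal'' test points, $(j,k)=(-M,0)$ and $(j,k)=(-1,M-1)$, have $j-k=-M$, which is odd (recall $\gcd(M,4m+2)=1$ forces $M$ odd), so neither lies in the target set; correspondingly neither is attained from any $s$ (e.g.\ for $s=4$ with $k_1=0$ one would need $2k_2=M$). This does not affect your argument---the disjointness via $j+k\geq 0$ versus $j+k\leq -2$ (with $j+k$ even) and the coverage count already settle the bijection---but you should replace the illustrative examples by genuine boundary points such as $(-1,M-2)$ or $(-M+1,0)$.
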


%%%%%%%%%%%%%%%%%%%%%%%%%%%%%%%%%%%%%%%%

As in [KW] and Section 2 of the presnt paper, we introduce the twisted 
normalized admissible characters and supercharacters $t_\xi\,ch^{\pm}_{\Lambda}$ of $\widehat{osp}_{3|2}$ and their denominators and superdenominators $t_\xi\,\widehat{R}^{\pm}$.

Choose $\xi=-(\alpha_1+\alpha_2)$. Then we have in the coordinates (\ref{eq:4.1}) :
$$
t_{-\xi}(h)=(\tau,z_1+\frac{\tau}{2},z_2+\frac{\tau}{2},t+\frac{z_1+z_2}{2}+\frac{\tau}{4}).
$$
The proof of the following lemma is straightforward.

\begin{lemma}
\label{lem:5.10}
$$
\Psi^{[B;M,m;\epsilon]}_{j,k;\epsilon^\prime}(\tau,z_1+\frac{\tau}{2},z_2+\frac{\tau}{2},t+\frac{z_1+z_2}{2}+\frac{\tau}{4}) = \Psi^{[B;M,m;\epsilon]}_{j+\frac{1}{2},k+\frac{1}{2};\frac{1}{2}-\epsilon^\prime}(\tau,z_1,z_2,t),
$$
and the same formula holds if we replace $\Psi$ by $\overset{\sim}{\Psi}$.

\hfill
$\Box$
\end{lemma}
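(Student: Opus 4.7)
The plan is to unwind both sides using the definition (\ref{eq:5.3}) of $\Psi^{[B;M,m;\epsilon]}$ and then verify the identity by matching exponents, exploiting the elementary observation that the $t$-dependence of $\Phi^{[B;m]}$ is purely multiplicative: from (\ref{eq:4.0}) (or equivalently (\ref{eq:4.4})),
$$
\Phi^{[B;m]}(\tau,z_1,z_2,t) \;=\; e^{\pi i (2m+1)t}\,\Phi^{[B;m]}(\tau,z_1,z_2,0).
$$
This is the only nontrivial input; the rest is bookkeeping of half-integer shifts.

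First I would substitute $(z_1,z_2,t) \mapsto (z_1+\tfrac{\tau}{2},\,z_2+\tfrac{\tau}{2},\,t+\tfrac{z_1+z_2}{2}+\tfrac{\tau}{4})$ into (\ref{eq:5.3}) for indices $(j,k;\epsilon^\prime)$. The spatial arguments of $\Phi^{[B;m]}$ become $z_1 + (j+\tfrac12)\tau+\epsilon$ and $z_2 + (k+\tfrac12)\tau+\epsilon$, which already coincide with those appearing in the RHS $\Psi^{[B;M,m;\epsilon]}_{j+\frac12,k+\frac12;\frac12-\epsilon^\prime}(\tau,z_1,z_2,t)$. Only the $t$-argument of $\Phi^{[B;m]}$ differs between the two sides, and the displayed identity reduces that discrepancy to a factor
$$
\exp\!\Bigl(\tfrac{\pi i(2m+1)}{M}\bigl(\tfrac{z_1+z_2}{2}+\tfrac{\tau}{4}\bigr)\Bigr)
\;=\; e^{\frac{\pi i(2m+1)(z_1+z_2)}{2M}}\,q^{\frac{2m+1}{8M}}.
$$

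Next I would compare this extra factor against the difference of the two explicit prefactors. The LHS prefactor is $q^{\frac{2m+1}{2M}jk}\,e^{\frac{\pi i(2m+1)}{M}(kz_1+jz_2)+\frac{\pi i(2m+1)(j+k)\tau}{2M}}$, while the RHS prefactor is $q^{\frac{2m+1}{2M}(j+\frac12)(k+\frac12)}\,e^{\frac{\pi i(2m+1)}{M}((k+\frac12)z_1+(j+\frac12)z_2)}$. Expanding $(j+\tfrac12)(k+\tfrac12) = jk + \tfrac{j+k}{2} + \tfrac14$ and $(k+\tfrac12)z_1 + (j+\tfrac12)z_2 = kz_1 + jz_2 + \tfrac{z_1+z_2}{2}$, the RHS prefactor exceeds the LHS prefactor by exactly the factor above, closing the computation.

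For the assertion that the same identity holds with $\Psi$ replaced by $\widetilde{\Psi}$, I would note that $\widetilde{\Phi}^{[B;m]} = \Phi^{[B;m]} + \Phi^{[B;m]}_{\mathrm{add}}$ and observe from (\ref{eq:4.12}) that $\Phi^{[B;m]}_{\mathrm{add}}$ has precisely the same multiplicative $t$-dependence $e^{\pi i(2m+1)t}$ as $\Phi^{[B;m]}$. Consequently the argument above applies verbatim to the $\Phi^{[B;m]}_{\mathrm{add}}$-piece as well. I expect no serious obstacle here: the content is a half-integer translation of the theta-lattice combined with the scalar $t$-behaviour, so the only danger is a misplaced factor of $2$ or $M$ in the exponent bookkeeping, which is what the verification above is designed to rule out.
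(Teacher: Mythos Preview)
Your proof is correct and is exactly the straightforward computation the paper has in mind (the paper gives no proof beyond the remark that the lemma is ``straightforward''). The only point worth adding explicitly is the trivial observation that the subscript $\epsilon'$ in (\ref{eq:5.3}) is merely the label for the coset $(\epsilon'+\ZZ)^2$ containing $(j,k)$, so the shift $(j,k)\mapsto(j+\tfrac12,k+\tfrac12)$ automatically changes the label to $\tfrac12-\epsilon'$; otherwise your bookkeeping is complete.
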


We shall use the same notation (\ref{eq:2.6a}) and (\ref{eq:2.6b}) as before, 
and let 
$$
ch^{[B;M,m;\epsilon]}_{j+\epsilon^\prime,k+\epsilon^\prime;\epsilon^\prime}=ch^{(\epsilon)}_{\Lambda^{(s)}_{j,k};\epsilon^\prime}.
$$
Then
\begin{equation}
\label{eq:5.10a}
\{ch^{[B;M,m;\epsilon]}_{j+\epsilon^\prime,k+\epsilon^\prime;\epsilon^\prime}|
\,\,j,k\in\Omega^{[B;M]}\} 
\cup \{ch^{[B;M,m;\epsilon]}_{j+\epsilon^\prime,k+\epsilon^\prime;\epsilon^\prime}|\,\,j,k\in\ZZ,\,0\leq j\leq M-1,\,-M\leq k <0\}
%\ZZ,\,j-k\in2\ZZ,\, -M\leq j \leq M-1,\, 0\leq k \leq M-1 \}
\end{equation}
is (up to a sign) precisely the set of all normalized principal
admissible characters (resp. supercharacters), associated to the pair $(M,m\Lambda_0)$, if $\epsilon^\prime=0$ 
and $\epsilon=\frac{1}{2}\, (\text{resp.}\;\epsilon=0)$, and it is the set of 
all normalized twisted principal admissible characters (resp. supercharacters), associated to the pair $(M,m\Lambda_0)$, if $\epsilon^\prime=\frac{1}{2}$ and $\epsilon=\frac{1}{2}\; (\text{resp.}\;\epsilon=0)$.

By Corollaries \ref{co:5.2} and \ref{co:5.9}, and Lemma \ref{lem:5.10}, we obtain the following unified formula :
\begin{equation}
	\label{eq:5.6}
	\hat R^{(\epsilon)}_{\epsilon^\prime} 
ch^{[B;M,m;\epsilon]}_{j+\epsilon^\prime,k+\epsilon^\prime,\epsilon^\prime}=
\overset{\sim}{\epsilon}_{s} \Psi^{[B;M,m;\epsilon]}_{j+\epsilon^\prime,
k+\epsilon^\prime;\epsilon^\prime},\,\,\,\,j,k\in
\Omega^{[B;M]},
\end{equation}
where 
$\overset{\sim}{\epsilon}_{s} 
= \epsilon_s$ if $j\geq 0$ and 
$\overset{\sim}{\epsilon}_{s}=1$ if $j<0$.

In view of these observations, introduce the modified normalized untwisted and 
twisted (super)characters, letting
\begin{equation}
	\label{eq:5.7}
	\overset{\sim}{ch}^{[B;M,m;\epsilon]}_{j,k;\epsilon^\prime}(\tau, z_1,z_2,t)=(-1)^{j-\epsilon^\prime}\frac{\overset{\sim}{\Psi}^{[B;M,m;\epsilon]}_{j,k;\epsilon^\prime}(\tau,z_1,z_2,t)}{\widehat{R}^{(\epsilon)}_{\epsilon^\prime}(\tau,z_1,z_2,t)}, \,\,\,\,j,k \in \Omega^{[B;M;\epsilon^\prime]}.
\end{equation}
Since , by Theorem \ref{th:4.14}(3), 
we have:
$\overset{\sim}{\Psi}^{[B;M,m;\epsilon]}_{j+aM,k+bM;\epsilon^\prime}
=(-1)^{a+\epsilon(a+b)}\overset{\sim}{\Psi}^{[B;M,m;\epsilon]}_{j,k;\epsilon^\prime}$ for $a,b\in\ZZ$, $a+b$ even, 
in view of Corollary \ref{co:5.9}, after the modification 
the second subset in (\ref{eq:5.10a}) coincides with a half of the first 
one, hence the first subset
produces, after the modification, all untwisted and twisted modified 
normalized principal
admissible (super)characters, associated to the pair $(M,m\Lambda_0)$.
%indeed parametrized by $\Omega^{[B;M;\epsilon^\prime]}$.

Using (\ref{eq:4.1a}), (\ref{eq:4.1}), (\ref{eq:4.3}), (\ref{eq:4.4}) 
from [KW], we can write the following unified formula for the untwisted 
and twisted 
(normalized) (super)denominators for $\widehat{osp}_{3|2}$ :
\begin{equation}
\label{eq:5.8}
\widehat{R}^{(\epsilon)}_{\epsilon^{\prime}}(\tau,z_1,z_2,t)=c_{\epsilon,\epsilon^{\prime}} e^{\pi i t} \frac{\vartheta(\tau)^3 \vartheta_{11}(\tau,z_1+z_2) \vartheta_{11}(\tau,\frac{z_1-z_2}{2})}{\vartheta_{1-2\epsilon^{\prime},1-2\epsilon}(\tau,z_1)\vartheta_{1-2\epsilon^\prime,1-2\epsilon}(\tau,z_2)\vartheta_{1-2\epsilon^\prime,1-2\epsilon}(\tau,\frac{z_1+z_2}{2})}\,,
\end{equation}
where $c_{\epsilon,\epsilon^{\prime}}=e^{\frac{\pi i}{2}(2(\epsilon-\epsilon^\prime)-4\epsilon\epsilon^{\prime}-1)}$.

The modular transformation formulae for the functions $\widehat{R}^{(\epsilon)}_{\epsilon^\prime}$ follow from Theorem 4.1 in [KW] :
\begin{lemma}
\label{lem:5.11}
$$
$$
\vspace{-7ex}

\begin{list}{}{}
\item (1) $\widehat{R}^{(\epsilon)}_{\epsilon^\prime}(-\frac{1}{\tau},\frac{z_1}{\tau},\frac{z_2}{\tau},t-\frac{z_1z_2}{\tau})=i^{-(2\epsilon+2\epsilon^\prime+4\epsilon\epsilon^\prime)} \tau \widehat{R}^{(\epsilon^\prime)}_{\epsilon}(\tau,z_1,z_2,t)$.

\item (2) $\widehat{R}^{(\epsilon)}_{\epsilon^\prime}(\tau+1,z_1,z_2,t)=e^{\frac{3\pi i}{2}\epsilon^\prime}\widehat{R}^{(|\epsilon-\epsilon^\prime|)}_{\epsilon^\prime}(\tau,z_1,z_2,t)$.

\hfill
$\Box$
\end{list}
\end{lemma}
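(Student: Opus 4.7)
The plan is to prove both formulae by direct computation from the unified expression~(\ref{eq:5.8}), using the standard $S$- and $T$-transformation formulae for $\eta$ and the four Jacobi theta functions $\vartheta_{ab}$. Formula~(\ref{eq:5.8}) differs from the corresponding $\widehat{\sl}_{2|1}$ expression~(\ref{eq:2.6c}) by exactly one extra factor $\vartheta_{11}(\tau, (z_1-z_2)/2)$ in the numerator and one extra factor $\vartheta_{1-2\epsilon',\,1-2\epsilon}(\tau, (z_1+z_2)/2)$ in the denominator. Consequently, Lemma~\ref{lem:5.11} reduces to carrying the extra phase, the extra $\sqrt{-i\tau}$, and the extra quadratic exponential through Theorem~4.1 of~\cite{KW}, which already handles the $\widehat{\sl}_{2|1}$ case.

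For part~(1), I substitute $\tau\mapsto -1/\tau$, $z_i\mapsto z_i/\tau$, $t\mapsto t-z_1z_2/\tau$ into~(\ref{eq:5.8}). Using $\eta(-1/\tau)=\sqrt{-i\tau}\,\eta(\tau)$ and $\vartheta_{ab}(-1/\tau, z/\tau)=(\text{phase})\sqrt{-i\tau}\,e^{\pi i z^2/\tau}\vartheta_{ba}(\tau, z)$ swaps the subscripts $1-2\epsilon'$ and $1-2\epsilon$ of the denominator thetas, accounting for the interchange $(\epsilon,\epsilon')\mapsto (\epsilon',\epsilon)$ on the right-hand side. The numerator supplies $3+1+1=5$ factors of $\sqrt{-i\tau}$ and the denominator supplies $3$, leaving a net $-i\tau$. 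The quadratic exponents from the $\vartheta$-transformations add to $\pi i((z_1+z_2)^2+(z_1-z_2)^2/4)/\tau$ in the numerator and $\pi i(z_1^2+z_2^2+(z_1+z_2)^2/4)/\tau$ in the denominator, whose difference is $\pi i z_1 z_2/\tau$, which exactly cancels the $e^{-\pi i z_1 z_2/\tau}$ introduced by the $t$-shift. What remains is the product of root-of-unity phases coming from the two $\vartheta_{11}$'s (each contributing $-i$) and from the denominator thetas, together with the constant $c_{\epsilon,\epsilon'}/c_{\epsilon',\epsilon}$; matching this to $i^{-(2\epsilon+2\epsilon'+4\epsilon\epsilon')}$ is a case check for $(\epsilon,\epsilon')\in\{0,1/2\}^2$.

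For part~(2), under $\tau\mapsto\tau+1$ the first subscript $1-2\epsilon'$ of each $\vartheta_{ab}$ is unchanged, while the second subscript $1-2\epsilon$ may be permuted: indeed $\vartheta_{00}\leftrightarrow\vartheta_{01}$, whereas $\vartheta_{11}$ and $\vartheta_{10}$ each pick up a factor $e^{\pi i/4}$. Hence for $\epsilon'=0$ we have $\vartheta_{1,1-2\epsilon}\mapsto e^{\pi i/4}\vartheta_{1,1-2\epsilon}$ with no swap of $\epsilon$, while for $\epsilon'=1/2$ we have $\vartheta_{0,1-2\epsilon}\mapsto\vartheta_{0,2\epsilon}$, i.e.\ $\epsilon\mapsto|\epsilon-\epsilon'|$. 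Collecting the $e^{\pi i/4}$ from $\eta^3$, the $e^{\pi i/2}$ from the two numerator $\vartheta_{11}$'s, and the phases from the three denominator $\vartheta$'s produces the claimed overall factor $e^{3\pi i \epsilon'/2}$.

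The main obstacle is not conceptual but purely the bookkeeping of the roots of unity, in particular the reconciliation of the constant $c_{\epsilon,\epsilon'}$ in~(\ref{eq:5.8}) with its $S$-image $c_{\epsilon',\epsilon}$. This is most safely handled by verifying the four cases $(\epsilon,\epsilon')\in\{0,1/2\}^2$ directly, reusing the $\widehat{\sl}_{2|1}$ computation of Theorem~4.1 in~\cite{KW} as a skeleton and inserting the extra $\vartheta_{11}(\tau, (z_1-z_2)/2)/\vartheta_{1-2\epsilon',1-2\epsilon}(\tau,(z_1+z_2)/2)$ ratio on top of it.
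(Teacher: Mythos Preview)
Your proposal is correct and is essentially the same approach as the paper's: the paper simply records that Lemma~\ref{lem:5.11} ``follows from Theorem~4.1 in [KW]'', i.e.\ it appeals to the general modular transformation of the affine (super)denominator once it has been written as a ratio of $\eta$ and $\vartheta_{ab}$'s, which is exactly the computation you carry out explicitly from~(\ref{eq:5.8}). Your bookkeeping of the $\sqrt{-i\tau}$ powers, the cancellation of the quadratic exponentials against the $t$-shift, and the case-check of the root-of-unity constants are all sound.
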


Modular transformation properties of the modified normalized (super)characters (\ref{eq:5.7}) follow from Theorem \ref{th:5.7} and Lemma \ref{lem:5.11} :

\begin{theorem}
\label{th:5.12}
Let $m\in \ZZ_{\geq 0}$ and $M\in \ZZ_{\geq 1}$ be such that $\gcd(M, 4m+2)=1$
, and let $\epsilon,\epsilon^\prime=0$ or $\frac{1}{2}$. Then for $j,k\in \Omega^{[B;M;\epsilon^\prime]}$ we have : 
\begin{list}{}{}
\item (1) $\overset{\sim}{ch}^{[B;M,m;\epsilon]}_{j,k;\epsilon^\prime}(-\frac{1}{\tau},\frac{z_1}{\tau},\frac{z_2}{\tau},t-\frac{z_1z_2}{\tau})$ 

$=\frac{i^{4\epsilon \epsilon^\prime+2\epsilon-2\epsilon^\prime}}{M}\sum\limits_{(a,b)\in \Omega^{[B;M;\epsilon]}} e^{-\frac{\pi i(2m+1)}{M}(bj+ak)} 
\overset{\sim}{ch}^{[B;M,m;\epsilon^\prime]}_{a,b;\epsilon}(\tau,z_1,z_2,t)$.

\item (2) $\overset{\sim}{ch}^{[B;M,m;\epsilon]}_{j,k;\epsilon^\prime}(\tau+1,z_1,z_2,t)=
(-1)^{4\epsilon\epsilon^\prime} e^{\frac{\pi i(2m+1)}{M}jk+\pi i(j-\frac{3}{2}\epsilon^\prime)} \overset{\sim}{ch}^{[B;M,m;|\epsilon-\epsilon^\prime|]}_{j,k;\epsilon^\prime}(\tau,z_1,z_2,t)$.

\hfill $\Box$
\end{list}
\end{theorem}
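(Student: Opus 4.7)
The proof is a direct calculation that combines the modular transformation formulae for the numerator $\tilde{\Psi}^{[B;M,m;\epsilon]}_{j,k;\epsilon'}$ (Theorem \ref{th:5.7}) with those for the affine (super)denominator $\hat{R}^{(\epsilon)}_{\epsilon'}$ (Lemma \ref{lem:5.11}), via the defining ratio (\ref{eq:5.7}). The strategy is parallel to the derivation of Theorem \ref{th:2.9} from Theorem \ref{th:2.8} and the $\widehat{R}$-transformation formulae in the $\widehat{\sl}_{2|1}$ case.

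For part (1), I would evaluate both sides of (\ref{eq:5.7}) at $(-1/\tau, z_1/\tau, z_2/\tau, t - z_1 z_2/\tau)$. The denominator transforms directly by Lemma \ref{lem:5.11}(1), producing the factor $i^{-(2\epsilon+2\epsilon'+4\epsilon\epsilon')}\tau\,\hat{R}^{(\epsilon')}_{\epsilon}(\tau,z_1,z_2,t)$. The numerator is handled by Theorem \ref{th:5.7}(1), which however delivers the transformation with $t$-shift $-z_1 z_2/(\tau M)$ rather than $-z_1 z_2/\tau$. To bridge the two, I would invoke the $t$-exponential dependence of $\tilde{\Psi}$, namely $e^{\pi i(2m+1)t/M}$ inherited from the factor $\Phi^{[B;m]}(M\tau,\cdot,\cdot,t/M)$ in (\ref{eq:5.3}), to reexpress $\tilde{\Psi}$ at the desired $t$-value; the resulting exponential in $z_1 z_2/\tau$ combines with the corresponding $e^{\pi i t}$-contribution from $\hat{R}^{(\epsilon)}_{\epsilon'}$ (cf.\ (\ref{eq:5.8})) in a way dictated by the level $k = (2m+1)/(2M) - 1/2$ of the underlying module, and the two contributions cancel. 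After this reconciliation, the factors of $\tau$ cancel; the signs $(-1)^{j-\epsilon'}$ (from the definition) and $(-1)^{\epsilon'+j}$ (from Theorem \ref{th:5.7}(1)) combine to give $(-1)^{2j}=1$; and the remaining sign $(-1)^{a-\epsilon}$ on each term of the sum in Theorem \ref{th:5.7}(1) is absorbed into the definition of $\tilde{ch}^{[B;M,m;\epsilon']}_{a,b;\epsilon}$ via (\ref{eq:5.7}), yielding the stated prefactor $i^{4\epsilon\epsilon'+2\epsilon-2\epsilon'}/M$.

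For part (2) the argument is more direct, as no $t$-shift is involved. Substituting $(\tau+1, z_1, z_2, t)$ into the ratio (\ref{eq:5.7}), Lemma \ref{lem:5.11}(2) produces the factor $e^{3\pi i\epsilon'/2}$ together with the swap $\epsilon \mapsto |\epsilon-\epsilon'|$ in the denominator, while Theorem \ref{th:5.7}(2) produces $(-1)^{j-\epsilon'+4\epsilon\epsilon'}e^{\pi i(2m+1)jk/M}$ with the same swap in the numerator. Combining these with the $(-1)^{j-\epsilon'}$ prefactors from both sides of (\ref{eq:5.7}) consolidates all signs and phases into $(-1)^{4\epsilon\epsilon'}e^{\pi i(2m+1)jk/M + \pi i(j - 3\epsilon'/2)}$, proving the $T$-transformation formula.

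The main obstacle is the careful bookkeeping of signs and exponential phases in part (1), where the mismatch between the $t$-shifts in Theorem \ref{th:5.7}(1) and Lemma \ref{lem:5.11}(1) must be reconciled through the distinct $t$-dependences of $\tilde{\Psi}$ and $\hat{R}$. Once these contributions are correctly identified, all exponential factors in $z_1 z_2/\tau$ cancel and the remaining manipulations are routine tracking of integer signs.
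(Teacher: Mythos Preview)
Your approach is exactly the paper's: Theorem \ref{th:5.12} is obtained by dividing the transformation law of Theorem \ref{th:5.7} by that of Lemma \ref{lem:5.11}, via the definition (\ref{eq:5.7}). The sign bookkeeping you outline is correct; in particular $(-1)^{j-\epsilon'}(-1)^{\epsilon'+j}=(-1)^{2\epsilon'}$ and $(-1)^{2\epsilon'}i^{2\epsilon+2\epsilon'+4\epsilon\epsilon'}=i^{4\epsilon\epsilon'+2\epsilon-2\epsilon'}$, as required.

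One remark on the ``main obstacle'' you identify. The $t$-shift $t-\tfrac{z_1z_2}{\tau M}$ printed in Theorem \ref{th:5.7}(1) is almost certainly a misprint for $t-\tfrac{z_1z_2}{\tau}$: compare the parallel Theorem \ref{th:2.8} for $\widehat{\sl}_{2|1}$, where the shift is $t-\tfrac{z_1z_2}{\tau}$, and note that both are derived in the same way from Proposition \ref{prop:5.6} (resp.\ Theorem \ref{th:2.2}). Indeed, one checks directly from (\ref{eq:5.3}) and Proposition \ref{prop:5.6} that the $z_1z_2/\tau$-exponential arising in the computation is exactly $e^{2\pi i(m+\frac12)z_1z_2/(\tau M)}$, which is precisely absorbed by the shift $t\mapsto t-\tfrac{z_1z_2}{\tau}$ in the $t$-dependence $e^{\pi i(2m+1)t/M}$ of $\tilde\Psi$. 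With the corrected shift there is no discrepancy to reconcile, and your proposed cancellation ``dictated by the level $k$'' is unnecessary (and, taken literally with the misprinted shift, would not actually work out).
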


\begin{remark}   
\label{rem:5.13}
  Due to Remark \ref{rem:4.15}, Theorem \ref{th:5.12} holds if we replace the modified normalized characters $\overset{\sim}{ch}$ by the non-modified ones 
$ch$ 
for any odd $M\in \ZZ_{\geq 1}$. In particular the normalized characters
$$
\{ ch^{[B;M,0;\epsilon]}_{j,k;\epsilon^\prime}|\,\, \epsilon, \epsilon^\prime=0\;\text{or}\; \frac{1}{2},\, (j,k)\in \Omega^{[B;M;\epsilon^\prime]} \}
$$
span an $SL_2(\ZZ)$-invariant space for any odd $M\in \ZZ_{\geq 1}$.
\end{remark}

\section{Modular transformation formulae for modified characters of admissible $\bs{N=3}$ modules.}
\label{sec:6}

Let $\fg=osp_{3|2}$ and let $\fh$ be its Cartan subalgebra, so that $l=
\text{dim}\,\fh=2$.

Choose the set of positive roots $\Delta_{+}$ as in Section 4 :
$$
\Delta_{+}=\{\alpha_1,\alpha_2, \alpha_1+\alpha_2, \alpha_1+2\alpha_2, \theta=2\alpha_1+2\alpha_2 \}.
$$
Let $x=\alpha_1+\alpha_2 \in \fh^{\ast}$, which is identified with $\fh$ 
via the bilinear form $(.|.)$. The eigenspace decomposition of $\fg$ with 
respect to $ad\,x$ is
$$
\fg=\fg_{-1}+\fg_{-\frac{1}{2}}+\fg_0+\fg_{\frac{1}{2}}+\fg_1 ,
$$
where $\fg_{\pm 1}=\CC e_{\pm \theta}$, $\fg_{\pm \frac12}$ are purely odd of 
dimension $3$, and $\fg_0=\CC x+\fg^{\#}$, where $\fg^{\#}$ is the orthogonal 
complement to $\CC x$ in $\fg_0$ with respect to $(.|.)$, isomorphic to 
$sl_2$. The subspace $\fg^{\#}\cap \fh$ is spanned by the element 
$J_0=-2 \alpha_2$.

Recall that the quantum Hamiltonian reduction associates to a 
$\widehat{\fg}$-module $L(\Lambda)$ of level $k\neq - \frac{1}{2}(=-h^{\vee})$, a module $H(\Lambda)$ over the $N=3$ superconformal algebra of Neveu-Schwarz 
type, such that the following properties hold [KRW], [KW2], [A] :
\begin{itemize}
	\item[(i)] the module $H(\Lambda)$ is either $0$ or an irreducible positive energy module\,;
	\item[(ii)] $H(\Lambda)=0$ iff $(\Lambda|\alpha_0)\in \ZZ_{\geq 0}$;
	\item[(iii)] the irreducible module $H(\Lambda)$ is characterized by three numbers:\\
	$(\alpha)$ the central charge
	\begin{equation}
	c_k=-3(2k+1),
	\label{eq:6.1}
	\end{equation}
	$(\beta)$ the lowest energy
	\begin{equation}
	h_{\Lambda}=\dfrac{(\Lambda+2\widehat{\rho}|\Lambda)}{2k+1}-(x+d| \Lambda),
	\label{eq:6.2}
	\end{equation}
	$(\gamma)$ the spin
	\begin{equation}
	s_\Lambda=\Lambda(J_0).
	\label{eq:6.3}
\end{equation}

\item[(iv)] the character $ch^{+}$ and the supercharacter $ch^{-}$ of the module $H(\Lambda)$ are given by the following formula :
\begin{equation}
	ch^{\pm}_{H(\Lambda)}(\tau, z):=tr^{\pm}_{H(\Lambda)} q^{L_0-\frac{c_k}{24}} e^{2\pi izJ_0}= (\widehat{R}^{\pm} ch^{\pm}_{\Lambda})
(\tau,-\tau x+J_0z,\frac{\tau}{4})\overset{3}{R}^{\pm}(\tau,z)^{-1},
	\label{eq:6.4}
\end{equation}
where
\begin{equation}
\label{eq:6.5}
\overset{3}{R}^{+}(\tau, z)= \frac{\eta(\frac{\tau}{2})\eta(2\tau)\vartheta_{11}(\tau,z)}{\vartheta_{00}(\tau,z)}\,\,\,\hbox{and}\,\,\,
\overset{3}{R}^{-}(\tau, z)=\frac{\eta(\tau)^3 \vartheta_{11}(\tau,z)}{\eta(\frac{\tau}{2})\vartheta_{01}(\tau,z)} 
%\eta(\tau)^2 \prod_{n=1}^{\infty}\frac{\prod_{\alpha \in \Delta_{+} 
%\atop \alpha(x)=0} (1-q^{n-1} e^{-2\pi i z\alpha (J_0)}) 
%(1-q^ne^{2\pi i \alpha(J_0)})}{\prod_{\alpha \in \Delta_{+} 
%\atop \alpha(x)=\frac{1}{2}} (1 \pm q^{n-\frac{1}{2}} e^{2\pi i z\alpha (J_0)})}
\end{equation}
are the $N=3$ superconformal algebra normalized demominator and 
superdenominator.
\end{itemize}

%It is easy to rewrite this (super)denominator in terms of the four Jacobi functions, using formulae (A.7) from [KW] :

%\begin{equation}
%\label{eq:6.5}
%iq^{\frac{7}{48}}e^{\pi iz}\overset{3}{R}^{+}(\tau, z)=
%\,\,\, iq^{\frac{7}{48}}e^{\pi iz}
%\end{equation}
%

Now we turn to the Ramond twisted sector. For each $\alpha \in \Delta_{+}$ 
choose $s_{\alpha} \in \ZZ$ (resp. $\in \frac{1}{2}+\ZZ$) if the root $\alpha$ is even (resp. odd), such that $s_{\theta}=0$ and 
$s_\alpha+s_{\theta-\alpha}=\delta_{\alpha,\theta/2}$ if both 
$\alpha$ and $\theta - \alpha$ are odd roots. Recall [KW3], [A] that, given such a suitable choice of $s{_\alpha}^\prime s$, the twisted quantum Hamiltonian 
reduction 
associates to a $\widehat{\fg}^{\tw}$-module $L^{\tw}(\Lambda)$ of level 
$k \neq -\frac{1}{2}$, a positive energy module $H^{\tw}(\Lambda)$ over the 
corresponding Ramond $N=3$ superconformal algebra, for which the 
properties (i) and (ii) hold with $H$ replaced by $H^{\tw}$.
A suitable choice of the $s_\alpha$ is 
(see \cite{KW3}) :
\begin{equation}
\label{eq:6.6}
s_{\alpha_1}=s_{\alpha_1+\alpha_2}=-s_{\alpha_1+2\alpha_2}= \frac{1}{2},\,
 s_{\alpha_2}=s_{\theta}=0.
\end{equation}
It is not difficult to see that, choosing the element 
$w=t_{\frac{1}{2}\theta} r_{\theta} \in \widehat{W}$, and a
lifting $\overset{\sim}{r}_{\theta}$ of $r_\theta$ in the corresponding $SL_2(C)$, we can lift $w$ to an isomorphism 
$$
\overset{\sim}{w}=t_{\frac{1}{2} \theta} \overset{\sim}{r}_{\theta} :
\widehat{\fg}^{\tw} \overset{\sim}{\rightarrow} \widehat{\fg},\;\text{where}\;\widehat{\fg}^{\tw} = \fg_{\bar{0}} [t,t^{-1}] \oplus \fg_{\bar{1}} [t,t^{-1}] t^{\frac{1}{2}} \oplus \CC K \oplus \CC d,
$$
such that the set of positive roots 
$\hat{\Delta}_+ $ of $\hat{\fg}$
corresponds to the set of positive roots 
$\hat{\Delta}^{\tw}_+ $ of 
$\hat{\fg}^{\tw}$, associated to the choice (\ref{eq:6.6}) of the $s_\alpha$'s.
The set of simple roots of 
$\hat{\Delta}^{\tw}_+$ is
\begin{equation}
\label{eq:6.7}
-\frac12 \delta+\alpha_1+2\alpha_2,\,\frac12 \delta +\alpha_1, \frac12 (\delta-\theta).
\end{equation}

It is easy to see that in the coordinates (\ref{eq:4.1}) we have:
\begin{equation}
\label{eq:6.8}
w(h)=w(\tau, z_1, z_2, t)=(\tau, 
-z_2+\frac{\tau}{2},-z_1+\frac{\tau}{2}, t-\frac{z_1+z_2}{2}+\frac{\tau}{4}).
\end{equation}
Note also that $w^2=1$.

Via the 
isomorphism $\tilde{w}$, the $\widehat{\fg}$-module $L(\Lambda)$ becomes a 
$\widehat{\fg}^{\tw}$-module, denoted by $L^{\tw}(\Lambda)$; 
its highest weight is 
\begin{equation}
	\Lambda^{\tw}=w(\Lambda),
	\label{eq:6.9}
\end{equation}
and its normalized character and supercharacter are :
\begin{equation}
	ch^{\tw,\pm}_{\Lambda}=w(ch^{\tw,\pm}_{\Lambda}),
	\label{eq:6.10}
\end{equation}
their denominator and superdenominator being 
$\widehat{R}^{\tw,\pm}=w(\widehat{R}^{\pm})$. 
%The corresponding normalized twisted characters and supercharacters are given by
%\begin{equation}
%	ch^{tw,\pm}_{\Lambda}=e^{-m^{tw}_{\Lambda}\delta} ch^{\pm}_{L^{tw}(\Lambda)}, \;\text{where}\;m^{tw}_{\Lambda}=\frac{|\Lambda^{tw}+\widehat{\rho}^{tw}|^2}{2k+1}.
%	\label{eq:6.8}
%\end{equation}

The irreducible module $H^{\tw}(\Lambda)$ is again characterized by three 
numbers : the central charge $c_k$, given by (\ref{eq:6.1}), the lowest energy
\begin{equation}
	h^{\tw}_{\Lambda}=\frac{(\Lambda^{\tw}+2\widehat{\rho}^{\tw}|\Lambda^{\tw})}{2k+1} - (x+d|\Lambda^{\tw})-\frac{3}{16},
	\label{eq:6.11}
\end{equation}
and the spin
\begin{equation}
	s^{\tw}_{\Lambda}=\Lambda^{\tw}(J_0)-\frac{1}{2}.
%+\frac{1}{2}\sum\limits_{\alpha \in \Delta_{+} \atop \alpha(x)=\frac{1}{2}} s_%{\alpha} \alpha(J_0).
	\label{eq:6.12}
\end{equation}

Furthermore, the (super)character of the module $H^{\tw}(\Lambda)$ is given by the following formula\,:
\begin{equation}
		ch^{\pm}_{H^{\tw}(\Lambda)}(\tau,z):\,= tr^{\pm}_{H^{\tw}(\Lambda)} q^{L^{\tw}_0-\frac{c_k}{24}} e^{2\pi izJ^{\tw}_0}=(\widehat{R}^{\tw,\pm} ch^{\tw,\pm}_{\Lambda}) (\tau, -\tau x+J_0 z,\frac{\tau}{4}) 
\overset{3}{R}^{\tw,\pm}(\tau, z)^{-1},\label{eq:6.13}
\end{equation}
where
\begin{equation}
\label{eq:6.14}
\overset{3}{R}^{\tw,+}(\tau,z)= =\dfrac{\eta(\tau)^3 \vartheta_{11}(\tau,z)}{\eta(2\tau) \vartheta_{10}(\tau,z)}
%\eta(\tau)^2 \prod^{\infty}_{n=1} 
%\frac{\prod_{\alpha\in \Delta + \atop \alpha(x)=0}(1-q^{n-1+s_\alpha} 
%e^{-2\pi iz\alpha(J_0)}) (1-q^{n-s_\alpha} e^{2\pi iz\alpha(J_0)})}
%{\prod_{\alpha\in \Delta + \atop \alpha(x)=\frac{1}{2}}(1\pm q^{n-\frac{1}{2}
%-s_\alpha} e^{2\pi iz \alpha(J_0)})}
\end{equation}
is the Ramond $N=3$ superconformal algebra normalized denominator,
and $\overset{3}{R}^{\tw,-}(\tau,z)^{-1}=0$,
hence $ch^{-}_{H^{\tw}(\Lambda)}=0$. 
%$(1-q^{n-\frac{1}{2}-s_{\theta/2}} e^{\pi i z \theta(J_0)})$ vanishes if $n=0$.
%Again, it is easy to rewrite the twisted $N=3$ denominator (\ref{eq:6.13}) in terms of the four Jacobi theta functions :
%\begin{equation}
%	\label{eq:6.14}
%	2iq^{-\frac{1}{24}} 
%\overset{3}{R}^{\tw,+}

As in the $N=2$ case, we introduce a more convenient notation:
$\overset{3}{R}^{(\frac{1}{2})}_{\frac{1}{2}}
=\overset{3}{R}^{+}$, 
$\overset{3}{R}^{(0)}_{\frac{1}{2}}=\overset{3}{R}^{-}$, and
$\overset{3}{R}^{(\frac{1}{2})}_{0}=2^{-\frac{1}{2}}\overset{3}{R}^{\tw,+}$
(cf. (\ref{eq:6.5}) and (\ref{eq:6.14})).
Using the modular transformation formulae for the four Jacobi theta functions (see e.g. [KW], Proposition A.7) we obtain the modular transformation formulae 
for these functions.
% $\overset{3}{R}^{(\frac{1}{2})}_{\frac{1}{2}}$, $\overset{3}{R}^{(0)}_{\frac{1}{2}}$ and $\overset{3}{R}^{(\frac{1}{2})}_{0}$.
\begin{proposition}
\label{prop:6.1}
Let $\epsilon, \epsilon^\prime = 0$ or $\frac{1}{2}$ be such that $\epsilon+\epsilon^\prime \neq 0$.
Then
\begin{list}{}{}
\item (1) $\overset{3}{R}^{(\epsilon)}_{\epsilon^\prime}(-\frac{1}{\tau},\frac{z}{\tau})=-\tau \overset{3}{R}^{(\epsilon^\prime)}_{\epsilon}(\tau,z)$.

\item (2) $\overset{3}{R}^{(\epsilon)}_{\epsilon^\prime}(\tau+1,z)=e^{\frac{\pi i}{12}(1+9 \epsilon^\prime)} \overset{3}{R}^{(|\epsilon-\epsilon^\prime|)}_{\epsilon^\prime} (\tau,z)$.
\end{list}
\hfill $\Box$
\end{proposition}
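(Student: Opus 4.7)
My plan is a direct case-by-case verification, substituting the explicit product formulas (\ref{eq:6.5}) and (\ref{eq:6.14}) for the three allowed pairs $(\epsilon,\epsilon')\in\{(\tfrac12,\tfrac12),(0,\tfrac12),(\tfrac12,0)\}$ and then applying the classical $S$- and $T$-transformation formulas for $\eta(\tau)$ and the four Jacobi theta functions $\vartheta_{ab}(\tau,z)$, as collected in the Appendix of \cite{KW}. The whole computation is mechanical; the only non-elementary ingredient I will need is one product identity for $\eta((\tau+1)/2)$, stated below.

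For claim (1), the $S$-transformation of $\vartheta_{11}(\tau,z)$ carries a factor $-i(-i\tau)^{1/2}e^{\pi i z^2/\tau}$, while each of $\vartheta_{00}, \vartheta_{01}, \vartheta_{10}$ (in the denominator) picks up $(-i\tau)^{1/2}e^{\pi i z^2/\tau}$; moreover $S$ fixes $\vartheta_{00}$ and interchanges $\vartheta_{01}\leftrightarrow\vartheta_{10}$, which is precisely what sends $\overset{3}{R}^{(\epsilon)}_{\epsilon'}$ to the denominator of $\overset{3}{R}^{(\epsilon')}_{\epsilon}$. The quadratic exponentials $e^{\pi i z^2/\tau}$ cancel. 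The $\eta$ factors transform as $\eta(-1/(2\tau))=\sqrt{-2i\tau}\,\eta(2\tau)$ and $\eta(-2/\tau)=\sqrt{-i\tau/2}\,\eta(\tau/2)$, and one checks on the principal branch that $\sqrt{-2i\tau}\sqrt{-i\tau/2}=-i\tau$. Combining these with the $-i$ from $\vartheta_{11}$ and the $(-i\tau)^{3/2}$ from $\eta(-1/\tau)^3$, all the fractional powers of $\tau$ collapse to give exactly a factor $-\tau$ in each of the three cases; for $\overset{3}{R}^{(1/2)}_0$ and $\overset{3}{R}^{(0)}_{1/2}$ the $1/\sqrt{2}$ normalization in (\ref{eq:6.14}) correctly accounts for the $\sqrt{2}$ coming out of the $\eta$-transformation.

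For claim (2), I will use $\eta(\tau+1)=e^{\pi i/12}\eta(\tau)$, $\eta(2\tau+2)=e^{\pi i/6}\eta(2\tau)$, $\vartheta_{11}(\tau+1,z)=e^{\pi i/4}\vartheta_{11}(\tau,z)$, $\vartheta_{10}(\tau+1,z)=e^{\pi i/4}\vartheta_{10}(\tau,z)$ and the swap $\vartheta_{00}(\tau+1,z)=\vartheta_{01}(\tau,z)$. For $\overset{3}{R}^{(1/2)}_0$ no further input is needed and the phases combine directly to $e^{\pi i/12}$, matching the claim with $|\epsilon-\epsilon'|=1/2$, $\epsilon'=0$. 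For the two cases involving $\eta((\tau+1)/2)$, I will use the product identity
\begin{equation*}
\eta(\tau/2)\,\eta((\tau+1)/2)\,\eta(2\tau) \;=\; e^{\pi i/24}\,\eta(\tau)^3,
\end{equation*}
which is proved by writing $\eta((\tau+1)/2)=q^{1/48}e^{\pi i/24}\prod_n(1-q^n)(1+q^{n-1/2})$ (from $(1-(-1)^n q^{n/2})$) and combining it with $\prod(1-q^{n-1/2})=q^{1/48}\eta(\tau/2)/\eta(\tau)$ and $\prod(1-q^{2n-1})=q^{1/24}\eta(\tau)/\eta(2\tau)$ via $(1-x)(1+x)=1-x^2$. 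This identity interpolates cleanly between the Neveu--Schwarz and Ramond denominators: in $\overset{3}{R}^{(1/2)}_{1/2}(\tau+1,z)$ it lets me replace $\eta((\tau+1)/2)\eta(2\tau)$ by $e^{\pi i/24}\eta(\tau)^3/\eta(\tau/2)$, and the collected phase $e^{\pi i/24}\cdot e^{\pi i/6}\cdot e^{\pi i/4}=e^{11\pi i/24}$ agrees with the claimed $e^{\pi i(1+9\epsilon')/12}=e^{11\pi i/24}$. Used in the opposite direction on $\overset{3}{R}^{(0)}_{1/2}(\tau+1,z)$, it produces $\overset{3}{R}^{(1/2)}_{1/2}$ with the same prefactor $e^{11\pi i/24}$.

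The main obstacle is purely bookkeeping: reconciling the eighth roots of unity from $\vartheta_{11},\vartheta_{10}$, the twelfth and twenty-fourth roots from $\eta(\tau+1)$ and $\eta(2\tau+2)$, and the phase $e^{\pi i/24}$ produced by the identity for $\eta((\tau+1)/2)$, so that everything condenses to the single clean factor $e^{\pi i(1+9\epsilon')/12}$. Once the $\eta$ identity is established, the rest is a routine tally.
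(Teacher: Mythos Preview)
Your proposal is correct and follows exactly the approach the paper indicates: a direct case-by-case application of the standard $S$- and $T$-transformation formulae for $\eta$ and the four Jacobi theta functions (the paper merely cites \cite{KW}, Proposition A.7, and omits all details). Your auxiliary identity $\eta(\tau/2)\,\eta((\tau+1)/2)\,\eta(2\tau)=e^{\pi i/24}\eta(\tau)^3$ is the clean way to handle the $\eta((\tau+1)/2)$ term, and your phase bookkeeping checks out.
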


Let $\Lambda=\Lambda^{(s)}_{k_1,k_2}$ and $\Lambda=
\Lambda^{\prime(s)}_{k_1,k_2}$ 
be one of the principal admissible weights (for the simple subsets
$S^{(s)}_{k_1,k_2}$and 
 $S^{'(s)}_{k_1,k_2}$), 
described by Corollaries \ref{co:5.2} and \ref{co:5.9}, along with the reparametrization $(j,k)$ of the pairs $(k_1,k_2)$. Then we have :
$$
(\widehat{R}^{\tw,+} ch^{\tw,+}_{\Lambda}) (h)=(\widehat{R}^{+} ch^{+}_{\Lambda})(w^{-1}h)=(\widehat{R}^{+} ch^{+}_{\Lambda}) (wh).
$$
Using (\ref{eq:6.8}) and Lemma \ref{lem:5.10}, we deduce :
\begin{equation}
	(\widehat{R}^{\tw,+}
 ch^{\tw,+}_{\Lambda})(h)=\overset{\sim}{\epsilon}_s \Psi^{[B;M,m;\frac{1}{2}]}_{j+\frac{1}{2},k+\frac{1}{2};\frac{1}{2}} (\tau,-z_2,-z_1,t).
	\label{eq:6.15}
\end{equation}
Furthermore, we have in coordinates (\ref{eq:4.1a}) :
\begin{equation}
	2\pi i(-\tau \Lambda_0-\tau x+zJ_0)=(\tau,z+\frac{\tau}{2},-z+\frac{\tau}{2},0).
	\label{eq:6.16}
\end{equation}

Now we can apply formulae (\ref{eq:6.4}) and (\ref{eq:6.15}). 
Using (\ref{eq:6.16}) and Lemma \ref{lem:5.10}, we obtain the following.
\begin{proposition}
\label{prop:6.2}
Let $m\in \ZZ_{\geq 0}$ and $M\in \ZZ_{\geq 1}$ be such that $\gcd(M, 4m+2)=1$. Let $\Lambda=\Lambda^{(s)}_{k_1,k_2}$ or $\Lambda=\Lambda^{\prime(s)}_{k_1,k_2}$ 
be one of the principal admissible weights, described in Corollaries \ref{co:5.2} and \ref{co:5.9}, along with the reparametrization $(j,k)$ of the pairs 
$(k_1,k_2)$. Then\\

\begin{list}{}{}
\item (1) $(\overset{3}{R}^{\pm}{ch}^{\pm}_{H(\Lambda)})(\tau,z)=
%q^{-\frac{2m+1}{8M}}  
\Psi^{[B;M,m;\frac{1}{4}(1\pm 1)]}_{j+\frac{1}{2},k+\frac{1}{2};\frac{1}{2}} (\tau,z,-z,0)$. 

\item (2) $(\overset{3}{R}^{\tw,+}{ch}^{+}_
{H^{\tw}(\Lambda)})(\tau,z)=
%q^{-\frac{2m+1}{8M}}  
\Psi^{[B;M,m;\frac{1}{2}]}_{j,k;0} (\tau,z,-z,0)$.

\end{list}
\end{proposition}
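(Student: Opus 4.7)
The plan is to chain three ingredients: the quantum Hamiltonian reduction formulas (6.4) and (6.13) for the $N=3$ (super)characters of $H(\Lambda)$ and $H^{\tw}(\Lambda)$; the unified admissible (super)character formula (5.6) together with its twisted counterpart (6.15); and Lemma 5.10, which trades a simultaneous shift of $z_1,z_2$ by $\tau/2$ (with the matching shift in $t$) for a shift of the lower indices by $1/2$ and a flip $\epsilon^\prime \mapsto 1/2-\epsilon^\prime$ of the labels.

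For part (1), (6.4) gives
\[
(\overset{3}{R}^{\pm}\mathrm{ch}^{\pm}_{H(\Lambda)})(\tau,z)
= (\widehat R^{\pm}\mathrm{ch}^{\pm}_\Lambda)\!\left(\tau,\,-\tau x+J_0 z,\,\tfrac{\tau}{4}\right),
\]
and by (6.16) the argument on the right, expressed in the coordinates (4.1), is $(\tau,\,z+\tfrac{\tau}{2},\,-z+\tfrac{\tau}{2},\,\tfrac{\tau}{4})$. Identifying $\mathrm{ch}^{+}_\Lambda=\mathrm{ch}^{(1/2)}_{\Lambda;0}$ and $\mathrm{ch}^{-}_\Lambda=\mathrm{ch}^{(0)}_{\Lambda;0}$, formula (5.6) with $\epsilon^\prime=0$ and $\epsilon=\tfrac14(1\pm 1)$ rewrites this as $\widetilde\epsilon_s\,\Psi^{[B;M,m;\epsilon]}_{j,k;0}(\tau,\,z+\tfrac{\tau}{2},\,-z+\tfrac{\tau}{2},\,\tfrac{\tau}{4})$, the sign $\widetilde\epsilon_s$ from (5.6) being absorbed into the sign convention for $\mathrm{ch}^{\pm}_{H(\Lambda)}$. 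Applying Lemma 5.10 with $\epsilon^\prime=0$ and $(z_1,z_2,t)=(z,-z,0)$---for which $(z_1+z_2)/2=0$, so that its shifted LHS arguments are exactly $(z+\tfrac{\tau}{2},-z+\tfrac{\tau}{2},\tfrac{\tau}{4})$---converts this expression into $\Psi^{[B;M,m;\epsilon]}_{j+1/2,k+1/2;1/2}(\tau,z,-z,0)$, which is the desired output.

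Part (2) is parallel, with (6.13) in place of (6.4) and (6.15) in place of (5.6). Evaluating (6.15) at $(z_1,z_2,t)=(z+\tfrac{\tau}{2},-z+\tfrac{\tau}{2},\tfrac{\tau}{4})$, the built-in $(z_1,z_2)\mapsto(-z_2,-z_1)$ substitution yields $\widetilde\epsilon_s\,\Psi^{[B;M,m;1/2]}_{j+1/2,k+1/2;1/2}(\tau,\,z-\tfrac{\tau}{2},\,-z-\tfrac{\tau}{2},\,\tfrac{\tau}{4})$. Invoking Lemma 5.10 in reverse---with $\epsilon^\prime=0$ and $(z_1,z_2,t)=(z-\tfrac{\tau}{2},-z-\tfrac{\tau}{2},\tfrac{\tau}{4})$, for which the shifted LHS arguments of the lemma reduce to $(z,-z,0)$---identifies this expression with $\Psi^{[B;M,m;1/2]}_{j,k;0}(\tau,z,-z,0)$, as claimed.

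The only point requiring care is the bookkeeping of three nearly indistinguishable coordinate shifts---(a) the rewriting of $2\pi i(-\tau x+J_0 z+\tfrac{\tau}{4}\delta)$ in the coordinates of (4.1), (b) the $w$-twist $(z_1,z_2)\mapsto(-z_2,-z_1)$ built into (6.15), and (c) the half-period exchange of Lemma 5.10---together with the $q$-powers and exponential prefactors in the definition (5.3) of $\Psi^{[B;M,m;\epsilon]}_{a,b;\epsilon^\prime}$. A direct unfolding of (5.3) confirms that all prefactors collapse correctly (for instance, in part (2) the $q$-exponent reduces to $(2m+1)jk/(2M)$ on both sides), so the identity reduces to matching of arguments, which is handled as above.
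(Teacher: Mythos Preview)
Your proof is correct and follows essentially the same route as the paper: apply the reduction formulae (6.4) and (6.13), rewrite the argument $-\tau x+J_0 z$ in the coordinates (4.1) via (6.16), invoke the unified character formula (5.6) (respectively (6.15)), and then use Lemma~5.10 to trade the simultaneous $\tau/2$-shifts for the index shift $(j,k)\mapsto(j+\tfrac12,k+\tfrac12)$ and the flip $\epsilon'\mapsto\tfrac12-\epsilon'$. Your explicit verification that the $t$-argument collapses to $0$ in both directions of Lemma~5.10 is exactly the bookkeeping the paper suppresses.
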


%be one of the principal admissible weights $\Lambda^{(s)}_{k_1,k_2}$, described in Section 5 (for the simple subsets $S^{(s)}_{k_1,k_2}$ and $S^{\prime(s)}_{k_1,k_2}$) 
By (\ref{eq:6.1}), the central charge of the $N=3$ modules 
$H(\Lambda)$ and $H^{\tw}(\Lambda)$, where 
$\Lambda =\Lambda^{(s)}_{k_1,k_2}$ or 
$\Lambda^{\prime(s)}_{k_1,k_2}$ 
have level $k$, given by formula (\ref{eq:5.2}),   
is equal to 
\begin{equation}
	\label{eq:6.17}
	c_k=-3\frac{2m+1}{M}.
\end{equation}

Recall that $H(\Lambda)$ and $H^{\tw}(\Lambda)$ are zero iff $(\Lambda | \alpha_0)\in \ZZ_{\geq 0}$, and that $H(\Lambda)$ is irreducible otherwise. If $M=1$, then a principal admissible $\widehat{osp}_{3|2}$-module is partially integrable, hence $(\Lambda | \alpha_0)\in \ZZ_{\geq 0}$ and therefore 
$H(\Lambda)=0=H^{\tw}(\Lambda)$. If $M>1$, it follows from the formulas for the principal admissible weights, listed in Section 5, that $(\Lambda | \alpha_0)\in\ZZ_{\geq 0}$ iff $k_0=0$ and $s=1$ or $3$.

Thus, in what follows we may assume that $M\geq 2$ and $k_0>0$ when $s=1$ or $3$. Using the formulas for the principal admissible weights in Section 5 and formulas (\ref{eq:6.2}), (\ref{eq:6.3}), (\ref{eq:6.11}) and (\ref{eq:6.12}), we obtain the following explicit formulae for the lowest energy $h_{\Lambda}$ (resp. $h_{\Lambda}^{\tw}$) and spin (resp. $s_{\Lambda}^{\tw}$) of $H(\Lambda)$ 
(resp. $H^{\tw}(\Lambda)$) , and the same formulae for 
$\Lambda^{\prime(s)}_{k_1,k_2}$ . :
\begin{equation}
	\label{eq:6.18}
	h_{\Lambda^{(1\,\text{or}\,3)}_{k_1,k_2}}=(k+\frac{1}{2}) ((k_1+\frac{1}{2}) 
(k_1+2k_2+\frac{1}{2}) -\frac{1}{4})+ \frac{k_1}{2} = h_{\Lambda^{(4\,\text{or}\,2)}_{k_1+1,k_2}},
\end{equation}
\begin{equation}
	\label{eq:6.19}
		s_{\Lambda^{(1)}_{k_1,k_2}}=
2(k+\frac{1}{2})k_2 = s_{\Lambda^{(4)}_{k_1+1,k_2}},
\end{equation}
%\begin{equation}
%	\label{eq:6.20}
%		h_{\Lambda^{(3)}_{k_1,k_2}}=(k+\frac{1}{2}) 
%((k_1+\frac{1}{2}) (k_1+2k_2+\frac{1}{2}) -\frac{1}{4})+ \frac{k_1}{2} = h_{\Lambda^{(2)}_{k_1+1,k_2}},
%\end{equation}
\begin{equation}
	\label{eq:6.20}
	s_{\Lambda^{(3)}_{k_1,k_2}}=-2(k+\frac{1}{2})k_2-1 = s_{\Lambda^{(2)}_{k_1+1,k_2}},
\end{equation}
\begin{equation}
	\label{eq:6.21}
	h^{\tw}_{\Lambda^{(s)}_{k_1,k_2}}=(k+\frac{1}{2}) 
(k_1(k_1+2k_2)-\frac{1}{4}) +\frac{k_1}{2}- \frac{1}{16},\,\,\,s=1,2,3,4,
% = h^{\tw}_{\Lambda^{(4)}_{k_1-1,k_2}},
\end{equation}
\begin{equation}
	\label{eq:6.22}
	s^{\tw}_{\Lambda^{(1)}_{k_1,k_2}}=2(k+\frac{1}{2})k_2-\frac{1}{2} = s^{\tw}_{\Lambda^{(4)}_{k_1,k_2}},
\end{equation}
%\begin{equation}	\label{eq:6.24}
%	h^{\tw}_{\Lambda^{(3)}_{k_1,k_2}}=(k+\frac{1}{2}) 
%((k_1-\frac{1}{2}) (k_1+2k_2-\frac{1}{2})-1) +\frac{k_1}{2}+ \frac{3}{16} = h^%{\tw}_{\Lambda^{(2)}_{k_1-1,k_2}},
%\end{equation}
\begin{equation}
	\label{eq:6.23}
	s^{\tw}_{\Lambda^{(3)}_{k_1,k_2}}=-2(k+\frac{1}{2})k_2-\frac{3}{2} = 
s^{\tw}_{\Lambda^{(2)}_{k_1,k_2}}.
\end{equation}
Since the irreducible $N=3$ positive energy modules are uniquely determined by their characteristic numbers, we obtain the following isomorphisms of the 
$N=3$ Neveu-Schwarz type superconformal algebra modules:
\begin{equation}
	\label{eq:6.24}
H(\Lambda^{(1)}_{k_1,k_2}) \simeq H(\Lambda^{(4)}_{k_1+1,k_2}), 
\,H(\Lambda^{(3)}_{k_1,k_2}) \simeq H(\Lambda^{(2)}_{k_1+1,k_2}),\,
H(\Lambda^{\prime(1)}_{k_1,k_2}) \simeq H(\Lambda^{\prime(4)}_{k_1+1,k_2}), 
\end{equation}
and of the $N=3$ Ramond type modules :
\begin{equation}
	\label{eq:6.25}
H^{\tw}(\Lambda^{(1)}_{k_1,k_2}) \simeq H^{\tw}(\Lambda^{(4)}_{k_1,k_2}), \,
H^{\tw}(\Lambda^{(3)}_{k_1,k_2}) \simeq H^{\tw}(\Lambda^{(2)}_{k_1,k_2}),
\,H^{\tw}(\Lambda^
{\prime(1)}_{k_1,k_2}) \simeq H^{\tw}(\Lambda^{\prime(4)}_{k_1,k_2}).
\end{equation}	
Thus, we may consider only the $N=3$ modules 
$H(\Lambda^{(s)}_{k_1,k_2})$ and $H^{\tw}(\Lambda^{(s)}_{k_1,k_2})$ for $s=1$ 
and $3$, and $H(\Lambda^{\prime(s)}_{k_1,k_2})$ and 
$H^{\tw}(\Lambda^{\prime(s)}_{k_1,k_2})$ for $s=1$.

%%%%%%%%%%%%%%%%%%%%%%%%%%%%%
\vspace{1ex}
In the reindexing of Corollaries \ref{co:5.2} and \ref{co:5.9} we let :
$$
\begin{array}{ll}
H_{NS} \left(\Lambda_{j,k} \right) = H \left(\Lambda^{(1\;\text{or}\;3)}_{j-\frac{1}{2}, k-\frac{1}{2}} \right),\,\, 
H_{NS} \left(\Lambda_{j,k} \right) = H \left(\Lambda^{\prime (1)}_{j-\frac{1}{2}, k-\frac{1}{2}} \right), \\[4ex]
H_{R} \left(\Lambda_{j,k}\right) = H^{\tw} \left(\Lambda^{(1\;\text{or}\;3)}_{j,k}\right),\,\, 
H_{R} \left(\Lambda_{j,k} \right) = H^{\tw} 
\left(\Lambda^{\prime(1)}_{j,k} \right).
\end{array}
$$
Using (\ref{eq:6.24}), (\ref{eq:6.25}) 
and Lemma \ref{lem:5.10}, it is easy to see that the set of all pairs 
($j$, $k$) that occur in 
$H_{NS} \left(\Lambda_{j,k}\right)$ (resp. $H_R \left(\Lambda_{j,k}\right)$) 
is the following, where, as before $\epsilon=\frac{1}{2}$ (resp. $=0$) in the 
Neveu-Schwarz case (resp. Ramond case) :
$$
\Omega^{[3; M]}_{\epsilon} = \left\{(j,k) \in (\epsilon + \ZZ)^2 |\,\, 
0 \leq k < M,\, 0 < j+k < M,\, j-k \in 2\ZZ \right\}.
$$

It is easy to see from (\ref{eq:6.18})-(\ref{eq:6.23}) that the characteristic 
numbers of these $N=3$ modules are given by the following unified formulae
for $\Lambda=\Lambda^{(s)}_{k_1+1,k_2}$ or 
$\Lambda=\Lambda^{\prime(s)}_{k_1+1,k_2}$ :
\begin{equation}
\label{eq:6.26}
h_{\Lambda}=(k+ \frac{1}{2})( jk- \frac{1}{4} )+ \frac{1}{2}(j- \frac{1}{2}), 
\,\,s_{\Lambda}=(k+ \frac{1}{2})(k-j)\;\,\text{if}\,\;j \leq k,
\end{equation}
\begin{equation}
\label{eq:2.27}
h_{\Lambda}=(k+ \frac{1}{2} )( jk- \frac{1}{4} )+ \frac{1}{2}
(k- \frac{1}{2}),\,\, s_{\Lambda}=(k+ \frac{1}{2})(j-k)-1\,\;\text{if}\,\;j > k,
\end{equation}
\begin{equation}
\label{eq:6.28}
h^{\tw}_{\Lambda}=(k+ \frac{1}{2})( jk- \frac{1}{4})+ \frac{j}{2}- \frac{1}{16},\,\, s^{\tw}_{\Lambda}=(k+ \frac{1}{2})(k-j)- \frac{1}{2},\,\;
\text{if}\;\,j \leq k,
\end{equation}
\begin{equation}
\label{eq:6.29}
h^{\tw}_{\Lambda}=(k+ \frac{1}{2} )( jk- \frac{1}{4} )+ \frac{k}{2}- \frac{1}{16},\,\, s^{\tw}_{\Lambda}=(k+ \frac{1}{2})(j-k)- \frac{3}{2},\,\;\text{if}\;\,
j > k.
\end{equation}

As in Section \ref{sec:3}, introduce the following notation for the characters 
and supercharacters of these modules :
$$
\begin{array}{l@{\,}l}
ch^{N=3\,[M,m;\epsilon]}_{j,k;\frac{1}{2}} (\tau, z) &= ch^{\pm}_{H_{NS}(\Lambda_{j,k})} (\tau,z), \\[4ex]
ch^{N=3\,\left [M,m;\frac{1}{2}\right]}_{j,k;0}	(\tau,z) &= ch^+_{H_{R} (\Lambda_{j,k})} (\tau,z).
\end{array}
$$
Now the character formulae, given by Proposition \ref{prop:6.2}, can be rewritten in a unified way as follows ($\epsilon, \epsilon^\prime=0 \;\text{or}\; 
\frac{1}{2},\,\, \epsilon + \epsilon^\prime \neq 0$) :
\begin{equation}
\label{eq:6.30}
\left(\overset{3}{R}^{(\epsilon)}_{\epsilon^\prime} ch^{N=3[M,m;\epsilon]}_
%{j+ \epsilon^\prime, k+ \epsilon^\prime} 
{j,k;\epsilon^\prime}\right)(\tau,z) = \Psi^{[B;M, m;\epsilon]}_
%{j+ \epsilon^\prime, k+\epsilon^\prime; \epsilon^\prime}  
{j,k;\epsilon\prime}(\tau,z,-z,0),
\,\,j,k\in \Omega^{[3;M]}_{\epsilon^\prime}.
\end{equation}    

Introduce the following modified $N=3$ characters $\overset{\sim}{ch}$ by the formula :
$$
\left(\overset{3}{R}^{(\epsilon)}_{\epsilon^\prime} \overset{\sim}{ch}^{N=3[M,m;\epsilon]}_{j,k;\epsilon^\prime}
%{j+ \slashed{\epsilon}^\prime, k+ \slashed{\epsilon}^\prime}
\right) (\tau,z) = (-1)^{j-\epsilon^\prime}  
\overset{\sim}{\Psi}^{[B;M, m;\epsilon]}_{j,k;\epsilon^\prime}
%{j+ \epsilon^\prime, k+ \epsilon^\prime; \epsilon^\prime} 
(\tau,z,-z,0),\,\,j,k\in \Omega^{[3;M]}_{\epsilon^\prime}.
$$

Theorem \ref{th:5.7} and Proposition \ref{prop:6.1} imply the following 
modular transformation properties of these modified characters.

\begin{theorem}
\label{th:6.3}
Let $m \in \ZZ_{\geq 0}$, $M \in \ZZ_{\geq 2}$ be such that $\gcd (M,4m+2)=1$,
and let $\epsilon, \epsilon^\prime=0$ or $\frac{1}{2}$ be such that $\epsilon + \epsilon^\prime \neq 0$. Then for $j,k \in \Omega^{[3;M]}_{\epsilon^\prime}$ 
we have the following transformation formula of the modified characters of the $N=3$ modules with central charge $c$ given by (\ref{eq:6.1}) :
\begin{list}{}{}
\item (1) $\overset{\sim}{ch}^{N=3[M,m;\epsilon]}_{j,k;\epsilon^\prime} \left(-\frac{1}{\tau}, \frac{z}{\tau} \right)
=e^{\frac{\pi i c z^2}{3 \tau}} \sum\limits_{(a,b)\in \Omega^{[3;M]}_\epsilon}{S^{[\epsilon^\prime, \epsilon]}_{(j,k),(a,b)} \overset{\sim}{ch}^{N=3[M,m;\epsilon^\prime]}_{a,b;\epsilon}}(\tau,z)$,

where
$$
\begin{array}{ll}
S^{\left[\frac{1}{2},\frac{1}{2}\right]}_{(j,k),(a,b)} = \frac{2}{M}  e^{\frac{\pi i (2m+1)}{2M} (a-b)(j-k)} \cos \frac{2m+1}{2M} (a+b) (j+k) \pi,  \\[4ex]
S^{[\epsilon^\prime, \epsilon]}_{(j,k),(a,b)} = -\frac{2}{M} e^{\frac{\pi i (2m+1)}{2M} (a-b)(j-k)} \sin \frac{2m+1}{2M} (a+b) (j+k) \pi \;\,\,
\text{if}\;\,\,\epsilon^\prime \neq \epsilon.
\end{array}
$$

\item(2) $\overset{\sim}{ch}^{N=3[M,m;\epsilon]}_{j,k;\epsilon^\prime}
(\tau +1, z)
=e^{-\pi i (j+ \frac{1}{12} - \frac{\epsilon^\prime}{4}+ 4 \epsilon \epsilon^\prime)} e^{\frac{2\pi i (2m+1)}{2M} jk}
\overset{\sim}{ch}^{N=3[M,m;|\epsilon-\epsilon^\prime|]}_
{j,k;\epsilon^\prime}(\tau,z)$.
\end{list}
\end{theorem}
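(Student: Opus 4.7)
The plan is to deduce both transformation laws directly from the defining relation
\[
\overset{3}{R}^{(\epsilon)}_{\epsilon'}(\tau,z)\,\overset{\sim}{ch}^{N=3[M,m;\epsilon]}_{j,k;\epsilon'}(\tau,z) \;=\; (-1)^{j-\epsilon'}\,\overset{\sim}{\Psi}^{[B;M,m;\epsilon]}_{j,k;\epsilon'}(\tau,z,-z,0),
\]
by combining the transformation formulas of $\overset{\sim}{\Psi}^{[B;M,m;\epsilon]}_{j,k;\epsilon'}$ proved in Theorem \ref{th:5.7} with those of $\overset{3}{R}^{(\epsilon)}_{\epsilon'}$ established in Proposition \ref{prop:6.1}.

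For part (2) (the $T$-transformation), I would simply substitute: Theorem \ref{th:5.7}(2) contributes the factor $(-1)^{j-\epsilon'+4\epsilon\epsilon'}e^{\pi i(2m+1)jk/M}$, while Proposition \ref{prop:6.1}(2) contributes $e^{\pi i(1+9\epsilon')/12}$ in the denominator. The two factors $(-1)^{j-\epsilon'}$ from the definition on either side combine into $(-1)^{2(j-\epsilon')}=1$ (since $j-\epsilon'\in\ZZ$), leaving the remaining phases to collapse via $(-1)^{8\epsilon\epsilon'}=1$ and the identity $-\frac{1}{12}-\frac{3\epsilon'}{4}+\epsilon'=\frac{\epsilon'}{4}-\frac{1}{12}$ into $e^{-\pi i(j+\frac{1}{12}-\frac{\epsilon'}{4}+4\epsilon\epsilon')}$, which is the stated answer.

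For part (1) (the $S$-transformation), specializing Theorem \ref{th:5.7}(1) at $(z_1,z_2,t)=(z,-z,0)$ and dividing by the transformed denominator via Proposition \ref{prop:6.1}(1) produces a sum over $(a,b)\in\Omega^{[B;M;\epsilon]}$ with coefficients $(-1)^{a-\epsilon}\,e^{-\pi i(2m+1)(ak+bj)/M}$. The target index set $\Omega^{[3;M]}_{\epsilon}$ enforces $0<j+k<M$, roughly halving the range. The crucial observation is that for the specialization $(z_1,z_2)=(z,-z)$, the two symmetries of Theorem \ref{th:4.14}(4),
\[
\tilde\Phi^{[B;m]}(\tau,z_2,z_1,t)=-\tilde\Phi^{[B;m]}(\tau,z_1,z_2,t),\qquad \tilde\Phi^{[B;m]}(\tau,-z_1,-z_2,t)=-\tilde\Phi^{[B;m]}(\tau,z_1,z_2,t),
\]
combine (together with the shift formula of Theorem \ref{th:4.14}(3) to reduce $(-b,-a)$ to $\Omega^{[B;M;\epsilon]}$) into an identity $\overset{\sim}{\Psi}^{[B;M,m;\epsilon]}_{-b,-a;\epsilon'}(\tau,z,-z,0)=(-1)^{2\epsilon}\,\overset{\sim}{\Psi}^{[B;M,m;\epsilon]}_{a,b;\epsilon'}(\tau,z,-z,0)$. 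Pairing each $(a,b)$ with its partner $(-b,-a)$ (modulo translation by $M$), the phase $e^{-\pi i(2m+1)(ak+bj)/M}$ splits as $e^{\pi i(2m+1)(a-b)(j-k)/(2M)}\,e^{-\pi i(2m+1)(a+b)(j+k)/(2M)}$ and only the second factor flips under the pairing; the two terms then combine into a cosine when the reflection sign $(-1)^{2\epsilon}$ together with $(-1)^{a-\epsilon}(-1)^{j-\epsilon'}$ yields a symmetric sum (i.e.\ when $\epsilon=\epsilon'=\tfrac12$) and into a sine otherwise. The overall scalar is obtained by collecting $\tfrac{\tau}{M}$ from Theorem \ref{th:5.7}(1), $-\tau$ from Proposition \ref{prop:6.1}(1), and $i^{-(2\epsilon+2\epsilon'+4\epsilon\epsilon')}$, while the exponential $e^{\pi icz^2/(3\tau)}=e^{-\pi i(2m+1)z^2/(M\tau)}$ arises from the $t$-shift inside $\overset{\sim}{\Psi}$ under the $S$-action together with the central charge identity (\ref{eq:6.17}).

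The main obstacle is the careful bookkeeping of signs and the reindexing of the sum from $\Omega^{[B;M;\epsilon]}$ to $\Omega^{[3;M]}_\epsilon$: one must verify that the pairing $(a,b)\leftrightarrow(-b,-a)\bmod M$ is a bijection onto a fundamental domain complementary to $\Omega^{[3;M]}_\epsilon$, that the combined signs $(-1)^{a-\epsilon}(-1)^{j-\epsilon'}(-1)^{2\epsilon}$ produce the symmetrization (for cosine) or antisymmetrization (for sine) of the phase, and that the boundary cases $a+b=0$ or $j+k=0$ cause no double-counting. This is the direct analogue of the computation performed for $N=2$ in Theorem \ref{th:2.11} (and Theorem 9.4 of \cite{KW}), so the same method applies; the only genuinely new feature is the reflection symmetry $(-1)^{2\epsilon}$, which accounts for the cosine/sine dichotomy in the resulting $S$-matrix.
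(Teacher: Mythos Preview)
Your proposal is correct and follows precisely the route the paper indicates: the paper's own proof is the single sentence ``Theorem \ref{th:5.7} and Proposition \ref{prop:6.1} imply the following modular transformation properties,'' and you have supplied exactly the missing computation, including the key reindexing step (via the symmetries of Theorem \ref{th:4.14}(4)) that collapses the sum over $\Omega^{[B;M;\epsilon]}$ to one over $\Omega^{[3;M]}_\epsilon$ and produces the cosine/sine dichotomy. Your sign bookkeeping for part~(2) is slightly garbled in the exposition (three copies of $(-1)^{j-\epsilon'}$ occur, not two, leaving one residual factor that then combines with $e^{2\pi i(j-\epsilon')}=1$ and $e^{8\pi i\epsilon\epsilon'}=1$), but the outcome you state is correct.
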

\hfill
$\Box$

\appendix
\numberwithin{equation}{section}
\section{Appendix. A brief review of theta functions}

In this appendix we review some basic facts about theta functions in a slightly more general setup than in \cite{K}, Chapter 13, or \cite{KW}.

Let $\fh$ be an $\ell$-dimensional vector space over $\CC$, endowed with a 
non-degenerate symmetric biliear form $(.|.)$; we shall identify $\fh$ with 
$\fh^*$ via this form. Let $k$ be a positive real number and let $ L $ be a 
lattice of rank $\ell$ (i.e. a free rank $\ell$ abelian subgroup) in $ \fh $, 
such that 
\[ k ( \alpha | \beta) \in \ZZ \quad \mbox{for all} 
\quad \alpha, \beta \in L , \] 
i.e. $ kL \subset L^* $, where $ L^* = \left\lbrace  \lambda \in \fh \: | 
\:(\lambda | L) \subset \ZZ \right\rbrace  $ is the dual lattice, and the 
restriction of the bilinear form $(.|.)$ to $L$ is positive-definite.

Let $ \widehat{\fh} = \fh \oplus \CC K \oplus \CC d$ be the $ \ell+2 $
-dimensional vector space over $ \CC $ with the (non-degenerate) symmetric 
bilinear form $ (.|.) $, extended from $ \fh $ by letting 
$ \fh \perp (\CC K + \CC d), (K|K) = 0 = (d|d), (K|d) = 1 $. 
We shall identify $  \widehat{\fh} $ and $ \widehat{\fh}^* $ using this form. 
Given $ \lambda \in \widehat{\fh} $, we denote by $ \bar{\lambda}$ its 
projection on $ \fh $.
Let $ X = \left\lbrace h \in \widehat{\fh} \: | \: \mathrm{Re} (K | h) > 0 \right\rbrace $.

Define the following representation of the additive group of the vector space 
$ \fh $ on the vector space $ \widehat{\fh} $ (cf. (\ref{eq:0.5})):
\[ t_{\alpha}(h) = h + (K|h)\alpha -((\alpha | h) + 
\frac{(\alpha | \alpha)}{2} 
(K| h) ) K, \quad \alpha \in \fh .\]
This action leaves the bilinear form
$ (.|.) $ on $  \widehat{\fh} $ invariant and fixes $ K $, hence leaves the 
domain $ X $ invariant. 

The additive group of $ \fh $ also acts on $ \widehat{\fh} $ by the affine 
transformations
\[ p_{\alpha} (h) = h + 2 \pi i \alpha ,\quad \alpha \in \fh,\]
which leave $ X $ invariant.

Denote by $ N^{'}_{\ZZ} $ the subgroup of affine transformations of the domain
$ X $, generated by the transformations $ t_{\alpha} $ and $ p_{\beta} $ 
for all $ \alpha, \beta \in L $. (This is a subgroup of the group $ N_{\ZZ} $, considered in \cite{K}, Chapter 13.)

A theta function of degree $ k $ is a holomorphic function $ F $ in the 
domain $ X $, satisfying the following two properties ($ h \in X $):

\begin{enumerate}
\item[(i)] $ F(n(h)) = F(h) \quad \mbox{for all} \quad n \in N^{'}_{\ZZ} $,
\item[(ii)] $ F ( h + aK) = e^{ka} F (h) \quad \mbox{for all} \quad a \in \CC$.
\end{enumerate}

Denote by  $\tilde{\mathrm{Th}}_k $ the space of all theta functions of degree
$k$.   Then 
%$ \vartheta := 
$\tilde{\mathrm{Th}}_0 $ 
is the algebra of holomorphic functions in 
$ \tau \: (\mathrm{Im} \quad \tau > 0) $, \cite{K}, Lemma 3.2, and 
$ \tilde{\mathrm{Th}} := \oplus_{k \geq 0 \atop kL \subset L^*}  \; \tilde{\mathrm{Th}}_k$ is a $ \RR_{\geq 0} $-graded algebra over the subalgebra $\tilde{\mathrm{Th}}_0 $.  
%$ \vartheta $.

Let $ D $ be the Laplace operator on $ \widehat{\fh} $, associated to the 
bilinear form $ (.|.) $, i.e. $ D e^h = (h|h) e^h, \quad h \in \widehat{\fh} $, and let $ \mathrm{Th} =  \oplus_{k \geq 0 \atop kL \subset L^*}  \; 
\mathrm{Th}_k $ denote the kernel of $ D $ in $\tilde{\mathrm{Th}}$ . 
This is an $ \RR_{\geq 0} $-graded algebra over $ \CC $. Elements of $ \mathrm{Th}_k $ are called classical theta functions (or Jacobi forms) of degree $ k $.

For $ k > 0 $, such that $ kL \subset L^* $, let
\[ P_k = \left\lbrace \lambda \in \widehat{\fh} \:| \: (\lambda | K) = k \quad \mathrm{and} \quad \bar{\lambda} \in L^* \right\rbrace. \]
Given $ \lambda \in P_k $, let
\[ \Theta_\lambda = e^{-\frac{(\lambda | \lambda)}{2k} K}  \sum_{\alpha \in L}^{} e^{t_{\alpha} (\lambda)} . \]
This series converges to a holomorphic function in the domain $ X $, which is 
an example of a theta function (=Jacobi form). Note that
\[ \Theta_{\lambda + k \alpha + aK} = \Theta_{\lambda} \quad \mathrm{for} \quad \alpha \in L,\: a \in \CC .\]

\begin{proposition}
\label{prop:A.1}
The set $ \left\lbrace \Theta_{\lambda} \: | \: \lambda \in P_k \mathrm{mod} (kL + \CC K) \right\rbrace $ is a $ \CC  $-basis of 
$ \mathrm{Th}_k $ (resp. $ \tilde{\mathrm{Th}}_0 $-basis of $\tilde
{\mathrm{Th}}_k $) 
if $ k > 0 $, and $ \mathrm{Th}_0  = \CC$.
\end{proposition}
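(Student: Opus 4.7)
The plan is to run a Fourier expansion in the coordinates $(\tau,z,t)$ where $h = 2\pi i(-\tau d + z + tK)$, so that $(K|h) = -2\pi i\tau$ makes $X$ the tube $\{\mathrm{Im}\,\tau > 0\} \times \fh \times \CC$. In these coordinates one checks $p_\beta(\tau,z,t) = (\tau,\,z+\beta,\,t)$ and $t_\alpha(\tau,z,t) = (\tau,\,z-\tau\alpha,\,t-(\alpha|z)+\tfrac{\tau}{2}(\alpha|\alpha))$, while condition (ii) forces $F(\tau,z,t) = e^{2\pi i k t} G(\tau,z)$. The $L$-periodicity of $G$ in $z$ (from $p_\beta$-invariance) produces a Fourier expansion $G(\tau,z) = \sum_{\mu \in L^*} c_\mu(\tau) e^{2\pi i(\mu|z)}$; substituting this into $t_\alpha$-invariance (the hypothesis $kL \subset L^*$ being exactly what lets the index shifts $\mu \mapsto \mu + k\alpha$ stay in $L^*$) and comparing Fourier coefficients yields the quasi-periodicity $c_{\mu+k\alpha}(\tau) = c_\mu(\tau)\,e^{\pi i k\tau(\alpha|\alpha) + 2\pi i\tau(\mu|\alpha)}$. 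Thus all Fourier coefficients inside any coset of $L^*/kL$ are rigidly determined by a single one.

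Next I would match each coset with a $\Theta_\lambda$. Writing $\lambda = kd + \bar\lambda + cK$, a direct expansion gives $\Theta_\lambda(\tau,z,t) = e^{2\pi i k t}\,q^{(\bar\lambda|\bar\lambda)/(2k)}\sum_{\alpha \in L} q^{(\alpha|\bar\lambda)+k(\alpha|\alpha)/2}\,e^{2\pi i(\bar\lambda+k\alpha|z)}$, $q = e^{2\pi i\tau}$; the sum converges absolutely by positive definiteness of $(\cdot|\cdot)$ on $L$. This shows $\Theta_\lambda$ depends only on $\bar\lambda$ modulo $kL + \CC K$, that different cosets have disjoint Fourier supports in $L^*$ (hence the $\Theta_\lambda$ are linearly independent over $\tilde{\mathrm{Th}}_0$), and that the recursion above uniquely represents every $F \in \tilde{\mathrm{Th}}_k$ as $F = \sum_{\bar\lambda \in L^*/kL} h_{\bar\lambda}(\tau)\,\Theta_\lambda$ with $h_{\bar\lambda} \in \tilde{\mathrm{Th}}_0$, proving the basis statement for $\tilde{\mathrm{Th}}_k$.

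The upgrade to a $\CC$-basis of $\mathrm{Th}_k$ rests on the vanishing $D\Theta_\lambda = 0$. For each summand $v_\alpha = t_\alpha(\lambda) - \tfrac{(\lambda|\lambda)}{2k}K$ one computes $(v_\alpha|v_\alpha) = (t_\alpha\lambda|t_\alpha\lambda) - \tfrac{(\lambda|\lambda)}{k}(K|t_\alpha\lambda) = (\lambda|\lambda) - (\lambda|\lambda) = 0$, using that $t_\alpha$ preserves the form and $(K|t_\alpha\lambda) = (K|\lambda) = k$, $(K|K) = 0$. Writing $D$ in the chosen coordinates as $\tfrac{1}{4\pi^2}(2\partial_t\partial_\tau - \Delta_z)$ and applying Leibniz to $F = \sum h_{\bar\lambda}(\tau)\Theta_\lambda$, the only surviving contribution is the $\partial_t\partial_\tau$ cross-term, proportional to $k\sum(\partial_\tau h_{\bar\lambda})\,\Theta_\lambda$; for $k > 0$ the $\Theta_\lambda$ being linearly independent over $\tilde{\mathrm{Th}}_0$ forces each $h_{\bar\lambda}$ to be constant. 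For $k = 0$, condition (ii) kills the $t$-dependence and joint $p_\beta$- and $t_\alpha$-invariance makes $F(\tau,\cdot)$ doubly periodic under the real rank-$2\ell$ lattice $L + \tau L \subset \fh$; Liouville on the compact torus $\fh/(L+\tau L)$ gives $F = F(\tau)$, and the conclusion $\mathrm{Th}_0 = \CC$ is then the standard reduction as in \cite{K}, Chapter 13. The main technical step is the quasi-periodicity identity together with the cancellation $(v_\alpha|v_\alpha) = 0$ that witnesses $D\Theta_\lambda = 0$ and enables the $\tilde{\mathrm{Th}}_0$-basis to shrink to a $\CC$-basis.
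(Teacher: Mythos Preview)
Your argument is correct and is precisely the standard Fourier-expansion/quasi-periodicity proof from \cite{K}, Chapter 13 (Lemma 13.2 and Proposition 13.3), which is exactly what the paper's own proof invokes. The only point you leave implicit---that $\mathrm{Th}_0=\CC$ rather than all of $\tilde{\mathrm{Th}}_0$---is likewise deferred to \cite{K} in the paper, so your treatment matches.
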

\begin{proof}
It is the same as that of Proposition 13.3 and Lemma 13.2 in \cite{K} .
\end{proof}

Introduce coordinates $ (\tau, z, t) $ on $ \widehat{\fh} $ by (\ref{eq:0.1}),
so that $ X = \left\lbrace (\tau, z, t) \: | \: \mathrm{Im}\tau >0 \right\rbrace  $ and $ q := e^{2 \pi i \tau} = e^{-K} $. In these coordinates we have the 
usual formula for the Jacobi form $ \Theta_{\lambda} $, $ \lambda \in P_k $, 
of degree $ k > 0 $: 
\begin{equation}
\Theta_{\lambda} (\tau, z, t) = e^{2 \pi i k t } \sum_{\gamma \in L + \frac{\bar{\lambda}}{k}}^{} q^{\frac{k (\gamma | \gamma)}{2}} e^{2 \pi i k(\gamma|z)}.
\end{equation}

\begin{proposition}
\label{prop:A.2}
One has the following transformation formulae of a Jacobi form 
$ \Theta_{\lambda} $ of degree $ k \in R_{>0} $, such that $ kL \subset L^* $, where $ \lambda \in P_k $:
\begin{enumerate}
\item[(a)] $\Theta_{\lambda} (-\frac{1}{\tau}, \frac{z}{\tau}, t - \frac{(z | z)}{2 \tau}) =  
(-i \tau)^{\frac{\ell}{2}}  | L^* /kL|^{-\frac{1}{2}} \: 
\sum_{\mu \in P_k \: \mathrm{mod} (kL + \CC K)}^{} 
e^{-\frac{2 \pi i }{k} (\bar{\lambda}| \bar{\mu}) } 
\Theta_{\mu} (\tau, z, t)$.
\item[(b)] $\Theta_{\lambda} (\tau +1, z, t ) =   e^{2 \pi i k t} e^{\frac{\pi i (\bar{\lambda}|\bar{\lambda})}{k}} \sum_{\alpha \in L}^{} (-1)^{k (\alpha | \alpha)} e^{2\pi i(\bar{\lambda} + k \alpha | z)} 
q^{\frac{| \bar{\lambda} + k \alpha|^2 }{2k}}$.
\end{enumerate}
In particular, 
\[ \Theta_{\lambda} (\tau +1, z, t ) = e^{\frac{\pi i ( \bar{\lambda} | \bar{\lambda})}{k}} \quad \Theta_{\lambda} (\tau, z, t),\]
hence the $ \CC $-span of $\{ \Theta_{\lambda}  \}_ {\lambda \in P_k \: \mathrm{mod}(kL + \CC K)} $ is $ \mathrm{SL}_2(\ZZ) $-invariant
(up to the weight factor), provided that 
$ k(\alpha | \alpha) \in 2\ZZ$ for all $\alpha \in L$.
\end{proposition}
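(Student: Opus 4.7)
My plan is to treat the two parts separately, starting with (b), which requires only a direct calculation, and then use Poisson summation to handle (a).

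For part (b), I would start from the explicit series expansion
\[
\Theta_{\lambda}(\tau,z,t) = e^{2\pi i k t} \sum_{\gamma \in L + \bar{\lambda}/k} e^{\pi i \tau k (\gamma|\gamma)} e^{2\pi i k (\gamma|z)},
\]
substitute $\tau \mapsto \tau+1$, and track the extra phase. Writing $\gamma = \alpha + \bar{\lambda}/k$ with $\alpha \in L$, one has
\[
k(\gamma|\gamma) = k(\alpha|\alpha) + 2(\alpha|\bar{\lambda}) + (\bar{\lambda}|\bar{\lambda})/k,
\]
so the extra phase $e^{\pi i k(\gamma|\gamma)}$ breaks into three pieces. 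The factor $e^{2\pi i (\alpha|\bar{\lambda})}$ is trivial since $\bar{\lambda}\in L^{*}$ and $\alpha \in L$; the assumption $kL\subset L^{*}$ gives $k(\alpha|\alpha)\in \ZZ$, so $e^{\pi i k(\alpha|\alpha)}=(-1)^{k(\alpha|\alpha)}$; and the remaining $e^{\pi i(\bar{\lambda}|\bar{\lambda})/k}$ pulls out of the sum. This yields the stated formula. The particular case follows immediately, since $k(\alpha|\alpha)\in 2\ZZ$ kills the sign and reassembles the series back into $\Theta_{\lambda}(\tau,z,t)$.

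For part (a), the strategy is Poisson summation applied to the translated lattice $L + \bar{\lambda}/k$. First I would complete the square in the exponent to rewrite
\[
\Theta_{\lambda}\!\left(-\tfrac{1}{\tau},\tfrac{z}{\tau},t-\tfrac{(z|z)}{2\tau}\right) = e^{2\pi i k t}\!\!\sum_{\gamma \in L + \bar{\lambda}/k}\!\! e^{-\pi i k|\gamma-z|^{2}/\tau}.
\]
Applying Poisson summation for the lattice $L$ gives a sum over $\mu \in L^{*}$ of $\hat{f}(\mu) e^{2\pi i(\mu|\bar{\lambda})/k}$, where $f(y)=e^{-\pi i k|y-z|^{2}/\tau}$. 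The Gaussian Fourier transform yields
\[
\hat{f}(\mu) = (-i\tau/k)^{\ell/2} e^{\pi i \tau |\mu|^{2}/k - 2\pi i (\mu|z)},
\]
with the branch of $(-i\tau)^{\ell/2}$ fixed to be positive on the imaginary axis. Then I would decompose $\mu = k\alpha+\bar{\nu}$ with $\alpha\in L$ and $\bar{\nu}$ running over representatives of $L^{*}/kL$, split the sum, and recognize the inner sum over $\alpha$ as (up to the parity of $\bar{\nu}$) the theta series $\Theta_{\mu}(\tau,z,t)$ for an appropriate $\mu \in P_{k}$. The overall constant is $\operatorname{covol}(L)^{-1}(-i\tau/k)^{\ell/2}$, and the identity $k^{\ell}\operatorname{covol}(L)^{2}=|L^{*}/kL|$, following from $\operatorname{covol}(L)\operatorname{covol}(L^{*})=1$ and the index formula $[L^{*}/k:L]=[L^{*}:kL]$, cleans this up to $(-i\tau)^{\ell/2}|L^{*}/kL|^{-1/2}$.

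The main obstacle I expect is the bookkeeping in part (a): choosing the correct branch of $(-i\tau)^{\ell/2}$ so the formula is consistent for real positive $-i\tau$, tracking the sign conventions when reindexing $\mu \mapsto -\mu$ to convert the Fourier-side exponentials $e^{-2\pi i (\mu|z)}$ into the standard $\Theta_{\mu}(\tau,z,t)$ form, and verifying the covolume-index identity so that the prefactor comes out as $|L^{*}/kL|^{-1/2}$. These are mechanical but fragile points; the underlying analytic content (Gaussian Poisson summation, absolute convergence in the domain $X$) is standard and parallels the proof of Proposition 13.3 in \cite{K}.
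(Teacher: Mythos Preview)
Your approach is correct and is essentially what the paper invokes: the paper simply refers to the proof of Theorem~13.5 in \cite{K}, which is the standard Gaussian Poisson summation argument you outline for (a), together with the direct substitution you give for (b). One small slip: your closing reference to ``Proposition~13.3 in \cite{K}'' is off---that proposition is the basis statement for $\mathrm{Th}_k$, whereas the transformation formula is Theorem~13.5.
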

\begin{proof}
It is is the same as in \cite{KW}, using the same argument as in \cite{K}, 
Theorem 13.5, where in place of Lemma 13.4 we use that $ (S, j) $ normalizes 
the group $ N^{'}_{\ZZ}. $
\end{proof}

Let 
$ L_{\bar{0}}$ (resp. $L_{\bar{1}}$)
$ = \{ \alpha \in  L \: | \: k (\alpha | \alpha) \in 2 \ZZ$
(resp. $ \in 1 + 2 \ZZ ) \} $. 
Proposition \ref{prop:A.2} 
says that the $ \CC $-span of $\{ \Theta_{\lambda}  \}_
{\lambda \in P_k \: \mathrm{mod}(kL + \CC K)} $ is $ \mathrm{SL}_2(\ZZ) $-invariant, provided that $ L = L_{\bar{0}}. $

Now we construct an $ \mathrm{SL}_2(\ZZ) $-invariant family of classical theta functions in the case when $ L \neq L_{\bar{0}}. $  
In this case $ L_{\bar{1}} = \beta_0 + L_{\bar{0}} $ for any $ \beta_0 \in L $
, such that $ k (\beta_0 | \beta_0) \in 1 + 2 \ZZ $. In what follows we shall 
often write $ \Theta_{\lambda} = \Theta_{\lambda, L} $ in order to emphasize 
the dependence on $ L $. We let:
\[ \Theta^\pm_\lambda = \Theta_{\lambda, L_{\bar{0}}} \pm \Theta_{\lambda + k \beta_0, L_{\bar{0}}}. \]
Note that $ \Theta^+_\lambda = \Theta_{\lambda, L} $ and that 
$ \Theta^-_\lambda $ is an alternate analogue of a Jacobi form:
\[ \Theta^-_\lambda = e^{-\frac{(\lambda | \lambda)}{2k}K} \: \sum_{\alpha \in L}^{} (-1)^{k(\alpha | \alpha)}  e^{t_\alpha(\lambda)}.\]
Fix $ \gamma_0 \in L^*_{\bar{0}} \setminus L^* $, and let 
\[ \Theta_\lambda^{\pm, \gamma_0} = \Theta_{\lambda + \gamma_0,\: L_{\bar{0}}} \pm \Theta_{\lambda + \gamma_0 + k \beta_0, \: L_{\bar0}} \:. \]
Note that the $\CC$-span of the classical theta functions
$\Theta^\pm_{\lambda}$ and $\Theta^{\pm, \gamma_0}_{\lambda}$ 
%\Theta^{-, \gamma_0}_{\lambda} \: | \: \lambda \in P_k \: \mathrm{mod} \: kL + \CC K \right\rbrace  $
of degree $k$ is the span of all classical theta functions of degree $k$
for the lattice $L_{\bar{0}}$.
 
It is easy to deduce the next proposition from Proposition \ref{prop:A.2}.
\begin{proposition}
\label{prop:A.3}
Let $ k \in \RR_{>0} $ be such that $ kL \subset L^* $ and assume that
$ L_{\bar{0}} \neq L $, and fix $ \gamma_0 \in L^*_{\bar{0}} \setminus L^* $. 
Let $ \lambda \in P_k. $ Then:
\begin{enumerate}
\item[(a)] $ \Theta^-_\lambda (-\frac{1}{\tau}, \frac{z}{\tau}     , t - \frac{(z | z)}{2\tau}    ) = $
\[ (-i \tau)^{\frac{\ell}{2}} | L^* / kL|
^{-\frac12} 
\sum_{\mu \in P_k \: \mathrm{mod} \: kL + \CC K}^{} e^{-\frac{2 \pi i}{k} (\bar{\lambda} | \bar{\mu} + \gamma_0)} 
\Theta^{-,\gamma_0}_\mu (\tau, z, t); \]
$ \Theta^{+, \gamma_0}_\lambda (-\frac{1}{\tau}, \frac{z}{\tau}     , t - \frac{(z | z)}{2\tau}    )=  $
\[ (-i \tau)^{\frac{\ell}{2}} | L^* / kL|^{-\frac12}  \sum_{\mu \in P_k \: \mathrm{mod} \: kL + \CC K}^{} e^{-\frac{2 \pi i}{k} (\bar{\lambda} + \gamma_0 | \bar{\mu})} \Theta^{-}_\mu (\tau, z, t); \]
$ \Theta^{-, \gamma_0}_\lambda (-\frac{1}{\tau}, \frac{z}{\tau}     , t - \frac{(z | z)}{2\tau}    )= $
\[ (-i \tau)^{\frac{\ell}{2}} | L^* / kL|^{-\frac12}  \sum_{\mu \in P_k \: \mathrm{mod} \: kL + \CC K}^{} e^{-\frac{2 \pi i}{k} (\bar{\lambda} + \gamma_0 | \bar{\mu} + \gamma_0)} \Theta^{-, \gamma_0} (\tau, z, t).  \]
\item[(b)] $ \Theta^\pm_\lambda (\tau + 1, z, t) = e^{\frac{\pi i}{k} 
(\bar{\lambda} |\bar{\lambda} )} \Theta^{\mp}_\lambda (\tau, z, t);  $
\bigskip
$ \Theta^{\pm, \gamma_0}_\lambda (\tau + 1, z, t) = e^{\frac{\pi i}{k} 
(\bar{\lambda} + \gamma_0 | \bar{\lambda} + \gamma_0) } \: \Theta^{\pm, \gamma_0}_\lambda (\tau, z, t). $
\end{enumerate}
Consequently, the $ \CC $-span of the classical theta functions 
$ \left\lbrace \Theta_{\lambda}^{+} , \Theta^{-}_{\lambda}, \Theta^{+, \gamma_0}_{\lambda}, \Theta^{-, \gamma_0}_{\lambda}\right\rbrace _
{\lambda \in P_k \: \mathrm{mod} \: kL + \CC K}$ 
is $ \mathrm{SL}_2(\ZZ) $-invariant.
\hfill $\Box$
\end{proposition}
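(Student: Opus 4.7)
The plan is to reduce everything to Proposition~\ref{prop:A.2} applied to the sublattice $L_{\bar 0}\subset L$, which by construction satisfies $k(\alpha|\alpha)\in 2\ZZ$ for all $\alpha\in L_{\bar 0}$, so the clean form of A.2 applies. First I would assemble the lattice-theoretic facts that drive the whole computation: since $\beta_0\in L\setminus L_{\bar 0}$ and $2\beta_0\in L_{\bar 0}$, the inclusion $L_{\bar 0}\subset L$ has index $2$, and dually $L^*\subset L_{\bar 0}^*$ has index $2$, giving the disjoint union $L_{\bar 0}^*=L^*\sqcup(\gamma_0+L^*)$. A chain computation
\[
[L_{\bar 0}^*:kL_{\bar 0}]=[L_{\bar 0}^*:L^*]\,[L^*:kL]\,[kL:kL_{\bar 0}]=2\cdot[L^*:kL]\cdot 2=4\,[L^*:kL]
\]
then gives $|L_{\bar 0}^*/kL_{\bar 0}|^{-1/2}=\tfrac12 |L^*/kL|^{-1/2}$, which explains why the prefactor in the proposition involves $L$ rather than $L_{\bar 0}$. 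Two parity facts are crucial: $(\bar\lambda|\beta_0)\in\ZZ$ for every $\bar\lambda\in L^*$ (since $\beta_0\in L$), and $(\gamma_0|\beta_0)\in\tfrac12+\ZZ$ (since $2\gamma_0\in L^*$ forces $2(\gamma_0|\beta_0)\in\ZZ$, while $\gamma_0\notin L^*$ forbids $(\gamma_0|\beta_0)\in\ZZ$).

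For part~(a), I would apply Proposition~\ref{prop:A.2}(a) to the lattice $L_{\bar 0}$ to each summand of $\Theta^\pm_\lambda$ and $\Theta^{\pm,\gamma_0}_\lambda$ separately, obtaining a sum over $\bar\nu\in L_{\bar 0}^*/kL_{\bar 0}$ of $\Theta_{\nu,L_{\bar 0}}$. Splitting this sum along $L_{\bar 0}^*=L^*\sqcup(\gamma_0+L^*)$, the shift by $k\beta_0$ inside each $\Theta^{(\pm)}$ produces a factor $1\pm e^{-2\pi i(\bar\nu|\beta_0)}$; by the parity facts above this factor kills exactly one of the two cosets and equals $2$ on the surviving coset. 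The remaining $\bar\nu$ in the surviving coset still have two representatives modulo $kL_{\bar 0}$ (namely $\bar\nu$ and $\bar\nu+k\beta_0$), and re-pairing these two terms reassembles a $\Theta^{\pm}$ or $\Theta^{\pm,\gamma_0}$ with the correct sign, determined by $e^{-2\pi i(\bar\lambda|\beta_0)}=1$ in the $\Theta^-_\lambda$ calculation and $e^{-2\pi i(\bar\lambda+\gamma_0|\beta_0)}=-1$ in the $\Theta^{\pm,\gamma_0}_\lambda$ calculations. The factor of $2$ gained from the non-vanishing coset cancels the extra $\tfrac12$ from the normalization, giving the prefactor stated.

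For part~(b), Proposition~\ref{prop:A.2}(b) applied to $L_{\bar 0}$ simplifies, since $(-1)^{k(\alpha|\alpha)}=1$ for $\alpha\in L_{\bar 0}$, to the clean identity $\Theta_{\mu,L_{\bar 0}}(\tau+1,z,t)=e^{\pi i(\bar\mu|\bar\mu)/k}\Theta_{\mu,L_{\bar 0}}(\tau,z,t)$. Substituting and using
\[
\frac{(\bar\lambda+k\beta_0|\bar\lambda+k\beta_0)}{k}=\frac{(\bar\lambda|\bar\lambda)}{k}+2(\bar\lambda|\beta_0)+k(\beta_0|\beta_0),
\]
the first extra factor is $1$ for $\bar\lambda\in L^*$ and the second is $-1$ since $k(\beta_0|\beta_0)\in 1+2\ZZ$, hence $\Theta^\pm_\lambda(\tau+1)=e^{\pi i(\bar\lambda|\bar\lambda)/k}\Theta^\mp_\lambda(\tau)$. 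In the $\gamma_0$-shifted case the same expansion applied with $\bar\lambda$ replaced by $\bar\lambda+\gamma_0$ produces an extra sign $e^{2\pi i(\bar\lambda+\gamma_0|\beta_0)}=-1$, which combines with $(-1)^{k(\beta_0|\beta_0)}=-1$ to give $+1$, so $\Theta^{\pm,\gamma_0}_\lambda(\tau+1)=e^{\pi i(\bar\lambda+\gamma_0|\bar\lambda+\gamma_0)/k}\Theta^{\pm,\gamma_0}_\lambda(\tau)$. The main obstacle is purely bookkeeping: tracking which pairs of $L_{\bar 0}$-classes assemble into the correct $\pm$ combinations at the end, and verifying the sign patterns in each of the three $S$-formulas; everything else is a substitution into the already-established formula for $L_{\bar 0}$.
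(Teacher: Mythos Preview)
Your approach is correct and is precisely what the paper intends: its proof consists only of the sentence ``It is easy to deduce the next proposition from Proposition~\ref{prop:A.2}'', and you have supplied exactly the omitted details by applying Proposition~\ref{prop:A.2} to the even sublattice $L_{\bar 0}$ and splitting along the coset decomposition $L_{\bar 0}^*=L^*\sqcup(\gamma_0+L^*)$. (As an aside, carrying your bookkeeping for the first $S$-formula through to the end actually produces $\Theta^{+,\gamma_0}_\mu$ on the right-hand side, consistent with inverting the second formula; the superscript ``$-$'' printed there appears to be a misprint in the paper, not an error in your method.)
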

\begin{proposition}
\label{prop:A.4}
One has the following elliptic transformation properties: 
%if $\beta\in \frac{1}{k}L^*$: 
%\begin{enumerate}\item[(a)] 
$$ 
\Theta^\pm_\lambda (\tau , z+\beta, t) 
= \Theta^\pm_\lambda (\tau , z, t);\,\,
 \Theta^{\pm,\gamma_0}_\lambda (\tau , z+\beta, t) = (-1)^{k(\beta|\beta)} \Theta^{\pm,\gamma_0}_\lambda (\tau , z, t)\,\,\hbox{ if}\,\, \beta\in L . 
$$
%\item[(b)] 
$$ 
\Theta^\pm_\lambda (\tau , z+\tau \beta, t)=e^{-2\pi ik (\beta | z )}q^{-\frac{k}{2}(\beta|\beta)} \Theta^{\mp}_{\lambda+k\beta} (\tau, z, t);
$$
$$
\Theta^{\pm,\gamma_0}_\lambda (\tau , z+\tau \beta, t) = 
e^{-2\pi ik (\beta | z )}q^{-\frac{k}{2}(\beta|\beta)} \Theta^{\mp,\gamma_0}_
{\lambda+k\beta} (\tau, z, t) \,\,\hbox{if}\,\, \beta\in \frac{1}{k}L^*. 
$$
%\end{enumerate}
\hfill $\Box$
\end{proposition}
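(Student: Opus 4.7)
The plan is to reduce everything to the elliptic behavior of the single Jacobi form $\Theta_{\mu, L_{\bar{0}}}$, and then combine two summands according to the definitions $\Theta^\pm_\lambda = \Theta_{\lambda, L_{\bar{0}}} \pm \Theta_{\lambda + k\beta_0, L_{\bar{0}}}$ and $\Theta^{\pm,\gamma_0}_\lambda = \Theta_{\lambda + \gamma_0, L_{\bar{0}}} \pm \Theta_{\lambda + \gamma_0 + k\beta_0, L_{\bar{0}}}$, in parallel with the strategy already used for Proposition~\ref{prop:A.3}.

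For the translation $z \mapsto z + \beta$ with $\beta \in L$, I would apply the series (A.1) directly: the substitution multiplies each summand by the phase $e^{2\pi i k(\gamma|\beta)}$, where $\gamma$ runs over the coset $L_{\bar{0}} + \bar{\mu}/k$. Writing $k\gamma = k\alpha + \bar{\mu}$ with $\alpha \in L_{\bar{0}}$, one finds $k(\gamma|\beta) = k(\alpha|\beta) + (\bar{\mu}|\beta)$, the first piece being integral by the hypothesis $kL \subseteq L^*$. For $\mu \in \{\lambda, \lambda + k\beta_0\}$ the term $(\bar{\mu}|\beta)$ is also integral since $\bar{\mu} \in L^*$, giving the plain $L$-periodicity of $\Theta^\pm_\lambda$. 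For $\mu \in \{\lambda + \gamma_0, \lambda + \gamma_0 + k\beta_0\}$ an extra $(\gamma_0|\beta)$ appears; it is integral for $\beta \in L_{\bar{0}}$ and lies in $\tfrac{1}{2} + \ZZ$ for $\beta \in \beta_0 + L_{\bar{0}}$, via the nondegenerate pairing $L^*_{\bar{0}}/L^* \times L/L_{\bar{0}} \to \tfrac{1}{2}\ZZ/\ZZ$ together with $\gamma_0 \in L^*_{\bar{0}} \setminus L^*$. This would reproduce the sign $(-1)^{k(\beta|\beta)}$, since $k(\beta|\beta)$ is even iff $\beta \in L_{\bar{0}}$.

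For the $\tau$-shift with $\beta \in \tfrac{1}{k} L^*$, I would perform the standard quasi-periodicity substitution $\gamma \mapsto \gamma + \beta$ in the Gaussian sum, obtaining
\[
\Theta_{\mu, L_{\bar{0}}}(\tau, z + \tau \beta, t) = e^{-2\pi i k(\beta|z)} q^{-\frac{k}{2}(\beta|\beta)} \Theta_{\mu + k\beta, L_{\bar{0}}}(\tau, z, t);
\]
the reindexing is legal because $k\beta \in L^* \subseteq L^*_{\bar{0}}$, so $\overline{\mu + k\beta}$ stays in $L^*_{\bar{0}}$. Adding the two summands in each definition would yield
\[
\Theta^\pm_\lambda(\tau, z + \tau\beta, t) = e^{-2\pi i k(\beta|z)} q^{-\frac{k}{2}(\beta|\beta)} \Theta^\pm_{\lambda + k\beta}(\tau, z, t)
\]
and its $\gamma_0$ analogue. (My derivation yields $\Theta^\pm_{\lambda + k\beta}$ on the right with the same sign as on the left, so the $\mp$ printed in the statement looks like a typographical slip, since $k\beta \in L^*$ does not by itself toggle the $L/L_{\bar{0}}$ parity.) The only non-routine step will be the parity count for $(\gamma_0|\beta)$ in the pure-translation identity, which rests on the index-$2$ duality between $L/L_{\bar{0}}$ and $L^*_{\bar{0}}/L^*$.
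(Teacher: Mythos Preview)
Your approach is correct and is precisely the routine verification the paper intends (the proposition is stated with a $\Box$ and no argument). Reducing to the single Jacobi form $\Theta_{\mu,L_{\bar0}}$ via (A.1), tracking the phase $e^{2\pi i k(\gamma|\beta)}$ for the $z$-translation, and reindexing $\gamma\mapsto\gamma+\beta$ for the $\tau$-translation is exactly what is needed; your parity argument for $(\gamma_0|\beta)$ via $2\beta_0\in L_{\bar0}$ is the right way to produce the sign $(-1)^{k(\beta|\beta)}$.

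Your suspicion about the $\mp$ is also justified. The computation
\[
\Theta_{\mu,L_{\bar0}}(\tau,z+\tau\beta,t)=e^{-2\pi i k(\beta|z)}q^{-\frac{k}{2}(\beta|\beta)}\Theta_{\mu+k\beta,L_{\bar0}}(\tau,z,t)
\]
applies with the \emph{same} prefactor to both summands $\mu=\lambda$ and $\mu=\lambda+k\beta_0$, so the relative sign is preserved and one obtains $\Theta^{\pm}_{\lambda+k\beta}$ on the right, not $\Theta^{\mp}_{\lambda+k\beta}$. (One can cross-check: for $\beta\in L_{\bar1}$ one has $\Theta^{-}_{\lambda+k\beta}=-\Theta^{-}_\lambda$, consistent with the sign $(-1)^{k(\beta|\beta)}$ that also appears when one computes directly from the alternating $L$-sum; the version with $\mp$ would instead give $\Theta^{+}_\lambda$, which is wrong.) So the printed $\mp$ is a typographical slip for $\pm$, in both the $\Theta^\pm$ and $\Theta^{\pm,\gamma_0}$ formulas.
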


\end{document}